\numberwithin{equation}{section}
\newtheorem{thm}{Theorem}[section]
\newtheorem*{main-thm}{Main Theorem}
\newtheorem{cor}[thm]{Corollary}
\newtheorem{lem}[thm]{Lemma}
\newtheorem{prop}[thm]{Proposition}
\theoremstyle{definition}
\newtheorem{defn}[thm]{Definition}
\newtheorem{rem}[thm]{Remark}
\newtheorem{exam}[thm]{Example}
\newcommand{\lxr}{\longrightarrow}
\newcommand{\A}{\mathscr A}
\newcommand{\B}{\mathscr B}
\newcommand{\C}{\mathscr C}
\newcommand{\D}{\mathscr D}
\newcommand{\F}{\mathscr F}
\newcommand{\X}{\mathcal X}
\newcommand{\perf}{{\operatorname{perf}}}
\DeclareMathOperator{\uCM}{\underline{CM}}
\DeclareMathOperator{\umod}{\underline{mod}}
\DeclareMathOperator{\inc}{inc}
\DeclareMathOperator{\Ker}{Ker}
\DeclareMathOperator{\Image}{Im}
\DeclareMathOperator{\pd}{\mathsf{pd}}
\DeclareMathOperator{\id}{\mathsf{id}}
\DeclareMathOperator{\Mod}{Mod}
\DeclareMathOperator{\fmod}{mod}
\DeclareMathOperator{\End}{End}
\DeclareMathOperator{\Inj}{Inj}
\DeclareMathOperator{\Proj}{Proj}
\DeclareMathOperator{\inj}{inj}
\DeclareMathOperator{\proj}{proj}
\DeclareMathOperator{\Hom}{Hom}
\DeclareMathOperator{\Ext}{Ext}
\DeclareMathOperator{\Tor}{Tor}
\DeclareMathOperator{\Id}{Id}
\DeclareMathOperator{\rad}{rad}
\DeclareMathOperator*{\silp}{\mathsf{silp}}
\DeclareMathOperator*{\spli}{\mathsf{spli}}
\DeclareMathOperator*{\gld}{\mathsf{gl.dim}}
\newcommand{\fcomp}{\circ}
\newcommand{\iso}{\cong}
\newcommand{\equivalence}{\simeq}
\newcommand{\Union}{\bigcup}
\newcommand{\dsum}{\oplus}
\newcommand{\Dsum}{\bigoplus}
\newcommand{\tensor}{\otimes}
\newcommand{\N}{\mathbb{N}} 
\newcommand{\No}{\N_0}      
\newcommand{\degree}[1]{\left|#1\right|}
\newcommand{\opposite}[1]{#1^{\operatorname{op}}}
\newcommand{\envalg}[1]{#1^{\operatorname{e}}}
\newcommand{\idealgenby}[1]{\langle #1 \rangle}
\newcommand{\fg}[1]{\textup{\textsf{Fg#1}}}
\newcommand{\defterm}[1]{\textbf{#1}}
\newcommand{\HH}[1]{\operatorname{HH}^{#1}}
\newcommand{\Dsg}{\mathsf{D}_\mathsf{sg}}
\newcommand{\Db}{\mathsf{D}^\mathsf{b}}
\newcommand{\Kb}{\mathsf{K}^\mathsf{b}}
\newcommand{\Derived}{\mathsf{D}}
\DeclareMathOperator{\CM}{CM}
\DeclareMathOperator{\Spec}{Spec}
\DeclareMathOperator{\coh}{coh}
\newcommand{\leS}{\preceq}
\newcommand{\recollement}[9]{\xymatrix@C=5em{
{#1} \ar[r]^{#5} &
{#2} \ar[r]^{#8} \ar@/_1.5pc/[l]_{#4} \ar@/^1.5pc/[l]^{#6} &
{#3} \ar@/_1.5pc/[l]_{#7} \ar@/^1.5pc/[l]^{#9}
}}
\begin{document}

\title[Gorenstein categories, singular equivalences and cohomology rings]
{Gorenstein categories, singular equivalences and Finite generation of cohomology rings in recollements}

\author{Chrysostomos Psaroudakis}
\author{\O{}ystein Skarts\ae{}terhagen}
\author{\O{}yvind Solberg}
\address{Institutt for matematiske fag, NTNU \\ N-7491 Trondheim \\ Norway}
\curraddr{Universit\"at Stuttgart \\ Institut f\"ur Algebra und Zahlentheorie \\
  Pfaffenwaldring 57 \\
  D-70569 Stuttgart \\
  Germany}
\email{Chrysostomos.Psaroudakis@mathematik.uni-stuttgart.de}
\address{Institutt for matematiske fag, NTNU \\ N-7491 Trondheim \\ Norway}
\email{Oystein.Skartsaterhagen@math.ntnu.no}
\address{Institutt for matematiske fag, NTNU \\ N-7491 Trondheim \\ Norway}
\email{Oyvind.Solberg@math.ntnu.no}

\date{\today}
 
\keywords{%
Recollements of abelian categories,
Finite generation condition,
Hochschild cohomology,
Gorenstein categories,
Gorenstein artin algebras,
Singularity categories,
Cohen--Macaulay modules,
Triangular matrix algebras,
Path algebras}

\subjclass[2010]{%
Primary
18E, 
18E30, 
16E30, 
16E40, 
16E65; 
Secondary
16E10, 
16G, 
16G50
}

\begin{abstract}
Given an artin algebra $\Lambda$ with an idempotent element $a$ we
compare the algebras $\Lambda$ and $a\Lambda a$ with respect to
Gorensteinness, singularity categories and the finite generation
condition \fg{} for the Hochschild cohomology.  In particular, we
identify assumptions on the idempotent element $a$ which ensure that
$\Lambda$ is Gorenstein if and only if $a\Lambda a$ is Gorenstein,
that the singularity categories of $\Lambda$ and $a\Lambda a$ are
equivalent and that \fg{} holds for $\Lambda$ if and only if \fg{}
holds for $a\Lambda a$.  We approach the problem by using recollements
of abelian categories and we prove the results concerning
Gorensteinness and singularity categories in this general setting.
The results are applied to stable categories of Cohen--Macaulay
modules and classes of triangular matrix algebras and quotients of
path algebras.
\end{abstract}

\maketitle

\setcounter{tocdepth}{1} \tableofcontents

\section{Introduction}

This paper deals with Gorenstein algebras/categories, singularity
categories and a finiteness condition ensuring existence of a useful
theory of support for modules over finite dimensional algebras.  First
we give some background and indicate how these subjects are linked for
us.  Then we discuss the common framework for our investigations and
give a sample of the main results in the paper.  Finally we describe
the structure of the paper.  For related work see
Green--Madsen--Marcos \cite{GMM} and Nagase \cite{N}.  In
Subsection~\ref{subsection:nagase}, we compare our results to those of
Nagase.

For a group algebra of a finite group $G$ over a field $k$ there is a
theory of support varieties of modules introduced by Jon Carlson in
the seminal paper \cite{Ca}.  This theory has proven useful and
powerful, where the support of a module is defined in terms of the
maximal ideal spectrum of the group cohomology ring $H^*(G,k)$.
Crucial facts here are that the group cohomology ring is graded
commutative and noetherian, and for any finitely generated $kG$-module
$M$, the Yoneda algebra $\Ext^*_{kG}(M,M)$ is a finitely generated
module over the group cohomology ring (see \cite{E,Go,V}).  For a
finitely generated $kG$-module $M$ the support variety is defined as
the variety associated to the annihilator ideal of the action of the
group cohomology ring $H^*(G,k)$ on $\Ext^*_{kG}(M,M)$.  This
construction is based on the Hopf algebra structure of the group
algebra $kG$, and until recently a theory of support was not available
for finite dimensional algebras in general.

Snashall and Solberg \cite{SO} have extended the theory of support
varieties from group algebras to finite dimensional algebras by
replacing the group cohomology $H^*(G,k)$ with the Hochschild
cohomology ring of the algebra.  Whenever similar properties as for
group algebras are satisfied, that is, (i) the Hochschild cohomology
ring is noetherian and (ii) all Yoneda algebras $\Ext^*_\Lambda(M,M)$
for a finitely generated $\Lambda$-module $M$ are finitely generated
modules over the Hochschild cohomology ring, then many of the same
results as for group algebras of finite groups are still true when
$\Lambda$ is a selfinjective algebra \cite{EHSST}.  The above set of
conditions is referred to as \fg{} (see \cite{EHSST, Solberg:1}).

\textit{Triangulated categories of singularities} or for simplicity
\textit{singularity categories} have been introduced and studied by
Buchweitz \cite{Buchweitz:unpublished}, under the name \textit{stable
  derived categories}, and later they have been considered by Orlov
\cite{Orlov}.  For an algebraic variety $\mathbb{X}$, Orlov introduced
the \textit{singularity category} of $\mathbb{X}$, as the Verdier
quotient $\Dsg(\mathbb{X}) = \Db(\coh \mathbb{X})/ \perf(\mathbb{X})$,
where $\Db(\coh \mathbb{X})$ is the bounded derived category of
coherent sheaves on $\mathbb{X}$ and $\perf(\mathbb{X})$ is the full
subcategory consisting of perfect complexes on $\mathbb{X}$.  
The singularity category $\Dsg(\mathbb{X})$
captures many geometric properties of $\mathbb{X}$.  For instance, if
the variety $\mathbb{X}$ is smooth, then the singularity category
$\Dsg(\mathbb{X})$ is trivial but this is not true in general
\cite{Orlov}.  It should be noted that the singularity category is not
only related to the study of the singularities of a given variety
$\mathbb{X}$ but is also related to the Homological Mirror Symmetry
Conjecture due to Kontsevich \cite{Kontsevich}.  For more information
we refer to \cite{Orlov, Orlov:2, Orlov:3}.

Similarly, the singularity category over a noetherian ring $R$ is
defined \cite{Buchweitz:unpublished} to be the Verdier quotient of the
bounded derived category $\Db(\fmod{R})$ of the finitely generated
$R$-modules by the full subcategory $\perf(R)$ of perfect complexes
and is denoted by
\[
\Dsg(R)=\Db(\fmod{R})/
\perf(R).
\]
In this case the singularity category $\Dsg(R)$ can be viewed as a
categorical measure of the singularities of the spectrum $\Spec(R)$.
Moreover, by a fundamental result of Buchweitz
\cite{Buchweitz:unpublished}, and independently by Happel
\cite{Happel:3}, the singularity category of a Gorenstein ring is
equivalent to the stable category of (maximal) Cohen--Macaulay modules
$\uCM(R)$, where the latter is well known to be a triangulated
category \cite{Happel:4}.  Note that this equivalence generalizes the
well known equivalence between the singularity category of a
selfinjective algebra and the stable module category, a result due to
Rickard \cite{Rickard}.  If there exists a triangle equivalence
between the singularity categories of two rings $R$ and $S$, then such
an equivalence is called a \textit{singular equivalence} between $R$
and $S$.  Singular equivalences were introduced by Chen, who studied
singularity categories of non-Gorenstein algebras and investigated
when there is a singular equivalence between certain extensions of
rings \cite{Chen:schurfunctors, Chen:radicalsquarezero, Chen:Singular
  equivalences, Chen:Singular equivalences-trivial extensions}.

Next, from the perspective of support varieties, we describe some
links between the above topics.  Support varieties for
$\Db(\fmod\Lambda)$ using the Hochschild cohomology ring of
$\Lambda$ were considered in \cite{Solberg:1} for a finite dimensional
algebra $\Lambda$ over a field $k$, where all the perfect complexes
$\perf(\Lambda)$ were shown to have trivial support variety.  Hence the
theory of support via the Hochschild cohomology ring naturally only
says something about the Verdier quotient
$\Db(\fmod\Lambda)/\perf(\Lambda)$ -- the singularity
category.

To have an interesting theory of support, the finiteness condition
\fg{} is pivotal.  When \fg{} is satisfied for an algebra $\Lambda$,
then $\Lambda$ is Gorenstein \cite[Proposition~1.2]{EHSST}, or
equivalently, $\fmod\Lambda$ is a Gorenstein category.

As we pointed out above, when $\Lambda$ is Gorenstein, then by
Buchweitz--Happel the singularity category
$\Db(\fmod\Lambda)/\perf(\Lambda)$ is triangle equivalent to
$\uCM(\Lambda)$, the stable category of Cohen--Macaulay modules.  When
$\Lambda$ is a selfinjective algebra, then $\envalg{\Lambda}$ is
selfinjective and $\uCM(\envalg{\Lambda})=\umod\envalg{\Lambda}$ is a
tensor triangulated category with $\Lambda$ as a tensor identity.  Let
$\mathcal{B}$ be the full subcategory of $\uCM(\envalg{\Lambda})$
consisting of all bimodules which are projective as a left and as a
right $\Lambda$-module.  Then $\mathcal{B}$ is also a tensor triangulated
category with tensor identity $\Lambda$.  The strictly positive part
of the graded endomorphism ring
\[
\End^*_{\mathcal{B}}(\Lambda) = \Dsum_{i\in \mathbb{Z}}
\Hom_{\mathcal{B}}(\Lambda, \Omega^i_{\envalg{\Lambda}}(\Lambda)),
\]
of the tensor identity $\Lambda$ in $\uCM(\envalg{\Lambda})$ 
is isomorphic to the strictly positive part $\HH{\geqslant
  1}(\Lambda)$ of the Hochschild cohomology ring of $\Lambda$.  This is
the relevant part for the theory of support varieties via the
Hochschild cohomology ring.  In addition $\mathcal{B}$ is a tensor triangulated
category acting on the triangulated category $\uCM(\Lambda)$, and we
can consider a theory of support varieties for $\uCM(\Lambda)$ using
the framework described in the forthcoming paper
\cite{BKSS}.  Therefore the singularity category of the enveloping
algebra $\envalg{\Lambda}$ encodes the geometric object for
support varieties of modules and complexes over the algebra
$\Lambda$.

Next we describe the categorical framework for our work.  There has
recently been a lot of interest around recollements of abelian (and
triangulated) categories.  These are exact sequences of abelian
categories
\[
 \xymatrix{
  0 \ar[r]^{ } & \A \ar[r]^{i} & \B \ar[r]^{e} &
  \C  \ar[r]^{ } & 0  }
\] 
where both the inclusion functor $i\colon \A\lxr \B$ and and the
quotient functor $e\colon \B\lxr \C$ have left and right adjoints.
They have been introduced by Beilinson, Bernstein and Deligne
\cite{BBD} first in the context of triangulated categories in their
study of derived categories of sheaves on singular spaces.

Properties of recollements of abelian categories were studied by
Franjou and Pirashvilli in \cite{Pira}, motivated by the
MacPherson--Vilonen construction for the category of perverse sheaves
\cite{MV}, and recently homological properties of recollements of
abelian and triangulated categories have also been studied in
\cite{Psaroud}.  Recollements of abelian categories were used by
Cline, Parshall and Scott in the context of representation theory, see
\cite{CPS, Parshall}, and later Kuhn used recollements in his study of
polynomial functors, see \cite{Kuhn}.  Recently, recollements of
triangulated categories have appeared in the work of Angeleri H\"ugel,
Koenig and Liu in connection with tilting theory, homological
conjectures and stratifications of derived categories of rings, see
\cite{AKL, AKL2, AKL3, AKLY4}.  Also, Chen and Xi have investigated
recollements in relation with tilting theory \cite{Xi:1} and algebraic
K-theory \cite{Xi:2, Xi:3}.  Furthermore, Han \cite{Han} has studied
the relations between recollements of derived categories of algebras,
smoothness and Hochschild cohomology of algebras.

It should be noted that module recollements, i.e.\ recollements of
abelian categories whose terms are categories of modules, appear quite
naturally in various settings.  For instance any idempotent element $e$
in a ring $R$ induces a recollement situation between the module
categories over the rings $R/\idealgenby{e}$, $R$ and $eRe$.  In fact
recollements of module categories are now well understood since every
such recollement is equivalent, in an appropriate sense, to one
induced by an idempotent element \cite{PsaroudVitoria}.

We want to compare the \fg{} condition for Hochschild cohomology,
Gorensteinness and the singularity categories of two algebras.  Our
aim in this paper is to present a common context where we can compare
these properties for an algebra $\Lambda$ and $a\Lambda a$, where $a$
is an idempotent of $\Lambda$.  This is achieved using recollements of
abelian categories.  To summarize our main results we introduce the
following notion.  Given a functor $f\colon \B \lxr \C$ between abelian
categories, the functor $f$ is called an \defterm{eventually
  homological isomorphism} if there is an integer $t$ such that for
every pair of objects $B$ and $B'$ in $\B$, and every $j > t$, there
is an isomorphism
\[
\Ext_\B^j(B,B') \iso \Ext_\C^j(f(B),f(B')).
\]
Our main results, stated in the context of artin algebras, are
summarized in the following theorem.  The four parts of the theorem
are proved in Corollary~\ref{cor:algebra-ext-iso},
Corollary~\ref{corsingular}, Corollary~\ref{corGorensteinArtinalg} and
Theorem~\ref{thm:main-result-fg}, respectively.  More general versions
of the first three parts, in the setting of recollements of abelian
categories, are given in Corollary~\ref{cor:recollement-e-is-iso} and
Proposition~\ref{prop:ext-iso-implies-conditions},
Theorem~\ref{thmsingular} and Theorem~\ref{thm:gorenstein}.

\begin{main-thm}
Let $\Lambda$ be an artin algebra over a commutative ring $k$ and let
$a$ be an idempotent element of $\Lambda$.  Let $e$ be the functor
$a-\colon \fmod{\Lambda}\lxr \fmod{a\Lambda a}$ given by
multiplication by $a$.  Consider the following conditions$\colon$
\begin{align*}
(\alpha)
\ \id_\Lambda \Big( \frac{\Lambda/\idealgenby{a}}
                         {\rad \Lambda/\idealgenby{a}} \Big)
&< \infty &
(\beta) \ \pd_{a{\Lambda}a} a{\Lambda} &< \infty \\
(\gamma)
\ \pd_\Lambda \Big( \frac{\Lambda/\idealgenby{a}}
                         {\rad \Lambda/\idealgenby{a}} \Big)
&< \infty &
(\delta) \ \pd_{\opposite{(a{\Lambda}a)}}{\Lambda}a &< \infty
\end{align*}
Then the following hold.
\begin{enumerate}
\item The following are equivalent$\colon$
\begin{enumerate}
\item $(\alpha)$ and $(\beta)$ hold.
\item $(\gamma)$ and $(\delta)$ hold.
\item The functor $e$ is an eventually homological isomorphism.
\end{enumerate}
\item The functor $e$ induces a singular equivalence between $\Lambda$
and $a\Lambda a$ if and only if the conditions $(\beta)$ and
$(\gamma)$ hold.
\item Assume that $e$ is an eventually homological isomorphism.  Then
$\Lambda$ is Gorenstein if and only if $a{\Lambda}a$ is Gorenstein.
\item Assume that $e$ is an eventually homological isomorphism.
Assume also that $k$ is a field and that $(\Lambda/\rad \Lambda)
\tensor_k (\opposite{\Lambda}/\rad \opposite{\Lambda})$ is a
semisimple $\envalg{\Lambda}$-module (for instance, this is true if
$k$ is algebraically closed).  Then $\Lambda$ satisfies \fg{} if and
only if $a{\Lambda}a$ satisfies \fg{}.
\end{enumerate}
\end{main-thm}

Now we describe the contents of the paper section by section.  In
Section~\ref{section:prelim}, we recall notions and results on
recollements of abelian categories and Hochschild cohomology that are
used throughout the paper.

In Section~\ref{section:extensions}, we study extension groups in a
recollement of abelian categories $(\A,\B,\C)$.  More precisely, we
investigate when the exact functor $e \colon \B \lxr \C$ is an
eventually homological isomorphism.  It turns out that the answer to
this problem is closely related to the characterization given in
\cite{Psaroud} of when the functor $e$ induces isomorphisms between
extension groups in all degrees below some bound $n$.  In
Corollary~\ref{cor:recollement-e-is-iso} and
Proposition~\ref{prop:ext-iso-implies-conditions} we give sufficient
and necessary conditions, respectively, for the functor $e$ to be an
eventually homological isomorphism.  In the setting of the Main
Theorem, we characterize when the functor $e$ is an eventually
homological isomorphism in Corollary~\ref{cor:algebra-ext-iso}.  The
results of this section are used in Section~\ref{section:Gorenstein}
and Section~\ref{section:fg-a(Lambda)a}.

In Section~\ref{section:Gorenstein}, we study Gorenstein categories,
introduced by Beligiannis and Reiten \cite{BR}.  Assuming that we have
an eventually homological isomorphism $f \colon \D \lxr \F$ between
abelian categories, we investigate when Gorensteinness is transferred
between $\D$ and $\F$.  Among other things, we prove that if $f$ is an
essentially surjective eventually homological isomorphism, then $\D$
is Gorenstein if and only if $\F$ is (see
Theorem~\ref{thm:gorenstein}).  We apply this to recollements of
abelian categories and recollements of module categories.

In Section~\ref{sectionsingular}, we investigate singularity
categories, in the sense of Buchweitz \cite{Buchweitz:unpublished} and
Orlov \cite{Orlov}, in a recollement $(\A,\B,\C)$ of abelian
categories.  In fact, we give necessary and sufficient conditions for
the quotient functor $e\colon \B\lxr \C$ to induce a triangle
equivalence between the singularity categories of $\B$ and $\C$, see
Theorem~\ref{thmsingular}.  This result generalizes earlier results by
Chen \cite{Chen:schurfunctors}.  We obtain the results of Chen in
Corollary~\ref{corsingular} by applying Theorem~\ref{thmsingular} to
rings with idempotents.  Finally, for an artin algebra $\Lambda$ with
an idempotent element $a$, we give a sufficient condition for the
stable categories of Cohen--Macaulay modules of $\Lambda$ and $a\Lambda
a$ to be triangle equivalent, see Corollary~\ref{corgorsingular}.

In Section~\ref{section:cohomology} and
Section~\ref{section:fg-a(Lambda)a}, which form a unit, we investigate
the finite generation condition \fg{} for the Hochschild cohomology
of a finite dimensional algebra over a field.  In particular, in
Section~\ref{section:cohomology} we show how we can compare the \fg{}
condition for two different algebras.  This is achieved by showing,
for two graded rings and graded modules over them, that if we have
isomorphisms in all but finitely many degrees then the noetherian
property of the rings and the finite generation of the modules is
preserved, see Proposition~\ref{prop:graded-fin-gen} and
Corollary~\ref{prop:fg-two-algebras}.  In
Section~\ref{section:fg-a(Lambda)a}, we use this result to show that
\fg{} holds for a finite dimensional algebra $\Lambda$ over a field if
and only if \fg{} holds for the algebra $a\Lambda a$, where $a$ is an
idempotent element of $\Lambda$ which satisfies certain assumptions
(see Theorem~\ref{thm:main-result-fg}).

The final Section~\ref{section:examples} is devoted to applications
and examples of our main results.  First we apply our results to
triangular matrix algebras.  For a triangular matrix algebra
$\Lambda=\bigl(\begin{smallmatrix}
\Sigma & 0 \\
{_{\Gamma}M_{\Sigma}} & \Gamma
\end{smallmatrix}\bigr)$, we compare $\Lambda$ to the algebras
$\Sigma$ and $\Gamma$ with respect to the \fg{} condition,
Gorensteinness and singularity categories.  In particular, we recover
a result by Chen \cite{Chen:schurfunctors} concerning the singularity
categories of $\Lambda$ and $\Sigma$.  Then we consider some special
cases where there are relations between the assumptions of our main
results (see $(\alpha)$--$(\delta)$ in Main Theorem) and provide an
interpretation for quotients of path algebras.  Finally, we compare
our results to those of Nagase \cite{N}.

\subsection*{Conventions and Notation}
For a ring $R$ we work usually with left $R$-modules and the
corresponding category is denoted by $\Mod R$.  The full subcategory
of finitely presented $R$-modules is denoted by $\fmod R$.  Our
additive categories are assumed to have finite direct sums and our
subcategories are assumed to be closed under isomorphisms and direct
summands.  The Jacobson radical of a ring $R$ is denoted by $\rad R$.
By a module over an artin algebra $\Lambda$, we mean a finitely
presented (generated) left $\Lambda$-module.

\subsection*{Acknowledgments}
This paper was written during a postdoc period of the first author at
the Norwegian University of Science and Technology (NTNU, Trondheim)
funded by NFR Storforsk grant no.\ 167130. The first author would
like to thank his co-authors, Idun Reiten and all the members of the
Algebra group for the warm hospitality and the excellent working
conditions. The authors are grateful for the comments from Hiroshi
Nagase on a preliminary version of this paper, which led to a much
better understanding of the conditions occurring in the Main Theorem.

\section{Preliminaries}
\label{section:prelim}

In this section we recall notions and results on recollements of
abelian categories and Hochschild cohomology.

\subsection{Recollements of Abelian Categories}

In this subsection we recall the definition of a recollement situation
in the context of abelian categories, see for instance
\cite{Pira,Happel,Kuhn}, we fix notation and recall some well known
properties of recollements which are used later in the paper.  We also
include our basic source of examples of recollements.  For an additive
functor $F\colon \A \lxr \B$ between additive categories, we denote by
$\Image F = \{B \in \B \mid B \iso F(A) \ \text{for some} \ A \in
\A\}$ the \defterm{essential image} of $F$ and by $\Ker F = \{A \in \A
\mid F(A) = 0\}$ the \defterm{kernel} of $F$.

\begin{defn}
\label{defnrec}
A \defterm{recollement situation} between 
abelian categories $\A,\B$ and $\C$ is a diagram
\[
\recollement{\A}{\B}{\C}{q}{i}{p}{l}{e}{r}
\]
henceforth denoted by $(\A,\B,\C)$, satisfying the following
conditions$\colon$
\begin{enumerate}
\item[\bf 1.] $(l,e,r)$ is an adjoint triple.
\item[\bf 2.] $(q,i,p)$ is an adjoint triple.
\item[\bf 3.] The functors $i$, $l$, and $r$ are fully faithful.
\item[\bf 4.] $\Image i = \Ker e$.
\end{enumerate}
\end{defn}

In the next result we collect some basic properties of a recollement
situation of abelian categories that can be derived easily from
Definition~\ref{defnrec}.  For more details, see \cite{Pira,Psaroud}.

\begin{prop}
\label{properties}
Let $(\A,\B,\C)$ be a recollement of abelian categories.  Then the
following hold.
\begin{enumerate}
\item The functors $i\colon \A\lxr \B$ and $e\colon \B\lxr \C$ are
exact.
\item The compositions $ei$, $ql$ and $pr$ are zero.
\item The functor $e\colon \B\lxr \C$ is essentially surjective.
\item The units of the adjoint pairs $(i,p)$ and $(l,e)$ and the
counits of the adjoint pairs $(q,i)$ and $(e,r)$ are
isomorphisms$\colon$
\[
\Id_\A \xrightarrow{\iso} pi \qquad
\Id_\C \xrightarrow{\iso} el \qquad
qi \xrightarrow{\iso} \Id_\A \qquad
er \xrightarrow{\iso} \Id_\C
\]
\item The functors $l\colon \C\lxr \B$ and $q\colon \B\lxr \A$
preserve projective objects and the functors $r\colon \C\lxr \B$ and
$p\colon \B\lxr \A$ preserve injective objects.
\item The functor $i\colon \A\lxr \B$ induces an equivalence between
$\A$ and the Serre subcategory $\Ker e = \Image i$ of $\B$.  Moreover,
$\A$ is a localizing and colocalizing subcategory of $\B$ and there is
an equivalence of categories $\B/\A \equivalence \C$.
\item For every $B$ in $\B$ there are $A$ and $A'$ in $\A$ such that
the units and counits of the adjunctions induce the following exact
sequences$\colon$
\[
 \xymatrix{
  0 \ar[r]^{ } & i(A) \ar[r]^{ } & le(B) \ar[r]^{ } &
  B  \ar[r]^{ } & iq(B) \ar[r] & 0  }
\]  
and 
\[
\xymatrix{
  0 \ar[r]^{ } & ip(B) \ar[r]^{ } & B \ar[r]^{ } &
  re(B)  \ar[r] & i(A') \ar[r] & 0  }
\]  
\end{enumerate}
\end{prop}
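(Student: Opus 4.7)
The plan is to prove the seven items in essentially the listed order, systematically exploiting the two adjoint triples together with the defining condition $\Image i = \Ker e$. For (i), I would invoke the general fact that any functor admitting both a left and a right adjoint is exact; since $(q,i,p)$ and $(l,e,r)$ are adjoint triples, both $i$ and $e$ are automatically exact. For (ii), the identity $ei = 0$ is immediate from $\Image i = \Ker e$, and $ql = 0$ is forced by the adjunction chain $\Hom_\A(ql(C), A) \iso \Hom_\B(l(C), i(A)) \iso \Hom_\C(C, ei(A)) = 0$ followed by the Yoneda lemma; the argument for $pr = 0$ is entirely dual.

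For (iv), I would appeal to the standard characterization that for an adjoint pair $(F, G)$, the right adjoint $G$ is fully faithful if and only if the counit $FG \to \Id$ is an isomorphism (and dually for the unit when $F$ is fully faithful). Applying this to the pairs $(l, e)$, $(e, r)$, $(q, i)$, $(i, p)$ using that $l$, $r$, $i$ are fully faithful yields exactly the four stated isomorphisms. Item (iii) is then immediate from $\Id_\C \iso el$. For (v), I would use that any left adjoint to an exact functor preserves projectives, and dually that any right adjoint to an exact functor preserves injectives; combined with (i) this disposes of all four statements at once.

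For (vi), full faithfulness of $i$ gives the equivalence $\A \equivalence \Image i$, and exactness of $e$ makes $\Ker e = \Image i$ a Serre subcategory of $\B$. Since $e$ admits the fully faithful right adjoint $r$ with $er \iso \Id_\C$, the functor $e$ realizes $\C$ as the Serre quotient $\B/\Ker e$, yielding $\B/\A \equivalence \C$; the existence of the fully faithful left adjoint $l$ with $el \iso \Id_\C$ similarly shows that $\A$ is colocalizing in addition to localizing.

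The main work lies in (vii). Starting from the counit $\varepsilon\colon le(B) \to B$ and applying $e$, the isomorphism $el \iso \Id_\C$ from (iv) forces $e(\varepsilon)$ to be an isomorphism, so both $\Ker \varepsilon$ and $\Coker \varepsilon$ lie in $\Ker e = \Image i$; thus $\Ker \varepsilon \iso i(A)$ for some $A \in \A$. To identify $\Coker \varepsilon$, I would apply the right exact functor $q$ to the short exact sequence $\Image \varepsilon \hookrightarrow B \twoheadrightarrow \Coker \varepsilon$: since $ql = 0$ by (ii) and $qi \iso \Id_\A$ by (iv), a short diagram chase gives $q(\Coker \varepsilon) \iso q(B)$, and writing $\Coker \varepsilon = i(X)$ yields $X \iso q(B)$, so $\Coker \varepsilon \iso iq(B)$. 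The second four-term sequence is obtained dually, starting from the unit $B \to re(B)$ and applying the left exact functor $p$, using $pr = 0$ and $pi \iso \Id_\A$. The main obstacle I anticipate is precisely this bookkeeping in (vii), namely verifying that the cokernel (respectively kernel) really matches $iq(B)$ (respectively $ip(B)$) and that no further terms intrude.
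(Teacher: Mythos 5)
The paper does not actually prove Proposition~\ref{properties}: the text preceding it states that these facts ``can be derived easily from Definition~\ref{defnrec}'' and refers to Franjou--Pirashvili and to Psaroudakis for details, so there is no in-paper proof to compare your attempt against. That said, your outline is the standard derivation and is correct. Items (i)--(v) are routine applications of: a functor with a left and a right adjoint is exact; the adjunction-plus-Yoneda chain to kill $ql$ and $pr$; the criterion that a right (resp.\ left) adjoint is fully faithful iff the counit (resp.\ unit) is an isomorphism; and the fact that a left (resp.\ right) adjoint of an exact functor preserves projectives (resp.\ injectives). For (vi), the equivalence $\B/\A \equivalence \C$ rests on the classical Gabriel localization theorem for an exact functor with a fully faithful right adjoint whose kernel is $\A$; you are right to invoke it, but a full write-up would either cite it or reproduce its proof rather than treat it as a formality. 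For (vii), your identification of $\Ker\varepsilon$ and $\Coker\varepsilon$ via $e(\varepsilon)$ being an isomorphism and then applying $q$ (using $ql=0$ and $qi \iso \Id_\A$) is correct; the one omitted detail is checking that under the isomorphism $\Coker\varepsilon \iso iq(B)$ the projection $B \twoheadrightarrow \Coker\varepsilon$ is precisely the unit $B \lxr iq(B)$ of $(q,i)$, which follows in one line from naturality of that unit together with the triangle identity. Neither of these is a substantive gap.
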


Throughout the paper, we apply our results to recollements of module
categories, and in particular to recollements of module categories
over artin algebras as described in the following example.

\begin{exam}
\label{exam:mod-recollements}
Let $\Lambda$ be an artin $k$-algebra, where $k$ is a commutative
artin ring, and let $a$ be an idempotent element in $\Lambda$.
\begin{enumerate}
\item We have the following recollement of abelian categories$\colon$
\[
\recollement{\fmod \Lambda/\idealgenby{a}}{\fmod \Lambda}{\fmod a \Lambda a}
            {\Lambda/\idealgenby{a} \tensor_\Lambda -}
            {\inc}
            {\Hom_\Lambda(\Lambda/\idealgenby{a},-)}
            {\Lambda a \tensor_{a \Lambda a} -}
            {e=a(-)}
            {\Hom_{a \Lambda a}(a \Lambda,-)}
\]
The functor $e\colon \fmod{\Lambda}\lxr \fmod{a\Lambda a}$ can be also
described as follows$\colon$$e=a(-)\iso \Hom_{\Lambda}(\Lambda
a,-)\iso a\Lambda\tensor_{\Lambda}-$.  We write $\idealgenby{a}$ for
the ideal of $\Lambda$ generated by the idempotent element $a$.  Then
every left $\Lambda/\idealgenby{a}$-module is annihilated by
$\idealgenby{a}$ and thus the category $\fmod{\Lambda/\idealgenby{a}}$
is the kernel of the functor $a(-)$.
\item Let $\envalg{\Lambda}=\Lambda \tensor_k \opposite{\Lambda}$ be
the enveloping algebra of $\Lambda$.  The element $\varepsilon=a\tensor
\opposite{a}$ is an idempotent element of
$\envalg{\Lambda}$.  Therefore as above we have the following
recollement of abelian categories$\colon$
\[
\recollement{\fmod \envalg{\Lambda}/\idealgenby{\varepsilon}}
            {\fmod \envalg{\Lambda}}
            {\fmod \envalg{(a \Lambda a)}}
            {\envalg{\Lambda}/\idealgenby{\varepsilon} \tensor_{\envalg{\Lambda}} -}
            {\inc}
            {\Hom_{\envalg{\Lambda}}(\envalg{\Lambda}/\idealgenby{\varepsilon},-)}
            {\envalg{\Lambda}\varepsilon \tensor_{\envalg{(a \Lambda a)}} -}
            {E = \varepsilon(-)}
            {\Hom_{\envalg{(a \Lambda a)}}(\varepsilon\envalg{\Lambda},-)}
\]
Note that $\envalg{(a \Lambda a)} \iso
\varepsilon\envalg{\Lambda}\varepsilon$ as $k$-algebras.
\end{enumerate}
\end{exam}

\begin{rem}
As in Example~\ref{exam:mod-recollements}, any idempotent element $e$
in a ring $R$ induces a recollement situation between the module
categories over the rings $R/\idealgenby{e}$, $R$ and $eRe$.  This
should be considered as the universal example for recollements of
abelian categories whose terms are categories of modules.  Indeed, in
\cite{PsaroudVitoria}, it is proved that any recollement of module
categories is equivalent, in an appropriate sense, to one induced by
an idempotent element.
\end{rem}

\subsection{Hochschild cohomology rings}
\label{subsection:hh}

We briefly explain the terminology we need regarding Hochschild
cohomology and the finite generation condition \fg{}, and recall some
important results.  For a more detailed exposition of these topics,
see sections 2--5 of \cite{Solberg:1}.

Let $\Lambda$ be an artin algebra over a commutative ring $k$.  We
define the \defterm{Hochschild cohomology ring} $\HH*(\Lambda)$ of
$\Lambda$ by
\[
\HH*(\Lambda)
 = \Ext_{\envalg{\Lambda}}^*(\Lambda, \Lambda)
 = \Dsum_{i=0}^\infty \Ext_{\envalg{\Lambda}}^i(\Lambda, \Lambda).
\]
This is a graded $k$-algebra with multiplication given by Yoneda
product.  Hochschild cohomology was originally defined by Hochschild
in \cite{Hochschild}, using the bar resolution.  It was shown in
\cite[IX, \S6]{CartanEilenberg} that our definition coincides with the
original definition when $\Lambda$ is projective over $k$.

Gerstenhaber showed in \cite{Gerstenhaber:1} that the Hochschild
cohomology ring as originally defined is graded commutative.  This
implies that the Hochschild cohomology ring as defined above is graded
commutative when $\Lambda$ is projective over $k$.  The following more
general result was shown in \cite[Theorem~1.1]{SO}  (see also
\cite{Suarez}, which proves graded commutativity of several cohomology
theories in a uniform way).

\begin{thm}
\label{thm:hh-graded-commutative}
Let $\Lambda$ be an algebra over a commutative ring $k$ such that
$\Lambda$ is flat as a module over $k$.  Then the Hochschild
cohomology ring $\HH*(\Lambda)$ is graded commutative.
\end{thm}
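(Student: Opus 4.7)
The plan is to reduce the claim to Gerstenhaber's classical theorem \cite{Gerstenhaber:1} by comparing the bar complex with a genuine projective resolution of $\Lambda$ over $\envalg{\Lambda}$. Let $B_\bullet(\Lambda)$ denote the bar complex with $B_n(\Lambda) = \Lambda^{\tensor_k (n+2)}$ together with its standard $k$-linear contracting homotopy, so $B_\bullet(\Lambda) \to \Lambda$ is always an exact sequence of $\envalg{\Lambda}$-modules. Under the identification $B_n(\Lambda) \iso \envalg{\Lambda} \tensor_k \Lambda^{\tensor_k n}$ of left $\envalg{\Lambda}$-modules, applying $\Hom_{\envalg{\Lambda}}(-, \Lambda)$ produces the classical Hochschild cochain complex $C^\bullet(\Lambda,\Lambda) = \Hom_k(\Lambda^{\tensor_k \bullet}, \Lambda)$ with its Gerstenhaber cup product.

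Gerstenhaber's original argument then gives that the cup product on $H^*(C^\bullet(\Lambda,\Lambda))$ is graded commutative; this is a purely combinatorial computation on the cochain level that is insensitive to the relationship between $\Lambda$ and $k$. So it suffices to identify $H^*(C^\bullet(\Lambda,\Lambda))$ with $\HH{*}(\Lambda)$ as graded rings.

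For this I would pick an $\envalg{\Lambda}$-projective resolution $P_\bullet \to \Lambda$ and a comparison chain map $\varphi\colon P_\bullet \to B_\bullet(\Lambda)$ lifting $\Id_\Lambda$, and then prove that $\varphi$ induces an isomorphism on cohomology after applying $\Hom_{\envalg{\Lambda}}(-,\Lambda)$. The $k$-flatness of $\Lambda$ enters decisively here: it makes each $B_n(\Lambda)$ a $k$-flat $\envalg{\Lambda}$-module, and a spectral sequence (or equivalent double complex) argument then shows that the mapping cone of $\varphi$ becomes acyclic after $\Hom_{\envalg{\Lambda}}(-, \Lambda)$. Multiplicativity of the resulting isomorphism can be arranged by choosing $\varphi$ to be compatible with a diagonal approximation $P_\bullet \to P_\bullet \tensor_\Lambda P_\bullet$, so that the cup product on $H^*(C^\bullet(\Lambda,\Lambda))$ corresponds to the Yoneda product on $\HH{*}(\Lambda)$.

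The hard part will be this last comparison: under mere $k$-flatness the bar complex is not $\envalg{\Lambda}$-projective, so one cannot directly declare that it computes $\Ext_{\envalg{\Lambda}}^*(\Lambda,\Lambda)$. Showing that $k$-flatness nevertheless suffices for $\varphi$ to induce an $\Ext$-isomorphism, and that this isomorphism is ring-theoretic, is the technical core of the theorem, and is exactly where the uniform approach of Su\'arez-\'Alvarez \cite{Suarez}---which handles several cohomology theories simultaneously---becomes advantageous over the direct bar-resolution calculation that only works in the $k$-projective case.
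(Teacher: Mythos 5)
The step you isolate at the end as ``the technical core'' is in fact a gap that cannot be closed along the lines you propose. Under $k$-flatness of $\Lambda$, each $B_n(\Lambda)\iso\envalg{\Lambda}\tensor_k\Lambda^{\tensor_k n}$ is a flat $\envalg{\Lambda}$-module, so the bar complex is a K-flat resolution of $\Lambda$ -- good for computing $\Tor$ (Hochschild homology) but not for computing $\Ext$. Flatness gives no control whatsoever over $\Hom_{\envalg{\Lambda}}(-,\Lambda)$: the cone of a comparison map $\varphi\colon P_\bullet\lxr B_\bullet(\Lambda)$ is a bounded-below acyclic complex of flat (non-projective) bimodules, and there is no spectral sequence that forces $\Hom_{\envalg{\Lambda}}(\mathrm{cone}(\varphi),\Lambda)$ to be acyclic. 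What $\Hom_{\envalg{\Lambda}}(B_\bullet(\Lambda),\Lambda)$ computes, using the $k$-linear contracting homotopy, is the $k$-\emph{relative} $\Ext$ (Gerstenhaber's classical cohomology); this coincides with $\Ext_{\envalg{\Lambda}}^*(\Lambda,\Lambda)$ when $\Lambda$ is $k$-projective, and the statement that they agree under mere $k$-flatness is precisely the unproved claim your argument rests on.

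The proof the paper points to (\cite[Theorem~1.1]{SO}, see also \cite{Suarez}) avoids the bar complex altogether. One fixes a projective $\envalg{\Lambda}$-resolution $P_\bullet\lxr\Lambda$; the only place $k$-flatness enters is to guarantee that $P_\bullet\tensor_\Lambda P_\bullet\lxr\Lambda$ is again a projective resolution, because $\envalg{\Lambda}$ (hence each $P_n$) is flat as a one-sided $\Lambda$-module exactly when $\Lambda$ is $k$-flat. This yields a diagonal approximation $\Delta\colon P_\bullet\lxr P_\bullet\tensor_\Lambda P_\bullet$ and thus a cup product on $\Ext_{\envalg{\Lambda}}^*(\Lambda,\Lambda)$, and an Eckmann--Hilton interchange between cup and Yoneda products then gives graded commutativity (this is exactly the abstract form captured in \cite{Suarez}). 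Your diagonal-approximation idea is the right tool, but it has to be deployed internally to the projective-resolution picture, not as a bridge back to the bar complex, which simply does not model $\Ext_{\envalg{\Lambda}}$ in the flat-but-not-projective case.
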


To describe the finite generation condition \fg{}, we first need to
define a $\HH*(\Lambda)$-module structure on the direct sum of all
extension groups of a $\Lambda$-module with itself (for more details
about this module structure, see \cite{SO}).  Assume that $\Lambda$ is
flat as $k$-module, and let $M$ be a $\Lambda$-module.  The direct sum
\[
\Ext_\Lambda^*(M, M)
 = \Dsum_{i=0}^\infty \Ext_\Lambda^i(M, M)
\]
of all extension groups of $M$ with itself is a graded $k$-algebra
with multiplication given by Yoneda product.  We give it a graded
$\HH*(\Lambda)$-module structure by the graded ring homomorphism
\[
\varphi_M \colon \HH*(\Lambda) \lxr \Ext_\Lambda^*(M, M).
\]
which is defined in the following way.  Any homogeneous element of
positive degree in $\HH*(\Lambda)$ can be represented by an exact
sequence
\[
\eta\colon
0 \lxr
\Lambda \lxr
X \lxr
P_n \lxr
\cdots \lxr
P_1 \lxr
P_0 \lxr
\Lambda \lxr
0
\]
of $\envalg{\Lambda}$-modules, where every $P_i$ is projective.
Tensoring this sequence throughout with $M$ gives an exact sequence
\[
0 \lxr
\Lambda \tensor_\Lambda M \lxr
X \tensor_\Lambda M \lxr
P_n \tensor_\Lambda M \lxr
\cdots \lxr
P_1 \tensor_\Lambda M \lxr
P_0 \tensor_\Lambda M \lxr
\Lambda \tensor_\Lambda M \lxr
0
\]
of $\Lambda$-modules (the exactness of this sequence follows from the
facts that $\Lambda$ is flat as $k$-module and that the modules $P_i$
are projective $\envalg{\Lambda}$-modules).  Using the isomorphism
$\Lambda \tensor_\Lambda M \iso M$, we get an exact sequence of
$\Lambda$-modules starting and ending in $M$; we define
$\varphi_M([\eta])$ to be the element of $\Ext_\Lambda^*(M, M)$
represented by this sequence.  For elements of degree zero in
$\HH{*}(\Lambda)$, the map $\varphi_M$ is defined by tensoring with
$M$ and using the identification $\Lambda \tensor_\Lambda M \iso M$.

In \cite{EHSST}, Erdmann--Holloway--Snashall--Solberg--Taillefer
identified certain assumptions about an algebra $\Lambda$ which are
sufficient in order for the theory of support varieties to have good
properties.  They called these assumptions \textbf{Fg1} and
\textbf{Fg2}.  We say that an algebra satisfies \fg{} if it satisfies
both \textbf{Fg1} and \textbf{Fg2}.  We use the following definition
of \fg{}, which is equivalent (by \cite[Proposition~5.7]{Solberg:1})
to the definition of \textbf{Fg1} and \textbf{Fg2} given in
\cite{EHSST}.

\begin{defn}
\label{defn:fg}
Let $\Lambda$ be an algebra over a commutative ring $k$ such that
$\Lambda$ is flat as a module over $k$.  We say that $\Lambda$
satisfies the \defterm{\fg{} condition} if the following is true:
\begin{enumerate}
\item The ring $\HH*(\Lambda)$ is noetherian.
\item The $\HH*(\Lambda)$-module $\Ext_\Lambda^*(\Lambda/\rad
  \Lambda, \Lambda/\rad \Lambda)$ is finitely generated.
\end{enumerate}
\end{defn}

The following result states that in our definition of \fg{}, we could
have replaced part (ii) by the same requirement for all
$\Lambda$-modules.  It can be proved in a similar way as
\cite[Proposition~1.4]{EHSST}.

\begin{thm}
\label{fg-implies-every-ext-fin-gen}
If an artin algebra $\Lambda$ satisfies the \fg{} condition, then
$\Ext_\Lambda^*(M, M)$ is a finitely generated $\HH*(\Lambda)$-module
for every $\Lambda$-module $M$.
\end{thm}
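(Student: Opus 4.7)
The plan is to reduce the statement to the hypothesis by d\'evissage along composition series, using the noetherian property of $\HH*(\Lambda)$ and the long exact sequence of $\Ext$. Throughout, all modules are finitely generated (hence of finite composition length, since $\Lambda$ is artin).

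\textbf{Step 1 (the simple case).} Let $S_1, \ldots, S_n$ be a complete set of non-isomorphic simple $\Lambda$-modules, so that $\Lambda/\rad \Lambda \iso \Dsum_i S_i$. Then
\[
\Ext_\Lambda^*(\Lambda/\rad \Lambda, \Lambda/\rad \Lambda)
\iso \Dsum_{i,j} \Ext_\Lambda^*(S_i, S_j)
\]
as graded $\HH*(\Lambda)$-modules, since the $\varphi_{(-)}$-construction is additive in each variable. By hypothesis this is finitely generated over $\HH*(\Lambda)$, and since $\HH*(\Lambda)$ is noetherian each direct summand $\Ext_\Lambda^*(S_i, S_j)$ is also finitely generated.

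\textbf{Step 2 (extension closure).} For any short exact sequence $0 \to A \to B \to C \to 0$ of $\Lambda$-modules and any $\Lambda$-module $N$, the long exact sequences
\[
\cdots \lxr \Ext_\Lambda^i(C, N) \lxr \Ext_\Lambda^i(B, N) \lxr \Ext_\Lambda^i(A, N)
\lxr \Ext_\Lambda^{i+1}(C, N) \lxr \cdots
\]
and the analogous one in the second variable are long exact sequences of graded $\HH*(\Lambda)$-modules. Granting this, splice the sequence into two short exact sequences of graded $\HH*(\Lambda)$-modules; since $\HH*(\Lambda)$ is noetherian, submodules and quotients of finitely generated modules are finitely generated, so if $\Ext_\Lambda^*(A, N)$ and $\Ext_\Lambda^*(C, N)$ are finitely generated, then so is $\Ext_\Lambda^*(B, N)$. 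The analogous statement holds in the second variable.

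\textbf{Step 3 (induction on length).} By induction on the composition length of $M$, using Step 2 at each step with $N = S_j$, we get that $\Ext_\Lambda^*(M, S_j)$ is finitely generated for all $M$ and all simples $S_j$. A second induction on the composition length of $N$, now using the long exact sequence in the second variable, gives that $\Ext_\Lambda^*(M, N)$ is finitely generated over $\HH*(\Lambda)$ for all $\Lambda$-modules $M, N$. Specializing to $N = M$ yields the theorem.

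The main obstacle is justifying the $\HH*(\Lambda)$-linearity claim used in Step~2, i.e.\ that the connecting homomorphism and the induced maps in the long exact sequence commute with the Yoneda action of $\HH*(\Lambda)$ defined via $\varphi_{(-)}$ in Subsection~\ref{subsection:hh}. This is a naturality check: the $\HH*(\Lambda)$-action on $\Ext_\Lambda^*(-, N)$ is obtained by tensoring a representing bimodule extension $\eta$ with the first argument and then splicing via Yoneda product, and both operations are functorial in short exact sequences of $\Lambda$-modules. The same argument works in the second variable, and handles degree zero separately via the identification $\Lambda \tensor_\Lambda M \iso M$. Once this is established, Steps 1--3 go through as described, in parallel with the proof of \cite[Proposition~1.4]{EHSST}.
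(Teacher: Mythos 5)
Your proof is correct and follows the same d\'evissage along composition series as \cite[Proposition~1.4]{EHSST}, which is precisely the argument the paper indicates. The one point you rightly flag at the end --- that the long exact sequences of $\Ext$ and the decomposition $\Ext_\Lambda^*(\Lambda/\rad\Lambda,\Lambda/\rad\Lambda) \iso \Dsum_{i,j}\Ext_\Lambda^*(S_i,S_j)$ are decompositions of graded $\HH*(\Lambda)$-modules --- is the standard naturality of the $\varphi$-action established in \cite{SO}, and it holds exactly as you describe.
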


We end this section by describing a connection between the \fg{}
condition and Gorensteinness.

\begin{thm}\cite[Theorem~1.5~(a)]{EHSST}
\label{fg-implies-gorenstein}
If an artin algebra $\Lambda$ satisfies the \fg{} condition, then
$\Lambda$ is Gorenstein.
\end{thm}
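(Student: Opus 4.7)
My strategy is to bound the self-injective dimension of $\Lambda$ on both sides; by definition, for an artin algebra this is equivalent to Gorensteinness.  I will focus first on showing that $\id_\Lambda \Lambda < \infty$ and then invoke the symmetric argument over $\opposite\Lambda$.  By standard d\'evissage through long exact sequences of $\Ext$ along composition series, it suffices to exhibit an integer $d$ such that
\[
\Ext^i_\Lambda(\Lambda/\rad\Lambda,\, \Lambda) = 0 \quad \text{for every } i > d.
\]
The whole task therefore reduces to controlling the single graded group $\Ext_\Lambda^*(S, \Lambda)$, where I write $S = \Lambda/\rad\Lambda$.

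The first step is to observe that $\Ext_\Lambda^*(S, \Lambda)$ is a finitely generated $\HH{*}(\Lambda)$-module.  The plan is to apply Theorem~\ref{fg-implies-every-ext-fin-gen} to $M = S \dsum \Lambda$, which yields that $\Ext_\Lambda^*(M, M)$ is finitely generated over $\HH{*}(\Lambda)$.  Since $\HH{*}(\Lambda)$ is noetherian by the first clause of \fg{}, and $\Ext_\Lambda^*(S, \Lambda)$ sits inside $\Ext_\Lambda^*(M, M)$ as a graded direct summand coming from the block decomposition of $M \tensor M$, it inherits finite generation.

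The crucial observation is that $\HH{\geq 1}(\Lambda)$ annihilates $\Ext_\Lambda^*(S, \Lambda)$.  Indeed, the $\HH{*}(\Lambda)$-action on $\Ext_\Lambda^*(S, \Lambda)$ factors through $\varphi_\Lambda\colon \HH{*}(\Lambda) \lxr \Ext_\Lambda^*(\Lambda, \Lambda)$ (the target-side action described in Subsection~\ref{subsection:hh}), and $\Ext_\Lambda^i(\Lambda, \Lambda) = 0$ for all $i \geq 1$ because $\Lambda$ is projective over itself.  Consequently, $\Ext_\Lambda^*(S, \Lambda)$ is finitely generated as a graded module over the degree-zero ring $\HH{0}(\Lambda) = Z(\Lambda)$; a finitely generated graded module over a ring concentrated in degree zero is supported in the finitely many degrees occupied by a homogeneous generating set, which gives the desired bound $d$.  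Running the same argument on $\opposite\Lambda$ then yields $\id_{\opposite\Lambda} \Lambda < \infty$; here one uses that $\HH{*}(\opposite\Lambda) \iso \HH{*}(\Lambda)$ canonically and that the \fg{} condition is self-dual, since the $k$-duality $\Hom_k(-, E)$ sends left simples to right simples and induces a $\HH{*}(\Lambda)$-compatible isomorphism between the relevant $\Ext$-algebras.

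The main obstacle I foresee is the preparatory Theorem~\ref{fg-implies-every-ext-fin-gen}, which extends finite generation from the pair $(S,S)$ to arbitrary $(M,M)$: its proof (analogous to \cite[Proposition~1.4]{EHSST}) requires an induction on length together with careful compatibility of the Yoneda action with short exact sequences.  Once that input is available, the rest of the argument is a purely formal exploitation of the fact that $\Lambda$, being projective as a left module over itself, has vanishing positive self-extensions, which forces the $\HH{\geq 1}$-action on any $\Ext_\Lambda^*(-, \Lambda)$ to be trivial.
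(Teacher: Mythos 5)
The paper does not actually prove this statement; it cites it directly from \cite[Theorem~1.5~(a)]{EHSST}, so there is no internal proof to compare against. Your argument is correct and is, to my knowledge, essentially the argument given in \cite{EHSST} itself. The reduction to $\Ext^i_\Lambda(\Lambda/\rad\Lambda,\Lambda)$, the use of Theorem~\ref{fg-implies-every-ext-fin-gen} on $M=S\dsum\Lambda$ to obtain finite generation of $\Ext^*_\Lambda(S,\Lambda)$ as a direct summand, and the observation that the $\HH{\geq 1}(\Lambda)$-action is trivial because it factors through $\Ext^{>0}_\Lambda(\Lambda,\Lambda)=0$ are all sound, and the symmetric argument for $\opposite{\Lambda}$ is fine.

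One point worth flagging explicitly: the step where you conclude that $\HH{\geq1}(\Lambda)$ annihilates $\Ext^*_\Lambda(S,\Lambda)$ uses that the two natural $\HH{*}(\Lambda)$-actions on $\Ext^*_\Lambda(M,N)$ --- composition with $\varphi_N(\eta)$ on the target side versus composition with $\varphi_M(\eta)$ on the source side --- agree up to a sign. Subsection~\ref{subsection:hh} of the paper only sets up the case $M=N$ via the single map $\varphi_M$, so the two-module version and its sign-commutativity are being imported implicitly. This is a genuine (if standard) input, established in \cite{SO}; it is the reason the vanishing of $\varphi_\Lambda$ in positive degrees suffices to kill the entire action rather than only the target-side one. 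As long as you cite this, the proof is complete.
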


\section{Eventually homological isomorphisms in recollements}
\label{section:extensions}

Given a functor $f\colon \D \lxr \F$ between abelian categories and an
integer $t$, the functor $f$ is called an \defterm{$t$-homological
  isomorphism} if there is an isomorphism
\[
\Ext_\B^j(B,B') \iso \Ext_\C^j(f(B),f(B'))
\]
for every pair of objects $B$ and $B'$ in $\B$, and every $j > t$.  If
$f$ is a $t$-homological isomorphism for some $t$, then it is an
\defterm{eventually homological isomorphism}.
In this section, we investigate when the functor $e$ in a recollement
\[
\recollement{\A}{\B}{\C}{q}{i}{p}{l}{e}{r}
\]
of abelian categories is an eventually homological isomorphism.

The functor $e$ induces maps
\begin{equation}
\label{eqn:ext-map}
\Ext^j_\B(X,Y) \ \lxr \ \Ext^j_\C(e(X),e(Y))
\end{equation}
of extension groups for all objects $X$ and $Y$ in $\B$ and for every
$j \ge 0$.  With one argument fixed and the other one varying over all
objects we study when these maps are isomorphisms in almost all
degrees, that is, for every degree $j$ greater than some bound $n$
(see Theorem~\ref{thm:ext-iso-proj} and
Theorem~\ref{thm:ext-iso-inj}).  We use this to find two sets of
sufficient conditions for the functor $e\colon \B \lxr \C$ to be an
eventually homological isomorphism
(Corollary~\ref{cor:recollement-e-is-iso}), and we find a partial
converse (Proposition~\ref{prop:ext-iso-implies-conditions}).
Finally, we specialize these results to artin algebras, using the
recollement $(\fmod \Lambda/\idealgenby{a}, \fmod \Lambda, \fmod
a{\Lambda}a)$ of Example~\ref{exam:mod-recollements}~(i).  In
particular, we characterize when the functor $e\colon \fmod \Lambda
\lxr \fmod a{\Lambda}a$ is an eventually homological isomorphism
(Corollary~\ref{cor:algebra-ext-iso}).

These results are used in Section~\ref{section:Gorenstein} for
comparing Gorensteinness of the categories in a recollement, and in
Section~\ref{section:fg-a(Lambda)a} for comparing the \fg{} condition
of the algebras $\Lambda$ and $a{\Lambda}a$, where $a$ is an
idempotent in $\Lambda$.

We start by fixing some notation. For an injective coresolution $0
\lxr B \lxr I^0 \lxr I^1{\lxr}\cdots $ of $B$ in $\B$, we say that the
image of the morphism $I^{n-1}\lxr I^n$ is an \defterm{\mbox{$n$-th}
  cosyzygy} of $B$, and we denote it by $\Sigma^n(B)$.  Dually, if
$\cdots \lxr P_1 \lxr P_0 \lxr B\lxr 0$ is a projective resolution of
$B$ in $\B$, then we say that the kernel of the morphism $P^{n-1}\lxr
P^{n-2}$ is an \defterm{\mbox{$n$-th} syzygy} of $B$, and we denote it
by $\Omega^n(B)$.  Also, if $\X$ is a class of objects in $\B$, then
we denote by $\X^{\bot}=\{B\in \B \mid \Hom_{\B}(X,B)=0\}$ the
\defterm{right orthogonal subcategory} of $\X$ and by
${^{\bot}\X}=\{B\in \B \mid \Hom_{\B}(B,X)=0\}$ the \defterm{left
  orthogonal subcategory} of $\X$.

We now describe precisely how the maps \eqref{eqn:ext-map} induced by
the functor $e$ in a recollement are defined.  Let $\D$ and $\F$ be
abelian categories and $f\colon \D \lxr \F$ an exact functor which has
a left and a right adjoint (for example, the functors $i$ and $e$ in a
recollement have these properties).  If
\[
\xi\colon \xymatrix{
0 \ar[r]^{} &
X_n \ar[r]^{d_n \ \ } &
X_{n-1} \ar[r]^{} &
\cdots \ar[r]^{} &
X_{1} \ar[r]^{d_1 \ } &
X_0 \ar[r]^{} &
0
}
\]
is an exact sequence in $\D$, then we denote by $f(\xi)$ the exact
sequence
\[
f(\xi)\colon \xymatrix{
0 \ar[r]^{} &
f(X_n) \ar[r]^{f(d_n) \ \ \ } &
f(X_{n-1}) \ar[r]^{} &
\cdots \ar[r]^{} &
f(X_{1}) \ar[r]^{f(d_1) \ } &
f(X_0) \ar[r]^{} &
0
}
\]
in $\F$.  It is clear that this operation commutes with Yoneda product;
that is, if $\xi$ and $\zeta$ are composable exact sequences in $\D$,
then $f(\xi\zeta) = f(\xi) \cdot f(\zeta)$.  For every pair of objects
$X$ and $Y$ in $\D$ and every nonnegative integer $j$, we define a
group homomorphism
\[
f_{X,Y}^j \ \colon \ \Ext_{\D}^j(X,Y) \ \lxr \ \Ext_{\F}^j(f(X),f(Y))
\]
by
\begin{align*}
f_{X,Y}^0(d) &= f(d) &&\text{for a morphism $d \colon X \lxr Y$;} \\
f_{X,Y}^j([\eta]) &= [f(\eta)]
 &&\text{for a $j$-fold extension $\eta$ of $X$ by $Y$, where $j>0$.}
\end{align*}

For an object $X$ in $\D$, the direct sum $\Ext^*_\D(X,X) =
\Dsum_{j=0}^\infty \Ext^j_\D(X,X)$ is a graded ring with
multiplication given by Yoneda product, and taking the maps
$f^j_{X,X}$ in all degrees $j$ gives a graded ring homomorphism
\[
f^*_{X,X}\colon \Ext^*_\D(X,X) \lxr \Ext^*_\F(f(X),f(X)).
\]

\begin{rem}
We explain briefly why the maps $f_{X,Y}^j$ and $f^*_{X,X}$ defined
above are homomorphisms.
\begin{enumerate}
\item The functor $f$ being a right and left adjoint implies that it
preserves limits and colimits and therefore it preserves pullbacks and
pushouts.  Thus the map $f_{X,Y}^j$ preserves the Baer sum between two
extensions.
\item For checking that the map $f_{X,X}^*$ is a graded ring
homomorphism, the only nontrivial case to consider is the product of a
morphism and an extension.  For this case, we again use that the
functor $f$ preserves pullbacks and pushouts.
\end{enumerate}
\end{rem}

We now consider the maps
\[
e_{B,B'}^j \colon \Ext_\B^j(B, B') \lxr \Ext_\C^j(e(B), e(B'))
\]
induced by the functor $e \colon \B \lxr \C$ in a recollement, where we
let one argument be fixed and the other vary over all objects of $\B$.
In \cite{Psaroud}, the first author studied when these maps are
isomorphisms for all degrees up to some bound $n$, that is, for $0 \le
j \le n$.  This immediately leads to a description of when these maps
are isomorphisms in all degrees, which we state as the following
theorem.

\begin{thm}\cite[Propositions~3.3 and 3.4, Theorem~3.10]{Psaroud}
\label{extthm}
Let $(\A,\B,\C)$ be a recollement of abelian categories and assume
that $\B$ and $\C$ have enough projective and injective objects.  Let
$B$ be an object in $\B$.
\begin{enumerate}
\item The following statements are equivalent$\colon$
\begin{enumerate}
\item The map $e_{B,B'}^{j} \colon \Ext_{\B}^{j}(B,B') \lxr
\Ext_{\C}^{j}(e(B),e(B'))$ is an isomorphism for every object $B'$ in
$\B$ and every nonnegative integer $j$.
\item The object $B$ has a projective resolution of the form
\[
\xymatrix{
\cdots \ar[r]^{} &
l(P_2) \ar[r]^{} &
l(P_1) \ar[r]^{} &
l(P_0) \ar[r]^{} &
B \ar[r]^{} &
0
}
\]
where $P_j$ is a projective object in $\C$.
\item $\Ext_{\B}^j(B,i(A))=0$ for every $A\in \A$ and $j\geq 0$.
\item $\Ext_{\B}^j(B,i(I))=0$ for every $I\in \Inj{\A}$ and $j\geq 0$.
\end{enumerate}
\item The following statements are equivalent$\colon$
\begin{enumerate}
\item The map $e_{B',B}^{j} \colon \Ext_{\B}^{j}(B',B) \lxr
\Ext_{\C}^{j}(e(B'),e(B))$ is an isomorphism for every object $B'$ in
$\B$ and every nonnegative integer $j$.
\item The object $B$ has an injective coresolution of the form
\[
\xymatrix{
0 \ar[r]^{} &
B \ar[r]^{} &
r(I^0) \ar[r]^{} &
r(I^1) \ar[r]^{} &
r(I^2) \ar[r]^{} &
\cdots
}
\]
where $I^j$ is an injective object in $\C$.
\item $\Ext_{\B}^j(i(A),B)=0$ for every $A\in \A$ and $j\geq 0$.
\item $\Ext_{\B}^j(i(P),B)=0$ for every $P\in \Proj{\A}$ and $j\geq 0$.
\end{enumerate}
\end{enumerate}
\end{thm}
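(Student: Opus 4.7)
The plan is to prove part (i) via the cycle of implications $(b)\Rightarrow (a)\Rightarrow (c)\Rightarrow (b)$, together with the easy equivalence $(c)\Leftrightarrow (d)$; part (ii) then follows by the evident dualization, exchanging projectives with injectives and the triple $(l,e,\text{counit})$ with $(r,e,\text{unit})$.

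For $(b)\Rightarrow (a)$, I would apply the exact functor $e$ to the given resolution $\cdots\to l(P_1)\to l(P_0)\to B\to 0$; because $el\iso \Id_\C$ and $l$ sends projectives to projectives (Proposition~\ref{properties}(v)), this yields a projective resolution of $e(B)$ in $\C$. Computing $\Ext_\B^j(B,B')$ with the $l(P_\bullet)$-resolution and using the adjunction $\Hom_\B(l(P),B')\iso \Hom_\C(P,e(B'))$ identifies it termwise with the complex computing $\Ext_\C^j(e(B),e(B'))$. For $(a)\Rightarrow (c)$, take $B'=i(A)$ and observe that $ei=0$ by Proposition~\ref{properties}(ii), so the target vanishes. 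The implication $(c)\Rightarrow (d)$ is trivial, and for $(d)\Rightarrow (c)$ I would fix $A\in\A$, take an injective coresolution in $\A$, apply the exact functor $i$, and perform dimension shifting: vanishing of $\Ext_\B^j(B,i(I))$ on all injectives of $\A$ propagates through the cosyzygies $i(\Sigma^n A)$ to force $\Ext_\B^j(B,i(A))=0$.

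The main work, and the expected obstacle, is $(c)\Rightarrow (b)$: one must produce a projective resolution of $B$ in which every term lies in the essential image of $l$. I would use the canonical four-term sequence $0\to i(A)\to le(B)\to B\to iq(B)\to 0$ from Proposition~\ref{properties}(vii). The case $j=0$ of (c) gives $\Hom_\B(B,iq(B))=0$, and since the surjection $B\twoheadrightarrow iq(B)$ is a map to an $i$-object, it must vanish, so $iq(B)=0$ and consequently $le(B)\twoheadrightarrow B$ is surjective. Now pick a projective surjection $P_0\twoheadrightarrow e(B)$ in $\C$; since $l$ is right exact (as a left adjoint), $l(P_0)\twoheadrightarrow le(B)\twoheadrightarrow B$ is a surjection with $l(P_0)$ projective. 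Writing $K_0$ for its kernel, I need $K_0$ to satisfy (c) so that the construction can iterate. This follows from the long exact $\Ext_\B^*(-,i(A))$ sequence applied to $0\to K_0\to l(P_0)\to B\to 0$, once we know that $l(P_0)$ itself satisfies (c); but $l(P_0)$ is projective, so only degree zero needs checking, and $\Hom_\B(l(P_0),i(A))\iso\Hom_\C(P_0,ei(A))=0$ again by $ei=0$. Induction then produces the desired resolution.

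Part (ii) is proved by the dual recipe, replacing the exact sequence of Proposition~\ref{properties}(vii) by its companion $0\to ip(B)\to B\to re(B)\to i(A')\to 0$, exploiting that $r$ is left exact and preserves injectives, and noting that $er\iso\Id_\C$ and $ei=0$ play the symmetric roles. The main obstacle is again the analogue of $(c)\Rightarrow (b)$, where one constructs an injective coresolution of $B$ by objects of the form $r(I^n)$ by showing that $B\hookrightarrow re(B)$ is a monomorphism (using $\Hom_\B(ip(B),B)=0$, which is the $j=0$ case of the hypothesis) and that each cosyzygy again satisfies the orthogonality condition.
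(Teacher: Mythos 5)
Your proof is correct and is the standard recollement-theoretic argument; the paper cites the result from \cite{Psaroud} without reproducing a proof, so there is nothing to compare against in detail, but your cycle $(b)\Rightarrow(a)\Rightarrow(c)\Rightarrow(b)$ (adjunction for $(b)\Rightarrow(a)$, $ei=0$ for $(a)\Rightarrow(c)$, the four-term sequence forcing $iq(B)=0$ and then pulling back $\C$-projective covers along $l$ for $(c)\Rightarrow(b)$), together with the dimension shift for $(c)\Leftrightarrow(d)$, is the expected argument. The one thing you use silently is that $\A$ has enough injectives (respectively projectives), needed to form the cosyzygies in $(d)\Rightarrow(c)$ (respectively the dual); this does follow from the recollement because $p$ (respectively $q$) is left (respectively right) exact and preserves injectives (respectively projectives), so the gap is only cosmetic.
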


The above theorem describes when the maps $e_{B,B'}^j$ induced by the
functor $e$ are isomorphisms in all degrees $j$.  Our aim in this
section is to give a similar description of when these maps are
isomorphisms in almost all degrees.  The basic idea is to translate
the conditions in the above theorem to similar conditions stated for
almost all degrees, and show the equivalence of these conditions by
using the above theorem and dimension shifting.  In order for this to
work, however, we need to modify the conditions somewhat.  We obtain
Theorem~\ref{thm:ext-iso-proj} which is stated below and generalizes
parts of Theorem~\ref{extthm}~(i) (and the dual
Theorem~\ref{thm:ext-iso-inj} which generalizes parts of
Theorem~\ref{extthm}~(ii)).  In order to prove the theorem, we need a
general version of dimension shifting as stated in the following
lemma.

\begin{lem}
\label{lem:dimension-shift}
Let $\A$ be an abelian category, $n$ be an integer, and let
\[
\epsilon\colon \xymatrix{
0 \ar[r]^{} &
X \ar[r]^{} &
E_{m-1} \ar[r]^{} &
\cdots \ar[r]^{} &
E_{0} \ar[r]^{} &
Y \ar[r]^{} &
0
}
\]
be an exact sequence in $\A$ with $\pd_{\A}E_i \le n$ for every $i$.
Then for every $i > n$ and $Z\in \A$, the map
\[
\xymatrix@C=0.5cm{
\epsilon^* \ \colon \ {\Ext}_{\A}^{i}(X,Z) \ \ar[rr] &&
\ \Ext^{i+m}_{\A}(Y,Z),}
\]
given by $\epsilon^*([\eta]) = [\eta\epsilon]$, is an isomorphism.
\end{lem}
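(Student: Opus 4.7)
The plan is to proceed by induction on the length $m$ of the exact sequence $\epsilon$, using the standard splitting of a long exact sequence into short ones and the long exact sequence of $\Ext$.

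For the base case $m = 1$, the sequence $\epsilon$ is a short exact sequence $0 \to X \to E_0 \to Y \to 0$ with $\pd_\A E_0 \le n$. Applying $\Hom_\A(-, Z)$ yields a long exact sequence containing the segment
\[
\Ext^i_\A(E_0, Z) \lxr \Ext^i_\A(X, Z) \xrightarrow{\partial} \Ext^{i+1}_\A(Y, Z) \lxr \Ext^{i+1}_\A(E_0, Z).
\]
For $i > n$ both outer terms vanish, so the connecting map $\partial$ is an isomorphism. It is a standard fact that $\partial$ coincides with Yoneda product with the class of $\epsilon$, i.e.\ $\partial = \epsilon^{*}$; this handles the base case.

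For the inductive step, let $K = \Image(E_0 \lxr Y)$ and split $\epsilon$ as the Yoneda composition $\epsilon = \epsilon_1 \cdot \epsilon_2$, where
\[
\epsilon_1\colon 0 \lxr K \lxr E_0 \lxr Y \lxr 0, \qquad
\epsilon_2\colon 0 \lxr X \lxr E_{m-1} \lxr \cdots \lxr E_1 \lxr K \lxr 0.
\]
By the induction hypothesis applied to $\epsilon_2$, the map $\epsilon_2^{*}\colon \Ext^i_\A(X, Z) \lxr \Ext^{i+m-1}_\A(K, Z)$ is an isomorphism for all $i > n$. The base case applied to $\epsilon_1$, with the degree shifted to $i + m - 1 > n$, shows that $\epsilon_1^{*}\colon \Ext^{i+m-1}_\A(K, Z) \lxr \Ext^{i+m}_\A(Y, Z)$ is also an isomorphism. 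Since Yoneda product is associative, $\epsilon^{*} = \epsilon_1^{*} \circ \epsilon_2^{*}$, and the composition of two isomorphisms is an isomorphism, completing the induction.

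The only real subtlety is the identification of the connecting homomorphism in the long exact Ext-sequence with Yoneda multiplication by $\epsilon_1$; this is a classical fact but should be invoked cleanly so that the composition $\epsilon_1^{*} \circ \epsilon_2^{*} = \epsilon^{*}$ really encodes the statement of the lemma. Apart from that, the argument is a routine combination of dimension shifting and induction, with no serious obstacle.
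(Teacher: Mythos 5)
The paper states this lemma without proof, treating it as a standard dimension-shifting fact, so there is no in-text argument to compare yours against; your proposal supplies exactly the expected argument. The structure is right: induction on $m$, the $m = 1$ base case from the long exact Ext-sequence (with vanishing outer terms because $\pd_\A E_0 \le n < i$), identification of the connecting homomorphism with Yoneda composition with the short exact sequence, and the inductive step by splicing $\epsilon$ into a length-one piece and a length-$(m-1)$ piece and using associativity of Yoneda product, with the degree bound $i + m - 1 > n$ still holding.

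One slip to correct: you set $K = \Image(E_0 \lxr Y)$, but since the sequence is exact at $Y$ this image is all of $Y$, which would make $\epsilon_1$ and $\epsilon_2$ nonsense. You clearly mean $K = \Ker(E_0 \lxr Y) = \Image(E_1 \lxr E_0)$, which is what your displayed $\epsilon_1$ and $\epsilon_2$ actually use. With that corrected, the proof is complete.
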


Now we are ready to show our characterization of when the functor $e$
in a recollement induces isomorphisms of extension groups in almost
all degrees.

\begin{thm}
\label{thm:ext-iso-proj}
Let $(\A,\B,\C)$
be a recollement of abelian categories and assume that $\B$ and $\C$
have enough projective and injective objects.  Consider the following
statements for an object $B$ of $\B$ and two integers $n$ and $m$$\colon$
\begin{enumerate}
\item[(a)] The map $e_{B,B'}^{j} \colon \Ext_{\B}^{j}(B,B') \lxr
\Ext_{\C}^{j}(e(B),e(B'))$ is an isomorphism for every object $B'$ in
$\B$ and every integer $j > m + n$.
\item[(b)] The object $B$ has a projective resolution of the form
\[
\xymatrix{
\cdots \ar[r] &
l(Q_1) \ar[r] &
l(Q_0) \ar[r] &
P_{n-1} \ar[r] &
\cdots \ar[r] &
P_0 \ar[r] &
B \ar[r] &
0
} 
\]
where each $Q_j$ is a projective object in $\C$.
\item[(c)] $\Ext_{\B}^j(B,i(A))=0$ for every $A \in \A$ and $j > n$,
and there exists an $n$-th syzygy of $B$ lying in ${^{\bot}i(\A)}$.
\item[(d)] $\Ext_{\B}^j(B,i(I))=0$ for every $I\in \Inj{\A}$ and $j >
n$, and and there exists an $n$-th syzygy of $B$ lying in
${^{\bot}i(\Inj{\A})}$.
\end{enumerate}
We have the following relations between these statements$\colon$
\begin{enumerate}
\item $\textup{(b)} \iff \textup{(c)} \iff \textup{(d)}$.
\item If $\pd_{\C} e(P) \le m$ for every projective object $P$ in
$\B$, then $\textup{(b)} \implies \textup{(a)}$.
\end{enumerate}
\end{thm}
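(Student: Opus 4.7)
The plan is to reduce both parts to Theorem~\ref{extthm}~(i) applied to an $n$-th syzygy $K$ of $B$, using Lemma~\ref{lem:dimension-shift} to transfer information between $K$ and $B$. The key observation is that a projective resolution of $B$ of the shape in (b) is precisely the splicing of a projective resolution $l(Q_\bullet) \to K \to 0$ of the form demanded by Theorem~\ref{extthm}~(i)~(b) with a finite piece $\epsilon\colon 0 \to K \to P_{n-1} \to \cdots \to P_0 \to B \to 0$. Thus (b) for $B$ is equivalent to Theorem~\ref{extthm}~(i)~(b) holding for some $n$-th syzygy $K$ of $B$.

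For part (1), I would fix a projective resolution of $B$ with $n$-th syzygy $K$ and consider the truncation $\epsilon$ above. Since each $P_i$ is projective, Lemma~\ref{lem:dimension-shift} yields $\Ext^j_\B(K,i(A)) \iso \Ext^{j+n}_\B(B,i(A))$ for every $j \geq 1$ and every $A \in \A$. The remaining $j=0$ case $\Hom_\B(K,i(A))$ is not reached by the dimension shift; it is exactly the orthogonality condition $K \in {^{\bot}i(\A)}$. Hence the two assertions in (c) taken together amount to $\Ext^j_\B(K,i(A)) = 0$ for all $j \geq 0$ and all $A \in \A$, which by Theorem~\ref{extthm}~(i) is the condition that $K$ has a projective resolution of the form $l(Q_\bullet)$, i.e.\ (b). The equivalence (b)~$\iff$~(d) is obtained in the same way with $\Inj\A$ in place of $\A$, and (c)~$\iff$~(d) can also be read off directly from Theorem~\ref{extthm}~(i) applied to the syzygy $K$, without having to run the implication separately on $B$.

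For part (2), I would take a resolution of $B$ as in (b) and let $K$ be its $n$-th syzygy. By part~(1), Theorem~\ref{extthm}~(i) applies to $K$, so $e^j_{K,B'}$ is an isomorphism for every $B' \in \B$ and every $j \geq 0$. The truncation $\epsilon$ has $\pd_\B P_i = 0$, and its image $e(\epsilon)$ in $\C$ is exact (as $e$ is exact) with $\pd_\C e(P_i) \leq m$ by hypothesis. Two applications of Lemma~\ref{lem:dimension-shift} give, for every $j > m$ and every $B' \in \B$,
\[
\Ext^{j+n}_\B(B,B') \iso \Ext^j_\B(K,B') \iso \Ext^j_\C(e(K),e(B')) \iso \Ext^{j+n}_\C(e(B),e(B')),
\]
where the outer isomorphisms are Yoneda multiplication by $\epsilon$ and $e(\epsilon)$ and the middle one is $e^j_{K,B'}$. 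Since $e$ preserves Yoneda products, the composite coincides with $e^{j+n}_{B,B'}$, which is therefore an isomorphism whenever $j+n > m+n$; this is (a).

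The main obstacle I anticipate is naturality bookkeeping: verifying that the two dimension-shift isomorphisms and the map $e^*_{K,B'}$ fit into a commuting diagram — essentially, that $e(\eta\epsilon) = e(\eta)\,e(\epsilon)$ — so that chaining the three isomorphisms above genuinely produces $e^{j+n}_{B,B'}$ and not merely some isomorphism between the same groups. Once this compatibility is set up, everything else is either Theorem~\ref{extthm}~(i) applied to $K$ or a routine appeal to Lemma~\ref{lem:dimension-shift}.
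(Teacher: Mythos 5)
Your proposal is correct and takes essentially the same route as the paper: translate (b)--(d) into the corresponding statements of Theorem~\ref{extthm}~(i) for a chosen $n$-th syzygy $K$ via Lemma~\ref{lem:dimension-shift}, then for (2) chain $\pi^*$, $e^{j-n}_{K,B'}$ and $(e(\pi))^*$ and verify that the composite is $e^j_{B,B'}$ because $e$ preserves Yoneda products. The "naturality bookkeeping" you flag at the end is exactly the point the paper settles by tracing a representative $[\eta]$ through the three maps, so you have correctly identified the only nontrivial check.
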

\begin{proof}
(i) By dimension shift, statement (c) is equivalent to
\[
\Ext_\B^j(\Omega^n(B), i(A)) = 0
\qquad\text{for every $j \ge 0$ and every $A \in \A$,}
\]
and statement (d) is equivalent to
\[
\Ext_\B^j(\Omega^n(B), i(I)) = 0
\qquad\text{for every $j \ge 0$ and every $I \in \Inj \A$,}
\]
where in both cases $\Omega^n(B)$ is a suitably chosen $n$-th syzygy
of $B$.  The equivalence of statements (b), (c) and (d) now follows
from the equivalence of (b), (c) and (d) in Theorem~\ref{extthm}~(i).

(ii) Let 
\[
\pi\colon \xymatrix{
0 \ar[r] & K \ar[r] & P_{n-1} \ar[r] & \cdots \ar[r] &
P_1 \ar[r] & P_0 \ar[r] & B \ar[r] & 0 }
\]
be the beginning of the chosen projective resolution of $B$, where $K
= \Omega^n(B)$ is the $n$-th syzygy of $B$.  Consider the following
group homomorphisms$\colon$
\begin{equation}
\label{eqn:ext-maps}
\Ext_\B^j(B,B') \xleftarrow{\pi^*}
\Ext_\B^{j-n}(K,B') \xrightarrow{e_{K,B'}^{j-n}}
\Ext_\C^{j-n}(e(K),e(B')) \xrightarrow{(e(\pi))^*}
\Ext_\C^j(e(B),e(B'))
\end{equation}
Here, the maps $\pi^*$ and $(e(\pi))^*$ are isomorphisms by
Lemma~\ref{lem:dimension-shift}.  Note that for $(e(\pi))^*$ we use the
fact that $\pd_{\C} e(P) \le n$ for every projective object $P$ in
$\B$.  The map $e_{K,B'}^{j-n}$ is an isomorphism by
Theorem~\ref{extthm}~(i).  Thus, we have an isomorphism
\[
(e(\pi))^* \fcomp e_{K,B'}^{j-n} \fcomp (\pi^*)^{-1} \colon
\Ext_\B^j(B,B') \lxr \Ext_\C^j(e(B),e(B'))
\]
for every $j\geq m+n+1$ and $B'\in \B$.  We want to show that this is
the same map as $e_{B,B'}^j$.  We consider an element $[\eta] \in
\Ext_\B^{j-n}(K,B')$, and follow it through the
maps~\eqref{eqn:ext-maps}.  We then get the following elements$\colon$
\[
\xymatrix@R=1em@C=3em{
{\Ext_\B^j(B,B')} &
{\Ext_\B^{j-n}(K,B')} \ar[l]_{\pi^*}^\iso
                        \ar[r]^-{e_{K,B'}^{j-n}}_-\iso &
{\Ext_\C^{j-n}(e(K),e(B'))} \ar[r]^-{(e(\pi))^*}_-\iso &
{\Ext_\C^j(e(B),e(B'))} \\
{[\eta\pi]} &
{[\eta]} \ar@{|->}[l] \ar@{|->}[r] &
{[e(\eta)]} \ar@{|->}[r] &
{[e(\eta) \cdot e(\pi)]} \ar@{=}[d] \\
&&& {[e(\eta\pi)]}
}
\]
This shows that our isomorphism takes any element $[\zeta] \in
\Ext_\B^j(B,B')$ to the element $[e(\zeta)] \in
\Ext_\C^j(e(B),e(B'))$.  Thus, our isomorphism is $e_{B,B'}^j$.
\end{proof}

Dually to the above theorem, we have the following generalization of
some of the implications in Theorem~\ref{extthm}~(ii).

\begin{thm}
\label{thm:ext-iso-inj}
Let $(\A,\B,\C)$
be a recollement of abelian categories and assume that $\B$ and $\C$
have enough projective and injective objects.  Consider the following
statements for an object $B$ of $\B$ and two integers $n$ and $m$$\colon$
\begin{enumerate}
\item[(a)] The map $e_{B',B}^{j} \colon \Ext_{\B}^{j}(B',B) \lxr
\Ext_{\C}^{j}(e(B'),e(B))$ is an isomorphism for every object $B'$ in
$\B$ and every integer $j > m + n$.
\item[(b)] The object $B$ has an injective coresolution of the form
\[
\xymatrix{
0 \ar[r] &
B \ar[r] &
I^0 \ar[r] &
\cdots \ar[r] &
I^{n-1} \ar[r] &
r(J^0) \ar[r] &
r(J^1) \ar[r] &
\cdots
} 
\]
where each $J^j$ is a projective object in $\C$.
\item[(c)] $\Ext_{\B}^j(i(A),B)=0$ for every $A \in \A$ and $j > n$,
and there exists an $n$-th cosyzygy of $B$ lying in $i(\A)^{\bot}$.
\item[(d)] $\Ext_{\B}^j(i(P),B)=0$ for every $P\in \Proj{\A}$ and $j >
n$, and there exists an $n$-th cosyzygy of $B$ lying in
$i(\Proj{\A})^{\bot}$.
\end{enumerate}
We have the following relations between these statements$\colon$
\begin{enumerate}
\item $\textup{(b)} \iff \textup{(c)} \iff \textup{(d)}$.
\item If $\id_{\C} e(I) \le m$ for every injective object $I$ in
$\B$, then $\textup{(b)} \implies \textup{(a)}$.
\end{enumerate}
\end{thm}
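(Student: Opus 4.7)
My strategy is to transcribe the proof of Theorem~\ref{thm:ext-iso-proj} dually, exchanging the roles of projective and injective dimension and of first and second Ext argument.

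For part (i), dimension shifting in the second Ext argument gives the identification
\[
\Ext_\B^j(i(A), B) \iso \Ext_\B^{j-n}(i(A), K)
\]
for every $j > n$, $A \in \A$, and any $n$-th cosyzygy $K = \Sigma^n(B)$. I can therefore rewrite statements (c) and (d) as: for a suitably chosen cosyzygy $K$, one has $\Ext_\B^k(i(A), K) = 0$ for every $k \ge 0$ and every $A \in \A$ (respectively $A \in \Proj \A$). Applying Theorem~\ref{extthm}~(ii) to $K$ then shows that both (c) and (d) are equivalent to the existence of an injective coresolution of $K$ of the form $r(J^\bullet)$ with each $J^j$ injective in $\C$. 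Splicing such a coresolution with the first $n$ terms $0 \to B \to I^0 \to \cdots \to I^{n-1} \to K$ of a coresolution of $B$ yields (b); conversely, the coresolution in (b) exhibits the cosyzygy satisfying the equivalent condition of Theorem~\ref{extthm}~(ii).

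For part (ii), fix a coresolution as in (b) and let $\iota$ denote the $n$-fold extension $0 \to B \to I^0 \to \cdots \to I^{n-1} \to K \to 0$, viewed as a class in $\Ext_\B^n(K, B)$. For any $B' \in \B$ and any $j > m + n$ I form the diagram
\[
\Ext_\B^j(B', B) \xleftarrow{\iota_*} \Ext_\B^{j-n}(B', K) \xrightarrow{e_{B', K}^{j-n}} \Ext_\C^{j-n}(e(B'), e(K)) \xrightarrow{(e(\iota))_*} \Ext_\C^j(e(B'), e(B)),
\]
where the outer arrows are Yoneda multiplication by $\iota$ and $e(\iota)$. The outer maps are isomorphisms by the dual of Lemma~\ref{lem:dimension-shift}: the $I^i$ have $\id_\B I^i = 0 < j - n$, and by the hypothesis of (ii) the $e(I^i)$ satisfy $\id_\C e(I^i) \le m < j - n$. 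The middle map is an isomorphism by Theorem~\ref{extthm}~(ii) applied to $K$, whose coresolution has the form $r(J^\bullet)$ by construction. Finally, chasing a class $[\eta] \in \Ext_\B^{j-n}(B', K)$ around the diagram shows that the composite $(e(\iota))_* \fcomp e_{B', K}^{j-n} \fcomp \iota_*^{-1}$ sends $[\iota\eta]$ to $[e(\iota\eta)]$, so it coincides with $e_{B', B}^j$.

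The main obstacle is establishing the dual version of Lemma~\ref{lem:dimension-shift}, which shifts in the second Ext argument using bounded injective dimension on the middle terms. This I would prove by breaking $\iota$ and $e(\iota)$ into short exact sequences and iterating the long exact sequences of $\Ext_\B(B', -)$ and $\Ext_\C(e(B'), -)$, invoking $\Ext_\B^{\ge 1}(B', I) = 0$ for injective $I$ in $\B$ and $\Ext_\C^{> m}(e(B'), e(I)) = 0$ under the hypothesis of (ii). Once this auxiliary lemma is in place, the rest is a straightforward dualization of the proof of Theorem~\ref{thm:ext-iso-proj}.
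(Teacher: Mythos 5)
Your proposal is correct and is exactly the dualization the paper intends: the paper does not supply a separate proof of Theorem~\ref{thm:ext-iso-inj} but simply says it is dual to Theorem~\ref{thm:ext-iso-proj}, and your argument carries out that dualization faithfully — replacing the $n$-th syzygy $\Omega^n(B)$ by the $n$-th cosyzygy $K$, invoking Theorem~\ref{extthm}~(ii) in place of Theorem~\ref{extthm}~(i), and using the dual of Lemma~\ref{lem:dimension-shift} for the sequence $\iota$ and its image $e(\iota)$, with the hypothesis $\id_\C e(I) \le m$ playing the role that $\pd_\C e(P) \le m$ plays in the projective version. The degree bookkeeping ($\iota_*$ needs $j - n > 0$, $(e(\iota))_*$ needs $j - n > m$) and the final diagram chase identifying the composite with $e^j_{B',B}$ are the precise mirror images of the steps in the paper's proof.
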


In the above results, we fixed an object $B$ of the category $\B$, and
considered the maps $e_{B,B'}^j$ or $e_{B',B}^j$ for all objects $B'$
in $\B$.  With certain conditions on the object $B$, we found that
these maps are isomorphisms for almost all degrees $j$.  We now
describe some conditions on the recollement which are sufficient to
ensure that the maps $e_{B,B'}^j$ are isomorphisms in almost all
degrees $j$ for all objects $B$ and $B'$ of $\B$, in other words, that
the functor $e$ is an eventually homological isomorphism.  These
conditions are given in the following corollary, which follows
directly from Theorem~\ref{thm:ext-iso-proj} and
Theorem~\ref{thm:ext-iso-inj}.

\begin{cor}
\label{cor:recollement-e-is-iso}
Let $(\A,\B,\C)$ be a recollement and assume that $\B$ and $\C$ have
enough projective and injective objects.  Let $m$ and $n$ be two
integers.  Assume that one of the following conditions hold$\colon$
\begin{enumerate}
\item $(\alpha')$ $\sup\{\id_{\B}i(I) \mid I\in \Inj{\A} \}<m$.

\noindent $(\epsilon)$ Every object of $\B$ has an $m$-th syzygy which
lies in ${^{\bot}i(\Inj{\A})}$.

\noindent $(\beta)$ $\sup\{\pd_{\C}e(P) \mid P\in \Proj{\B} \}\leq n$.

\item $(\gamma')$ $\sup\{\pd_{\B}i(P) \mid P\in \Proj{\A} \}<n$.

\noindent $(\opposite{\epsilon})$ Every object of $\B$ has an $n$-th
cosyzygy which lies in $i(\Proj{\A})^{\bot}$.

\noindent $(\delta)$ $\sup\{\id_{\C}e(I) \mid I\in \Inj{\B} \}\leq m$.
\end{enumerate}
Then the functor $e$ is an $(m+n)$-homological isomorphism, and in
particular the map
\[
\xymatrix@C=0.5cm{
e^j_{B,B'} \ \colon \ {\Ext}_{\B}^j(B,B') \ \ar[rr]^{ \iso } &&
\ \Ext^j_{\C}(e(B),e(B'))}
\]
is an isomorphism for all objects $B$ and $B'$ of $\B$ and for every
$j>m+n$.
\end{cor}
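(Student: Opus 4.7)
The plan is to reduce both cases directly to Theorem~\ref{thm:ext-iso-proj} and its dual Theorem~\ref{thm:ext-iso-inj}, which already do the bulk of the work. The task amounts to matching the corollary's data to the hypotheses of those theorems and verifying the syzygy/Ext-vanishing condition~(d) therein for every object $B$ of $\B$.

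For case (i), I would apply Theorem~\ref{thm:ext-iso-proj} with its parameter $n$ set equal to the corollary's $m$ and its parameter $m$ set equal to the corollary's $n$. Condition~$(\beta)$ then supplies exactly the hypothesis $\pd_\C e(P) \le n$ of part~(ii) of that theorem. To invoke the equivalence (b)$\iff$(d) in part~(i) for an arbitrary $B \in \B$, I must check that $\Ext_\B^j(B, i(I)) = 0$ for every $I \in \Inj \A$ and every $j > m$, and that $B$ admits an $m$-th syzygy in ${^{\bot}i(\Inj \A)}$. The first is immediate from $(\alpha')$, which gives $\id_\B i(I) \le m - 1$ and hence $\Ext_\B^j(-, i(I)) = 0$ for all $j \ge m$; the second is exactly the content of $(\epsilon)$. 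The conclusion of Theorem~\ref{thm:ext-iso-proj}(ii), namely that $e^j_{B,B'}$ is an isomorphism for every $B' \in \B$ and every $j > m + n$, then follows. Case (ii) will be handled dually via Theorem~\ref{thm:ext-iso-inj}, this time taking the theorem's $n$ and $m$ to equal the corollary's $n$ and $m$ respectively: $(\delta)$ supplies the hypothesis of part~(ii); $(\gamma')$ gives $\Ext_\B^j(i(P), B) = 0$ for $P \in \Proj \A$ and $j > n$ (in fact already for $j \ge n$); and $(\opposite{\epsilon})$ furnishes the required $n$-th cosyzygy in $i(\Proj \A)^{\bot}$.

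I do not expect a genuine obstacle beyond bookkeeping: the two earlier theorems do all the substantive work. The only subtleties are the careful matching of the parameters $m$ and $n$ with the analogous parameters in Theorem~\ref{thm:ext-iso-proj} and Theorem~\ref{thm:ext-iso-inj} — swapped in case (i), matched in case (ii) — and the observation that the strict inequalities in $(\alpha')$ and $(\gamma')$ yield slightly stronger Ext-vanishing than those theorems literally require, which is harmless. In both cases the conclusion threshold $j > m+n$ agrees exactly with the sum of the two theorem parameters, so the claim follows without any further argument.
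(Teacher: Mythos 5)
Your proposal is correct and follows exactly the route the paper intends: the paper states that the corollary "follows directly from Theorem~\ref{thm:ext-iso-proj} and Theorem~\ref{thm:ext-iso-inj}" without giving further details, and your careful parameter-matching (theorem's $(\tilde n,\tilde m)=(m,n)$ in case (i), $(\tilde n,\tilde m)=(n,m)$ in case (ii)) is precisely the bookkeeping needed to make that reduction precise. The observation that the strict inequalities in $(\alpha')$ and $(\gamma')$ give slightly stronger Ext-vanishing than strictly required is correct and harmless.
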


We now show a partial converse of the above result.

\begin{prop}
\label{prop:ext-iso-implies-conditions}
Let $(\A,\B,\C)$ be a recollement and assume that $\B$ and $\C$ have
enough projective and injective objects.  Assume that the functor $e$
is an eventually homological isomorphism.  Then the following hold$\colon$
\begin{enumerate}
\item[$(\alpha)$] $\sup \{ \id_\B i(A) \mid A \in \A \} < \infty$.
\item[$(\beta)$]  $\sup \{ \pd_\C e(P) \mid P \in \Proj \B \} < \infty$.
\item[$(\gamma)$] $\sup \{ \pd_\B i(A) \mid A \in \A \} < \infty$.
\item[$(\delta)$] $\sup \{ \id_\C e(I) \mid I \in \Inj \B \} < \infty$.
\end{enumerate}
In particular, if $e$ is an $t$-homological isomorphism for a
nonnegative integer $t$, then each of the above dimensions is bounded
by $t$.
\end{prop}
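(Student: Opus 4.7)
The plan is to derive each of the four dimension bounds directly from the definition of an eventually homological isomorphism combined with the two structural properties $ei = 0$ and the essential surjectivity of $e$ recorded in Proposition~\ref{properties}. Assume that $e$ is a $t$-homological isomorphism, so
\[
e^j_{B,B'}\colon \Ext^j_\B(B,B') \xrightarrow{\iso} \Ext^j_\C(e(B),e(B'))
\]
is an isomorphism for all $B,B' \in \B$ and all $j > t$. I want to show that each of $(\alpha),(\beta),(\gamma),(\delta)$ is bounded above by $t$.

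For $(\alpha)$ and $(\gamma)$, fix $A \in \A$ and take any $B \in \B$ and $j > t$. Because $e \fcomp i = 0$ by Proposition~\ref{properties}~(ii), the isomorphisms
\[
\Ext^j_\B(B,i(A)) \iso \Ext^j_\C(e(B),0) = 0, \qquad
\Ext^j_\B(i(A),B) \iso \Ext^j_\C(0,e(B)) = 0
\]
immediately give $\id_\B i(A) \le t$ and $\pd_\B i(A) \le t$, so both suprema are at most $t$.

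For $(\beta)$ and $(\delta)$, let $P$ be a projective and $I$ an injective object of $\B$, and fix $j > t$. Because $e$ is essentially surjective by Proposition~\ref{properties}~(iii), every object of $\C$ has the form $e(B)$ for some $B \in \B$. Applying the isomorphism $e^j_{P,B}$ (respectively $e^j_{B,I}$) and using that $P$ is projective and $I$ is injective in $\B$, we obtain
\[
\Ext^j_\C(e(P),e(B)) \iso \Ext^j_\B(P,B) = 0, \qquad
\Ext^j_\C(e(B),e(I)) \iso \Ext^j_\B(B,I) = 0.
\]
Since every object of $\C$ is of the form $e(B)$, this forces $\pd_\C e(P) \le t$ and $\id_\C e(I) \le t$, proving the two remaining inequalities.

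There is no real obstacle here: the argument is a formal consequence of the adjunction-theoretic properties of a recollement together with the hypothesis. The only small point worth noting is the uniformity of the bound, namely that the same $t$ coming from the definition of an eventually homological isomorphism bounds all four dimensions simultaneously, which is precisely what the final sentence of the proposition records.
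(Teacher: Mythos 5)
Your proof is correct and follows essentially the same route as the paper. The only cosmetic difference is in $(\beta)$ and $(\delta)$: the paper produces the required preimage of an object $C\in\C$ explicitly via the unit isomorphism $el\iso\Id_\C$ (i.e.\ takes $B=l(C)$), whereas you invoke essential surjectivity of $e$ abstractly — both are immediate consequences of Proposition~\ref{properties} and yield identical bounds.
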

\begin{proof}
$(\alpha)$ Let $A$ be an object of $\A$.  For every $B$ in $\B$ and $j
> t$, we get
\[
\Ext_\B^j(B, i(A))
\iso \Ext_\C^j(e(B), ei(A))
\iso \Ext_\C^j(e(B), 0)
= 0,
\]
since $ei = 0$ by Proposition~\ref{properties}, and thus $\id_\B i(A)
\le t$.  The proof of $(\gamma)$ is similar.

$(\beta)$ Let $P$ be a projective object of $\B$.  For every $C$ in
$\C$ and $j > t$, we get
\[
\Ext_\C^j(e(P), C)
\iso \Ext_\C^j(e(P), el(C))
\iso \Ext_\B^j(P, l(C))
= 0,
\]
since $el \iso \Id_\C$ by Proposition~\ref{properties}, and thus
$\pd_\C e(P) \le t$.  The proof of $(\delta)$ is similar.
\end{proof}

\begin{rem}
\label{rem:relative-gldim}
Recall from \cite{Psaroud} that $\sup \{ \pd_\B i(A) \mid A \in \A \}
< \infty$, which appears in statement $(\gamma)$ above, is called the
$\A$-relative global dimension of $\B$, and denoted by $\gld_\A \B$.
\end{rem}

We close this section by interpreting Theorem~\ref{thm:ext-iso-proj},
Theorem~\ref{thm:ext-iso-inj} and
Corollary~\ref{cor:recollement-e-is-iso} for artin algebras.  To this
end, for an artin algebra $\Lambda$ and $a\in \Lambda$ an idempotent
element, we denote by
\[
e = (a\Lambda \tensor_\Lambda -) \colon
   \fmod \Lambda \lxr  \fmod a \Lambda a
\]
the quotient functor of the recollement
$(\fmod{\Lambda/\idealgenby{a}}, \fmod{\Lambda}, \fmod{a \Lambda a})$,
see Example~\ref{exam:mod-recollements}.

We first need the following well-known observation.

\begin{lem}
\label{lemHom}
Let $\Lambda$ be an artin algebra, $M$ be a $\Lambda$-module and $S$
be a simple $\Lambda$-module.  Then for every $n\geq 1$ we have$\colon$
\[
\Ext_{\Lambda}^n(M,S)\iso \Hom_{\Lambda}(\Omega^n(M),S)
\qquad\text{and}\qquad
\Ext_{\Lambda}^n(S,M)\iso \Hom_{\Lambda}(S,\Sigma^n(M)),
\]
where $\Omega^n(M)$ is the $n$-th syzygy in a minimal projective
resolution of $M$, and $\Sigma^n(M)$ is the $n$-th cosyzygy in a
minimal injective coresolution of $M$.
\end{lem}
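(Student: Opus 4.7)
The plan is to prove both isomorphisms by dimension-shifting in a minimal (co)resolution, exploiting the fact that any morphism from a projective module to a simple module factors through the top, together with the minimality characterization of syzygies.

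For the first isomorphism, I would start from a minimal projective resolution $\cdots\to P_1\to P_0\to M\to 0$, and consider the short exact sequence
\[
0\lxr \Omega^n(M)\lxr P_{n-1}\lxr \Omega^{n-1}(M)\lxr 0.
\]
Applying $\Hom_\Lambda(-,S)$ and using that $\Ext^i_\Lambda(P_{n-1},S)=0$ for $i\geq 1$, I get the exact piece
\[
\Hom_\Lambda(P_{n-1},S)\xrightarrow{\rho} \Hom_\Lambda(\Omega^n(M),S)\lxr \Ext^1_\Lambda(\Omega^{n-1}(M),S)\lxr 0.
\]
By iterated dimension shifting via the sequences $0\to\Omega^{i+1}(M)\to P_i\to\Omega^i(M)\to 0$, the rightmost term is naturally isomorphic to $\Ext^n_\Lambda(M,S)$. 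So it suffices to show $\rho=0$.

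The key point is minimality: the surjection $P_{n-1}\twoheadrightarrow \Omega^{n-1}(M)$ is a projective cover, hence $\Omega^n(M)=\ker(P_{n-1}\to \Omega^{n-1}(M))\subseteq \rad P_{n-1}$. Any $f\colon P_{n-1}\to S$ with $S$ simple factors through $P_{n-1}/\rad P_{n-1}$, so $f|_{\Omega^n(M)}=0$, i.e.\ $\rho=0$. Combining the vanishing of $\rho$ with the identification $\Ext^1_\Lambda(\Omega^{n-1}(M),S)\iso \Ext^n_\Lambda(M,S)$ yields the desired isomorphism.

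The second isomorphism is obtained dually. Starting from a minimal injective coresolution $0\to M\to I^0\to I^1\to\cdots$, I form the short exact sequence $0\to \Sigma^{n-1}(M)\to I^{n-1}\to \Sigma^n(M)\to 0$, apply $\Hom_\Lambda(S,-)$, and use dimension shifting to identify $\Ext^1_\Lambda(S,\Sigma^{n-1}(M))\iso \Ext^n_\Lambda(S,M)$. The role of the radical is played here by the socle: minimality of the injective coresolution gives $\Sigma^n(M)\supseteq \operatorname{soc}(I^{n-1})$ modulo the image—more precisely, $I^{n-1}/\Sigma^{n-1}(M)\hookrightarrow I^n$ is an injective envelope, so the image of $I^{n-1}$ in $\Sigma^n(M)$ avoids the socle of $\Sigma^n(M)$; equivalently, any map $S\to I^{n-1}$ lands in $\Sigma^{n-1}(M)=\ker(I^{n-1}\to I^n)$, making the analogous map $\Hom_\Lambda(S,\Sigma^{n-1}(M))\to \Hom_\Lambda(S,I^{n-1})$ an isomorphism onto, and forcing the connecting-type map to be zero. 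The only delicate point in the write-up is formulating this dual minimality statement cleanly, but no real obstacle arises—it is the exact dual of the radical argument used above.
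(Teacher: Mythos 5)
The paper leaves this lemma without proof, citing it as a well-known observation, so there is no "paper argument" to compare against. Your proof is essentially the standard one and it is correct.

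Both halves are done by the right dimension-shifting argument: applying $\Hom_\Lambda(-,S)$ to $0 \to \Omega^n(M) \to P_{n-1} \to \Omega^{n-1}(M) \to 0$ and using $\Omega^n(M)\subseteq\rad P_{n-1}$ (minimality) together with the fact that every $P_{n-1}\to S$ kills $\rad P_{n-1}$ to see that the restriction map vanishes. The dual argument for the injective side is the right one as well: minimality of the coresolution means $\Sigma^{n-1}(M)\hookrightarrow I^{n-1}$ is an injective envelope, hence essential, hence $\operatorname{soc}(I^{n-1})\subseteq\Sigma^{n-1}(M)$, so any $S\to I^{n-1}$ factors through $\Sigma^{n-1}(M)$ and dies upon composing with $I^{n-1}\twoheadrightarrow\Sigma^n(M)$. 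One sentence of your write-up is garbled, though: ``the image of $I^{n-1}$ in $\Sigma^n(M)$ avoids the socle of $\Sigma^n(M)$'' is not what you want (the map $I^{n-1}\to\Sigma^n(M)$ is surjective, so its image \emph{is} all of $\Sigma^n(M)$). What you mean, and what you correctly state in the next clause, is that the \emph{socle} of $I^{n-1}$ is contained in $\Sigma^{n-1}(M)=\ker(I^{n-1}\to\Sigma^n(M))$. With that one sentence rephrased, the argument is clean and complete, and it directly dualizes the projective-side argument as you claim.
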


We also need the next easy result whose proof is left to the reader.

\begin{lem}
\label{lem:algebra-dimension-inequalities}
Let $\Lambda$ be an artin algebra and $a$ an idempotent element of
$\Lambda$.
 
Then the following inequalities hold$\colon$ 
\begin{enumerate}
\item $\pd_{a \Lambda a} e(P) \le \pd_{a \Lambda a} a\Lambda$, for
every $P\in \proj{\Lambda}$.
\item $\id_{a \Lambda a} e(I) \le \pd_{\opposite{(a \Lambda a)}}
\Lambda a$, for every $I\in \inj{\Lambda}$.
\end{enumerate}
\end{lem}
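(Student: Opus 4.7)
The plan is to prove each inequality by exhibiting the relevant module as a direct summand of a direct sum of copies of $a\Lambda$ (for (i)) or $\Lambda a$ (for (ii)), after which the inequality on dimensions is immediate.

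For (i), I would use the description $e \iso a\Lambda \tensor_\Lambda -$ of the quotient functor from Example~\ref{exam:mod-recollements}. Since $P$ is projective in $\fmod \Lambda$, it is a direct summand of $\Lambda^n$ for some $n$. The functor $e$ is additive and commutes with direct sums, so $e(P)$ is a direct summand of $e(\Lambda^n) \iso (a\Lambda)^n$ as left $a\Lambda a$-modules. Since projective dimension does not increase under taking direct summands, we conclude $\pd_{a\Lambda a} e(P) \le \pd_{a\Lambda a}(a\Lambda)^n = \pd_{a\Lambda a} a\Lambda$.

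For (ii), I would use the standard duality $D\colon \fmod \Lambda \lxr \fmod \opposite{\Lambda}$ of the artin algebra $\Lambda$, which exchanges projective and injective modules. Any injective $I \in \inj \Lambda$ is isomorphic to $D(Q)$ for some projective right $\Lambda$-module $Q$, which is a direct summand of $\Lambda^n$ in $\fmod \opposite{\Lambda}$. Now use the alternative description $e \iso \Hom_\Lambda(\Lambda a, -)$ and the tensor-Hom/duality identity
\[
e(I) \iso \Hom_\Lambda(\Lambda a, D(Q)) \iso D(Q \tensor_\Lambda \Lambda a) \iso D(Qa),
\]
where $Qa$ is a right $a\Lambda a$-module, hence a left $\opposite{(a\Lambda a)}$-module. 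Since $D$ converts projective resolutions into injective coresolutions, $\id_{a\Lambda a} D(Qa) = \pd_{\opposite{(a\Lambda a)}} Qa$. Finally, $Qa$ is a direct summand of $\Lambda^n a = (\Lambda a)^n$ in $\fmod \opposite{(a\Lambda a)}$, giving $\pd_{\opposite{(a\Lambda a)}} Qa \le \pd_{\opposite{(a\Lambda a)}} \Lambda a$ and hence the desired inequality.

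No serious obstacle is anticipated: both arguments reduce to the elementary observation that $e$ (or its dual incarnation) sends the regular representation to the ``universal'' bimodule $a\Lambda$ (respectively $\Lambda a$) appearing on the right-hand side. The only item requiring care is tracking sidedness in part (ii) so that $Qa$ is correctly identified as a right $a\Lambda a$-module.
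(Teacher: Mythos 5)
Your proof is correct, and since the paper explicitly leaves the proof of this lemma to the reader, it is precisely the kind of argument intended: part (i) reduces to the fact that $e$ is additive and $e(\Lambda) = a\Lambda$, so $e(P)$ is a direct summand of $(a\Lambda)^n$ and projective dimension cannot increase on passage to direct summands; part (ii) is the dual, transported via the standard duality. The only small point to flag is notational: for a general artin algebra over a commutative artin ring $k$ the duality is $D = \Hom_k(-, E)$ with $E$ the injective envelope of $k/\rad k$ over $k$, not simply $\Hom_k(-,k)$ (which suffices only when $k$ is a field). The adjunction isomorphism $\Hom_\Lambda(\Lambda a, \Hom_k(Q,E)) \iso \Hom_k(Q \tensor_\Lambda \Lambda a, E)$ still gives $e(D(Q)) \iso D(Qa)$, and the rest of your argument — that $D$ interchanges projective resolutions of $Qa$ over $\opposite{(a\Lambda a)}$ with injective coresolutions of $D(Qa)$ over $a\Lambda a$, and that $Qa$ is a summand of $(\Lambda a)^n$ — goes through unchanged. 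Your remark about carefully tracking sidedness in (ii) is well placed; that is indeed the only delicate point.
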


The following is a consequence of Theorem~\ref{thm:ext-iso-proj} and
Theorem~\ref{thm:ext-iso-inj} for artin algebras.

\begin{cor}
\label{cor:algebra-ext-iso-onemodule}
Let $\Lambda$ be an artin algebra and $a$ an idempotent element in
$\Lambda$, and let $m$ and $n$ be integers.
\begin{enumerate}
\item Let $M$ be a $\Lambda$-module such that
$\Ext^j_{\Lambda}\big(M,(\Lambda/\idealgenby{a})/(\rad{\Lambda/
  \idealgenby{a}})\big)=0$ for every $j\geq m$.  Assume that $\pd_{a
  \Lambda a} a \Lambda \le n$.  Then the map
\[
\xymatrix@C=0.5cm{
e^j_{M,N} \ \colon \ {\Ext}_{\Lambda}^j(M,N) \ \ar[rr]^{ \iso \ } &&
\ \Ext^j_{a\Lambda a}(e(M),e(N)) }
\]
is an isomorphism for every $\Lambda$-module $N$, and for every
integer $j > m + n$.
\item Let $M$ be a $\Lambda$-module such that
$\Ext^j_{\Lambda}\big((\Lambda/\idealgenby{a})/(\rad{\Lambda/
  \idealgenby{a}}),M\big)=0$ for every $j\geq n$.  Assume that
$\pd_{\opposite{(a \Lambda a)}} \Lambda a \le m$.  Then the map
\[
\xymatrix@C=0.5cm{
e^j_{N,M} \ \colon \ {\Ext}_{\Lambda}^j(N,M) \ \ar[rr]^{ \iso \ } &&
\ \Ext^j_{a\Lambda a}(e(N),e(M)) }
\]
is an isomorphism for every $\Lambda$-module $N$, and for every
integer $j > m + n$.
\end{enumerate}
\end{cor}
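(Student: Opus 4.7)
The plan is to prove part (i) by applying Theorem~\ref{thm:ext-iso-proj}(ii) to the object $B = M$ in the recollement $(\fmod\Lambda/\idealgenby{a}, \fmod\Lambda, \fmod a\Lambda a)$ of Example~\ref{exam:mod-recollements}(i); part (ii) will then follow by the dual argument using Theorem~\ref{thm:ext-iso-inj}(ii). To line up the threshold $j > m+n$ of the theorem with the threshold $j > m+n$ of the corollary, I take the theorem's parameter ``$m$'' equal to the corollary's $n$ (the projective-dimension bound), and the theorem's parameter ``$n$'' equal to the corollary's $m$ (the Ext-vanishing threshold).

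The global hypothesis of Theorem~\ref{thm:ext-iso-proj}(ii) is $\pd_{\fmod a\Lambda a} e(P) \le n$ for every projective $\Lambda$-module $P$; this is immediate from Lemma~\ref{lem:algebra-dimension-inequalities}(i) together with the assumption $\pd_{a\Lambda a} a\Lambda \le n$. It then remains to verify condition (c) of Theorem~\ref{thm:ext-iso-proj}. The Ext-vanishing half, $\Ext^j_\Lambda(M, A) = 0$ for every $A \in \fmod\Lambda/\idealgenby{a}$ and $j > m$, is obtained by induction on the composition length of $A$, since every such $A$ has composition factors among the simple summands of $(\Lambda/\idealgenby{a})/\rad(\Lambda/\idealgenby{a})$ and the hypothesis provides the vanishing for these simples.

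The syzygy half is the main technical point. Let $\Omega^m(M)$ denote the $m$-th syzygy in a minimal projective resolution of $M$. By Lemma~\ref{lemHom}, the hypothesis translates into $\Hom_\Lambda(\Omega^m(M), (\Lambda/\idealgenby{a})/\rad(\Lambda/\idealgenby{a})) = 0$, which means that no simple summand of the semisimple top of $\Omega^m(M)$ is annihilated by $\idealgenby{a}$. On each such simple summand the ideal $\idealgenby{a}$ acts either as $0$ or as the identity, so $\idealgenby{a}$ acts as the identity on the entire top, whence the top of $\Omega^m(M)/\idealgenby{a}\cdot\Omega^m(M)$ vanishes. Nakayama's lemma then forces $\Omega^m(M) = \idealgenby{a}\cdot\Omega^m(M)$, and consequently every $\Lambda$-homomorphism from $\Omega^m(M)$ into a module annihilated by $\idealgenby{a}$ must vanish. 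This is exactly the required condition $\Omega^m(M) \in {}^\bot i(\fmod\Lambda/\idealgenby{a})$.

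With condition (c) verified, Theorem~\ref{thm:ext-iso-proj}(ii) yields the claimed isomorphism $e^j_{M,N}$ for every $\Lambda$-module $N$ and every $j > m+n$, finishing part (i). Part (ii) follows dually: one applies Theorem~\ref{thm:ext-iso-inj}(ii) to $M$, uses the cosyzygy half of Lemma~\ref{lemHom} to show that no simple summand of the socle of $\Sigma^n(M)$ belongs to $\fmod\Lambda/\idealgenby{a}$ and hence (by the dual socle argument) that $\Sigma^n(M) \in i(\fmod\Lambda/\idealgenby{a})^\bot$, and invokes Lemma~\ref{lem:algebra-dimension-inequalities}(ii) for the injective-dimension bound.
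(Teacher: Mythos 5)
Your proof is correct and follows essentially the same route as the paper: apply Theorem~\ref{thm:ext-iso-proj} in the recollement $(\fmod\Lambda/\idealgenby{a}, \fmod\Lambda, \fmod a\Lambda a)$ after translating the hypothesis on $M$ into condition (c) of that theorem via Lemma~\ref{lemHom} and invoking Lemma~\ref{lem:algebra-dimension-inequalities} for the bound on $\pd_{a\Lambda a} e(P)$. The one minor divergence is your verification that $\Omega^m(M) \in {}^\bot i(\fmod\Lambda/\idealgenby{a})$: you argue via Nakayama's lemma that $\Omega^m(M)=\idealgenby{a}\Omega^m(M)$, whereas the paper simply passes from $\Hom_\Lambda(\Omega^m(M),S)=0$ for all simple $\Lambda/\idealgenby{a}$-modules $S$ to $\Hom_\Lambda(\Omega^m(M),N)=0$ for all $\Lambda/\idealgenby{a}$-modules $N$ by induction on composition length; both arguments are correct and comparably brief.
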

\begin{proof}
(i) Consider the recollement
$(\fmod{\Lambda/\idealgenby{a}},\fmod{\Lambda},\fmod{a\Lambda a})$ of
Example~\ref{exam:mod-recollements}.  Since every simple $\Lambda/
\idealgenby{a}$-module is also simple as a $\Lambda$-module it follows
from Lemma~\ref{lemHom} that
\[
\Hom_{\Lambda}
   \big(\Omega^m(M),
        (\Lambda/\idealgenby{a})/(\rad{\Lambda/\idealgenby{a}})
   \big)
 = 0
\]
This implies that $\Hom_{\Lambda}(\Omega^m(M),N)=0$ for every
$\Lambda/\idealgenby{a}$-module $N$ since every module has a finite
composition series.  Then the result is a consequence of
Theorem~\ref{thm:ext-iso-proj}.

(ii) The result follows similarly as in (i), using
Theorem~\ref{thm:ext-iso-inj} and the second isomorphism of
Lemma~\ref{lemHom}.
\end{proof}

As an immediate consequence of the above results we have the following
characterization of when the functor $e \colon \fmod \Lambda \lxr
\fmod a{\Lambda}a$ is an eventually homological isomorphism.  This
constitutes the first part of the Main Theorem presented in the
introduction.

\begin{cor}
\label{cor:algebra-ext-iso}
Let $\Lambda$ be an artin algebra and $a$ an idempotent element in
$\Lambda$.  The following are equivalent:
\begin{enumerate}
\item There is an integer $s$ such that for every pair of
$\Lambda$-modules $M$ and $N$, and every $j > s$, the map
\[
\xymatrix@C=0.5cm{
e^j_{M,N} \ \colon \ {\Ext}_{\Lambda}^j(M,N) \ \ar[rr]^{ \iso \ } &&
\ \Ext^j_{a\Lambda a}(e(M),e(N)) }
\]
is an isomorphism.
\item The functor $e$ is an eventually homological isomorphism.
\item $(\alpha)$
$\id_{\Lambda}\big((\Lambda/\idealgenby{a})/(\rad{\Lambda/
  \idealgenby{a}})\big) < \infty$ and $(\beta)$ $\pd_{a \Lambda a} a
\Lambda < \infty$.
\item $(\gamma)$
$\pd_{\Lambda}\big((\Lambda/\idealgenby{a})/(\rad{\Lambda/
  \idealgenby{a}})\big) < \infty$ and $(\delta)$ $\pd_{\opposite{(a
    \Lambda a)}} \Lambda a < \infty$.
\end{enumerate}
In particular, if the functor $e$ is a $t$-homological isomorphism,
then each of the dimensions in \textup{(iii)} and \textup{(iv)} are at
most $t$.  The bound $s$ in \textup{(i)} is bounded by the sum of
the dimensions occurring in \textup{(iii)}, and also bounded by the
sum of the dimensions occurring in \textup{(iv)}.
\end{cor}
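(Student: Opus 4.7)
The plan is to combine Corollary~\ref{cor:algebra-ext-iso-onemodule} (for the implications starting from the algebraic conditions) with Proposition~\ref{prop:ext-iso-implies-conditions} (for the reverse direction), both applied to the recollement $(\fmod \Lambda/\idealgenby{a}, \fmod \Lambda, \fmod a\Lambda a)$ of Example~\ref{exam:mod-recollements}(i). The equivalence (i)~$\Leftrightarrow$~(ii) is immediate from the definition of an eventually homological isomorphism.

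For (iii)~$\Rightarrow$~(i), set $S = (\Lambda/\idealgenby{a})/\rad(\Lambda/\idealgenby{a})$. By $(\alpha)$ there is an integer $m$ with $\Ext_\Lambda^j(M, S) = 0$ for every $\Lambda$-module $M$ and every $j \ge m$ (take $m$ one more than $\id_\Lambda S$), while $(\beta)$ supplies an integer $n$ with $\pd_{a\Lambda a} a\Lambda \le n$. Applying Corollary~\ref{cor:algebra-ext-iso-onemodule}(i) uniformly over $M$ then makes $e^j_{M,N}$ an isomorphism for all $\Lambda$-modules $M, N$ and all $j > m+n$, which gives (i) with $s = m+n$. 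The implication (iv)~$\Rightarrow$~(i) is entirely symmetric, using Corollary~\ref{cor:algebra-ext-iso-onemodule}(ii) in place of~(i).

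For (ii)~$\Rightarrow$~(iii) and (iv), I apply Proposition~\ref{prop:ext-iso-implies-conditions} to the above recollement; it produces four uniform finite bounds, each at most $t$ when $e$ is a $t$-homological isomorphism, and the four algebraic conditions follow by specializing each supremum to a single well-chosen module. Explicitly, $(\alpha)$ and $(\gamma)$ are obtained by specializing $A$ to~$S$, and $(\beta)$ is obtained by specializing $P$ to the free module $\Lambda$, using $e(\Lambda) = a\Lambda$. The step requiring the most care is $(\delta)$: I specialize the bound on $\sup \{ \id_{a\Lambda a} e(I) \mid I \in \inj \Lambda \}$ to the injective cogenerator $I = D(\opposite{\Lambda})$, use the standard identification $e(D(\opposite{\Lambda})) \cong D(\Lambda a)$ as left $a\Lambda a$-modules (which follows from the isomorphism $a \cdot (-) \cong \Hom_\Lambda(\Lambda a, -)$), and invoke Matlis duality---which exchanges injective and projective dimensions---to rewrite $\id_{a\Lambda a} D(\Lambda a) = \pd_{\opposite{(a\Lambda a)}} \Lambda a$.

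This Matlis-duality computation is the only non-formal point in the argument; everything else is a direct specialization of the general recollement machinery to the module-category setting. The final bound statements are then routine bookkeeping: Proposition~\ref{prop:ext-iso-implies-conditions} explicitly bounds each of its four dimensions by $t$, and the translations above transport these bounds to the dimensions in (iii) and (iv), while the bound on $s$ by the sum of the dimensions in (iii) (respectively (iv)) is visible directly from the choice $s = m+n$ in the forward implication.
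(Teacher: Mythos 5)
Your proof follows the paper's route: the equivalence (i)~$\Leftrightarrow$~(ii) is definitional, the implications (iii)~$\Rightarrow$~(i) and (iv)~$\Rightarrow$~(i) come from Corollary~\ref{cor:algebra-ext-iso-onemodule}, and (ii)~$\Rightarrow$~(iii),~(iv) come from Proposition~\ref{prop:ext-iso-implies-conditions} applied to the recollement of Example~\ref{exam:mod-recollements}(i), together with the translation of the abstract supremum conditions into the module-theoretic conditions $(\alpha)$--$(\delta)$. You usefully spell out the one translation that is not a routine specialization, namely that $(\delta)$ follows from the bound on $\sup\{\id_{a\Lambda a} e(I) \mid I \in \inj\Lambda\}$ via the identification $e(D(\opposite{\Lambda})) \iso D(\Lambda a)$ and duality; the paper leaves this to the reader (and Lemma~\ref{lem:algebra-dimension-inequalities} only records the reverse inequality). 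One small point of care: your choice $s = m+n$ with $m = \id_\Lambda\!\big((\Lambda/\idealgenby{a})/\rad\big) + 1$ gives $s$ equal to the sum of the dimensions in (iii) \emph{plus one}, not the sum itself, so your closing sentence does not actually deliver the stated bound; the paper's own chain through Corollary~\ref{cor:algebra-ext-iso-onemodule} yields the same off-by-one (one can check on $\Lambda = kA_2$, $a = e_2$, where both dimensions in (iii) are zero but $e^1_{S_1,S_2}$ is not an isomorphism), so this is an imprecision in the corollary's statement rather than a gap unique to your argument.
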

\begin{proof}
The implications (ii) $\implies$ (iii) and (ii) $\implies$ (iv) follow
from Proposition~\ref{prop:ext-iso-implies-conditions}.  The
implications (iii) $\implies$ (i) and (iv) $\implies$ (i) follow from
Corollary~\ref{cor:algebra-ext-iso-onemodule}.
\end{proof}

\section{Gorenstein categories and eventually homological isomorphisms}
\label{section:Gorenstein}

Our aim in this section is to study Gorenstein categories, introduced
by Beligiannis--Reiten \cite{BR}.  The main objective is to study when
a functor $f\colon \D \lxr \F$ between abelian categories preserves
Gorensteinness.  A central property here is whether the functor $f$ is
an eventually homological isomorphism.  We prove that for an
essentially surjective eventually homological isomorphism $f \colon \D
\lxr \F$, then $\D$ is Gorenstein if and only if $\F$ is.  The results
are applied to recollements of abelian categories, and recollements of
module categories.

We start by briefly recalling the notion of Gorenstein categories
introduced in \cite{BR}.
Let $\A$ be an abelian category with enough projective and injective
objects.  We consider the following invariants associated to $\A\colon$
\[
\spli{\A} = \sup\{\pd_{\A}I \mid I\in \Inj{\A} \}
\ \ \text{and} \ \
\silp{\A} = \sup\{\id_{\A}P \mid P\in \Proj{\A} \}
\]
Then we have the following notion of Gorensteinness for abelian
categories.

\begin{defn}\cite{BR}
An abelian category $\A$ with enough projective and injective objects
is called \defterm{Gorenstein} if $\spli{\A}<\infty$ and
$\silp{\A}<\infty$.
\end{defn}

Note that the above notion is a common generalization of
Gorensteinness in the commutative and in the noncommutative setting.
We refer to \cite[Chapter VII]{BR} for a thorough discussion on
Gorenstein categories and connections with Cohen--Macaulay objects and
cotorsion pairs.

We start with the following useful observation whose direct proof is
left to the reader.

\begin{lem}
\label{lemsilpspli}
Let $\A$ be an abelian category with enough projective and injective
objects and let $X$ be an object of $\A$.
\begin{enumerate}
\item If $\pd_{\A}X<\infty$, then $\id_{\A}X\le \silp{\A}$.
\item If $\id_{\A}X<\infty$, then $\pd_{\A}X\le \spli{\A}$.
\end{enumerate}
\end{lem}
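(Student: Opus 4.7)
The plan is to prove both parts by induction on the finite dimension assumed in the hypothesis, using standard dimension shifting via long exact sequences of Ext. The two parts are dual, so I will focus on part (i); part (ii) follows by the same argument with projective resolutions replaced by injective coresolutions and arrows reversed.

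For part (i), set $s = \silp \A$ and induct on $n = \pd_\A X$. In the base case $n = 0$, the object $X$ is projective, so $\id_\A X \le s$ directly by definition of $\silp \A$. For the inductive step with $n \ge 1$, pick a short exact sequence
\[
0 \lxr K \lxr P \lxr X \lxr 0
\]
with $P$ projective; then $\pd_\A K = n-1$, so by the inductive hypothesis $\id_\A K \le s$, and also $\id_\A P \le s$ since $P$ is projective. Apply $\Hom_\A(Y,-)$ to obtain the long exact sequence
\[
\cdots \lxr \Ext^j_\A(Y,P) \lxr \Ext^j_\A(Y,X) \lxr \Ext^{j+1}_\A(Y,K) \lxr \cdots
\]
For $j > s$ the outer terms vanish by the induction, so $\Ext^j_\A(Y,X) = 0$ for every $Y \in \A$, giving $\id_\A X \le s$.

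For part (ii), I would mirror this argument: induct on $n = \id_\A X$, use a short exact sequence $0 \lxr X \lxr I \lxr C \lxr 0$ with $I$ injective and $\id_\A C = n - 1$, apply $\Hom_\A(-,Y)$, and invoke the inductive hypothesis together with $\pd_\A I \le \spli \A$ to conclude $\pd_\A X \le \spli \A$.

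There is essentially no obstacle here beyond keeping the indices straight; the argument is a textbook dimension-shifting induction and relies only on the existence of enough projectives and injectives in $\A$. The only point to double-check is that the invariants $\spli \A$ and $\silp \A$ give uniform bounds that propagate through a single short exact sequence, which is immediate from the long exact sequence of Ext.
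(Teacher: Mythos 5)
Your proof is correct. The paper explicitly leaves this lemma to the reader ("whose direct proof is left to the reader"), so there is no official argument to compare against, but your dimension-shifting induction is precisely the standard textbook proof the authors had in mind: the base case uses the definition of $\silp$ (resp.\ $\spli$) directly, and the inductive step propagates the uniform bound across a short exact sequence via the long exact sequence of $\Ext$. One small remark: you only need $\pd_\A K \le n-1$ (which is immediate), not the equality $\pd_\A K = n-1$; and the argument remains valid, trivially, when $\silp \A = \infty$.
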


In the main result of this section we study eventually homological
isomorphisms between abelian categories with enough projective and
injective objects.  In particular we show that an essentially
surjective eventually homological isomorphism preserves
Gorensteinness.  This is a general version of the third part of the
Main Theorem presented in the introduction.

\begin{thm}
\label{thm:gorenstein}
Let $f \colon \D \lxr \F$ be a functor, where $\D$ and $\F$ are
abelian categories with enough projective and injective objects, and
let $t$ be a nonnegative integer.  Consider the following four
statements.
\begin{alignat*}{2}
\text{\textup{(a)} For every $D$ in $\D$\textup{:}}
&\left\{
\begin{aligned}
\pd_\D D &\le \sup \{ \pd_\F f(D), t \} \\
\id_\D D &\le \sup \{ \id_\F f(D), t \}
\end{aligned}
\right.
\qquad\qquad
&
\textup{(c)}
&\left\{
\begin{aligned}
\spli \D &\le \sup \{ \spli \F, t \} \\
\silp \D &\le \sup \{ \silp \F, t \}
\end{aligned}
\right.
\\
\text{\textup{(b)} For every $D$ in $\D$\textup{:}}
&\left\{
\begin{aligned}
\pd_\F f(D) &\le \sup \{ \pd_\D D, t \} \\
\id_\F f(D) &\le \sup \{ \id_\D D, t \}
\end{aligned}
\right.
&
\textup{(d)}
&\left\{
\begin{aligned}
\spli \F &\le \sup \{ \spli \D, t \} \\
\silp \F &\le \sup \{ \silp \D, t \}
\end{aligned}
\right.
\end{alignat*}
We have the following.
\begin{enumerate}
\item If $f$ is a $t$-homological isomorphism, then \textup{(a)}
holds.
\item If $f$ is an essentially surjective $t$-homological isomorphism,
then \textup{(a)} and \textup{(b)} hold.
\item If \textup{(a)} and \textup{(b)} hold, then \textup{(c)} holds.
\item If \textup{(a)} and \textup{(b)} hold and $f$ is essentially
surjective, then \textup{(c)} and \textup{(d)} hold.
\end{enumerate}
In particular, we obtain the following.
\begin{enumerate}
\setcounter{enumi}{4}
\item If $f$ is an essentially surjective eventually homological
isomorphism, then $\D$ is Gorenstein if and only if $\F$ is
Gorenstein.
\item If $f$ is an eventually homological isomorphism and \textup{(b)}
holds, then $\F$ being Gorenstein implies that $\D$ is Gorenstein.
\end{enumerate}
\end{thm}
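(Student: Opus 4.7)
The plan is to prove parts \textup{(i)}--\textup{(iv)} by direct manipulation of Ext-vanishing and then deduce \textup{(v)} and \textup{(vi)} as formal consequences. The two key tools are the defining Ext-isomorphism of an eventually homological isomorphism, which for $j > t$ identifies $\Ext_\D^j(D,D')$ with $\Ext_\F^j(f(D),f(D'))$, and Lemma~\ref{lemsilpspli}, which bounds $\pd$ by $\spli$ and $\id$ by $\silp$ once finiteness is known.

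For \textup{(i)}, fix $D \in \D$, set $N = \sup\{\pd_\F f(D), t\}$, and observe that for every $D' \in \D$ and $j > N$ the $t$-homological isomorphism gives $\Ext_\D^j(D, D') \iso \Ext_\F^j(f(D), f(D')) = 0$. This forces $\pd_\D D \le N$ by dimension shifting, and the injective-dimension inequality in \textup{(a)} is dual. For \textup{(ii)}, essential surjectivity lets the same argument run backwards: given $F \in \F$ choose $D'$ with $f(D') \iso F$ and compute $\Ext_\F^j(f(D), F) \iso \Ext_\D^j(D, D') = 0$ for $j > \sup\{\pd_\D D, t\}$, which produces \textup{(b)}; the injective half is again dual.

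Parts \textup{(iii)} and \textup{(iv)} are where Lemma~\ref{lemsilpspli} intervenes. For \textup{(iii)}, let $I \in \Inj \D$; from \textup{(a)} we have $\pd_\D I \le \sup\{\pd_\F f(I), t\}$, but $\pd_\F f(I)$ is a priori uncontrolled. Hypothesis \textup{(b)} supplies $\id_\F f(I) \le \sup\{\id_\D I, t\} = t < \infty$, so Lemma~\ref{lemsilpspli}\textup{(ii)} yields $\pd_\F f(I) \le \spli \F$, whence $\spli \D \le \sup\{\spli \F, t\}$. The inequality for $\silp \D$ is analogous, starting from $P \in \Proj \D$, using \textup{(b)} to get $\pd_\F f(P) \le t < \infty$, and invoking Lemma~\ref{lemsilpspli}\textup{(i)}. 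For \textup{(iv)}, essential surjectivity lets us run the dual game: for injective $J \in \F$ write $J \iso f(D)$, extract $\id_\D D \le t$ from \textup{(a)}, apply Lemma~\ref{lemsilpspli}\textup{(ii)} to obtain $\pd_\D D \le \spli \D$, and then feed this into \textup{(b)} to conclude $\pd_\F J \le \sup\{\spli \D, t\}$; the bound on $\silp \F$ is dual.

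Finally \textup{(v)} and \textup{(vi)} reduce to bookkeeping. For \textup{(v)}, an essentially surjective eventually homological isomorphism furnishes \textup{(a)} and \textup{(b)} via \textup{(i)}--\textup{(ii)} and then \textup{(c)} and \textup{(d)} via \textup{(iv)}, so finiteness of $\spli$ and $\silp$ passes in both directions. For \textup{(vi)}, only \textup{(a)} (supplied by \textup{(i)}) together with the assumed \textup{(b)} and \textup{(iii)} are needed, giving $\spli \D \le \sup\{\spli \F, t\}$ and $\silp \D \le \sup\{\silp \F, t\}$. The main subtlety lies in the step inside \textup{(iii)}: although \textup{(a)} already bounds $\pd_\D I$ by $\pd_\F f(I)$, the image $f(I)$ is not injective in $\F$ and its projective dimension is not a priori controlled by $\spli \F$; it is precisely the role of \textup{(b)} to guarantee finite injective dimension of $f(I)$ in $\F$, which is what unlocks Lemma~\ref{lemsilpspli}.
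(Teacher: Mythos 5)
Your proof is correct and follows essentially the same route as the paper: establish (a) by the forward Ext-isomorphism, (b) by running the argument backwards through essential surjectivity, and then pass from (a)/(b) to (c)/(d) via Lemma~\ref{lemsilpspli} by first noting the image of an injective (resp.\ projective) has finite injective (resp.\ projective) dimension. The only cosmetic difference is which representative inequality you display (you write out the (a) and (c) halves, the paper writes out (b) and (d) and declares the rest ``similar''), and your closing remark correctly isolates the key step that the paper leaves implicit.
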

\begin{proof}
We first assume that $f$ is an essentially surjective $t$-homological
isomorphism and show the inequality $\pd_\F f(D) \le \sup \{ \pd_\D D,
t \}$; the other inequalities in parts (i) and (ii) are proved
similarly.  The inequality clearly holds if $D$ has infinite
projective dimension.  Assume that $D$ has finite projective
dimension, and let $n = \max \{ \pd_\D D, t \} + 1$.  For any object
$X$ in $\F$, there is an object $X'$ in $\D$ with $f(X') \iso X$,
since the functor $f$ is essentially surjective.  By using that $f$ is
a $t$-homological isomorphism, we get
\[
\Ext_\F^n(f(D), X)
\iso \Ext_\F^n(f(D), f(X'))
\iso \Ext_\D^n(D, X')
= 0.
\]
This means that we have $\pd_\F f(D) < n$, and therefore $\pd_\F f(D)
\le \sup \{ \pd_\D D, t \}$.

We now assume that (a) and (b) hold and $f$ is essentially surjective,
and show the inequality $\spli \F \le \sup \{ \spli \D, t \}$; the
other inequalities in parts (iii) and (iv) are proved similarly.  Let
$I$ be an injective object of $\F$.  Since $f$ is essentially
surjective, we can choose an object $D$ in $\D$ such that $f(D) \iso
I$.  By (a), the object $D$ has finite injective dimension, and then
by Lemma~\ref{lemsilpspli}, its projective dimension is at most $\spli
\D$.  Using (b), we get
\[
\pd_\F I
 \le \sup \{ \pd_\D D, t \}
 \le \sup \{ \spli \D, t \}.
\]
Since this holds for any injective object $I$ in $\F$, we have
$\spli \F \le \sup \{ \spli \D, t \}$.

Parts (v) and (vi) follow by combining parts (i)--(iv).
\end{proof}

Now we return to the setting of a recollement $(\A, \B, \C)$.  We use
Theorem~\ref{thm:gorenstein} to study the functors $i\colon \A \lxr
\B$ and $e\colon \B \lxr \C$ with respect to Gorensteinness.

\begin{cor}
\label{cor:gorenstein-recollement}
Let $(\A,\B,\C)$ be a recollement of abelian categories.
\begin{enumerate}
\item Assume that the categories $\B$ and $\C$ have enough projective
and injective objects, and that the functor $e$ is an eventually
homological isomorphism.  Then $\B$ is Gorenstein if and only if $\C$
is Gorenstein.
\item Assume that the category $\B$ has enough projective and
injective objects, and that we have either
\par\noindent\begin{minipage}{\linewidth}
\[
\left.
\begin{aligned}
\sup\{\pd_{\B}i(P) \mid P\in \Proj{\A}\} &\le 1 \\
\sup\{\id_{\B}i(I) \mid I\in \Inj{\A}\} &< \infty
\end{aligned}
\right\}
\qquad\text{or}\qquad
\left\{
\begin{aligned}
\sup\{\pd_{\B}i(P) \mid P\in \Proj{\A}\} &< \infty \\
\sup\{\id_{\B}i(I) \mid I\in \Inj{\A}\} &\le 1
\end{aligned}
\right.
\]
\end{minipage}
If $\B$ is Gorenstein, then $\A$ is Gorenstein.
\end{enumerate}
\end{cor}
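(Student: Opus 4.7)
For part (i), the plan is to apply Theorem~\ref{thm:gorenstein}(v) to the functor $e \colon \B \lxr \C$. Both $\B$ and $\C$ have enough projective and injective objects and $e$ is an eventually homological isomorphism by hypothesis; the only additional input needed is that $e$ is essentially surjective, which is Proposition~\ref{properties}(iii). The equivalence of Gorensteinness for $\B$ and $\C$ then follows directly.

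For part (ii), I would assume $\B$ is Gorenstein and aim to bound $\silp\A$ and $\spli\A$. Consider the first of the two cases in the hypothesis (the second is entirely dual): $\sup\{\pd_\B i(P) \mid P \in \Proj\A\} \le 1$ and $\sup\{\id_\B i(I) \mid I \in \Inj\A\} < \infty$. The central step will be to produce a comparison exhibiting $\Ext_\A^n(A, A')$ as a quotient of $\Ext_\B^n(i(A), i(A'))$ for all $A, A'$ in $\A$ and all $n \ge 0$. To obtain such a comparison, given a projective resolution $U_\bullet \lxr A$ in $\A$, I would apply the exact functor $i$ to obtain an exact complex $i(U_\bullet) \lxr i(A) \lxr 0$ in $\B$ whose terms satisfy $\pd_\B i(U_k) \le 1$; form a Cartan--Eilenberg projective resolution of $i(U_\bullet)$, producing a double complex in $\Proj \B$ supported in the two columns $p = 0, 1$ whose total complex is a projective resolution of $i(A)$ in $\B$; then apply $\Hom_\B(-, i(A'))$ and use fully-faithfulness of $i$ to identify $\Hom_\B(i(U_\bullet), i(A'))$ with $\Hom_\A(U_\bullet, A')$. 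The resulting first-quadrant spectral sequence has $E_2^{p,q} = 0$ for $p \ge 2$ and $E_2^{0, n} \iso \Ext_\A^n(A, A')$, and converges to $\Ext_\B^{p+q}(i(A), i(A'))$. Because only two columns carry nonzero entries, every differential $d_r$ for $r \ge 2$ vanishes (its target column lies at $p+r \ge 2$), the sequence collapses at $E_2$, and the resulting filtration yields a short exact sequence displaying $\Ext_\A^n(A, A')$ as a direct quotient of $\Ext_\B^n(i(A), i(A'))$.

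Granted this comparison, the bounds follow immediately. For $\silp \A$: if $V \in \Proj \A$ then $\pd_\B i(V) \le 1$ together with $\B$ Gorenstein yield $\id_\B i(V) \le \silp \B$ via Lemma~\ref{lemsilpspli}(i); hence $\Ext_\B^n(i(A), i(V)) = 0$ and so $\Ext_\A^n(A, V) = 0$ for $n > \silp \B$, which gives $\silp \A \le \silp \B$. For $\spli \A$: if $I \in \Inj \A$ then $\id_\B i(I) < \infty$ together with $\B$ Gorenstein yield $\pd_\B i(I) \le \spli \B$ via Lemma~\ref{lemsilpspli}(ii); applying the same comparison to a projective resolution of $I$ in $\A$ (whose terms still lie in $\Proj \A$ and so have $\pd_\B \le 1$) gives $\Ext_\A^n(I, A') = 0$ for $n > \spli \B$, whence $\spli \A \le \spli \B$. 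The second case of (ii) is handled by the dual argument using injective coresolutions in $\A$ and the hypothesis $\id_\B i(J) \le 1$ for $J \in \Inj \A$.

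The hardest step will be the spectral-sequence comparison, and the strict bound "$\le 1$" is precisely what makes it work: it restricts the $E_2$ page to two nonzero columns and forces every higher differential to vanish on dimension grounds. Under a weaker hypothesis such as $\pd_\B i(P) < \infty$, further columns and nontrivial higher differentials would appear, $\Ext_\A^n(A, A')$ would only be a subquotient (rather than a direct quotient) of $\Ext_\B^n(i(A), i(A'))$, and vanishing of the latter would not propagate cleanly to the former.
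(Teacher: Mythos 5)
Part (i) is exactly the paper's argument: apply Theorem~\ref{thm:gorenstein}~(v) using essential surjectivity of $e$ from Proposition~\ref{properties}.

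For part (ii) you take a genuinely different route. The paper invokes \cite[Proposition~4.15]{Psaroud} (or its dual), which says that under the hypothesis ``$\pd_\B i(P) \le 1$ for all $P \in \Proj\A$'' (resp.\ the dual bound on injectives) the inclusion $i\colon \A \to \B$ is a \emph{homological embedding}, i.e.\ $i^n_{X,Y}$ is an isomorphism for all $n \ge 0$ and all $X,Y \in \A$; it then feeds this $0$-homological isomorphism into Theorem~\ref{thm:gorenstein}~(i) to get $\pd_\A A \le \pd_\B i(A)$ and $\id_\A A \le \id_\B i(A)$, and proceeds as you do. You instead prove from scratch, via the hyper-Ext spectral sequence $E_2^{s,t} = H^t\bigl(\Ext^s_\B(i(U_\bullet), i(A'))\bigr) \Rightarrow \Ext^{s+t}_\B(i(A), i(A'))$ associated to the exact complex $i(U_\bullet) \to i(A)$ with $\pd_\B i(U_t) \le 1$, that $\Ext^n_\A(A,A')$ appears as a (sub)quotient of $\Ext^n_\B(i(A), i(A'))$. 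That is weaker than an isomorphism but entirely sufficient, since all you need is that vanishing of $\Ext_\B$ forces vanishing of $\Ext_\A$. Your approach is self-contained (no appeal to \cite{Psaroud}) at the cost of being less precise about the comparison; the paper's is shorter but rests on the cited result. One small inaccuracy in your description: the Cartan--Eilenberg double complex itself need not be supported in columns $p=0,1$, since its columns resolve cycles and boundaries as well and those may have projective dimension $>1$ in $\B$. What is true, and what your argument actually uses, is that the $E_1$ page $\Ext^s_\B(i(U_t), i(A'))$ vanishes for $s\ge 2$ because $\pd_\B i(U_t)\le 1$; the two-column concentration and the resulting degeneration at $E_2$ live on the $E_1$/$E_2$ pages, not on the double complex. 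With that rephrasing the argument is correct, including the bounds $\silp\A \le \silp\B$ and $\spli\A \le \spli\B$ via Lemma~\ref{lemsilpspli}.
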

\begin{proof}
Part (i) follows directly from Theorem~\ref{thm:gorenstein}~(v),
noting that $e$ is essentially surjective by
Proposition~\ref{properties}.

We now show part (ii).  By Proposition~\ref{properties}~(iv) and (v),
$\A$ has enough projective and injective objects since $\B$ does (see
\cite[Remark~2.5]{Psaroud}).

It follows from \cite[Proposition~4.15]{Psaroud} (or its
dual) that the functor $i\colon \A\lxr \B$ is a homological embedding,
i.e.\ the map $i_{X,Y}^n$ is an isomorphism for all objects $X$ and
$Y$ in $\A$ and every $n \ge 0$.  In particular, this means that $i$
is a $0$-homological isomorphism.  By
Theorem~\ref{thm:gorenstein}~(i), we have
\begin{equation}
\label{eqn:gorenstein-recollement-inequalities}
\pd_\A A \le \pd_\B i(A)
\qquad\text{and}\qquad
\id_\A A \le \id_\B i(A)
\end{equation}
for every object $A$ in $\A$.

We show that $\spli \A \le \spli \B$.  Let $I$ be an injective object
in $A$.  By assumption, we have $\id_\B i(I) < \infty$, and then by the
first inequality in \eqref{eqn:gorenstein-recollement-inequalities}
and Lemma~\ref{lemsilpspli}, we have
\[
\pd_\A I
\le \pd_\B i(I)
\le \spli \B.
\]
Hence we have $\spli \A \le \spli \B$.  By a similar argument, we have
$\silp \A \le \silp \B$.  The result follows.
\end{proof}

In a recollement $(\A, \B, \C)$ we have seen that the implications
$\B$ Gorenstein if and only if $\C$ Gorenstein and $\B$ Gorenstein
implies $\C$ Gorenstein hold under various additional assumptions.  It
is then natural to ask if the categories $\A$ and $\C$ being
Gorenstein could imply that $\B$ is Gorenstein.  The next example
shows that this is not true in general.

\begin{exam}
Let $k$ be a field and consider the algebra $k[x]/\idealgenby{x^2}$.
Then from the triangular matrix algebra
\[
\Lambda = \begin{pmatrix}
           k & k \\
           0 & k[x]/\idealgenby{x^2} \\
         \end{pmatrix}
\]
we have the recollement of module categories
$(\fmod{k[x]/\idealgenby{x^2}}, \fmod{\Lambda}, \fmod{k})$, where
$\fmod{k[x]/\idealgenby{x^2}}$ and $\fmod{k}$ are Gorenstein
categories but $\fmod{\Lambda}$ is not Gorenstein.  We refer to
\cite[Example~4.3~(2)]{Chen:schurfunctors} for more details about the
algebra $\Lambda$.
\end{exam}

Recall from \cite{BR} that a ring $R$ is called \defterm{left
  Gorenstein} if the category $\Mod{R}$ of left $R$-modules is a
Gorenstein category.  Applying
Corollary~\ref{cor:gorenstein-recollement} to the module recollement
$(\Mod{R/\idealgenby{e}},\Mod{R},\Mod{eRe})$ from
Example~\ref{exam:mod-recollements}, we have the following result.

\begin{cor} 
\label{cor:gorenstein-ring}
Let $R$ be a ring and $e$ an idempotent element of $R$.
\begin{enumerate}
\item If the functor $e-\colon \Mod R \lxr \Mod eRe$ is an eventually
homological isomorphism, then the ring $R$ is left Gorenstein if and
only if the ring $eRe$ is left Gorenstein.
\item Assume that we have either
\par\noindent\begin{minipage}{\linewidth}
\[
\left.
\begin{aligned}
\pd_R R/\idealgenby{e} &\le 1 \\
\sup\{\id_{\B}i(I) \mid I\in \Inj{R/\idealgenby{e}} \} &< \infty
\end{aligned}
\right\}
\qquad\text{or}\qquad
\left\{
\begin{aligned}
\pd_R R/\idealgenby{e} &< \infty \\
\sup\{\id_{\B}i(I) \mid I\in \Inj{R/\idealgenby{e}} \} &\le 1
\end{aligned}
\right.
\]
\end{minipage}
If the ring $R$ is left Gorenstein then the ring $R/\idealgenby{e}$ is
left Gorenstein.
\end{enumerate}
\end{cor}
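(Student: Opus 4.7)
The plan is to specialize Corollary~\ref{cor:gorenstein-recollement} directly to the module recollement
\[
(\Mod{R/\idealgenby{e}},\ \Mod{R},\ \Mod{eRe})
\]
induced by the idempotent $e$, as described in Example~\ref{exam:mod-recollements}. All three module categories possess enough projective and injective objects, so the hypotheses of the earlier corollary are in force.

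For part (i), the quotient functor of this recollement is precisely $e \cdot (-) \colon \Mod{R} \lxr \Mod{eRe}$, which is assumed to be an eventually homological isomorphism. Thus Corollary~\ref{cor:gorenstein-recollement}~(i) applies and yields that $\Mod{R}$ is Gorenstein if and only if $\Mod{eRe}$ is, which by definition is the assertion that $R$ is left Gorenstein if and only if $eRe$ is left Gorenstein.

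For part (ii), the only substantive task is to match the dimension conditions on the inclusion functor $i \colon \Mod{R/\idealgenby{e}} \lxr \Mod{R}$ with those stated here. Every projective $R/\idealgenby{e}$-module $P$ is a direct summand of some free module $(R/\idealgenby{e})^{(S)}$, and since projective dimension is preserved under arbitrary direct sums and does not grow under passage to summands, we obtain
\[
\sup\{\pd_{R} i(P) \mid P \in \Proj{R/\idealgenby{e}}\} = \pd_{R} R/\idealgenby{e},
\]
with the supremum attained at $P = R/\idealgenby{e}$. The injective-dimension condition $\sup\{\id_{R} i(I) \mid I \in \Inj{R/\idealgenby{e}}\}$ appears verbatim in both formulations. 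Hence the hypotheses of Corollary~\ref{cor:gorenstein-recollement}~(ii) translate exactly to those stated here, and its conclusion, namely that $\Mod{R/\idealgenby{e}}$ is Gorenstein whenever $\Mod{R}$ is, is exactly the claim that $R/\idealgenby{e}$ is left Gorenstein whenever $R$ is.

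There is no real obstacle in this argument beyond the bookkeeping of identifying the projective-dimension supremum over $\Proj{R/\idealgenby{e}}$ with $\pd_{R} R/\idealgenby{e}$; after that, the result is a direct transcription of Corollary~\ref{cor:gorenstein-recollement} into the language of rings and idempotents.
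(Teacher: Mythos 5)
Your proposal is correct and matches the paper's own approach: the paper obtains this corollary by applying Corollary~\ref{cor:gorenstein-recollement} to the module recollement $(\Mod{R/\idealgenby{e}},\Mod{R},\Mod{eRe})$ from Example~\ref{exam:mod-recollements}, exactly as you do. The one point you make explicit — that $\sup\{\pd_{R} i(P) \mid P \in \Proj{R/\idealgenby{e}}\} = \pd_{R} R/\idealgenby{e}$ because projective dimension commutes with arbitrary direct sums and passes to summands — is the right bookkeeping the paper leaves implicit.
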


Recall that an artin algebra $\Lambda$ is called \defterm{Gorenstein}
if $\id{_{\Lambda}\Lambda}<\infty$ and $\id\Lambda_{\Lambda}<\infty$
(see \cite{AR:applications, AR:cm}).  Note that $\fmod{\Lambda}$ is a
Gorenstein category if and only if $\Lambda$ is a Gorenstein algebra.
We close this section with the following consequence for artin
algebras, whose first part constitutes the third part of the Main
Theorem presented in the introduction.

\begin{cor}
\label{corGorensteinArtinalg}
Let $\Lambda$ be an artin algebra and $a$ an idempotent element of
$\Lambda$.
\begin{enumerate}
\item Assume that the functor $a-\colon \fmod \Lambda \lxr \fmod
a{\Lambda}a$ is an eventually homological isomorphism.  Then the
algebra $\Lambda$ is Gorenstein if and only if the algebra $a\Lambda
a$ is Gorenstein.
\item Assume that we have either
\[
\left.
\begin{aligned}
\pd_\Lambda \Lambda/\idealgenby{a} &\le 1 \\
\pd_{\opposite{\Lambda}} \Lambda/\idealgenby{a} &< \infty
\end{aligned}
\right\}
\qquad\text{or}\qquad
\left\{
\begin{aligned}
\pd_\Lambda \Lambda/\idealgenby{a} &< \infty \\
\pd_{\opposite{\Lambda}} \Lambda/\idealgenby{a} &\le 1
\end{aligned}
\right.
\]
If the algebra $\Lambda$ is Gorenstein, then the algebra
$\Lambda/\idealgenby{a}$ is Gorenstein.
\end{enumerate}
\end{cor}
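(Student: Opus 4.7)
The plan is to reduce both parts to the general recollement result Corollary~\ref{cor:gorenstein-recollement} applied to the recollement $(\fmod \Lambda/\idealgenby{a}, \fmod \Lambda, \fmod a{\Lambda}a)$ from Example~\ref{exam:mod-recollements}(i). The background translation needed for both parts is standard: an artin algebra $\Gamma$ is Gorenstein if and only if the module category $\fmod \Gamma$ is a Gorenstein category, as the authors recall just before the statement.

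For part (i), the functor $a(-)\colon \fmod \Lambda \lxr \fmod a{\Lambda}a$ is literally the quotient functor $e$ of the recollement, so Corollary~\ref{cor:gorenstein-recollement}(i) applies directly and gives the equivalence between Gorensteinness of $\Lambda$ and of $a\Lambda a$ with no further work.

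For part (ii), the plan is to translate the two suprema appearing in the hypotheses of Corollary~\ref{cor:gorenstein-recollement}(ii) into the projective dimensions of $\Lambda/\idealgenby{a}$ over $\Lambda$ and $\opposite{\Lambda}$. For the projective side, every finitely generated projective $\Lambda/\idealgenby{a}$-module is a direct summand of $(\Lambda/\idealgenby{a})^n$, and the maximum is attained by $\Lambda/\idealgenby{a}$ itself, so
\[
\sup\{\pd_\Lambda i(P) \mid P \in \Proj \fmod \Lambda/\idealgenby{a}\} = \pd_\Lambda \Lambda/\idealgenby{a}.
\]
For the injective side, I would use the standard artin duality $D\colon \fmod \opposite{\Lambda} \lxr \fmod \Lambda$, which restricts to a duality between $\fmod \opposite{(\Lambda/\idealgenby{a})}$ and $\fmod \Lambda/\idealgenby{a}$. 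Then $D(\Lambda/\idealgenby{a})$, with $\Lambda/\idealgenby{a}$ regarded as a right module over itself, is an injective cogenerator of $\fmod \Lambda/\idealgenby{a}$, and every injective of $\fmod \Lambda/\idealgenby{a}$ is a summand of a power of it. Since $D$ is exact and swaps projectives and injectives, one gets $\id_\Lambda D(M) = \pd_{\opposite{\Lambda}} M$ for every $M \in \fmod \opposite{\Lambda}$, and hence
\[
\sup\{\id_\Lambda i(I) \mid I \in \Inj \fmod \Lambda/\idealgenby{a}\} = \pd_{\opposite{\Lambda}} \Lambda/\idealgenby{a}.
\]

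With these two identifications, the two alternative hypotheses of part (ii) match verbatim the two alternative hypotheses of Corollary~\ref{cor:gorenstein-recollement}(ii), so that corollary yields that $\fmod \Lambda/\idealgenby{a}$ is Gorenstein whenever $\fmod \Lambda$ is, i.e.\ $\Lambda/\idealgenby{a}$ is a Gorenstein algebra. The only nonformal ingredient is the duality identity on the injective side; once this translation between $\id_\Lambda$ of the injective cogenerator and $\pd_{\opposite{\Lambda}}$ of $\Lambda/\idealgenby{a}$ is recorded, the rest is a direct invocation of the general recollement result. I do not expect serious obstacles beyond checking that the duality $D$ for $\Lambda$ and the duality for $\Lambda/\idealgenby{a}$ agree on $\fmod \Lambda/\idealgenby{a}$, which holds because both algebras are artin over the same commutative base ring $k$.
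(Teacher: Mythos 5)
Your proposal is correct and matches the intended derivation: part (i) is an immediate instance of Corollary~\ref{cor:gorenstein-recollement}(i) applied to the idempotent recollement of Example~\ref{exam:mod-recollements}(i), and part (ii) follows from Corollary~\ref{cor:gorenstein-recollement}(ii) once the two suprema are rewritten via the observation that $\Lambda/\idealgenby{a}$ is the projective generator and $D(\Lambda/\idealgenby{a})$ the injective cogenerator of $\fmod\Lambda/\idealgenby{a}$, with the artin duality $D$ converting $\id_\Lambda D(M)$ into $\pd_{\opposite{\Lambda}} M$. This is essentially the same route the paper takes (it passes through the ring-level Corollary~\ref{cor:gorenstein-ring}, which leaves the injective supremum unconverted, and then uses duality to reach the artin-algebra formulation).
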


\section{Singular equivalences in recollements}
\label{sectionsingular}

Our purpose in this section is to study singularity categories, in the
sense of Buchweitz \cite{Buchweitz:unpublished} and Orlov
\cite{Orlov}, in a recollement of abelian categories $(\A,\B,\C)$.  In
particular we are interested in finding necessary and sufficient
conditions such that the singularity categories of $\B$ and $\C$ are
triangle equivalent.  We start by recalling some well known facts about
singularity categories.

Let $\B$ be an abelian category with enough projective objects.  We
denote by $\Db(\B)$ the derived category of
bounded complexes of objects of $\B$ and by
$\Kb(\Proj{\B})$ the homotopy category of bounded
complexes of projective objects of $\B$.  Then the \defterm{singularity
  category} of $\B$ (\cite{Buchweitz:unpublished, Orlov}) is defined
to be the Verdier quotient$\colon$
\[
\Dsg(\B)
 = \Db(\B) / \Kb(\Proj{\B})
\]
See \cite{Chen:relativesing} for a discussion of more general
quotients of $\Db(\B)$ by
$\Kb(\X)$, where $\X$ is a selforthogonal
subcategory of $\B$.

It is well known that the singularity category $\Dsg(\B)$ carries a
unique triangulated structure such that the quotient functor
$Q_{\B}\colon \Db(\B)\lxr \Dsg(\B)$ is triangulated, see
\cite{Krause:Localization, Neeman:book, Verdier}.  Recall that the
objects of the singularity category $\Dsg(\B)$ are the objects of the
bounded derived category $\Db(\B)$, the morphisms between two objects
$X^{\bullet}\lxr Y^{\bullet}$ are equivalence classes of fractions
$(X^{\bullet}\leftarrow L^{\bullet}\rightarrow Y^{\bullet})$ such that
the cone of the morphism $L^{\bullet}\lxr X^{\bullet}$ belongs to
$\Kb(\Proj{\B})$ and the exact triangles in $\Dsg(\B)$ are all the
triangles which are isomorphic to images of exact triangles of
$\Db(\B)$ via the quotient functor $Q_{\B}$.  Note that a complex
$X^{\bullet}$ is zero in $\Dsg(\B)$ if and only if $X^{\bullet}\in
\Kb(\Proj{\B})$.  Following Chen \cite{Chen:Singular equivalences,
  Chen:Singular equivalences-trivial extensions}, we say that two
abelian categories $\A$ and $\B$ are \defterm{singularly equivalent}
if there is a triangle equivalence between the singularity categories
$\Dsg(\A)$ and $\Dsg(\B)$.  This triangle equivalence is called a
\defterm{singular equivalence} between $\A$ and $\B$.

To proceed further we need the following well known result for exact
triangles in derived categories.  For a complex $X^{\bullet}$ in an
abelian category $\A$ we denote by $\sigma_{>n}(X^{\bullet})$ the
truncation complex \ $\cdots\lxr 0 \lxr \Image{d^n}\lxr
X^{n+1}\stackrel{d^{n+1}}{\lxr} X^{n+2}\lxr \cdots $, and by
$H^n(X^{\bullet})$ the n-th homology of $X^{\bullet}$.

\begin{lem}
\label{lemtriangles}
Let $\A$ be an abelian category and $X^{\bullet}$ be a complex in
$\A$.  Then we have the following triangle in $\Derived(\A)\colon$
\[
\xymatrix{
H^n(X^{\bullet})[-n] \ar[r]^{} &
\sigma_{>n-1}(X^{\bullet}) \ar[r]^{} &
\sigma_{>n}(X^{\bullet}) \ar[r]^{} &
H^n(X^{\bullet})[1-n]
}
\]
\end{lem}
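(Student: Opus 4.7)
The plan is to realize the triangle as coming from a short exact sequence of complexes. Specifically, I would construct a natural chain map $\pi^\bullet \colon \sigma_{>n-1}(X^\bullet) \lxr \sigma_{>n}(X^\bullet)$, show that it is termwise surjective, identify its kernel, and observe that the kernel is quasi-isomorphic to $H^n(X^\bullet)[-n]$.

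First I would spell out the two truncations degreewise. The complex $\sigma_{>n-1}(X^\bullet)$ is $X^i$ in degrees $i \ge n$ with differential $d^i$, and equals $\Image d^{n-1}$ in degree $n-1$ with differential the inclusion $\Image d^{n-1} \hookrightarrow X^n$; it is zero in smaller degrees. Similarly $\sigma_{>n}(X^\bullet)$ is $X^i$ in degrees $i > n$, equals $\Image d^n$ in degree $n$, and is zero below. I then define $\pi^\bullet$ to be the identity in degrees $> n$, the canonical surjection $X^n \twoheadrightarrow \Image d^n$ (i.e.\ $d^n$ with restricted codomain) in degree $n$, and the zero map $\Image d^{n-1} \lxr 0$ in degree $n-1$. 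A direct check shows this commutes with differentials, so $\pi^\bullet$ is a morphism of complexes, and it is evidently surjective in each degree.

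Next I would compute the kernel complex $K^\bullet = \Ker \pi^\bullet$. It is zero outside degrees $n-1$ and $n$; in degree $n-1$ it is $\Image d^{n-1}$, in degree $n$ it is $\Ker d^n$, and its differential is the inclusion $\iota \colon \Image d^{n-1} \hookrightarrow \Ker d^n$. Since $\iota$ is injective and its cokernel is precisely $H^n(X^\bullet)$, the canonical projection of complexes $K^\bullet \lxr H^n(X^\bullet)[-n]$ (where the target is $H^n(X^\bullet)$ concentrated in degree $n$) is a quasi-isomorphism, hence an isomorphism in $\Derived(\A)$.

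Finally, the short exact sequence of complexes
\[
0 \lxr K^\bullet \lxr \sigma_{>n-1}(X^\bullet) \xrightarrow{\pi^\bullet} \sigma_{>n}(X^\bullet) \lxr 0
\]
induces a distinguished triangle in $\Derived(\A)$ in the standard way, and substituting the identification $K^\bullet \iso H^n(X^\bullet)[-n]$ yields the desired triangle. There is no real obstacle here; the only minor care needed is checking the signs/degrees in the shift so that $H^n(X^\bullet)$ indeed ends up in degree $n$ and the connecting morphism lands in $H^n(X^\bullet)[1-n]$, which is a direct bookkeeping matter.
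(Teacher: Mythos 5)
Your proof is correct and is the standard way to obtain this triangle. The paper states the lemma without proof (it is described there as ``well known''), so there is no argument to compare against; your construction of the termwise surjection $\pi^\bullet\colon\sigma_{>n-1}(X^\bullet)\lxr\sigma_{>n}(X^\bullet)$, the identification of $\Ker\pi^\bullet$ as the two-term complex $\Image d^{n-1}\hookrightarrow\Ker d^n$ concentrated in degrees $n-1$ and $n$, the quasi-isomorphism from this kernel to $H^n(X^\bullet)[-n]$, and the passage from the short exact sequence of complexes to a distinguished triangle in $\Derived(\A)$ are exactly the right steps, and the degree/shift bookkeeping is consistent throughout.
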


Now we are ready to prove the main result of this section which gives
necessary and sufficient conditions for the quotient functor $e\colon
\B\lxr \C$ to induce a triangle equivalence between the singularity
categories of $\B$ and $\C$.  This is a general version of the second
part of the Main Theorem presented in the introduction.

\begin{thm}
\label{thmsingular}
Let $(\A,\B,\C)$ be a recollement of abelian categories.  Then the
following statements are equivalent$\colon$
\begin{enumerate}
\item We have $\pd_{\B}i(A)<\infty$ and $\pd_{\C} e(P)< \infty$ for
every $A\in \A$ and $P\in \Proj{\B}$.
\item The functor $e\colon \B\lxr \C$ induces a singular equivalence
between $\B$ and $\C\colon$
\[
\xymatrix@C=0.5cm{
 \Dsg(e)\colon
 \Dsg(\B)  \ \ar[rr]^{  \ \ \ \equivalence} &&
 \ \Dsg(\C)
}
\]
\end{enumerate}
\begin{proof}
(i) $\Rightarrow$ (ii) First note that we have a well defined derived
functor $\Db(e)\colon \Db(\B)\lxr \Db(\C)$ since the quotient functor
$e\colon \B\lxr \C$ is exact.  Also the recollement situation
$(\A,\B,\C)$ implies that $0 \lxr \A \lxr \B \lxr \C \lxr 0$ is an
exact sequence of abelian categories, see Proposition~\ref{properties}.
Then it follows from \cite[Theorem~3.2]{Miyachi},
see also \cite{Keller:cyclic}, that $0 \lxr \Db_{\A}(\B) \lxr \Db(\B)
\lxr \Db(\C) \lxr 0$ is an exact sequence of triangulated categories,
where $\Db_{\A}(\B)$ is the full subcategory of $\Db(\B)$ consisting
of complexes whose homology lie in $\A$.  Hence $\Db(e)$ is a quotient
functor, i.e.\ $\Db(\B)/\Db_{\A}(\B)\equivalence \Db(\C)$.  Next we claim
that $\Db(e)(\Kb(\Proj{\B})) \subseteq \Kb(\Proj{\C})$.  Let
$P^{\bullet}\in \Kb(\Proj{\B})$.  Suppose first that $P^{\bullet}$ is
concentrated in degree zero, so we deal with a projective object $P$
of $\B$.  Since the object $e(P)$ has finite projective dimension it
follows that there is a quasi-isomorphism $Q^{\bullet}\lxr e(P)[0]$
where $Q^{\bullet}\in \Kb(\Proj{\C})$ is a projective resolution of
$e(P)$.  Then the object $e(P)$[0] is isomorphic with $Q^{\bullet}$ in
$\Db(\C)$ and therefore $e(P)\in \Kb(\Proj{\C})$.  Now let
$P^{\bullet}=(0\lxr P_0\lxr P_1\lxr 0)\in \Kb(\Proj{\B})$.  Then we
have the triangle $P_0[0]\lxr P_1[0]\lxr P^{\bullet}\lxr P_0[1]$ and
if we apply the functor $\Db(e)$ we infer that $\Db(e)(P^{\bullet})\in
\Kb(\Proj{\C})$ since $\Kb(\Proj{\C})$ is a triangulated subcategory.
Continuing inductively on the length of the complex $P^{\bullet}$ we
infer that the object $\Db(e)(P^{\bullet})$ lies in $\Kb(\Proj{\C})$
and so our claim follows.  Then since the triangulated functor
$\Db(e)\circ Q_{\C}\colon \Db(\B)\lxr \Dsg(\C)$ annihilates
$\Kb(\Proj{\B})$ it follows that $\Db(e)\circ Q_{\C}$ factors uniquely
through $Q_{\B}$ via a triangulated functor $\Dsg(e)\colon
\Dsg(\B)\lxr \Dsg(\C)$, that is the following diagram is
commutative$\colon$
\[
\xymatrix{
 \Db(\B) \ar[rr]^{Q_{\B}} \ar[d]_{\Db(e)} &&
 \Dsg(\B)  \ar[d]^{\Dsg(e)} \\
 \Db(\C)   \ar[rr]^{Q_{\C}}     &&
 \Dsg(\C)
}
\]

Next we show that $\Db_{\A}(\B)\subseteq \Kb(\Proj{\B})$ in $\Db(\B)$.
Since the projective dimension of $i(A)$ is finite for all $A$ in
$\A$, it follows that $i(\A)\subseteq \Kb(\Proj{\B})$ in $\Db(\B)$.
Let $B^{\bullet}$ be an object of $\Db_{\A}(\B)$.  Assume first that
$B^{\bullet}$ is concentrated in degree zero.  Hence we deal with an
object $B\in \B$ such that $B\iso i(A)$ for some $A\in \A$, and
therefore our claim follows.  Now consider a complex
\[
B^{\bullet}\colon\xymatrix{
0 \ar[r]^{} & B^0 \ar[r]^{d^0} & B^1 \ar[r]^{} & 0  } 
\]
where $H^0(B^{\bullet})$ and $H^1(B^{\bullet})$ lies in $\A$.  From
Lemma~\ref{lemtriangles} we have the triangles
\[
\xymatrix{
 H^0(B^{\bullet}) \ar[r]^{} &
 \sigma_{>-1}(B^{\bullet}) \ar[r]^{} &
 \sigma_{>0}(B^{\bullet}) \ar[r]^{} &
 H^0(B^{\bullet})[1]
}
\]
and
\[
\xymatrix{
 H^1(B^{\bullet})[-1] \ar[r]^{} &
 \sigma_{>0}(B^{\bullet}) \ar[r]^{} &
 \sigma_{>1}(B^{\bullet}) \ar[r]^{} &
 H^1(B^{\bullet})
}
\]
in $\Db(\B)$.  Then from the second triangle it follows that
$\sigma_{>0}(B^{\bullet})\in \Kb(\Proj{\B})$ and therefore from the
first triangle we get that
$\sigma_{>-1}(B^{\bullet})=B^{\bullet}\in\Kb(\Proj{\B})$.  Continuing
inductively on the length of the complex $B^{\bullet}$, we infer that
$\Db_{\A}(\B)\subseteq \Kb(\Proj{\B})$ in $\Db(\B)$.  Using this we
can form the quotient $\Kb(\Proj{\B})/\Db_{\A}(\B)$, and then we have
the following exact commutative diagram$\colon$
\[
\xymatrix{
 0 \ar[r] &
 \Kb(\Proj{\B})/\Db_{\A}(\B) \ar[r]^{} \ar[d]_{} &
 \Db(\B)/\Db_{\A}(\B) \ar[d]^{\equivalence} \ar[r] &
 \Dsg(\B) \ar[r]^{} \ar[d]^{\Dsg(e)} &
 0 \\
 0 \ar[r] &
 \Kb(\Proj{\C}) \ar[r]^{} &
 \Db(\C)  \ar[r] &
 \Dsg(\C)  \ar[r] &
 0 
}
\]

We show that the functor $\Kb(\Proj{\B})/\Db_{\A}(\B)\lxr
\Kb(\Proj{\C})$ is an equivalence, where we denote it by $\Kb(e)$.
First from the above commutative diagram and since there is an
equivalence $\Db(\B)/\Db_{\A}(\B)\equivalence \Db(\C)$, it follows that the
functor $\Kb(e)$ is fully faithful.  Let $P^{\bullet}\colon 0 \lxr P_n
\lxr \cdots \lxr P_1 \lxr P_0 \lxr 0 $ be an object of
$\Kb(\Proj{\C})$.  Each $P_i$ is a projective object in $\C$ and from
Proposition~\ref{properties} we have $el(P_i)\iso P_i$ with $l(P_i)\in
\Proj{\B}$.  Then the complex $l(P^{\bullet})\colon 0 \lxr l(P_n) \lxr
\cdots \lxr l(P_1) \lxr l(P_0) \lxr 0 $ is such that
$\Kb(e)(l(P^{\bullet}))=P^{\bullet}$.  This implies that the functor
$\Kb(e)$ is essentially surjective.  Hence the functor $\Kb(e)$ is an
equivalence.

In conclusion, from the above exact commutative diagram we infer that
the singularity categories of $\B$ and $\C$ are triangle equivalent.

(ii) $\Rightarrow$ (i) Suppose that there is a triangle equivalence
$\Dsg(e)\colon \Dsg(\B)\stackrel{\equivalence}{\lxr} \Dsg(\C)$.  Let $P$ be
a projective object of $\B$.  Then $P[0]\in \Kb(\Proj{\B})$ and
$\Db(e)(P[0])\in \Kb(\Proj{\C})$.  Thus the object $e(P)$ has finite
projective dimension.  Let $A\in \A$ and consider the object $i(A)$ of
$\B$.  Then from Proposition~\ref{properties} we have $e(i(A))=0$.
Since $\Dsg(e)$ is an equivalence, the object $i(A)$ is zero in
$\Dsg(\B)$, and therefore $i(A)\in \Kb(\Proj{\B})$.  We infer that
$i(A)$ has finite projective dimension.
\end{proof}
\end{thm}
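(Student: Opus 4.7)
The plan is to prove the two implications of the equivalence separately, treating the reverse direction as a quick check and concentrating the work on constructing the equivalence for the forward direction. The reverse direction (ii) $\Rightarrow$ (i) is straightforward: assuming $\Dsg(e)$ is a triangle equivalence, for any projective $P \in \B$ the complex $P[0]$ is zero in $\Dsg(\B)$, so $e(P)[0]$ is zero in $\Dsg(\C)$, which means $e(P) \in \Kb(\Proj \C)$ and hence $\pd_\C e(P) < \infty$. For any $A \in \A$, Proposition \ref{properties} gives $ei(A) = 0$, so $\Dsg(e)$ sends $i(A)$ to zero; being an equivalence it is conservative, so $i(A) \in \Kb(\Proj \B)$, giving $\pd_\B i(A) < \infty$.

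For the forward direction (i) $\Rightarrow$ (ii), I would first descend the derived functor $\Db(e)\colon \Db(\B) \lxr \Db(\C)$ to the singularity categories. Since the recollement yields an exact sequence of abelian categories $0 \lxr \A \lxr \B \lxr \C \lxr 0$ (Proposition \ref{properties}), a theorem of Miyachi produces an exact sequence $0 \lxr \Db_\A(\B) \lxr \Db(\B) \lxr \Db(\C) \lxr 0$ of triangulated categories, so $\Db(\B)/\Db_\A(\B) \equivalence \Db(\C)$. Using $\pd_\C e(P) < \infty$ for projective $P \in \B$, I would show by induction on the length of a bounded complex of projectives that $\Db(e)$ carries $\Kb(\Proj \B)$ into $\Kb(\Proj \C)$, so $\Db(e)$ factors through a triangulated functor $\Dsg(e)\colon \Dsg(\B) \lxr \Dsg(\C)$.

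To prove $\Dsg(e)$ is an equivalence, I would assemble a commutative diagram with short exact rows of triangulated categories, with $0 \lxr \Kb(\Proj \B)/\Db_\A(\B) \lxr \Db(\B)/\Db_\A(\B) \lxr \Dsg(\B) \lxr 0$ on top and $0 \lxr \Kb(\Proj \C) \lxr \Db(\C) \lxr \Dsg(\C) \lxr 0$ on the bottom, the middle vertical map being the Miyachi equivalence. This requires first establishing $\Db_\A(\B) \subseteq \Kb(\Proj \B)$ inside $\Db(\B)$, which uses the hypothesis $\pd_\B i(A) < \infty$ at the level of $\A$ in degree zero and then extends to all of $\Db_\A(\B)$ by induction on the length of a complex via the truncation triangles of Lemma \ref{lemtriangles}. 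Fully faithfulness of the left vertical map will follow from the middle equivalence, and essential surjectivity will use that $l\colon \C \lxr \B$ preserves projectives (Proposition \ref{properties}) and satisfies $el \iso \Id_\C$, so a bounded complex of projectives of $\C$ lifts termwise through $l$.

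The main obstacle I anticipate is the diagram-chase step at the end: deducing that $\Dsg(e)$ is an equivalence from the two horizontal exact sequences together with the middle equivalence and the left equivalence. One has to verify this carefully, ensuring that the quotient relation $\Dsg = \Db / \Kb(\Proj)$ is compatible on both sides with the inclusion $\Db_\A(\B) \subseteq \Kb(\Proj \B)$. The rest of the proof consists of reasonably standard inductions on complex length and adjunction arguments using the structure of the recollement.
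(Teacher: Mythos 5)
Your proposal is correct and follows essentially the same approach as the paper's proof: both directions use the same ingredients (Miyachi's theorem for the exact sequence of derived categories, induction on complex length to show $\Db(e)$ preserves $\Kb(\Proj)$ and that $\Db_\A(\B)\subseteq\Kb(\Proj\B)$ via Lemma~\ref{lemtriangles}, the lift through $l$ for essential surjectivity, and the commutative diagram of Verdier quotients). The final diagram-chase you flag as a potential obstacle is exactly the step the paper takes, and it goes through as a standard argument: two vertical equivalences in a commutative ladder of short exact sequences of triangulated categories force the third.
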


\begin{rem}
\label{rem:singular}
If the functor $e \colon \B \lxr \C$ is an eventually homological
isomorphism, then statement (i) in Theorem~\ref{thmsingular} is true
by Proposition~\ref{prop:ext-iso-implies-conditions}.  Thus
Theorem~\ref{thmsingular} in particular says that if the functor $e
\colon \B \lxr \C$ in a recollement $(\A,\B,\C)$ is an eventually
homological isomorphism, then it induces a singular equivalence
between $\B$ and $\C$.

Note that statement (i) in Theorem~\ref{thmsingular} only states that
each object of the form $i(A)$ or $e(P)$ has finite projective
dimension, and not that there exists a finite bound for the projective
dimensions of all such objects.  In other words, the supremums
\[
\sup \{ \pd_\B i(A) \mid A \in \A \}
\qquad\text{and}\qquad
\sup \{ \pd_\C e(P) \mid P \in \Proj \B \}
\]
(which are used in other parts of the paper) may be infinite even if
statement (i) is true.
\end{rem}

Applying Theorem~\ref{thmsingular} to the recollement of module
categories $(\fmod{R/\idealgenby{e}},\fmod{R},\fmod{eRe})$, see
Example~\ref{exam:mod-recollements}, we have the following consequence
due to Chen, see \cite[Theorem~2.1]{Chen:schurfunctors} and
\cite[Corollary~3.3]{Chen:tworesultsofOrlov}.  Note that our version
is somewhat stronger; the difference is that Chen takes $\pd_{eRe} eR
<\infty$ as an assumption instead of including it in one of the
equivalent statements.  This result constitutes the second part of the
Main Theorem presented in the introduction.

\begin{cor}
\label{corsingular}
Let $R$ be a left Noetherian ring and $e$ an idempotent element of
$R$.  Then the following statements are equivalent$\colon$
\begin{enumerate} 
\item For every $R/\idealgenby{e}$-module $X$ we have $\pd_RX<\infty$, and
$\pd_{eRe} eR< \infty$.
\item The functor $e(-)\colon \fmod{R}\lxr \fmod{eRe}$ induces a
singular equivalence between $\fmod{R}$ and $\fmod{eRe}\colon$
\[
\xymatrix@C=0.5cm{
 \Dsg(e(-))\colon
 \Dsg(\fmod{R})  \ \ar[rr]^{  \ \ \ \ \equivalence} &&
 \ \Dsg(\fmod{eRe})
}
\]
\end{enumerate}
\end{cor}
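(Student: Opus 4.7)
The plan is to derive this corollary directly from Theorem~\ref{thmsingular} by specializing to the recollement of module categories in Example~\ref{exam:mod-recollements}(i) associated to the idempotent $e \in R$, namely $(\fmod{R/\idealgenby{e}}, \fmod R, \fmod{eRe})$. In this recollement the left-hand embedding $i$ is the canonical inclusion $\fmod{R/\idealgenby{e}} \hookrightarrow \fmod R$, and the middle-right quotient functor is precisely multiplication by $e$, which is the functor $e(-) \colon \fmod R \lxr \fmod{eRe}$ appearing in the corollary. Since $R$ is left Noetherian, $\fmod R$ is abelian with enough projectives, so Theorem~\ref{thmsingular} is applicable.

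First, I would rewrite the two halves of condition (i) of Theorem~\ref{thmsingular} in this module-theoretic setting. The clause ``$\pd_\B i(A) < \infty$ for every $A \in \A$'' translates, via $i = \inc$, to ``$\pd_R X < \infty$ for every $R/\idealgenby{e}$-module $X$'', which is precisely the first half of condition (i) of the corollary.

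Next, I would handle the clause ``$\pd_\C e(P) < \infty$ for every $P \in \Proj \B$''. Because $R$ is left Noetherian, every finitely generated projective $R$-module $P$ is a direct summand of some $R^n$. Since $e(-) = eR \tensor_R -$ is additive and exact, $e(R^n) = (eR)^n$, so $e(P)$ is a summand of $(eR)^n$, which gives
\[
\pd_{eRe} e(P) \le \pd_{eRe} (eR)^n = \pd_{eRe} eR.
\]
Conversely, taking $P = R$ recovers $\pd_{eRe} eR$, so this clause is equivalent to the single condition $\pd_{eRe} eR < \infty$, matching the second half of (i) in the corollary.

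With both translations in hand, condition (i) of the corollary coincides with condition (i) of Theorem~\ref{thmsingular} for this recollement, and the equivalence with (ii)---a singular equivalence $\Dsg(e) \colon \Dsg(\fmod R) \equivalence \Dsg(\fmod{eRe})$---is then immediate from the theorem. The only step requiring any thought is the second translation, where one must observe that the \emph{a priori} bound on $\pd_{eRe} e(P)$ across all projectives in $\fmod R$ collapses to the single invariant $\pd_{eRe} eR$; the remainder of the argument is a direct invocation of Theorem~\ref{thmsingular}.
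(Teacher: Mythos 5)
Your proposal is correct and follows essentially the same route as the paper: the paper also obtains Corollary~\ref{corsingular} by specializing Theorem~\ref{thmsingular} to the module recollement $(\fmod{R/\idealgenby{e}}, \fmod R, \fmod{eRe})$ of Example~\ref{exam:mod-recollements}. You merely spell out the translation of the two clauses of condition (i) (which the paper leaves implicit), and your direct-summand argument reducing $\pd_{eRe}e(P)$ to the single invariant $\pd_{eRe}eR$ is the right observation.
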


We end this section with an application to stable categories of
Cohen--Macaulay modules.  Let $\Lambda$ be a Gorenstein artin
algebra.  We denote by $\CM(\Lambda)$ the category of (maximal)
\defterm{Cohen--Macaulay modules} defined as follows$\colon$
\[
\CM(\Lambda)
= \{ X\in \fmod{\Lambda} \mid
     \Ext_{\Lambda}^n(X,\Lambda)=0 \ \text{for all $n\geq 1$} \}
\]
Then it is known that the stable category $\uCM(\Lambda)$ modulo
projectives is a triangulated category, see \cite{Happel:4}, and
moreover there is a triangle equivalence between the singularity
category $\Dsg(\fmod{\Lambda})$ and the stable category
$\uCM(\Lambda)$, see \cite[Theorem~4.4.1]{Buchweitz:unpublished} and
\cite[Theorem~4.6]{Happel:3}.  As a consequence of
Corollary~\ref{cor:algebra-ext-iso},
Corollary~\ref{corGorensteinArtinalg} and Corollary~\ref{corsingular}
we get the following.

\begin{cor}
\label{corgorsingular}
Let $\Lambda$ be Gorenstein artin algebra and $a$ an idempotent
element of $\Lambda$.  Assume that the functor $a-\colon \fmod \Lambda
\lxr \fmod a{\Lambda}a$ is an eventually homological isomorphism.
Then there is a triangle equivalence between the stable categories of
Cohen--Macaulay modules of $\Lambda$ and $a\Lambda a$$\colon$
\[
\xymatrix@C=0.5cm{
 \uCM(\Lambda)  \ \ar[rr]^{ \equivalence \  \ } &&
 \ \uCM(a\Lambda a)  }
\]
\end{cor}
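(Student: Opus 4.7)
The plan is to assemble three earlier results: Corollary~\ref{cor:algebra-ext-iso}, which unpacks what it means for $a-$ to be an eventually homological isomorphism into dimension conditions $(\alpha)$--$(\delta)$; Corollary~\ref{corGorensteinArtinalg}(i), which transfers Gorensteinness between $\Lambda$ and $a\Lambda a$; and Corollary~\ref{corsingular}, which produces a singular equivalence under the finite-projective-dimension hypotheses that those conditions supply. The Buchweitz--Happel theorem recalled just above the statement then identifies the two singularity categories with the two stable categories of Cohen--Macaulay modules, and the three equivalences compose to give the conclusion.

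Concretely, I would proceed in three steps. First, apply Corollary~\ref{corGorensteinArtinalg}(i) to conclude that $a\Lambda a$ is Gorenstein; consequently the Buchweitz--Happel theorem yields triangle equivalences
\[
\Dsg(\fmod\Lambda) \equivalence \uCM(\Lambda)
\qquad\text{and}\qquad
\Dsg(\fmod a\Lambda a) \equivalence \uCM(a\Lambda a).
\]
Second, verify the hypotheses of Corollary~\ref{corsingular} with $R = \Lambda$ and $e = a$. By Corollary~\ref{cor:algebra-ext-iso}, the hypothesis on $a-$ gives $(\beta)$ $\pd_{a\Lambda a} a\Lambda < \infty$, which is the second requirement of Corollary~\ref{corsingular}(i); it also gives $(\gamma)$ $\pd_\Lambda\bigl((\Lambda/\idealgenby{a})/\rad(\Lambda/\idealgenby{a})\bigr) < \infty$, from which the first requirement follows. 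Third, compose
\[
\uCM(\Lambda) \equivalence \Dsg(\fmod\Lambda) \equivalence \Dsg(\fmod a\Lambda a) \equivalence \uCM(a\Lambda a),
\]
where the middle triangle equivalence is the singular equivalence furnished by Corollary~\ref{corsingular}.

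The one small gap to fill in the second step is the passage from finite projective dimension of the semisimple top $(\Lambda/\idealgenby{a})/\rad(\Lambda/\idealgenby{a})$ to finite projective dimension over $\Lambda$ of every module in $\fmod{\Lambda/\idealgenby{a}}$. Since $\Lambda$ is artin, every such module has a finite composition series whose simple factors are direct summands of $(\Lambda/\idealgenby{a})/\rad(\Lambda/\idealgenby{a})$, so $(\gamma)$ bounds the projective dimension (over $\Lambda$) of each simple factor, and then the standard short-exact-sequence argument, by induction on composition length, bounds the projective dimension over $\Lambda$ of the whole module. This is the only step that is not an immediate invocation of a previously proved result, and it is the main, though very mild, obstacle; otherwise the proof is a clean concatenation of the three corollaries with the Buchweitz--Happel equivalence.
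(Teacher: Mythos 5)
Your proposal is correct and follows essentially the same route the paper intends: the paper states the corollary as a direct consequence of Corollary~\ref{cor:algebra-ext-iso}, Corollary~\ref{corGorensteinArtinalg} and Corollary~\ref{corsingular} together with the Buchweitz--Happel equivalence recalled just above, and you have assembled precisely those ingredients in the right order. The composition-series argument you supply to pass from condition $(\gamma)$ to finiteness of $\pd_\Lambda X$ for all $X$ in $\fmod\Lambda/\idealgenby{a}$ is the standard step the paper leaves implicit (and uses elsewhere, e.g.\ in the proof of Corollary~\ref{cor:algebra-ext-iso-onemodule}), so it is the intended filler.
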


\section{Finite generation of cohomology rings}
\label{section:cohomology}

In this section, we describe a way to compare the \fg{} condition (see
Definition~\ref{defn:fg}) for two different algebras.  This is used in
the next section for the algebras $\Lambda$ and $a{\Lambda}a$, where
$\Lambda$ is a finite dimensional algebra over a field and $a$ is an
idempotent in $\Lambda$.

Let $\Lambda$ and $\Gamma$ be two artin algebras over a commutative
ring $k$, and assume that they are flat as $k$-modules.  Let $M =
\Lambda/(\rad \Lambda)$ and $N = \Gamma/(\rad \Gamma)$.  Assume that
we have graded ring isomorphisms $f$ and $g$ making the
diagram
\begin{equation}
\label{eqn:fg-diagram}
\xymatrix{
{\HH*(\Lambda)} \ar[r]^-{\varphi_M} \ar[d]_{f}^{\iso} &
{\Ext_\Lambda^*(M, M)}  \ar[d]_g^{\iso} \\
{\HH*(\Gamma)} \ar[r]_-{\varphi_{N}} &
{\Ext_{\Gamma}^*(N, N)}
}
\end{equation}
commute, where the maps $\varphi_M$ and $\varphi_N$ are defined in
Subsection~\ref{subsection:hh}.  Then it is clear that \fg{} for
$\Lambda$ is exactly the same as \fg{} for $\Gamma$, since all the
relevant data for the \fg{} condition is exactly the same for the two
algebras.

However, we can come to the same conclusion even if the homology groups for
$\Lambda$ and $\Gamma$ are different in some degrees, as long as they
are the same in all but finitely many degrees.  In other words, if the
maps $f$ and $g$ above are just graded ring homomorphisms such that
$f_n$ and $g_n$ are group isomorphisms for almost all degrees $n$,
then the \fg{} condition holds for $\Lambda$ if and only if it holds
for $\Gamma$.  The goal of this section is to show this.

We first prove the result in a more general setting, where we replace
the rings in \eqref{eqn:fg-diagram} by arbitrary graded rings
satisfying appropriate assumptions.  This is done in
Proposition~\ref{prop:graded-fin-gen}, after we have shown a part of
the result (corresponding to part (i) of the \fg{} condition) in
Proposition~\ref{prop:graded-noetherian}.  Finally, we state the
result for \fg{} in Proposition~\ref{prop:fg-two-algebras}.

We now introduce some terminology and notation which is used in this
section and the next.  By \defterm{graded ring} we always mean a ring
of the form
\[
R = \Dsum_{i=0}^\infty R_i
\]
graded over the nonnegative integers.  We denote the set of
nonnegative integers by $\No$.  If $R$ is a graded ring and $n$ a
nonnegative integer, we use the notation $R_{\ge n}$ for the graded
ideal
\[
R_{\ge n} = \Dsum_{i=n}^\infty R_i
\]
in $R$.  We use the term \defterm{rng} for a ``ring without identity'',
that is, an object which satisfies all the axioms for a ring except
having a multiplicative identity element.

We use the following characterization of noetherianness for graded
rings.

\begin{thm}
\label{thm:noetherian-equals-graded-noetherian}
Let $R$ be a graded ring.  Then $R$ is noetherian if and only if it
satisfies the ascending chain condition on graded ideals.
\end{thm}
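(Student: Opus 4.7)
The forward direction is immediate: every graded left ideal is in particular a left ideal, so the ascending chain condition on all left ideals certainly restricts to the ascending chain condition on graded ones. The content of the theorem is the converse, which I plan to prove by a standard ``top component'' argument that exploits the fact that $R$ is graded over $\No$.

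For each nonzero $x \in R$, define $\mathrm{top}(x) := x_d$, where $d$ is the largest index with $x_d \neq 0$; this is well defined since $x$ has only finitely many nonzero homogeneous components. For a left ideal $I \subseteq R$, let $\tau(I)$ denote the graded left ideal of $R$ generated by the set $\{\mathrm{top}(x) \mid 0 \neq x \in I\}$. Evidently $I \subseteq J$ implies $\tau(I) \subseteq \tau(J)$.

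The heart of the matter is the following lemma, to be proved by strong induction on the top degree of $y \in J$: if $I \subseteq J$ are left ideals with $\tau(I) = \tau(J)$, then $I = J$. Given $0 \neq y \in J$ with top component $y_d$ of degree $d$, the hypothesis forces $y_d \in \tau(I)_d$, so I may write $y_d = \sum_i r_i \cdot \mathrm{top}(x_i)$ with $x_i \in I$ and each $r_i$ homogeneous of degree $d - \deg(\mathrm{top}(x_i)) \geq 0$ (taking degree-$d$ components if necessary). Setting $w = \sum_i r_i x_i \in I$, each summand $r_i x_i$ has components only in degrees $\leq d$, with degree-$d$ component equal to $r_i \cdot \mathrm{top}(x_i)$, so the degree-$d$ component of $w$ is exactly $y_d$. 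Hence $y - w \in J$ has top degree strictly less than $d$ (or vanishes), the inductive hypothesis places it in $I$, and consequently $y = (y - w) + w \in I$. The base case is absorbed into the inductive step: when $d = 0$ the element $y - w$ must already be zero.

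With the lemma in hand, the converse direction is immediate: an ascending chain $I_1 \subseteq I_2 \subseteq \cdots$ of left ideals induces a chain $\tau(I_1) \subseteq \tau(I_2) \subseteq \cdots$ of graded left ideals, which stabilizes at some index $n$ by hypothesis, and the lemma then forces $I_n = I_{n+1} = \cdots$. The same argument applies verbatim to right ideals and to two-sided ideals. The main obstacle is the lemma, and the crucial feature making the induction run is that $R$ is graded over $\No$ rather than $\mathbb{Z}$: this guarantees that $\mathrm{top}(x)$ is defined for every nonzero $x$, that $\deg(r_i) \geq 0$ when lifting a relation in $\tau(I)_d$, and that the induction on top degree has a well-founded base at degree $0$.
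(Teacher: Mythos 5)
Your proof is correct. The paper's proof is a bare citation to Theorem~5.4.7 of the N\u{a}st\u{a}sescu--Van Oystaeyen textbook, so you have filled in content the paper outsources. Your argument is the standard ``leading component'' technique, the graded analogue of the Hilbert basis argument: associate to each (not necessarily graded) left ideal $I$ the graded left ideal $\tau(I)$ generated by the top homogeneous components of elements of $I$, and show this assignment detects proper inclusions. The reduction $y \mapsto y - w$, with $w \in I$ chosen to reproduce the top homogeneous component of $y$, together with strong induction on the top degree, is exactly the right mechanism, and your remark that the $\No$-grading is what keeps the coefficients $r_i$ in nonnegative degree and makes the induction well-founded at degree~$0$ pinpoints precisely where the hypothesis enters. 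One minor overstatement: the argument does not apply quite ``verbatim'' to two-sided ideals --- there $\tau(I)$ must be generated under both left and right multiplication, and one writes $y_d = \sum_i r_i \cdot \mathrm{top}(x_i) \cdot s_i$ with $r_i, s_i$ homogeneous --- though the same induction still goes through; in any case the paper only uses the statement for graded left (and, by symmetry, right) ideals. The payoff of your route is a self-contained elementary proof that makes the role of the $\No$-grading transparent, whereas the paper's citation keeps the exposition short at the cost of sending the reader elsewhere.
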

\begin{proof}
This follows directly from \cite[Theorem~5.4.7]{gradedrings:book}.
\end{proof}

We now begin the main work of this section by showing that an
isomorphism in all but finitely many degrees between two sufficiently
nice graded rings preserves noetherianness.  This implies that such a
map between Hochschild cohomology rings preserves part (i) of the
\fg{} condition, and thus gives one half of the result we want.

\begin{prop}
\label{prop:graded-noetherian}
Let $R$ and $S$ be graded rings.  Assume that $R_0$ and $S_0$ are
noetherian, that every $R_i$ is finitely generated as left and as
right $R_0$-module, and that every $S_i$ is finitely generated as left
and as right $S_0$-module.  Let $n$ be a nonnegative integer, and
assume that there exists an isomorphism $\phi \colon R_{\ge n} \lxr S_{\ge n}$
of graded rngs.  Then $R$ is noetherian if and only if $S$ is noetherian.
\end{prop}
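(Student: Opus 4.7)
By the symmetry of the hypotheses it suffices to prove that if $R$ is noetherian then $S$ is noetherian; the converse is obtained by interchanging the roles of $R$ and $S$. In light of Theorem~\ref{thm:noetherian-equals-graded-noetherian}, the goal reduces to verifying the ascending chain condition on graded ideals of $S$. Fix such a chain $I_1 \subseteq I_2 \subseteq \cdots$, and decompose each $I_k = I_k^{<n} \oplus I_k^{\geq n}$ with $I_k^{<n} = \bigoplus_{i<n}(I_k)_i$ and $I_k^{\geq n} = \bigoplus_{i \geq n}(I_k)_i$.

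For the low-degree part, I would argue degree by degree: in each fixed $i < n$ the chain $(I_k)_i$ consists of $S_0$-sub-bimodules of $S_i$, and since $S_0$ is noetherian and $S_i$ is finitely generated as a left (and right) $S_0$-module, $S_i$ is a noetherian left $S_0$-module, and in particular noetherian as an $S_0$-bimodule. So $(I_k)_i$ stabilizes for each such $i$, and because only finitely many degrees are involved, there is a uniform index $K_1$ beyond which $I_k^{<n}$ is constant.

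The heart of the proof concerns the high-degree part. Via $\phi^{-1}$ I would transfer to an ascending chain $J_k := \phi^{-1}(I_k^{\geq n})$ of graded rng-ideals of $R_{\geq n}$, and form the chain of two-sided graded ideals $\tilde J_k := R J_k R$ of $R$. Since $R$ is noetherian this chain stabilizes at some index $K_2$. Once one deduces from this that $J_k$ itself stabilizes, applying $\phi$ returns stabilization of $I_k^{\geq n}$, which together with the low-degree conclusion forces stabilization of $I_k$ and proves $S$ noetherian.

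The hard part will be this last deduction. Graded rng-ideals of $R_{\geq n}$ form a strictly richer lattice than the graded $R$-ideals contained in $R_{\geq n}$, since a rng-ideal need not be stable under multiplication by elements of $R_0 \oplus R_1 \oplus \cdots \oplus R_{n-1}$; thus noetherianness of $R$ alone does not provide ACC on the rng-ideal chain $J_k$. The missing information is carried by the fact that each $J_k$ is stable under the $S_0$-action on $R_{\geq n}$ transferred through $\phi$, and the hypothesis that each $S_i$ is finitely generated over $S_0$ on both sides makes every graded piece $R_i \cong S_i$ (for $i \geq n$) into a noetherian $S_0$-bimodule in this transferred action. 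Working inside the finitely generated graded $R$-module $\tilde J_{K_2}$, the plan is to combine these degree-wise $S_0$-noetherian structures with the finite generation of $\tilde J_{K_2}$ as an $R$-module to promote the pointwise-in-degree stabilization of $J_k$ into a uniform one across all degrees $\geq n$; this uniformization step is the main technical obstacle.
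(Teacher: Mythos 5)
Your proposal identifies the right difficulty but does not overcome it, and the plan you outline for the crucial step is unlikely to succeed as described.  You correctly observe that the low-degree part of a chain of graded ideals of $S$ stabilizes, because each $(I_k)_i$ for $i<n$ is an $S_0$-submodule of the noetherian $S_0$-module $S_i$ and there are only finitely many such degrees.  You also correctly observe that transferring $I_k^{\ge n}$ across $\phi^{-1}$ yields a chain of graded rng-ideals $J_k\subseteq R_{\ge n}$, and that the noetherianness of $R$ does not directly give ACC on rng-ideals of $R_{\ge n}$.  But your route through $\tilde J_k:=R J_k R$ has a real gap: stabilization of $\tilde J_k$ does not force stabilization of $J_k$, because $(\tilde J_k)_{\ge n}$ may contain elements coming from products $R_{<n}\cdot J_k$ or $J_k\cdot R_{<n}$ that are not in $J_k$, and there is no canonical way to recover $J_k$ from $\tilde J_k$.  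The ``uniformization step'' you flag as the main technical obstacle is in fact the entire content of the proposition, and the ingredients you list (degreewise $S_0$-noetherianness plus finite generation of $\tilde J_{K_2}$) do not obviously combine to close it.

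The paper's proof sidesteps all of this by arguing contrapositively and constructing an explicit strictly ascending chain rather than trying to prove stabilization.  Assuming $R$ is \emph{not} left noetherian, one takes a strictly ascending chain of graded left ideals $I^{(0)}\subset I^{(1)}\subset\cdots$ in $R$.  Degreewise noetherianness of $R_d$ as $R_0$-module gives, for each $d$, an index $s(d)$ beyond which $(I^{(i)})_d$ is constant; one sets $\sigma(d)=\max\{s(0),\ldots,s(d)\}$ and lets $\delta(i)$ be the smallest degree where $I^{(i)}$ and $I^{(i+1)}$ differ.  The key move is then to choose indices $i_j$ and degrees $d_j=\delta(i_j)$ inductively (with $i_j=\sigma(d_{j-1}+n)$) so that the degrees satisfy $d_j>d_{j-1}+n>n$.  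Picking $x_j\in (I^{(i_j+1)})_{d_j}\setminus (I^{(i_j)})_{d_j}$ and letting $J^{(j)}$ be the left ideal of $S$ generated by $\phi(x_0),\ldots,\phi(x_j)$, the degree gaps ensure that in any relation $\phi(x_j)=\sum_{m<j} s_m\phi(x_m)$ with homogeneous $s_m$, each nonzero $s_m$ has degree $d_j-d_m>n$ and hence lies in $S_{\ge n}=\operatorname{im}\phi$.  Pulling back via $\phi^{-1}$ then contradicts $x_j\notin I^{(i_j)}$.  Thus the chain $J^{(j)}$ is strictly ascending and $S$ is not noetherian.  The crucial idea you are missing is this spacing of degrees by more than $n$, which is exactly what guarantees that the ``coefficients'' one encounters are visible to $\phi$; this replaces the infeasible attempt to bound a chain of rng-ideals by an $R$-ideal.
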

\begin{proof}
We prove (by showing the contrapositive) that $R$ is left noetherian
if $S$ is left noetherian.  The corresponding result with right
noetherian is proved in the same way.  This gives one of the
implications we need.  The opposite implication is proved in the same
way by interchanging $R$ and $S$ and using $\phi^{-1}$ instead of
$\phi$.

Assume that $R$ is not left noetherian.  Let
\[
\mathbb{I}\colon\quad I^{(0)} \subset I^{(1)} \subset \cdots
\]
be an infinite strictly ascending sequence of graded left ideals in
$R$ (this is possible by
Theorem~\ref{thm:noetherian-equals-graded-noetherian}).  For every
index $i$ in this sequence, we can write the ideal $I^{(i)}$ as a
direct sum
\[
I^{(i)} = \Dsum_{d \in \No} I^{(i)}_d
\]
of abelian groups, where $I^{(i)}_d \subseteq R_d$ is the degree $d$
part of $I^{(i)}$.  For any degree $d$, we can make an ascending
sequence
\[
I^{(0)}_d \subseteq I^{(1)}_d \subseteq \cdots
\]
of $R_0$-submodules of $R_d$ by taking the degree $d$ part of each
ideal in $\mathbb{I}$.  But $R_d$ is a noetherian $R_0$-module (since
$R_0$ is noetherian and $R_d$ is a finitely generated $R_0$-module),
and hence this sequence must stabilize at some point.  Let $s(d)$ be
the point where it stabilizes, that is, the smallest integer such that
$I^{(s(d))}_d = I^{(i)}_d$ for every $i > s(d)$.

We now define two functions $\sigma\colon \No \lxr \No$ and $
\delta\colon \No \lxr \No$.  For $d \in \No$, we define
\[
\sigma(d) = \max \{ s(0), s(1), \ldots, s(d) \}.
\]
For $i \in \No$, we define $\delta(i)$ as the smallest number such
that
\[
I^{(i)}_{\delta(i)} \ne I^{(i+1)}_{\delta(i)}.
\]
These functions have the following interpretation.  For a degree $d$,
the number $\sigma(d)$ is the index in the sequence $\mathbb{I}$ where
the ideals in the sequence have stabilized up to degree $d$.  For an
index $i$, the number $\delta(i)$ is the lowest degree at which there
is a difference from the ideal $I^{(i)}$ to the ideal $I^{(i+1)}$.

We now define a sequence $(i_j)_{j \in \No}$ of indices and a sequence
$(d_j)_{j \in \No}$ of degrees by
\begin{align*}
i_j &=
\left\{
\begin{array}{ll}
\sigma(n) &\text{if $j = 0$,} \\
\sigma(d_{j-1} + n) &\text{otherwise.}
\end{array}
\right. \\
d_j &= \delta(i_j)
\end{align*}
We observe that for every positive integer $j$, we have
\[
i_j > i_{j-1}
\qquad\text{and}\qquad
d_j > d_{j-1} + n.
\]

We now construct a sequence $\mathbb{J}$ of graded left ideals in $S$.
For every nonnegative integer $j$, we choose an element
\[
x_j \in I^{(i_j+1)}_{d_j} - I^{(i_j)}_{d_j}
\]
(this is possible because $d_j = \delta(i_j)$).  Note that the degree
of $x_j$ is $d_j$, which is greater than $n$.  We then define
$J^{(j)}$ to be the left ideal of $S$ generated by the set
\[
\{ \phi(x_0), \ldots, \phi(x_j) \}.
\]
We let $\mathbb{J}$ be the sequence of these ideals$\colon$
\[
\mathbb{J}\colon\quad J^{(0)} \subseteq J^{(1)} \subseteq \cdots.
\]
We want to show that each inclusion here is strict.  This means that
we must show, for every positive integer $j$, that $\phi(x_j)$ is not
an element of $J^{(j-1)}$.

We show this by contradiction.  Assume that there is a $j$ such that
$\phi(x_j) \in J^{(j-1)}$.  Then we can write $\phi(x_j)$ as a sum
\[
\phi(x_j) = \sum_{m=0}^{j-1} s_m \cdot \phi(x_m),
\]
where each $s_m$ is an element of $S$.  Since $\phi(x_j)$ and every
$\phi(x_m)$ are homogeneous elements, we can choose every $s_m$ to be
homogeneous.  For each $m$, we have that if $s_m$ is nonzero, then its
degree is
\[
\degree{s_m}
 = \degree{\phi(x_j)} - \degree{\phi(x_m)}
 = \degree{x_j} - \degree{x_m}
 = d_j - d_m
 > n.
\]
Thus $s_m$ is either zero or in the image of $\phi$.  We use this
to find corresponding elements in $R$.  Let, for each $m \in
\{1,\ldots,j-1\}$,
\[
r_m = \left\{
\begin{array}{ll}
0              &\text{if $s_m = 0$,} \\
\phi^{-1}(s_m) &\text{otherwise.}
\end{array}
\right.
\]
Now we have
\[
\phi(x_j)
 = \sum_{m=0}^{j-1} s_m \cdot \phi(x_m)
 = \phi\left( \sum_{m=0}^{j-1} r_m\cdot x_m \right).
\]
Applying $\phi^{-1}$ gives
\[
x_j = \sum_{m=0}^{j-1} r_m\cdot x_m.
\]
Since we have $x_m \in I^{(i_m+1)} \subseteq I^{(i_j)}$ for every $m$,
this means that $x_j \in I^{(i_j)}$.  This is a contradiction, since
$x_j$ is chosen so that it does not lie in $I^{(i_j)}$.

We have shown that the sequence $\mathbb{J}$ is a strictly ascending
sequence of graded left ideals in $S$.  Thus $S$ in not left
noetherian.
\end{proof}

We now complete the picture by considering two graded rings and a
graded module over each ring, and showing that isomorphisms in all but
finitely many degrees preserve both noetherianness of the rings and
finite generation of the modules (given that certain assumptions are
satisfied).

\begin{prop}
\label{prop:graded-fin-gen}
Let $R$ and $M$ be graded rings, and $\theta \colon R \lxr M$ a graded ring
homomorphism.  View $M$ as a graded left $R$-module with scalar
multiplication given by $\theta$.  Assume that $R_0$ is noetherian, that
every $R_i$ is finitely generated as left and as right $R_0$-module,
and that every $M_i$ is finitely generated as left $R_0$-module.

Similarly, let $R'$ and $M'$ be graded rings, and $\theta' \colon R' \lxr
M'$ a graded ring homomorphism.  View $M'$ as a graded left
$R'$-module with scalar multiplication given by $\theta'$.  Assume that
$R'_0$ is noetherian, that every $R'_i$ is finitely generated as left
and as right $R'_0$-module, and that every $M'_i$ is finitely
generated as left $R'_0$-module.

Assume that there are graded rng isomorphisms $\phi \colon R_{\ge n}
\lxr R'_{\ge n}$ and $\psi \colon M_{\ge n} \lxr M'_{\ge n}$ (for some
nonnegative integer $n$) such that the diagram
\[
\xymatrix@C=5em{
R_{\ge n}  \ar[r]^{\theta_{\ge n}} \ar[d]_{\phi} & M_{\ge n} \ar[d]^{\psi} \\
R'_{\ge n} \ar[r]_{\theta'_{\ge n}}              & M'_{\ge n}
}
\]
commutes.  Then the following two conditions are equivalent.
\begin{enumerate}
\item $R$ is noetherian and $M$ is finitely generated as left $R$-module.
\item $R'$ is noetherian and $M'$ is finitely generated as left $R'$-module.
\end{enumerate}
\end{prop}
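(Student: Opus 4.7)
The noetherianness halves of (i) and (ii) are already equivalent by Proposition~\ref{prop:graded-noetherian} applied to $\phi$, so assuming both $R$ and $R'$ are noetherian the remaining task is to show that $M$ is finitely generated over $R$ if and only if $M'$ is finitely generated over $R'$. By interchanging $(R,M,\theta)$ with $(R',M',\theta')$ and inverting $\phi,\psi$, one direction suffices; I would assume $M$ is finitely generated over $R$ and construct an explicit finite generating set for $M'$ over $R'$.

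Since $R$ is noetherian and $M$ is finitely generated, $M$ is a noetherian $R$-module, so the graded submodule $M_{\geq n}$ is finitely generated; pick homogeneous generators $z_1,\ldots,z_s$ of $M_{\geq n}$ over $R$ with $\deg z_i = d_i \geq n$ and set $D = \max_i d_i$. For each $j$ with $0 \leq j < D+n$, pick a finite set $S_j \subseteq M'_j$ generating $M'_j$ as left $R'_0$-module (possible by hypothesis). The plan is then to show that
\[
T = \{\psi(z_1),\ldots,\psi(z_s)\} \cup \bigcup_{0 \leq j < D+n} S_j
\]
generates $M'$ over $R'$. For a homogeneous $m' \in M'$ of degree $d$, the case $d < D+n$ is handled directly by the sets $S_j$. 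For $d \geq D+n$, pull $m'$ back to $x = \psi^{-1}(m') \in M_{\geq n}$ and write $x = \sum_i r_i z_i$ with each $r_i$ homogeneous of degree $d - d_i \geq (D+n) - D = n$, so $r_i \in R_{\geq n}$; applying $\psi$ and using that the commutative diagram combined with $\psi$ being a rng homomorphism forces $\psi(\theta(r) \cdot z) = \theta'(\phi(r)) \cdot \psi(z) = \phi(r) \cdot \psi(z)$ for $r \in R_{\geq n}$, $z \in M_{\geq n}$, one obtains $m' = \sum_i \phi(r_i) \cdot \psi(z_i) \in R' \cdot T$.

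The main subtlety I anticipate is the choice of threshold $D+n$ rather than simply $n$: since $\phi$ and $\psi$ are only defined in degrees $\geq n$, the expansion $x = \sum r_i z_i$ can only be transported through $\psi$ once each $r_i$ lies in $R_{\geq n}$, which forces $\deg m' \geq D + n$ (one cannot simply multiply generators by $R_0, \ldots, R_{n-1}$ on the $R'$-side, because $\phi$ says nothing about those degrees). The intermediate degrees $n \leq d < D+n$ must therefore be covered separately, which is precisely where the hypothesis that each $M'_j$ is finitely generated as $R'_0$-module enters.
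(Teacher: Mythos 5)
Your proposal is correct, and it takes a genuinely different (and slightly leaner) route than the paper. The paper never forms a generating set for the submodule $M_{\ge n}$: instead it picks a finite homogeneous generating set $G$ for all of $M$ over $R$ and a finite homogeneous generating set $A$ for the left ideal $R_{\ge n}$, and for a high-degree $m'$ performs a two-step expansion $\psi^{-1}(m') = \sum_i \theta(r_i)\,g_i$ followed by $r_i = \sum_j u_{i,j}\,a_{i,j}$, arranged so that each $u_{i,j}$ lands in degree $> n$ (hence in the domain of $\phi$) and each $a_{i,j}\cdot g_i$ lands both in $M_{\ge n}$ (so $\psi$ applies) and in bounded degree $\le b_R+b_M$ (so $\psi(a_{i,j}g_i)$ is already covered by the low-degree $B_i$'s). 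You instead observe that noetherianness of $R$ makes $M$ a noetherian $R$-module, so $M_{\ge n}$ is itself finitely generated as a graded $R$-submodule; a single expansion $\psi^{-1}(m')=\sum_i r_i z_i$ with $\deg r_i \ge n$ then does the job whenever $\deg m' \ge D+n$, and you send both $r_i$ and $z_i$ through $\phi$ and $\psi$ rather than sending only the outer coefficient through and absorbing the rest into the low-degree generators. The two arguments use the same three ingredients (noetherianness of $R$, the commuting square, finite generation of each $M'_j$ over $R'_0$ for low degrees) but your decomposition is one layer shorter and gives a cleaner threshold $D+n$ in place of $b_R+b_M+n$; the paper's version has the small advantage of never invoking the noetherian-module fact for $M$ and working only with the tail ideal of $R$.
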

\begin{proof}
We prove that condition (i) implies condition (ii).  The opposite
implication is proved in exactly the same way by using $\phi^{-1}$ and
$\psi^{-1}$ instead of $\phi$ and $\psi$.

Assume that condition (i) holds.  Then by
Proposition~\ref{prop:graded-noetherian}, $R'$ is noetherian.  We need
to show that $M'$ is finitely generated as left $R'$-module.

We begin with choosing generating sets for things we know to be
finitely generated.  Note that the ideal $R_{\ge n}$ of $R$ is
finitely generated, since $R$ is noetherian.  Let $A$ be a finite
homogeneous generating set for $R_{\ge n}$.  Let $G$ be a finite
homogeneous generating set for $M$ as left $R$-module.  For every $i$,
let $B_i$ be a finite generating set for $M'_i$ as left $R'_0$-module.

Let
\[
b_R = \max \big\{ \degree{a} \mathrel{\big|} a \in A \big\}
\qquad\text{and}\qquad
b_M = \max \big\{ \degree{g} \mathrel{\big|} g \in G \big\}
\]
be the maximal degrees of elements in our chosen generating sets for
$R$ and $M$, respectively.  Let
\[
b = b_R + b_M + n.
\]
Define the set $G'$ to be
\[
G' = \Union_{i=0}^b B_i.
\]
We want to show that $G'$ generates $M'$ as left $R'$-module.

Let $N'$ be the $R'$-submodule of $M'$ generated by $G'$.  It is clear
that $N'$ contains every homogeneous element of $M'$ with degree at
most $b$.  Let $m' \in M'$ be a homogeneous element with $\degree{m'}
> b$.  Let $m = \psi^{-1}(m')$.  We can write $m$ as a sum
\[
m = \sum_i \theta(r_i) \cdot g_i,
\]
where every $r_i$ is a homogeneous nonzero element of $R$ and every
$g_i$ is an element of the generating set $G$ for $M$.  For every
$r_i$, we have
\[
\degree{r_i}
 = \degree{m} - \degree{g_i}
 = \degree{m'} - \degree{g_i}
 > b - b_M
 = b_R + n.
\]
Thus $r_i$ lies in the ideal $R_{\ge n}$, so we can write it as a sum
\[
r_i = \sum_j u_{i,j} \cdot a_{i,j},
\]
where every $u_{i,j}$ is a homogeneous nonzero element of $R$, and
every $a_{i,j}$ is an element of the generating set $A$ for $R_{\ge
  n}$.  For every $u_{i,j}$, we have
\[
\degree{u_{i,j}}
 = \degree{r_i} - \degree{a_{i,j}}
 > (b_R + n) - b_R
 = n.
\]
Now we can write the element $m$ as
\[
m = \sum_{i,j} \theta(u_{i,j} \cdot a_{i,j}) \cdot g_i
  = \sum_{i,j} \theta(u_{i,j}) \cdot \theta(a_{i,j}) \cdot g_i
  = \sum_{i,j} \theta(u_{i,j}) \cdot (a_{i,j} \cdot g_i).
\]
If we have $a_{i,j} \cdot g_i = 0$ for some terms in the sum, we
ignore these terms.  For every pair $(i,j)$, we have
\[
\degree{\theta(u_{i,j})} = \degree{u_{i,j}} > n
\qquad\text{and}\qquad
\degree{a_{i,j} \cdot g_i} \ge \degree{a_{i,j}} \ge n.
\]
This means that when applying $\psi$ to a term in the above sum for
$m$, we have
\[
\psi(\theta(u_{i,j}) \cdot (a_{i,j} \cdot g_i))
= \psi(\theta(u_{i,j})) \cdot \psi(a_{i,j} \cdot g_i).
\]
Using this, we can write our element $m'$ of $M'$ in the following
way$\colon$
\[
m'
 = \psi(m)
 = \psi\Big( \sum_{i,j} \theta(u_{i,j}) \cdot (a_{i,j} \cdot g_i) \Big)
 = \sum_{i,j} \psi(\theta(u_{i,j})) \cdot \psi(a_{i,j} \cdot g_i)
 = \sum_{i,j} \theta'(\phi(u_{i,j})) \cdot \psi(a_{i,j} \cdot g_i).
\]
For every pair $(i,j)$, we have
\[
\degree{\psi(a_{i,j} \cdot g_i)} = \degree{a_{i,j} \cdot g_i}
 = \degree{a_{i,j}} + \degree{g_i}
 \le b_R + b_M
 \le b,
\]
so $\psi(a_{i,j} \cdot g_i)$ lies in the module $N'$ generated by
$G'$.  Thus $m'$ also lies in $N'$.  Since every homogeneous element
of $M'$ lies in $N'$, we have $M' = N'$, and hence $M'$ is finitely
generated.
\end{proof}

Finally, we apply the above result to the rings which are involved in
the \fg{} condition, and obtain the main result of this section.

\begin{prop}
\label{prop:fg-two-algebras}
Let $\Lambda$ and $\Gamma$ be artin algebras over a commutative ring
$k$, and assume that they are flat as $k$-modules.  Let $M$ and $M'$
be $\Lambda$-modules, and let $N$ and $N'$ be $\Gamma$-modules, such
that $M \iso \Lambda/(\rad \Lambda)$ and $N' \iso \Gamma/(\rad
\Gamma)$.  Let $n$ be some nonnegative integer, and assume that there
are graded rng isomorphisms $f$, $g$, $f'$ and $g'$ making the
following two diagrams commute$\colon$
\[
\vcenter{\hbox{
\xymatrix{
{\HH{\ge n}(\Lambda)} \ar[r]^-{\varphi_M^{\ge n}} \ar[d]_{f}^{\iso} &
{\Ext_\Lambda^{\ge n}(M, M)}  \ar[d]_{g}^{\iso} \\
{\HH{\ge n}(\Gamma)} \ar[r]_-{\varphi_{N}^{\ge n}} &
{\Ext_{\Gamma}^{\ge n}(N, N)}
}}}
\qquad\text{and}\qquad
\vcenter{\hbox{
\xymatrix{
{\HH{\ge n}(\Lambda)} \ar[r]^-{\varphi_{M'}^{\ge n}} \ar[d]_{f'}^{\iso} &
{\Ext_\Lambda^{\ge n}(M', M')}  \ar[d]_{g'}^{\iso} \\
{\HH{\ge n}(\Gamma)} \ar[r]_-{\varphi_{N'}^{\ge n}} &
{\Ext_{\Gamma}^{\ge n}(N', N')}
}}}
\]
Then $\Lambda$ satisfies \fg{} if and only if $\Gamma$ satisfies
\fg{}.
\end{prop}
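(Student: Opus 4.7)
The plan is to derive the result directly from Proposition~\ref{prop:graded-fin-gen}, applied once to each of the two commutative squares in the hypothesis, after first verifying that the Hochschild cohomology rings and Ext-algebras of the two artin algebras satisfy the standing assumptions of that proposition. The key extra ingredient is Theorem~\ref{fg-implies-every-ext-fin-gen}, which lets us upgrade the \fg{} condition on $\Lambda$ (or $\Gamma$) to finite generation of $\Ext^*$ for arbitrary modules, so that we are free to use whichever of the two diagrams we need.

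First I would check the standing hypotheses of Proposition~\ref{prop:graded-fin-gen} for the graded ring $R=\HH*(\Lambda)$ and the graded module $M=\Ext_\Lambda^*(M',M')$ (and similarly for $\Gamma$). The degree-zero component $\HH{0}(\Lambda)=Z(\Lambda)$ is the center of an artin ring, hence a commutative noetherian ring. For $i\ge 1$, the group $\HH{i}(\Lambda)=\Ext_{\envalg{\Lambda}}^i(\Lambda,\Lambda)$ is a subquotient of $\Hom$-groups between finitely generated $\envalg{\Lambda}$-modules, hence a finitely generated $k$-module, and via the structure map $k\lxr Z(\Lambda)=\HH{0}(\Lambda)$ it becomes finitely generated as a left and right $\HH{0}(\Lambda)$-module. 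The same argument shows that each $\Ext_\Lambda^i(M',M')$ is finitely generated as a left $\HH{0}(\Lambda)$-module, and the analogous statements hold on the $\Gamma$-side.

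For the forward implication, assume $\Lambda$ satisfies \fg{}. Then $\HH*(\Lambda)$ is noetherian and, by Theorem~\ref{fg-implies-every-ext-fin-gen}, the module $\Ext_\Lambda^*(M',M')$ is finitely generated over $\HH*(\Lambda)$. Applying Proposition~\ref{prop:graded-fin-gen} to the second commutative square, with $R=\HH*(\Lambda)$, $M=\Ext_\Lambda^*(M',M')$, $R'=\HH*(\Gamma)$, $M'=\Ext_\Gamma^*(N',N')$, $\theta=\varphi_{M'}$, $\theta'=\varphi_{N'}$, $\phi=f'$ and $\psi=g'$, we conclude that $\HH*(\Gamma)$ is noetherian and that $\Ext_\Gamma^*(N',N')$ is finitely generated over it. Since $N'\iso \Gamma/\rad\Gamma$, this is exactly the \fg{} condition for $\Gamma$. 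The reverse implication is symmetric: assuming $\Gamma$ satisfies \fg{}, Theorem~\ref{fg-implies-every-ext-fin-gen} gives finite generation of $\Ext_\Gamma^*(N,N)$ over $\HH*(\Gamma)$, and applying Proposition~\ref{prop:graded-fin-gen} to the first square (using $f^{-1}$ and $g^{-1}$, which is legitimate because the conclusion of that proposition is symmetric in the two sides) yields \fg{} for $\Lambda$, this time detected via $M\iso \Lambda/\rad\Lambda$.

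The genuine content is already in Proposition~\ref{prop:graded-fin-gen}; there is no real obstacle here beyond making sure the correct diagram is used in each direction. The reason both diagrams are assumed, rather than just one, is precisely that \fg{} is a condition at $\Lambda/\rad\Lambda$ on one side and at $\Gamma/\rad\Gamma$ on the other, and these simple modules are a priori unrelated under the transfer of extension groups—so Theorem~\ref{fg-implies-every-ext-fin-gen} is needed to bridge between the fixed simple module and the module that is actually compared across the equivalence.
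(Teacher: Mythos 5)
Your argument matches the paper's proof essentially verbatim: verify the standing hypotheses of Proposition~\ref{prop:graded-fin-gen} on $\HH{0}$ and the finitely generated graded pieces, then use Theorem~\ref{fg-implies-every-ext-fin-gen} to upgrade \fg{} so that the second square transfers \fg{} from $\Lambda$ to $\Gamma$, and the first square handles the converse. The only cosmetic difference is that you mention inverting $f$ and $g$ for the reverse direction, whereas it suffices to invoke the stated biconditional of Proposition~\ref{prop:graded-fin-gen} directly.
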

\begin{proof}
We first check that the conditions on the graded rings in
Proposition~\ref{prop:graded-fin-gen} are satisfied in this case.  For
every degree $i$, we have that $\HH{i}(\Lambda)$,
$\Ext_\Lambda^i(M,M)$ and $\Ext_\Lambda^i(M',M')$ are finitely
generated as $k$-modules.  Therefore, they are also finitely generated
as $\HH{0}(\Lambda)$-modules.  The ring $\HH{0}(\Lambda)$ is
noetherian since it is an artin algebra.  Similarly, we see that
$\HH{i}(\Gamma)$, $\Ext_\Gamma^i(N,N)$ and $\Ext_\Gamma^i(N',N')$ are
finitely generated $\HH{0}(\Gamma)$-modules, and that the ring
$\HH{0}(\Gamma)$ is noetherian.

Assume that $\Lambda$ satisfies \fg{}.  Then $\HH*(\Lambda)$ is
noetherian, and by Theorem~\ref{fg-implies-every-ext-fin-gen},
$\Ext_\Lambda^*(M',M')$ is a finitely generated
$\HH*(\Lambda)$-module.  By applying
Proposition~\ref{prop:graded-fin-gen} to the commutative diagram with
$f'$ and $g'$, we see that $\Gamma$ satisfies \fg{}.

The opposite inclusion is proved in the same way by using the other
commutative diagram.
\end{proof}

\section{Finite generation of cohomology rings in module recollements}
\label{section:fg-a(Lambda)a}

We now investigate the relationship between the \fg{} condition (see
Definition~\ref{defn:fg}) for an algebra $\Lambda$ and the algebra
$a{\Lambda}a$, where $a$ is an idempotent of $\Lambda$.  We show that,
given some conditions on the idempotent $a$, the algebra $\Lambda$
satisfies \fg{} if and only if the algebra $a{\Lambda}a$ satisfies
\fg{}.  We prove this result only for finite-dimensional algebras over
a field, and not more general artin algebras.

Throughout this section, we let $k$ be a field, $\Lambda$ a
finite-dimensional $k$-algebra and $a$ an idempotent in $\Lambda$.  We
denote by $e$ and $E$ the exact functors
\begin{align*}
e = (a-) &\colon \fmod \Lambda \lxr \fmod a{\Lambda}a \\
E = (a-a) &\colon \fmod \envalg{\Lambda} \lxr \fmod \envalg{(a{\Lambda}a)}.
\end{align*}
These functors fit into the recollements described in
Example~\ref{exam:mod-recollements}.

For a $\Lambda$-module $M$, we can construct the diagram
\[
\xymatrix@C=4em{
{\HH*(\Lambda)}       \ar[r]^-{\varphi_M} \ar[d]_{E_{\Lambda,\Lambda}^*} &
{\Ext_\Lambda^*(M,M)} \ar[d]^{e_{M,M}^*} \\
{\HH*(a{\Lambda}a)}   \ar[r]_-{\varphi_{e(M)}} &
{\Ext_{a{\Lambda}a}^*(e(M), e(M))}
}
\]
where the maps $\varphi_M$ and $\varphi_{e(M)}$ are defined in
Subsection~\ref{subsection:hh}, and the maps $E_{\Lambda,\Lambda}^*$
and $e_{M,M}^*$ are defined in Section~\ref{section:extensions}.  We
show that this diagram commutes, and that under certain conditions on
$a$, the vertical maps are isomorphisms in almost all degrees.  We
then use Proposition~\ref{prop:fg-two-algebras} to show that $\Lambda$
satisfies \fg{} if and only if $a{\Lambda}a$ satisfies \fg{}.

Let us consider what kind of conditions we need to put on the choice
of the idempotent $a$.  From Corollary~\ref{cor:algebra-ext-iso}, we
know that the map $e_{M,M}^*$ in the above diagram is an isomorphism
in all but finitely many degrees if the two dimensions
\[
\id_\Lambda
 \Big( \frac{\Lambda/\idealgenby{a}}{\rad \Lambda/\idealgenby{a}} \Big)
\qquad\text{and}\qquad
\pd_{a{\Lambda}a} (a\Lambda)
\]
are finite, or, equivalently, that the two dimensions
\[
\pd_\Lambda
 \Big( \frac{\Lambda/\idealgenby{a}}{\rad \Lambda/\idealgenby{a}} \Big)
\qquad\text{and}\qquad
\pd_{\opposite{(a{\Lambda}a)}} ({\Lambda}a)
\]
are finite.  We show (given an additional technical assumption about
the algebra $\Lambda$) that this is in fact also sufficient for the
map $E_{\Lambda,\Lambda}^*$ to be an isomorphism in all but finitely
many degrees.

This section is structured as follows.  The first part considers the
commutativity of the above diagram, concluding with
Proposition~\ref{prop:e-gamma-commute}. The second part considers when 
the map $E_{\Lambda,\Lambda}^*$ is an isomorphism in high
degrees, concluding with Proposition~\ref{prop:lambda-ext-iso}.
Finally, the main result of this section is stated as
Theorem~\ref{thm:main-result-fg}.

We now show that the above diagram is commutative.  The maps
$\varphi_M$ and $\varphi_{e(M)}$ are defined by using tensor functors.
It is convenient to have short names for these functors.  For every
$\Lambda$-module $M$, we define $t_M$ and $T_M$ to be the tensor
functors
\begin{align*}
t_M = (- \tensor_\Lambda M) &\colon
\fmod \envalg{\Lambda} \lxr \fmod \Lambda, \\
T_M = (- \tensor_{a{\Lambda}a} aM) &\colon
\fmod \envalg{(a{\Lambda}a)} \lxr \fmod a{\Lambda}a.
\end{align*}
Together with the functors $e$ and $E$ from above, these functors fit
into the following diagram of categories and functors:
\[
\xymatrix@C=6em{
{\fmod \envalg{\Lambda}} \ar[r]^{t_M} \ar[d]_{E} &
{\fmod \Lambda} \ar[d]^{e} \\
{\fmod \envalg{(a \Lambda a)}} \ar[r]_{T_M } &
{\fmod a \Lambda a}
}
\]

We begin by showing that the two possible compositions of maps from
upper left to lower right in this diagram are related by a natural
transformation.

\begin{lem}
\label{lem:et-commutes}
For every $\Lambda$-module $M$, there is a natural transformation
$\tau^M \colon T_M \fcomp E \lxr e \fcomp t_M$.
\end{lem}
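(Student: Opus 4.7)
The plan is to construct $\tau^M$ explicitly by unwinding what the four functors do on a bimodule $X \in \fmod \envalg{\Lambda}$. First I would translate: $E(X) = aXa$, regarded as an $a\Lambda a$-bimodule with actions given by restriction of the $\Lambda$-actions; $t_M(X) = X \tensor_\Lambda M$ with its natural left $\Lambda$-module structure from the left action on $X$; and consequently $T_M \fcomp E(X) = aXa \tensor_{a\Lambda a} aM$ while $e \fcomp t_M(X) = a(X \tensor_\Lambda M)$.

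With this in hand, the component $\tau^M_X$ is simply the map sending a pure tensor $x \tensor m$ (with $x \in aXa$ and $m \in aM$) to the pure tensor $x \tensor m$ in $X \tensor_\Lambda M$. I first check that the image lies in $a(X \tensor_\Lambda M)$: since $x \in aXa \subseteq aX$, we have $x = ax$, so $x \tensor m = a \cdot (x \tensor m)$ lies in the image of multiplication by $a$. Next I verify well-definedness, namely that the defining tensor relation $xc \tensor m = x \tensor cm$ for $c \in a\Lambda a$ is respected; this is immediate because the $a\Lambda a$-actions on $aXa$ and $aM$ are the restrictions of the $\Lambda$-actions on $X$ and $M$, and $a\Lambda a \subseteq \Lambda$, so the relation already holds in $X \tensor_\Lambda M$. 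Biadditivity is clear.

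For naturality in $X$, I would take a morphism $f\colon X \lxr Y$ of $\envalg{\Lambda}$-modules. Then $T_M \fcomp E(f)$ sends $x \tensor m$ to $f(x) \tensor m$ in $aYa \tensor_{a\Lambda a} aM$, and $e \fcomp t_M(f)$ sends $x \tensor m$ to $f(x) \tensor m$ in $a(Y \tensor_\Lambda M)$; so the square for $\tau^M$ commutes on pure tensors and therefore everywhere by additivity.

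I do not expect any serious obstacle here: the only point requiring a moment of thought is recognising that the compatibility condition to be checked at the tensor relation is automatic because $a\Lambda a$ acts by restriction of the $\Lambda$-action; once that is spelled out the verification is formal. (Later, when $\tau^M$ needs to be shown to be an isomorphism or to induce isomorphisms on $\Ext$, more work will be required, but for the present statement -- mere existence of a natural transformation -- the above suffices.)
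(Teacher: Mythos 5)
Your proposal is correct and takes essentially the same approach as the paper: you define $\tau^M_X$ on pure tensors by $x \tensor m \mapsto x \tensor m$, note that well-definedness follows from $a\Lambda a \subseteq \Lambda$, and check naturality componentwise, exactly as the paper does (the paper simply writes $e\,t_M(N) = aN \tensor_\Lambda M$, implicitly using the identification you spell out).
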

\begin{proof}
Note that we have 
\[
T_M E(N) = aNa \tensor_{a{\Lambda}a} aM
\qquad\text{and}\qquad
e t_M(N) = aN \tensor_\Lambda M
\]
for every $\envalg{\Lambda}$-module $N$.  We define the maps
$\tau^M_N$ of the natural transformation $\tau^M$ by
\[
\tau^M_N(n \tensor m) = n \tensor m
\]
for an element $n \tensor m$ of $T_M E(N)$. This gives well defined
maps since $a\Lambda a \subseteq \Lambda$. It is easy to check that
the compositions $et_M(f) \fcomp \tau^M_N$ and $\tau^M_{N'} \fcomp T_M
E(f)$ are equal for a homomorphism $f\colon N \lxr N'$ of
$\envalg{\Lambda}$-modules, so $\tau^M$ is a natural transformation.
\end{proof}

We are now able to show that the diagrams we consider are commutative.

\begin{prop}
\label{prop:e-gamma-commute}
For any $\Lambda$-module $M$, the following diagram of graded rings
commutes$\colon$
\[
\xymatrix@C=4em{
{\HH*(\Lambda)}       \ar[r]^-{\varphi_M} \ar[d]_{E_{\Lambda,\Lambda}^*} &
{\Ext_\Lambda^*(M,M)} \ar[d]^{e_{M,M}^*} \\
{\HH*(a{\Lambda}a)}   \ar[r]_-{\varphi_{e(M)}} &
{\Ext_{a{\Lambda}a}^*(e(M), e(M))}
}
\]
\end{prop}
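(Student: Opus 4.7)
The plan is to reduce commutativity of the square to a termwise Yoneda-comparison between two extensions built from a single representative, using the natural transformation $\tau^M\colon T_M\fcomp E\lxr e\fcomp t_M$ furnished by Lemma~\ref{lem:et-commutes}. All four maps in the diagram are graded ring homomorphisms, so it suffices to compare the two compositions on each homogeneous element. I would treat degree zero separately: an element of $\HH{0}(\Lambda)=\End_{\envalg{\Lambda}}(\Lambda)$ is a bimodule endomorphism $f\colon \Lambda\lxr\Lambda$, and unwinding the definitions of $\varphi_M$, $\varphi_{e(M)}$, $e^0_{M,M}$ and $E^0_{\Lambda,\Lambda}$ shows that both routes produce the same endomorphism of $aM$ induced by $f$.

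For degree $n>0$, fix a representative $n$-fold extension
\[
\eta\colon\ 0\lxr \Lambda\lxr X\lxr P_{n-2}\lxr \cdots\lxr P_0\lxr \Lambda\lxr 0
\]
of $\envalg{\Lambda}$-modules with each $P_i$ a projective $\envalg{\Lambda}$-module. Since $E$ is exact and $E(\envalg{\Lambda}^m)=\envalg{(a\Lambda a)}^m$ (via $\envalg{(a\Lambda a)}\iso\varepsilon\envalg{\Lambda}\varepsilon$), the modules $E(P_i)$ are projective $\envalg{(a\Lambda a)}$-modules, so $E(\eta)$ is a legitimate representative of $E^*_{\Lambda,\Lambda}([\eta])$. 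Going clockwise around the diagram produces the class $[e(t_M(\eta))]$, and going counterclockwise produces the class $[T_M(E(\eta))]$, both in $\Ext^{n}_{a\Lambda a}(aM,aM)$ after the standard identifications $\Lambda\tensor_\Lambda M\iso M$ and $a\Lambda a\tensor_{a\Lambda a}aM\iso aM$ used in the definitions of $\varphi_M$ and $\varphi_{e(M)}$.

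Applying $\tau^M$ componentwise to $\eta$ and invoking naturality yields a chain map
\[
\tau^M_\eta\colon T_M(E(\eta))\ \lxr\ e(t_M(\eta)).
\]
The remaining task is to verify that at each of the two boundary terms $N=\Lambda$ the map $\tau^M_\Lambda$ coincides with the identity on $aM$ once the canonical identifications above are applied. This is a direct computation: $(a\lambda a)\tensor am$ in $a\Lambda a\tensor_{a\Lambda a}aM$ maps via $\tau^M_\Lambda$ to the same formal tensor in $a\Lambda\tensor_\Lambda M\iso a(\Lambda\tensor_\Lambda M)$, and both canonical isomorphisms send this to $a\lambda a m\in aM$. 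Once this boundary check is in place, the chain map $\tau^M_\eta$ exhibits a Yoneda equivalence between $T_M(E(\eta))$ and $e(t_M(\eta))$, so their classes in $\Ext^n_{a\Lambda a}(aM,aM)$ coincide, proving commutativity of the square on $[\eta]$.

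The only obstacle is the careful bookkeeping of the boundary identifications in the last step; the middle terms require no separate analysis, since the existence of a single chain map inducing the identity on the ends already forces Yoneda equivalence. Everything else is formal: exactness of the two tensored sequences follows from flatness of $\Lambda$ over $k$ together with projectivity of the $P_i$ and the $E(P_i)$, and the multiplicativity required for a ring-level comparison is automatic from the known ring-homomorphism properties of $\varphi_{(-)}$, $e^*_{M,M}$ and $E^*_{\Lambda,\Lambda}$.
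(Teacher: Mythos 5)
The high‐level strategy is the same as the paper's (use the natural transformation $\tau^M$ of Lemma~\ref{lem:et-commutes} to produce a chain map, then conclude Yoneda equivalence from the identity comparisons at the two ends), but there is a concrete error that causes a genuine gap.

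You assert that $E(\envalg{\Lambda}^m) = \envalg{(a\Lambda a)}^m$ and hence that the modules $E(P_i)$ are projective $\envalg{(a\Lambda a)}$-modules, so that $E(\eta)$ is already a representative to which $\varphi_{e(M)}$ can be applied directly. This is false: $E(N)=\varepsilon N$ with $\varepsilon = a\tensor\opposite{a}$, so $E(\envalg{\Lambda}) = \varepsilon\envalg{\Lambda} \iso a\Lambda\tensor_k\Lambda a$, whereas $\envalg{(a\Lambda a)} \iso \varepsilon\envalg{\Lambda}\varepsilon \iso a\Lambda a\tensor_k a\Lambda a$. These coincide only when $a\Lambda(1-a)=0=(1-a)\Lambda a$. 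In general, $E(P)$ for $P$ projective over $\envalg{\Lambda}$ is a direct summand of copies of $a\Lambda\tensor_k\Lambda a$, which is projective over $\envalg{(a\Lambda a)}$ precisely when the one-sided modules $a\Lambda$ and $\Lambda a$ are projective over $a\Lambda a$ — and this is exactly condition $(\beta)$ plus $(\delta)$ (see the discussion around Lemma~\ref{lem:pd-envalg}), not something true unconditionally. The paper explicitly flags this: ``Since the objects $E(P_j)$ are not necessarily projective, we may need to find a different representative of the element $[E(\eta)]$...''

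The consequence of this error is that two key steps in your argument are not justified: (a) $T_M(E(\eta))$ is not known to be exact, because the terms $E(P_i)$ need not be flat as right $a\Lambda a$-modules, so ``the class $[T_M(E(\eta))]$'' may not be an element of $\Ext^{n}_{a\Lambda a}(aM,aM)$ at all; (b) even granting exactness, $T_M(E(\eta))$ is not a priori the output of $\varphi_{e(M)}$ applied to $E^*_{\Lambda,\Lambda}([\eta])$, since $\varphi_{e(M)}$ is defined by tensoring a representative with projective middle terms. The paper's proof fills both gaps at once by lifting a genuine projective $\envalg{(a\Lambda a)}$-resolution $Q_\bullet$ to $E(\eta)$ via a comparison chain map $f_\bullet$, then stacking $T_M(f_\bullet)$ together with $\tau^M$ into a single tall commutative diagram whose top and bottom rows are exact with projective middle terms, and whose outer column compositions are identities. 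Your boundary check for $\tau^M_\Lambda$ is correct and useful, but it is not sufficient by itself; you need the intermediate comparison to a projective representative before the Yoneda criterion can be invoked.
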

\begin{proof}
We show that the result holds in the positive degrees of the graded
rings and graded ring homomorphisms in the diagram.  Showing that it
also holds in degree zero can be done in a similar way, by looking at
elements given by homomorphisms instead of extensions.

Let $\mu$ and $\nu$ be the natural isomorphisms
\[
\mu\colon \Lambda \tensor_\Lambda M \lxr M
\qquad\text{and}\qquad
\nu\colon a{\Lambda}a \tensor_{a{\Lambda}a} e(M) \lxr e(M)
\]
given by multiplication.

Consider, for some positive integer $i$, an element $[\eta] \in
\Ext_{\envalg{\Lambda}}^i(\Lambda,\Lambda)$ which is represented by
the exact sequence
\[
\eta\colon\quad
0 \lxr \Lambda \lxr X \lxr P_{i-2} \lxr \cdots \lxr P_0 \lxr \Lambda \lxr 0,
\]
where each $P_j$ is a projective $\envalg{\Lambda}$-module.  We apply
the compositions of maps $\varphi_{e(M)} \fcomp E_{\Lambda,\Lambda}^*$
and $e_{M,M}^* \fcomp \varphi_M$ to $[\eta]$, and show that we get the
same result in both cases.

We first consider the map $\varphi_{e(M)} \fcomp
E_{\Lambda,\Lambda}^*$.  If we apply the functor $E$
to $\eta$, then we get the exact sequence
\[
E(\eta)\colon
0 \lxr E(\Lambda) \lxr E(X) \lxr E(P_{i-2})
  \lxr \cdots \lxr E(P_0) \lxr E(\Lambda)
  \lxr 0 
\]  
of $\envalg{(a \Lambda a)}$-modules, and we have that
$E_{\Lambda,\Lambda}^*([\eta]) = [E(\eta)]$.  Since
the objects $E(P_j)$ are not necessarily projective, we may
need to find a different representative of the element $[E(\eta)]$ in
order to apply the map $\varphi_{e(M)}$.  We construct the
following commutative diagram with exact rows, where each $Q_j$ is a
projective $\envalg{(a \Lambda a)}$-module and the bottom row is
$E(\eta)$.
\[
\xymatrix{
0           \ar[r]                  &
a{\Lambda}a \ar[r] \ar@{=}[d]       &
Y           \ar[r] \ar[d]_{f_{i-1}} &
Q_{i-2}     \ar[r] \ar[d]_{f_{i-2}} &
\cdots      \ar[r]                  &
Q_0         \ar[r] \ar[d]_{f_0}     &
a{\Lambda}a \ar[r] \ar@{=}[d]       &
0                                   \\
0           \ar[r]                  &
E(\Lambda)  \ar[r]                  &
E(X)        \ar[r]                  &
E(P_{i-2})  \ar[r]                  &
\cdots      \ar[r]                  &
E(P_0)      \ar[r]                  &
E(\Lambda)  \ar[r]                  &
0
}
\]
Note that both rows represent the same element in $\Ext_{\envalg{(a
    \Lambda a)}}^i(a \Lambda a, a \Lambda a)$.  Applying the functor
$T_M$ to this diagram gives the two lower rows in the following
commutative diagram of $a \Lambda a$-modules, where the two upper rows
are exact.
\[
\xymatrix@C=1.5em{
0                \ar[r]                                  &
e(M)             \ar[r] \ar[d]_{\nu^{-1}}^{\iso}         &
T_M(Y)           \ar[r] \ar@{=}[d]                       &
T_M(Q_{i-2})     \ar[r] \ar@{=}[d]                       &
\cdots           \ar[r]                                  &
T_M(Q_0)         \ar[r] \ar@{=}[d]                       &
e(M)             \ar[r] \ar[d]_{\nu^{-1}}^{\iso}         &
0                                                        \\
0                \ar[r]                                  &
T_M(a{\Lambda}a) \ar[r] \ar@{=}[d]                       &
T_M(Y)           \ar[r] \ar[d]_{T_M(f_{i-1})}            &
T_M(Q_{i-2})     \ar[r] \ar[d]_{T_M(f_{i-2})}            &
\cdots           \ar[r]                                  &
T_M(Q_0)         \ar[r] \ar[d]_{T_M(f_0)}                &
T_M(a{\Lambda}a) \ar[r] \ar@{=}[d]                       &
0                                                        \\
                                                         &
T_M E(\Lambda)   \ar[r]                                  &
T_M E(X)         \ar[r]                                  &
T_M E(P_{i-2})   \ar[r]                                  &
\cdots           \ar[r]                                  &
T_M E(P_0)       \ar[r]                                  &
T_M E(\Lambda)                                           &
}
\]
The top row in this diagram is a representative for the element
$(\varphi_{e(M)} \fcomp E_{\Lambda,\Lambda}^*)([\eta])$.

We now consider the map $e_{M,M}^* \fcomp \varphi_M$.  Applying the
functor $e \fcomp t_M$ to the exact sequence $\eta$ gives the top row
in the following commutative diagram of $a \Lambda a$-modules with
exact rows, where the bottom row is a representative of the element
$(e_{M,M}^* \fcomp \varphi_M)([\eta])$.
\[
\xymatrix@C=1.5em{
0 \ar[r]                                      &
e t_M(\Lambda)  \ar[r] \ar[d]_{e(\mu)}^{\iso} &
e t_M(X)        \ar[r] \ar@{=}[d]             &
e t_M(P_{i-2})  \ar[r] \ar@{=}[d]             &
\cdots          \ar[r]                        &
e t_M(P_0)      \ar[r] \ar@{=}[d]             &
e t_M(\Lambda)  \ar[r] \ar[d]_{e(\mu)}^{\iso} &
0                                             \\
0 \ar[r]                                      &
e(M)            \ar[r]                        &
e t_M(X)        \ar[r]                        &
e t_M(P_{i-2})  \ar[r]                        &
\cdots          \ar[r]                        &
e t_M(P_0)      \ar[r]                        &
e(M)            \ar[r]                        &
0                                             \\
}
\]

Finally, we use the natural transformation $\tau^M$ from
Lemma~\ref{lem:et-commutes} to combine the two above diagrams into the
following commutative diagram of $a \Lambda a$-modules$\colon$
\[
\xymatrix@C=1.5em{
0 \ar[r]                                               &
e(M)             \ar[r] \ar[d]_{\nu^{-1}}^{\iso}       &
T_M(Y)           \ar[r] \ar@{=}[d]                     &
T_M(Q_{i-2})     \ar[r] \ar@{=}[d]                     &
\cdots           \ar[r]                                &
T_M(Q_0)         \ar[r] \ar@{=}[d]                     &
e(M)             \ar[r] \ar[d]_{\nu^{-1}}^{\iso}       &
0                                                      \\
0 \ar[r]                                               &
T_M(a{\Lambda}a) \ar[r] \ar@{=}[d]                     &
T_M(Y)           \ar[r] \ar[d]_{T_M(f_{i-1})}          &
T_M(Q_{i-2})     \ar[r] \ar[d]_{T_M(f_{i-2})}          &
\cdots           \ar[r]                                &
T_M(Q_0)         \ar[r] \ar[d]_{T_M(f_0)}              &
T_M(a{\Lambda}a) \ar[r] \ar@{=}[d]                     &
0                                                      \\
                                                       &
T_M E(\Lambda)   \ar[r] \ar[d]_{\tau^M_\Lambda}        &
T_M E(X)         \ar[r] \ar[d]_{\tau^M_X}              &
T_M E(P_{i-2})   \ar[r] \ar[d]_{\tau^M_{P_{i-2}}}      &
\cdots           \ar[r]                                &
T_M E(P_0)       \ar[r] \ar[d]_{\tau^M_{P_0}}          &
T_M E(\Lambda)          \ar[d]_{\tau^M_\Lambda}        &
                                                       \\
0 \ar[r]                                               &
e t_M(\Lambda)   \ar[r] \ar[d]_{e(\mu)}^{\iso}         &
e t_M(X)         \ar[r] \ar@{=}[d]                     &
e t_M(P_{i-2})   \ar[r] \ar@{=}[d]                     &
\cdots           \ar[r]                                &
e t_M(P_0)       \ar[r] \ar@{=}[d]                     &
e t_M(\Lambda)   \ar[r] \ar[d]_{e(\mu)}^{\iso}         &
0                                                      \\
0 \ar[r]                                               &
e(M)             \ar[r]                                &
e t_M(X)         \ar[r]                                &
e t_M(P_{i-2})   \ar[r]                                &
\cdots           \ar[r]                                &
e t_M(P_0)       \ar[r]                                &
e(M)             \ar[r]                                &
0                                                      \\
}
\]
It is easy to check that the composition of maps along the leftmost
column is the identity map on $e(M)$, and the same holds for the
composition of maps along the rightmost column.  Thus the top and
bottom rows in this diagram represent the same element in $\Ext_{a
  \Lambda a}^i(e(M),e(M))$.  Since the top row is a representative of
the element $(\varphi_{e(M)} \fcomp E_{\Lambda,\Lambda}^*)([\eta])$ and
the bottom row is a representative of the element $(e_{M,M}^* \fcomp
\varphi_M)([\eta])$, this means that $\varphi_{e(M)} \fcomp E_{\Lambda,\Lambda}^*
 = e_{M,M}^* \fcomp \varphi_M$.
\end{proof}

Having shown that our diagrams are commutative, we now move on
to describing when the map $E_{\Lambda,\Lambda}^*$ is an isomorphism
in almost all degrees.  For this, we use
Corollary~\ref{cor:algebra-ext-iso-onemodule}~(i) on the algebras
$\envalg{\Lambda}$ and
$(a\tensor\opposite{a})\envalg{\Lambda}(a\tensor\opposite{a})$ and the
$\envalg{\Lambda}$-module $\Lambda$.  We let $\varepsilon$ denote the
element $a\tensor\opposite{a}$ of $\envalg{\Lambda}$, so that we can
write the algebra
$(a\tensor\opposite{a})\envalg{\Lambda}(a\tensor\opposite{a})$ more
simply as $\varepsilon\envalg{\Lambda}\varepsilon$.  Note that
Corollary~\ref{cor:algebra-ext-iso-onemodule} uses a recollement
situation; in this case, the recollement is like the one in
Example~\ref{exam:mod-recollements}~(ii).

In order to use Corollary~\ref{cor:algebra-ext-iso-onemodule}~(i) in
this situation, we need to show the following$\colon$
\[
\pd_{\varepsilon\envalg{\Lambda}\varepsilon} \varepsilon\envalg{\Lambda}
< \infty
\qquad\text{and}\qquad
\Ext_{\envalg{\Lambda}}^j
\Big( \Lambda,
      \frac{\envalg{\Lambda}/\idealgenby{\varepsilon}}
           {\rad \envalg{\Lambda}/\idealgenby{\varepsilon}} \Big)
= 0
\quad\text{for $j \gg 0$.}
\]
We show the first of these conditions in Lemma~\ref{lem:pd-envalg},
and the second one in Lemma~\ref{lem:ext-lambda-simples} (here we need
an additional technical assumption on $\Lambda$ to be able to describe
the simple modules over $\envalg{\Lambda}$), and finally tie it
together in Proposition~\ref{prop:lambda-ext-iso}, where we show that
$E_{\Lambda,\Lambda}^*$ is an isomorphism in sufficiently high
degrees.

First, we show how the projective dimension of the tensor product $M
\tensor_k N$ is related to the projective dimensions of $M$ and $N$,
when $M$ and $N$ are modules over $k$-algebras.  In particular, the
following result implies that if a left and a right $\Lambda$-module
${}_\Lambda M$ and $N_\Lambda$ both have finite projective dimension,
then their tensor product $M \tensor_k N$ has finite projective
dimension as $\envalg{\Lambda}$-module.

\begin{lem}
\label{lem:tensor-preserves-fin-projdim}
Let $\Sigma$ and $\Gamma$ be $k$-algebras, and let $M$ be a
$\Sigma$-module and $N$ a $\Gamma$-module.  If $M$ has finite
projective dimension as $\Sigma$-module and $N$ has finite projective
dimension as $\Gamma$-module, then $M \tensor_k N$ has finite
projective dimension as $(\Sigma \tensor_k \Gamma)$-module, and
\[
\pd_{\Sigma \tensor_k \Gamma}(M \tensor_k N) \le
\pd_\Sigma{M} + \pd_\Gamma{N}.
\]
\end{lem}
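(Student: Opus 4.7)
The plan is to construct a projective resolution of $M \tensor_k N$ over $\Sigma \tensor_k \Gamma$ whose length is bounded by $\pd_\Sigma M + \pd_\Gamma N$, by tensoring together projective resolutions of $M$ and $N$. First, I would choose a finite projective resolution
\[
0 \lxr P_p \lxr \cdots \lxr P_0 \lxr M \lxr 0
\]
of $M$ over $\Sigma$ of length $p = \pd_\Sigma M$, and a finite projective resolution
\[
0 \lxr Q_q \lxr \cdots \lxr Q_0 \lxr N \lxr 0
\]
of $N$ over $\Gamma$ of length $q = \pd_\Gamma N$. Form the first-quadrant double complex $C_{i,j} = P_i \tensor_k Q_j$, concentrated in the square $0 \le i \le p$, $0 \le j \le q$, with the obvious horizontal and vertical differentials (and appropriate signs). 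Let $T_\bullet = \operatorname{Tot}(C_{\bullet,\bullet})$ be its total complex, which lives in homological degrees $0, 1, \ldots, p+q$.

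Next, I would check that each $T_n = \Dsum_{i+j = n} P_i \tensor_k Q_j$ is projective as a $(\Sigma \tensor_k \Gamma)$-module. For this it suffices to show that $P_i \tensor_k Q_j$ is projective, and this follows from the fact that $P_i$ is a summand of a free $\Sigma$-module $\Sigma^{(I)}$ and $Q_j$ is a summand of a free $\Gamma$-module $\Gamma^{(J)}$, so $P_i \tensor_k Q_j$ is a summand of
\[
\Sigma^{(I)} \tensor_k \Gamma^{(J)} \iso (\Sigma \tensor_k \Gamma)^{(I \times J)},
\]
which is free over $\Sigma \tensor_k \Gamma$.

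The third step is to verify that $T_\bullet$ is a resolution of $M \tensor_k N$, that is, the augmented complex $T_\bullet \lxr M \tensor_k N$ is exact. Because $k$ is a field, $\tensor_k$ is exact in each variable, so tensoring the exact sequence $P_\bullet \lxr M \lxr 0$ (regarded as a complex of $k$-vector spaces) with any fixed $Q_j$ yields an exact complex, and similarly for $Q_\bullet \lxr N \lxr 0$ tensored with any $P_i$. A standard spectral sequence (or direct iterated argument) on the double complex $C_{\bullet,\bullet}$ therefore collapses to show that the homology of $T_\bullet$ is concentrated in degree $0$ with value $M \tensor_k N$.

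Combining these steps, $T_\bullet$ is a projective resolution of $M \tensor_k N$ over $\Sigma \tensor_k \Gamma$ of length at most $p + q$, which gives
\[
\pd_{\Sigma \tensor_k \Gamma}(M \tensor_k N) \le \pd_\Sigma M + \pd_\Gamma N.
\]
The only mildly nontrivial point is the exactness verification in the third step, but once one uses that $k$ is a field (hence every $k$-module is flat), the double complex argument is routine; the projectivity of $P_i \tensor_k Q_j$ is also straightforward provided one is careful to work with free modules and retract summands through the tensor product.
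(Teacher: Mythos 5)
Your proof is correct and takes essentially the same approach as the paper: form the tensor product of two finite projective resolutions, observe that each term is projective over $\Sigma \tensor_k \Gamma$, and verify exactness using the flatness of everything over a field. The only cosmetic difference is that you verify exactness via the spectral sequence of the double complex, whereas the paper invokes the K\"unneth formula for homology directly (citing Rotman) and then remarks that the K\"unneth isomorphism respects the $(\Sigma \tensor_k \Gamma)$-module structure; these are the same calculation packaged differently.
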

\begin{proof}
Assume that $\pd_{\Sigma}M=m$ and $\pd_{\Gamma}N=n$.  Then we have
finite projective resolutions
\[
0 \to P_m \xrightarrow{} \cdots \xrightarrow{} P_0 \to M \to 0
\qquad\text{and}\qquad
0 \to Q_n \xrightarrow{} \cdots \xrightarrow{} Q_0 \to N \to 0
\]
of $M$ and $N$, respectively.  Let $P$ and $Q$ denote the
corresponding deleted resolutions.
Consider the tensor product 
\[
P \tensor_k Q \colon \cdots \xrightarrow{}
(P_0 \tensor_k Q_2) \dsum (P_1 \tensor_k Q_1) \dsum (P_2 \tensor_k Q_0)
   \xrightarrow{}
(P_0 \tensor_k Q_1) \dsum (P_1 \tensor_k Q_0) \xrightarrow{}
P_0 \tensor_k Q_0 \to 0
\]
of the complexes $P$ and $Q$.  This is a bounded complex of projective
($\Sigma \tensor_k \Gamma$)-modules.  We want to show that it is in
fact a deleted projective resolution of the $(\Sigma \tensor_k
\Gamma)$-module $M \tensor_k N$, which completes the proof.

We need to show that the complex $P \tensor_k Q$ is exact in all
positive degrees and has homology $M \tensor_k N$ in degree zero.  Let
us temporarily forget the $\Sigma$- and $\Gamma$-structures, and view
$P$ as a complex of right $k$-modules, $Q$ as a complex of left
$k$-modules, and $P \tensor_k Q$ as a complex of abelian groups.  Then
by the K\"unneth formula for homology, see
\cite[Corollary~11.29]{Rotman1}, we have an isomorphism
\[
\alpha \colon
\Dsum_{i+j=n} H_i(P) \tensor_k H_j(Q) \xrightarrow{\iso}
H_n(P \tensor_k Q)
\]
of abelian groups, given by $\alpha([p] \tensor [q]) = [p \tensor q]$,
for $p \in P_i$ and $q \in Q_j$.  Observe that $\alpha$ preserves
$(\Sigma \tensor_k \Gamma)$-module structure.
Thus, $\alpha$ is a $\Sigma \tensor_k \Gamma$-module isomorphism, and
we get
\[
H_n(P \tensor_k Q) \iso
\Dsum_{i+j=n} H_i(P) \tensor_k H_j(Q) \iso
\left\{
\begin{array}{ll}
M \tensor_k N & \text{if $n = 0$} \\
0             & \text{if $n > 0$}
\end{array}
\right.
\]
This means that the complex $P \tensor_k Q$ is a deleted projective
resolution of the $(\Sigma \tensor_k \Gamma)$-module $M \tensor_k N$.
Since the complex $P \tensor_k Q$ is zero in all degrees above $m+n$,
we get
\[
\pd_{\Sigma \tensor_k \Gamma} (M \tensor_k N)
 \le m + n
 = \pd_\Sigma{M} + \pd_\Gamma{N},
\]
and the proof is complete.
\end{proof}

Using the above result, we find that the assumptions we make about the
left and right $a{\Lambda}a$-modules $a\Lambda$ and ${\Lambda}a$
having finite projective dimension imply the first condition we need
for applying Corollary~\ref{cor:algebra-ext-iso-onemodule}~(i), namely
that the $\varepsilon\envalg{\Lambda}\varepsilon$-module
$\varepsilon\envalg{\Lambda}$ has finite projective dimension.  We
state this as the following result.

\begin{lem}
\label{lem:pd-envalg}
We have the following inequality$\colon$
\[
\pd_{\varepsilon\envalg{\Lambda}\varepsilon}\varepsilon\envalg{\Lambda}
\le
\pd_{a{\Lambda}a} a{\Lambda} +
\pd_{\opposite{(a{\Lambda}a)}} {\Lambda}a
\]
\end{lem}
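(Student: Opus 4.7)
The plan is to reduce this directly to Lemma~\ref{lem:tensor-preserves-fin-projdim} by recognizing both the ground algebra $\varepsilon\envalg{\Lambda}\varepsilon$ and the module $\varepsilon\envalg{\Lambda}$ as tensor products over $k$.

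First I would write down the identifications explicitly. Since $\varepsilon = a \tensor \opposite{a}$ and $\envalg{\Lambda} = \Lambda \tensor_k \opposite{\Lambda}$, we compute
\[
\varepsilon\envalg{\Lambda}\varepsilon
= (a \tensor \opposite{a})(\Lambda \tensor_k \opposite{\Lambda})(a \tensor \opposite{a})
= a\Lambda a \tensor_k \opposite{a}\opposite{\Lambda}\opposite{a}
\iso a\Lambda a \tensor_k \opposite{(a\Lambda a)}
= \envalg{(a\Lambda a)},
\]
as already noted in Example~\ref{exam:mod-recollements}. Similarly,
\[
\varepsilon\envalg{\Lambda}
= (a \tensor \opposite{a})(\Lambda \tensor_k \opposite{\Lambda})
= a\Lambda \tensor_k \opposite{a}\opposite{\Lambda}
\iso a\Lambda \tensor_k \opposite{(\Lambda a)},
\]
where $a\Lambda$ is a left $a\Lambda a$-module and $\opposite{(\Lambda a)}$ is a left $\opposite{(a\Lambda a)}$-module (equivalently, $\Lambda a$ is a right $a\Lambda a$-module). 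One should check that these identifications are compatible, i.e.\ that the module structure of $\varepsilon\envalg{\Lambda}$ over $\varepsilon\envalg{\Lambda}\varepsilon$ matches the outer tensor product module structure of $a\Lambda \tensor_k \opposite{(\Lambda a)}$ over $a\Lambda a \tensor_k \opposite{(a\Lambda a)}$, which is immediate from the definitions.

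Having made these identifications, the result follows by applying Lemma~\ref{lem:tensor-preserves-fin-projdim} with $\Sigma = a\Lambda a$, $\Gamma = \opposite{(a\Lambda a)}$, $M = a\Lambda$ and $N = \opposite{(\Lambda a)}$, which yields
\[
\pd_{\envalg{(a\Lambda a)}}\big(a\Lambda \tensor_k \opposite{(\Lambda a)}\big)
\le \pd_{a\Lambda a} a\Lambda + \pd_{\opposite{(a\Lambda a)}} \Lambda a.
\]
Translating back through the identifications above gives the claimed bound. There is essentially no obstacle here: the only thing requiring care is the bookkeeping to confirm that $\varepsilon\envalg{\Lambda}$, viewed as a module over $\varepsilon\envalg{\Lambda}\varepsilon$ in the natural way, really does correspond to the external tensor product $a\Lambda \tensor_k \opposite{(\Lambda a)}$ as a module over $a\Lambda a \tensor_k \opposite{(a\Lambda a)}$. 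Once this is verified, the inequality is an immediate consequence of the preceding lemma.
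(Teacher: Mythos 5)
Your proof is correct and follows essentially the same route as the paper: identify $\varepsilon\envalg{\Lambda}\varepsilon$ with $\envalg{(a\Lambda a)}$ and $\varepsilon\envalg{\Lambda}$ with the outer tensor product $a\Lambda\tensor_k\Lambda a$ as a left $\envalg{(a\Lambda a)}$-module, then invoke Lemma~\ref{lem:tensor-preserves-fin-projdim}. The only difference is notational (your $\opposite{(\Lambda a)}$ versus the paper's $\Lambda a$), which does not affect the argument.
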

\begin{proof}
Note that $\varepsilon \envalg{\Lambda}$ is isomorphic to
$(a\Lambda\tensor_k\Lambda a)$ as left $\envalg{(a\Lambda a)}$-modules
and that the rings $\envalg{(a{\Lambda}a)}$ and
$\varepsilon\envalg{\Lambda}\varepsilon$ are isomorphic.  By using
these isomorphisms and Lemma~\ref{lem:tensor-preserves-fin-projdim}, we
get that
\[ 
\pd_{\varepsilon \envalg{\Lambda}\varepsilon}\varepsilon\envalg{\Lambda}
 = \pd_{\envalg{(a\Lambda a)}}\varepsilon\envalg{\Lambda}
 = \pd_{\envalg{(a\Lambda a)}}(a\Lambda\tensor_k \Lambda a)
 \leq \pd_{a\Lambda a} a\Lambda+\pd_{\opposite{(a\Lambda a)}}\Lambda a.
\qedhere
\]
\end{proof}

Now we show how we get the second condition needed for applying
Corollary~\ref{cor:algebra-ext-iso-onemodule}~(i).  We begin with a
general result which relates extension groups over $\envalg{\Lambda}$
to extension groups over $\Lambda$.

\begin{lem}
\label{lem:ext-over-envalg-as-ext-over-lambda}
Let $M$ and $N$ be $\Lambda$-modules.  Let $D$ be the duality
$\Hom_k(-,k)\colon \fmod \Lambda \lxr \fmod \opposite{\Lambda}$.  Then
\[
\Ext_{\envalg{\Lambda}}^j(\Lambda, M \tensor_k D(N))
\iso
\Ext_\Lambda^j(N, M)
\]
for every nonnegative integer $j$.
\begin{proof}
This follows from \cite[Corollary~4.4, Chapter~IX]{CartanEilenberg}
by using the isomorphism $M \tensor_k D(N) \iso \Hom_k(N,M)$ of
$\envalg{\Lambda}$-modules.
\end{proof}
\end{lem}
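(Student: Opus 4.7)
The plan is to reduce to two standard isomorphisms. First, I would establish the natural isomorphism
\[
M \tensor_k D(N) \iso \Hom_k(N, M)
\]
of $\envalg{\Lambda}$-modules, and then invoke the Cartan--Eilenberg identification relating $\Ext$ over $\envalg{\Lambda}$ with values in $\Hom_k(N,M)$ to $\Ext$ over $\Lambda$.

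For the first step, I would write down the standard $k$-linear map
\[
\alpha \colon M \tensor_k \Hom_k(N, k) \lxr \Hom_k(N, M),
\qquad m \tensor f \longmapsto \bigl(n \mapsto f(n) \cdot m\bigr).
\]
Since $k$ is a field and $N$ and $M$ are finite-dimensional, $\alpha$ is a $k$-linear isomorphism. The only real content is to verify that $\alpha$ intertwines the $\envalg{\Lambda}$-module structures. On the left, the structure is $(\lambda_1 \tensor \opposite{\lambda_2})(m \tensor f) = (\lambda_1 m) \tensor (f \cdot \lambda_2)$, where $D(N) = \Hom_k(N, k)$ is given its standard right $\Lambda$-structure $(f \cdot \lambda)(n) = f(\lambda n)$. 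On the right, the structure is $\bigl((\lambda_1 \tensor \opposite{\lambda_2}) \cdot \phi\bigr)(n) = \lambda_1 \cdot \phi(\lambda_2 n)$. A short direct comparison shows that both sides send $m \tensor f$ to the map $n \mapsto f(\lambda_2 n) \cdot \lambda_1 m$, so $\alpha$ is $\envalg{\Lambda}$-equivariant.

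For the second step, I would quote \cite[Corollary~4.4, Chapter~IX]{CartanEilenberg}, which (applied with coefficient bimodule $\Hom_k(N,M)$ and using that $\Lambda$ is projective over $k$ since $k$ is a field) gives the isomorphism
\[
\Ext_{\envalg{\Lambda}}^j\bigl(\Lambda, \Hom_k(N, M)\bigr) \iso \Ext_\Lambda^j(N, M)
\]
for every $j \geq 0$. Composing with the isomorphism from the first step (applied to the coefficient module) yields the desired conclusion.

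The only step requiring any argument is the bimodule-equivariance of $\alpha$; the rest is bookkeeping and a citation. I do not anticipate any real obstacle, since finite-dimensionality of $N$ over the field $k$ makes the evaluation map a bijection and all structures involved are the standard ones.
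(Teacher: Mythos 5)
Your proposal follows exactly the paper's route: it establishes the $\envalg{\Lambda}$-module isomorphism $M \tensor_k D(N) \iso \Hom_k(N,M)$ and then invokes the same Cartan--Eilenberg result (Corollary 4.4 of Chapter IX) to conclude. You have merely spelled out the bimodule-equivariance check that the paper leaves implicit; the argument is correct.
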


Furthermore, we need to be able to describe the simple
$\envalg{\Lambda}$-modules in terms of simple $\Lambda$-modules.  It
is reasonable to expect that taking the tensor product
\[
(\Lambda/\rad \Lambda) \tensor_k
(\opposite{\Lambda}/\rad \opposite{\Lambda})
\]
should produce all the simple $\envalg{\Lambda}$-modules.  This is,
however, not true for all finite-dimensional algebras, as
Example~\ref{exam:not-semisimple} shows.  The following result
describes when it is true.

\begin{lem}
\label{lem:envalg-simples}
We have an isomorphism
\[
\envalg{\Lambda}/\rad \envalg{\Lambda}
\iso
(\Lambda/\rad \Lambda) \tensor_k
(\opposite{\Lambda}/\rad \opposite{\Lambda})
\]
of $\envalg{\Lambda}$-modules if and only if the
$\envalg{\Lambda}$-module
\[
(\Lambda/\rad \Lambda) \tensor_k
(\opposite{\Lambda}/\rad \opposite{\Lambda})
\]
is semisimple.
\begin{proof}
It is easy to show that
\[
(\Lambda/\rad \Lambda) \tensor_k
(\opposite{\Lambda}/\rad \opposite{\Lambda})
\iso
\frac{\envalg{\Lambda}}
     {\Lambda \tensor_k (\rad \opposite{\Lambda}) +
      (\rad \Lambda) \tensor_k \opposite{\Lambda}}
\]
as $\envalg{\Lambda}$-modules, and that the ideal
$\Lambda \tensor_k (\rad \opposite{\Lambda}) +
      (\rad \Lambda) \tensor_k \opposite{\Lambda}
$
of $\envalg{\Lambda}$ is nilpotent.  This means that if
$
(\Lambda/\rad \Lambda) \tensor_k
(\opposite{\Lambda}/\rad \opposite{\Lambda})
$ is a semisimple $\envalg{\Lambda}$-module, then it is
isomorphic to $\envalg{\Lambda}/\rad \envalg{\Lambda}$.
The opposite implication is obvious.
\end{proof}
\end{lem}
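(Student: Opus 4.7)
The plan is to establish the lemma by combining two well-known facts: that $\rad \envalg{\Lambda}$ is characterized (for an artinian ring) as the largest nilpotent ideal, and that it is simultaneously the smallest ideal with semisimple quotient. The bulk of the work is already indicated in the lemma statement's accompanying proof sketch, so my job is to fill in the details.

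First I would identify a convenient ideal $J$ of $\envalg{\Lambda}$ such that $\envalg{\Lambda}/J \iso (\Lambda/\rad \Lambda) \tensor_k (\opposite{\Lambda}/\rad \opposite{\Lambda})$; the natural choice is
\[
J = \Lambda \tensor_k (\rad \opposite{\Lambda}) + (\rad \Lambda) \tensor_k \opposite{\Lambda}.
\]
The isomorphism $\envalg{\Lambda}/J \iso (\Lambda/\rad\Lambda) \tensor_k (\opposite{\Lambda}/\rad\opposite{\Lambda})$ follows from the right exactness of the tensor product applied successively in each variable (using that $-\tensor_k \opposite{\Lambda}$ and $\Lambda \tensor_k -$ are exact because we are tensoring over a field).

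Next I would verify that $J$ is a nilpotent ideal of $\envalg{\Lambda}$. Since $\Lambda$ is a finite-dimensional $k$-algebra, $\rad \Lambda$ is nilpotent; say $(\rad \Lambda)^n = 0$ and $(\rad \opposite{\Lambda})^n = 0$ for some $n$. Then in any product of $2n$ generators of $J$, either at least $n$ factors come from $(\rad \Lambda) \tensor_k \opposite{\Lambda}$ or at least $n$ come from $\Lambda \tensor_k (\rad \opposite{\Lambda})$; a short bookkeeping on tensor components shows $J^{2n} = 0$.

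Finally I would combine these facts. For the nontrivial implication, suppose that the $\envalg{\Lambda}$-module $(\Lambda/\rad\Lambda) \tensor_k (\opposite{\Lambda}/\rad\opposite{\Lambda}) \iso \envalg{\Lambda}/J$ is semisimple. Then on one hand $J \subseteq \rad \envalg{\Lambda}$ because $J$ is nilpotent and $\rad \envalg{\Lambda}$ is the largest nilpotent ideal; on the other hand $\rad \envalg{\Lambda} \subseteq J$ because $\envalg{\Lambda}/J$ is semisimple and $\rad \envalg{\Lambda}$ is the smallest ideal with semisimple quotient. Hence $J = \rad \envalg{\Lambda}$, giving the desired isomorphism. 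The converse is immediate, since $\envalg{\Lambda}/\rad \envalg{\Lambda}$ is semisimple for any artin algebra. I do not anticipate a serious obstacle here; the only mild subtlety is the nilpotency check for $J$, which is a direct combinatorial argument on tensor factors.
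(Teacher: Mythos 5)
Your proposal follows essentially the same route as the paper's proof: identify the ideal $J = \Lambda \tensor_k (\rad \opposite{\Lambda}) + (\rad \Lambda) \tensor_k \opposite{\Lambda}$, establish the isomorphism $\envalg{\Lambda}/J \iso (\Lambda/\rad\Lambda) \tensor_k (\opposite{\Lambda}/\rad\opposite{\Lambda})$, show $J$ is nilpotent, and then deduce $J = \rad \envalg{\Lambda}$ from the two standard characterizations of the radical. You have merely spelled out details the paper compresses (the pigeonhole nilpotency count, the explicit double inclusion $J \subseteq \rad\envalg{\Lambda} \subseteq J$), so the argument is correct and is the same proof.
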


Now we give an example showing that $(\Lambda/\rad\Lambda) \tensor_k
(\opposite{\Lambda}/\rad\opposite{\Lambda})$ is not necessarily
semisimple for a finite dimensional algebra $\Lambda$ over a field
$k$.
\begin{exam}
\label{exam:not-semisimple}
Let $k=\mathbb{Z}_2(x)$ be the field of rational functions in one
indeterminant $x$ over $\mathbb{Z}_2$, and let $\Lambda$ be the
$2$-dimensional $k$-algebra $k[y]/\idealgenby{y^2 - x}$.  Then
$\Lambda$ is a field, so that $\rad\Lambda =(0)$.  The element $\alpha
= y\tensor 1 + 1\tensor y$ satisfies $\alpha^2=0$.  Hence
$\idealgenby{\alpha}$ is a nilpotent non-zero ideal in
$\envalg{\Lambda}$, and therefore $\envalg{\Lambda}$ is not
semisimple.
\end{exam}

We assume that $(\Lambda/\rad \Lambda) \tensor_k
(\opposite{\Lambda}/\rad \opposite{\Lambda})$ is semisimple whenever
we need it.  In particular, this assumption is included in the main
result at the end of this section.  Note that this assumption is
satisfied in many cases, for example if $\Lambda/\rad \Lambda$ is
separable as $k$-algebra (by \cite[Corollary~7.8~(i)]{CurtisReiner})
if $k$ is algebraically closed (this can be shown by using the
Wedderburn--Artin Theorem), or if $\Lambda$ is a quotient of a path
algebra by an admissible ideal.

Now we can show how to get the second condition we need for applying
Corollary~\ref{cor:algebra-ext-iso-onemodule}~(i).

\begin{lem}
\label{lem:ext-lambda-simples}
Assume that $(\Lambda/\rad \Lambda) \tensor_k (\opposite{\Lambda}/\rad
\opposite{\Lambda})$ is a semisimple $\envalg{\Lambda}$-module, and
that we have
\[
(\alpha)
\ \id_\Lambda \Big( \frac{\Lambda/\idealgenby{a}}{\rad \Lambda/\idealgenby{a}} \Big)
 < \infty
\qquad\text{and}\qquad
(\gamma)
\ \pd_\Lambda \Big( \frac{\Lambda/\idealgenby{a}}{\rad \Lambda/\idealgenby{a}} \Big)
 < \infty.
\]
Then
\[
\Ext_{\envalg{\Lambda}}^j
\Big( \Lambda,
      \frac{\envalg{\Lambda}/\idealgenby{\varepsilon}}
           {\rad \envalg{\Lambda}/\idealgenby{\varepsilon}} \Big)
= 0
\quad\text{for}\quad
j >
\max \Big\{
        \pd_\Lambda
          \Big( \frac{\Lambda/\idealgenby{a}}
                     {\rad \Lambda/\idealgenby{a}}
          \Big),
        \,
        \id_\Lambda
          \Big( \frac{\Lambda/\idealgenby{a}}
                     {\rad \Lambda/\idealgenby{a}}
          \Big)
     \Big\}.
\]
\end{lem}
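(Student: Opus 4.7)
The plan is to reduce the computation to Lemma~\ref{lem:ext-over-envalg-as-ext-over-lambda} by realising the target module $W := (\envalg{\Lambda}/\idealgenby{\varepsilon})/\rad(\envalg{\Lambda}/\idealgenby{\varepsilon})$ as a direct summand of an explicit module built from tensor products. Write $M = (\Lambda/\idealgenby{a})/\rad(\Lambda/\idealgenby{a})$ and $M_{\mathrm{op}} = (\opposite{\Lambda}/\idealgenby{\opposite{a}})/\rad(\opposite{\Lambda}/\idealgenby{\opposite{a}})$; these are semisimple $\Lambda$- and $\opposite{\Lambda}$-modules respectively.

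First, I would use the semisimplicity hypothesis and Lemma~\ref{lem:envalg-simples} to obtain an $\envalg{\Lambda}$-module isomorphism $\envalg{\Lambda}/\rad\envalg{\Lambda} \iso (\Lambda/\rad) \tensor_k (\opposite{\Lambda}/\rad)$. Writing $R = \Lambda/\rad$, $Q = \opposite{\Lambda}/\rad$ and $\bar\varepsilon$ for the image of $\varepsilon = a \tensor \opposite{a}$, this gives $W \iso (R \tensor_k Q)/((R \tensor_k Q)\bar\varepsilon(R \tensor_k Q))$. A direct calculation shows that the two-sided ideal generated by $\bar\varepsilon$ in the semisimple algebra $R \tensor_k Q$ equals $(R\bar a R) \tensor_k (Q\bar{a}^{\mathrm{op}} Q)$. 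Choosing central idempotents $f_1 \in R$, $f_2 \in Q$ with $R\bar a R = Rf_1$ and $Q\bar{a}^{\mathrm{op}}Q = Qf_2$, one has $(1-f_1)R \iso M$ and $(1-f_2)Q \iso M_{\mathrm{op}}$, and the orthogonal decomposition $1 - f_1 \tensor f_2 = (1-f_1) \tensor 1 + f_1 \tensor (1-f_2)$ yields
\[
W \iso (1-f_1)R \tensor_k Q \ \oplus \ f_1 R \tensor_k (1-f_2)Q \iso M \tensor_k Q \ \oplus \ f_1 R \tensor_k M_{\mathrm{op}}.
\]
In particular, $W$ is a direct summand of $N' := M \tensor_k (\opposite{\Lambda}/\rad) \oplus (\Lambda/\rad) \tensor_k M_{\mathrm{op}}$.

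Next, I would apply Lemma~\ref{lem:ext-over-envalg-as-ext-over-lambda} to each summand of $N'$. Writing $\opposite{\Lambda}/\rad \iso D(N^*)$ and $M_{\mathrm{op}} \iso D(\tilde M)$ for the semisimple $\Lambda$-modules $N^* := D(\opposite{\Lambda}/\rad)$ and $\tilde M := D(M_{\mathrm{op}})$, the simple summands of $\tilde M$ are precisely the $\Lambda$-duals of the simple summands of $M_{\mathrm{op}}$, all of which are simple $\Lambda/\idealgenby{a}$-modules; hence $\pd_\Lambda \tilde M \le \pd_\Lambda M$. The lemma then gives
\[
\Ext_{\envalg{\Lambda}}^j(\Lambda, M \tensor_k D(N^*)) \iso \Ext_\Lambda^j(N^*, M)
\quad\text{and}\quad
\Ext_{\envalg{\Lambda}}^j(\Lambda, (\Lambda/\rad) \tensor_k D(\tilde M)) \iso \Ext_\Lambda^j(\tilde M, \Lambda/\rad).
\]
Hypothesis $(\alpha)$ forces the first Ext to vanish for $j > \id_\Lambda M$, while hypothesis $(\gamma)$ together with $\pd_\Lambda \tilde M \le \pd_\Lambda M$ forces the second to vanish for $j > \pd_\Lambda M$. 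Since $W$ is a summand of $N'$, the required vanishing of $\Ext_{\envalg{\Lambda}}^j(\Lambda, W)$ follows.

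The main obstacle is the summand decomposition of $W$ in the first step. The identification of $(R \tensor_k Q)\bar\varepsilon(R \tensor_k Q)$ with the tensor product $(R\bar a R) \tensor_k (Q\bar{a}^{\mathrm{op}} Q)$ and the subsequent central-idempotent splitting work only because the semisimplicity hypothesis places us inside the semisimple algebra $R \tensor_k Q$; Example~\ref{exam:not-semisimple} shows that this tensor-product algebra can fail to be semisimple in general, and then the clean separation of the "left" and "right" contributions of $\bar\varepsilon$ via orthogonal central idempotents collapses.
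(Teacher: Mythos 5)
Your argument is correct and rests on the same two ingredients as the paper's proof — Lemma~\ref{lem:envalg-simples} to identify the top of $\envalg{\Lambda}/\idealgenby{\varepsilon}$ inside $(\Lambda/\rad\Lambda)\tensor_k(\opposite{\Lambda}/\rad\opposite{\Lambda})$, and Lemma~\ref{lem:ext-over-envalg-as-ext-over-lambda} to convert $\Ext_{\envalg{\Lambda}}$ into $\Ext_\Lambda$ and invoke $(\alpha)$ and $(\gamma)$. The only difference is one of packaging: the paper argues simple-by-simple, noting that each simple $\envalg{\Lambda}/\idealgenby{\varepsilon}$-module is a summand of some $S\tensor_k D(T)$ where at least one of $S$, $T$ is annihilated by $\idealgenby{a}$, and then kills $\Ext^j_\Lambda(T,S)$; you instead perform a single global decomposition of $W$ into two blocks via orthogonal central idempotents of the semisimple algebra $(\Lambda/\rad\Lambda)\tensor_k(\opposite{\Lambda}/\rad\opposite{\Lambda})$ and kill each block at once. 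Both exploit the semisimplicity hypothesis at exactly the same point, so this is essentially the same proof with a more explicit idempotent bookkeeping.
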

\begin{proof}
By Lemma~\ref{lem:envalg-simples}, every simple
$\envalg{\Lambda}$-module is a direct summand of a module of the form
$S \tensor_k D(T)$ for some simple $\Lambda$-modules $S$ and $T$,
where $D$ is the duality $\Hom_k(-,k)\colon \fmod \Lambda \lxr \fmod
\opposite{\Lambda}$.  If neither of the modules $S$ or $T$ is
annihilated by the ideal $\idealgenby{a}$, then we have
\[
\idealgenby{\varepsilon} (S \tensor_k D(T)) =
 \idealgenby{a \tensor \opposite{a}} (S \tensor_k D(T))
 = (\idealgenby{a}S) \tensor_k D(\idealgenby{a}T)
 = S \tensor_k D(T),
\]
which means that no nonzero direct summand of the $\envalg{\Lambda}$-module
$S \tensor_k D(T)$ is a
$\envalg{\Lambda}/\idealgenby{\varepsilon}$-module.

Let $j$ be an integer such that
\[
j >
\max \Big\{
        \pd_\Lambda
          \Big( \frac{\Lambda/\idealgenby{a}}
                     {\rad \Lambda/\idealgenby{a}}
          \Big),
        \,
        \id_\Lambda
          \Big( \frac{\Lambda/\idealgenby{a}}
                     {\rad \Lambda/\idealgenby{a}}
          \Big)
     \Big\}.
\]
In order to prove the result, it is sufficient to show that
$\Ext_{\envalg{\Lambda}}^j(\Lambda, U) = 0$ for every simple
$\envalg{\Lambda}/\idealgenby{\varepsilon}$-module $U$.  By the
above reasoning, every such $U$ is a direct summand of a module $S \tensor_k
D(T)$ for some simple $\Lambda$-modules $S$ and $T$, where at least
one of $S$ and $T$ is annihilated by $\idealgenby{a}$ and is thus a
simple $\Lambda/\idealgenby{a}$-module.  Using
Lemma~\ref{lem:ext-over-envalg-as-ext-over-lambda}, we get
\[
\Ext_{\envalg{\Lambda}}^j(\Lambda, S \tensor_k D(T))
 \iso \Ext_\Lambda^j(T, S)
 = 0,
\]
since we have $\pd_\Lambda T < j$ or $\id_\Lambda S < j$.  It follows
that $\Ext_{\envalg{\Lambda}}^j(\Lambda, U) = 0$.
\end{proof}

The following result summarizes the above work and shows that, with
the assumptions we have indicated for the algebra $\Lambda$ and the
idempotent $a$, the functor $E$ gives isomorphisms $E_{\Lambda,\Lambda}^j \colon
\HH{j}(\Lambda) \lxr
\HH{j}(a{\Lambda}a)$ in almost all degrees $j$.

\begin{prop}
\label{prop:lambda-ext-iso}
Assume that $(\Lambda/\rad \Lambda) \tensor_k (\opposite{\Lambda}/\rad
\opposite{\Lambda})$ is a semisimple $\envalg{\Lambda}$-module, and
that the functor $e$ is an eventually homological isomorphism.  Then
the map
\[
E_{\Lambda,M}^j \colon
\Ext_{\envalg{\Lambda}}^j(\Lambda, M) \lxr
\Ext_{\envalg{(a{\Lambda}a)}}^j(E(\Lambda), E(M))
\]
is an isomorphism for every $\envalg{\Lambda}$-module $M$ and every
integer $j$ such that
\[
j >
\max \Big\{
        \pd_\Lambda
          \Big( \frac{\Lambda/\idealgenby{a}}
                     {\rad \Lambda/\idealgenby{a}}
          \Big),
        \,
        \id_\Lambda
          \Big( \frac{\Lambda/\idealgenby{a}}
                     {\rad \Lambda/\idealgenby{a}}
          \Big)
     \Big\}
+
\pd_{a{\Lambda}a} a{\Lambda}
+
\pd_{\opposite{(a{\Lambda}a)}}{\Lambda}a
+ 1
< \infty.
\]
In particular, we have isomorphisms
\[
\HH{j}(\Lambda) \iso \HH{j}(a{\Lambda}a)
\]
for almost all degrees $j$.
\end{prop}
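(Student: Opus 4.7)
The plan is to apply Corollary~\ref{cor:algebra-ext-iso-onemodule}~(i) to the enveloping-algebra recollement of Example~\ref{exam:mod-recollements}~(ii), namely
\[
(\fmod \envalg{\Lambda}/\idealgenby{\varepsilon},\ \fmod \envalg{\Lambda},\ \fmod \envalg{(a\Lambda a)}),
\]
taking the chosen ``first argument'' module to be $\Lambda$ viewed as an $\envalg{\Lambda}$-module. Under the identification $\varepsilon\envalg{\Lambda}\varepsilon \iso \envalg{(a\Lambda a)}$ (with $\varepsilon = a \tensor \opposite{a}$), the quotient functor is $E = \varepsilon(-) \iso a(-)a$, so the map produced by the corollary is exactly the one in the statement of the proposition, with $E(\Lambda) = a\Lambda a$ and $E(M) = aMa$.

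Corollary~\ref{cor:algebra-ext-iso-onemodule}~(i) applied in this situation has two hypotheses: (1) a vanishing of $\Ext^j_{\envalg{\Lambda}}(\Lambda, -)$ on simple $\envalg{\Lambda}/\idealgenby{\varepsilon}$-modules for all sufficiently large $j$, and (2) the finiteness of $\pd_{\varepsilon\envalg{\Lambda}\varepsilon}\varepsilon\envalg{\Lambda}$. Hypothesis (2) is furnished by Lemma~\ref{lem:pd-envalg}, which gives the bound
\[
\pd_{\varepsilon\envalg{\Lambda}\varepsilon}\varepsilon\envalg{\Lambda} \le \pd_{a\Lambda a} a\Lambda + \pd_{\opposite{(a\Lambda a)}} \Lambda a.
\]
Both dimensions on the right are finite, since the assumption that $e$ is an eventually homological isomorphism, combined with Corollary~\ref{cor:algebra-ext-iso} (the implications (ii)$\Rightarrow$(iii) and (ii)$\Rightarrow$(iv)), forces the finiteness of all four dimensions $(\alpha)$--$(\delta)$ of the Main Theorem. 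Hypothesis (1) is precisely the content of Lemma~\ref{lem:ext-lambda-simples}, which I would invoke with the value
\[
m = \max\Big\{\pd_\Lambda\!\Big(\tfrac{\Lambda/\idealgenby{a}}{\rad \Lambda/\idealgenby{a}}\Big),\ \id_\Lambda\!\Big(\tfrac{\Lambda/\idealgenby{a}}{\rad \Lambda/\idealgenby{a}}\Big)\Big\} + 1.
\]
This is the step where the semisimplicity hypothesis is consumed: via Lemma~\ref{lem:envalg-simples}, the simple $\envalg{\Lambda}$-modules are identified with direct summands of tensor products $S \tensor_k D(T)$ of simple $\Lambda$-modules, and then the finiteness of $(\alpha)$ and $(\gamma)$ does the job through Lemma~\ref{lem:ext-over-envalg-as-ext-over-lambda}.

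Feeding the above $m$ together with $n = \pd_{a\Lambda a} a\Lambda + \pd_{\opposite{(a\Lambda a)}} \Lambda a$ into the conclusion $j > m + n$ of Corollary~\ref{cor:algebra-ext-iso-onemodule}~(i) produces exactly the explicit bound stated in the proposition, and this yields the desired isomorphism $E^j_{\Lambda, M}$ for every $\envalg{\Lambda}$-module $M$. Taking $M = \Lambda$ gives the concluding assertion $\HH{j}(\Lambda) \iso \HH{j}(a\Lambda a)$ for almost all $j$. The genuine technical work has already been carried out in Lemma~\ref{lem:pd-envalg}, Lemma~\ref{lem:envalg-simples} and Lemma~\ref{lem:ext-lambda-simples} (the latter being the only real obstacle, since it is there that the transfer from $\Lambda$-level data to $\envalg{\Lambda}$-level data is made and where the semisimplicity hypothesis is unavoidable); the present proof is then just an assembly of those ingredients.
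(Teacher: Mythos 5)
Your proposal is correct and follows exactly the paper's own proof: the same application of Corollary~\ref{cor:algebra-ext-iso-onemodule}~(i) to the enveloping-algebra recollement with $M=\Lambda$, the same choice of $m$ and $n$, and the same two ingredients (Lemma~\ref{lem:pd-envalg} for the projective-dimension hypothesis and Lemma~\ref{lem:ext-lambda-simples}, via Lemma~\ref{lem:envalg-simples}, for the Ext-vanishing hypothesis). Nothing to add.
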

\begin{proof}
We use Corollary~\ref{cor:algebra-ext-iso-onemodule}~(i) on the algebra
$\envalg{\Lambda}$, the idempotent $\varepsilon = a \tensor
\opposite{a}$ and the $\envalg{\Lambda}$-module $\Lambda$.  Let $m$
and $n$ be the integers
\[
m =
\max \Big\{
        \pd_\Lambda
          \Big( \frac{\Lambda/\idealgenby{a}}
                     {\rad \Lambda/\idealgenby{a}}
          \Big),
        \,
        \id_\Lambda
          \Big( \frac{\Lambda/\idealgenby{a}}
                     {\rad \Lambda/\idealgenby{a}}
          \Big)
     \Big\}
+ 1
\qquad\text{and}\qquad
n =
\pd_{a{\Lambda}a} a{\Lambda}
+
\pd_{\opposite{(a{\Lambda}a)}}{\Lambda}a.
\]
Note that $m$ and $n$ are finite by
Corollary~\ref{cor:algebra-ext-iso}.  By
Lemma~\ref{lem:ext-lambda-simples}, we have
\[
\Ext_{\envalg{\Lambda}}^j
\Big( \Lambda,
      \frac{\envalg{\Lambda}/\idealgenby{\varepsilon}}
           {\rad \envalg{\Lambda}/\idealgenby{\varepsilon}} \Big)
= 0
\quad\text{for}\quad
j \ge m,
\]
and by Lemma~\ref{lem:pd-envalg}, we have
\[
\pd_{\varepsilon\envalg{\Lambda}\varepsilon}\varepsilon\envalg{\Lambda}
\le
n.
\]
Now the result follows from
Corollary~\ref{cor:algebra-ext-iso-onemodule}~(i) by noting that
$\envalg{(a{\Lambda}a)}$ is the same algebra as
$\varepsilon\envalg{\Lambda}\varepsilon$ and that our functor $E =
a-a$ is the same as the functor $\varepsilon -$ given by left
multiplication with the idempotent $\varepsilon$.
\end{proof}

Finally, we conclude this section by showing that the assumptions we
have indicated imply that \fg{} holds for $\Lambda$ if and only if
\fg{} holds for $a{\Lambda}a$.  The following theorem is the main
result of this section and constitutes the fourth part of the Main
Theorem presented in the introduction.

\begin{thm}
\label{thm:main-result-fg}
Let $\Lambda$ be a finite dimensional algebra over a field $k$, and
let $a$ be an idempotent in $\Lambda$.  Assume that $(\Lambda/\rad
\Lambda) \tensor_k (\opposite{\Lambda}/\rad \opposite{\Lambda})$ is a
semisimple $\envalg{\Lambda}$-module, and that the functor $a-\colon
\fmod \Lambda \lxr \fmod a{\Lambda}a$ is an eventually homological
isomorphism.  Then $\Lambda$ satisfies \fg{} if and only if
$a{\Lambda}a$ satisfies \fg{}.
\end{thm}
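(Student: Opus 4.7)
The strategy is to apply Proposition~\ref{prop:fg-two-algebras} to $\Lambda$ and $\Gamma = a\Lambda a$. This requires constructing the two commutative diagrams specified there and exhibiting graded rng isomorphisms (in degrees $\ge n$ for suitable $n$) to play the roles of $f, g, f', g'$.

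For the first diagram, I will take $M = \Lambda/\rad\Lambda$ and $N = e(M) = a(\Lambda/\rad\Lambda)$; Proposition~\ref{prop:e-gamma-commute} applied to $M$ immediately yields the commutative square with $\varphi_M, \varphi_N$ horizontally and $E_{\Lambda,\Lambda}^*, e_{M,M}^*$ vertically. For the second diagram, I will take $N' = a\Lambda a/\rad(a\Lambda a)$ and $M' = l(N') = \Lambda a \tensor_{a\Lambda a} N'$; since $e \fcomp l \iso \Id$ by Proposition~\ref{properties}, we have $e(M') \iso N'$, and Proposition~\ref{prop:e-gamma-commute} applied to $M'$ delivers the second commutative square.

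To finish, I will choose $n$ large enough that the maps $E_{\Lambda,\Lambda}^j$, $e_{M,M}^j$, and $e_{M',M'}^j$ are all isomorphisms for $j \ge n$: for $e_{M,M}^j$ and $e_{M',M'}^j$ this follows from the hypothesis that $e$ is an eventually homological isomorphism, while for $E_{\Lambda,\Lambda}^j \colon \HH{j}(\Lambda) \lxr \HH{j}(a\Lambda a)$ it follows from Proposition~\ref{prop:lambda-ext-iso}, whose hypotheses are precisely those of the theorem (the semisimplicity of $(\Lambda/\rad\Lambda) \tensor_k (\opposite{\Lambda}/\rad\opposite{\Lambda})$ as $\envalg{\Lambda}$-module together with $e$ being an eventually homological isomorphism). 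Since each of these maps respects Yoneda composition, the restrictions to degrees $\ge n$ are graded rng isomorphisms, supplying the data $f, g, f', g'$ required by Proposition~\ref{prop:fg-two-algebras}, from which the conclusion follows. The only real organizational point is the choice of $M'$ so that $e(M') \iso a\Lambda a/\rad(a\Lambda a)$, which is handled cleanly by the left adjoint $l$; everything else is an assembly of previously established results.
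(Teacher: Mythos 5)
Your proposal is correct and matches the paper's proof essentially step for step: both assemble the commutative squares from Proposition~\ref{prop:e-gamma-commute}, verify that the vertical maps are isomorphisms in high degrees via Proposition~\ref{prop:lambda-ext-iso} and the eventual-homological-isomorphism hypothesis (the paper cites Corollary~\ref{cor:algebra-ext-iso}, which is what that hypothesis amounts to), and conclude via Proposition~\ref{prop:fg-two-algebras}. The only difference is cosmetic: where the paper invokes essential surjectivity of $e$ to assert the existence of an $M'$ with $e(M') \iso a\Lambda a/\rad(a\Lambda a)$, you produce one explicitly as $M' = l(a\Lambda a/\rad(a\Lambda a))$, which is a perfectly fine concrete choice.
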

\begin{proof}
For every $\Lambda$-module $M$, we can make a diagram
\[
\xymatrix@C=4em{
{\HH*(\Lambda)}       \ar[r]^-{\varphi_M} \ar[d]_{E_{\Lambda,\Lambda}^*} &
{\Ext_\Lambda^*(M,M)} \ar[d]^{e_{M,M}^*} \\
{\HH*(a{\Lambda}a)}   \ar[r]_-{\varphi_{e(M)}} &
{\Ext_{a{\Lambda}a}^*(e(M), e(M))}
}
\]
of graded rings and graded ring homomorphisms.  This diagram commutes
by Proposition~\ref{prop:e-gamma-commute}, and the maps
$E_{\Lambda,\Lambda}^*$ and $e_{M,M}^*$ are isomorphisms in almost all
degrees by Proposition~\ref{prop:lambda-ext-iso} and
Corollary~\ref{cor:algebra-ext-iso}, respectively.

Since we have such diagrams for every $\Lambda$-module $M$ and the
functor $e$ is essentially surjective (see
Proposition~\ref{properties}), we can make one diagram with $M =
\Lambda/\rad \Lambda$ and another with $e(M) \iso a{\Lambda}a/\rad
a{\Lambda}a$.  Then, by Proposition~\ref{prop:fg-two-algebras}, it
follows that $\Lambda$ satisfies \fg{} if and only if $a{\Lambda}a$
satisfies \fg{}.
\end{proof}

\section{Applications and Examples}
\label{section:examples}

In this section we provide applications of our Main Theorem (stated in
the Introduction), and examples illustrating its use.  For ease of
reference, we restate the Main Theorem here.

\begin{thm}
\label{thm:main-results-summary}
Let $\Lambda$ be an artin algebra over a commutative ring $k$ and let
$a$ be an idempotent element of $\Lambda$.  Let $e$ be the functor
$a-\colon \fmod{\Lambda}\lxr \fmod{a\Lambda a}$ given by
multiplication by $a$.  Consider the following conditions$\colon$
\begin{align*}
(\alpha)
\ \id_\Lambda \Big( \frac{\Lambda/\idealgenby{a}}
                         {\rad \Lambda/\idealgenby{a}} \Big)
&< \infty &
(\beta) \ \pd_{a{\Lambda}a} a{\Lambda} &< \infty \\
(\gamma)
\ \pd_\Lambda \Big( \frac{\Lambda/\idealgenby{a}}
                         {\rad \Lambda/\idealgenby{a}} \Big)
&< \infty &
(\delta) \ \pd_{\opposite{(a{\Lambda}a)}}{\Lambda}a &< \infty
\end{align*}
Then the following hold.
\begin{enumerate}
\item The following are equivalent$\colon$
\begin{enumerate}
\item $(\alpha)$ and $(\beta)$ hold.
\item $(\gamma)$ and $(\delta)$ hold.
\item The functor $e$ is an eventually homological isomorphism.
\end{enumerate}
\item The functor $a-\colon \fmod{\Lambda}\lxr \fmod{a\Lambda a}$
induces a singular equivalence between $\Lambda$ and $a\Lambda a$ if
and only if the conditions $(\beta)$ and $(\gamma)$ hold.
\item Assume that $e$ is an eventually homological isomorphism.  Then
$\Lambda$ is Gorenstein if and only if $a{\Lambda}a$ is Gorenstein.
\item Assume that $e$ is an eventually homological isomorphism, that
$k$ is a field and that $(\Lambda/\rad \Lambda) \tensor_k
(\opposite{\Lambda}/\rad \opposite{\Lambda})$ is a semisimple
$\envalg{\Lambda}$-module.  Then $\Lambda$ satisfies \fg{} if and
only if $a{\Lambda}a$ satisfies \fg{}.
\end{enumerate}
\end{thm}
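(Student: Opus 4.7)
The plan is to assemble this four-part theorem by appealing directly to the results already established in earlier sections, rather than by repeating any substantive argument. Each part has been essentially proved upstream; what remains is to invoke the appropriate statement and, where necessary, translate between formulations.

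For part (i), I would simply quote Corollary~\ref{cor:algebra-ext-iso}, which states precisely the equivalence of (a), (b) and (c). That corollary was obtained by combining Proposition~\ref{prop:ext-iso-implies-conditions} (necessity of the finiteness conditions, from the vanishing formulas $ei = 0$ and $el \iso \Id_\C$) with Corollary~\ref{cor:algebra-ext-iso-onemodule} (sufficiency, via dimension shifting in the module recollement $(\fmod\Lambda/\idealgenby{a}, \fmod\Lambda, \fmod a\Lambda a)$ of Example~\ref{exam:mod-recollements}).

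For part (ii), I would apply Corollary~\ref{corsingular} to the artin algebra $\Lambda$ and the idempotent $a$. Its hypothesis is that every $\Lambda/\idealgenby{a}$-module has finite projective dimension over $\Lambda$ and that $\pd_{a\Lambda a} a\Lambda < \infty$. The second requirement is exactly $(\beta)$. The first is equivalent to $(\gamma)$: the forward direction is trivial, and the converse holds because every $\Lambda/\idealgenby{a}$-module admits a finite composition series whose simple factors are direct summands of $(\Lambda/\idealgenby{a})/\rad(\Lambda/\idealgenby{a})$, so finite projective dimension of the latter forces finite projective dimension for all $\Lambda/\idealgenby{a}$-modules by induction on composition length applied to short exact sequences of $\Lambda$-modules. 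This translation is the only real verification needed in the whole proof.

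Parts (iii) and (iv) are direct citations. Part (iii) is precisely Corollary~\ref{corGorensteinArtinalg}(i), which in turn rests on Theorem~\ref{thm:gorenstein}(v) (transfer of Gorensteinness along essentially surjective eventually homological isomorphisms, applied to $e$, which is essentially surjective by Proposition~\ref{properties}). Part (iv) is a restatement of Theorem~\ref{thm:main-result-fg}; its substance lies in Proposition~\ref{prop:e-gamma-commute} (commutativity of the square involving $\varphi_M$, $\varphi_{e(M)}$, $E^*_{\Lambda,\Lambda}$ and $e^*_{M,M}$), Proposition~\ref{prop:lambda-ext-iso} (which uses the semisimplicity hypothesis on $(\Lambda/\rad\Lambda)\tensor_k(\opposite\Lambda/\rad\opposite\Lambda)$ together with Lemmas~\ref{lem:pd-envalg} and~\ref{lem:ext-lambda-simples} to show that $E^*_{\Lambda,\Lambda}$ is an isomorphism in high degrees), and Proposition~\ref{prop:fg-two-algebras} on preservation of \fg{} under graded rng isomorphisms in almost all degrees.

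I do not anticipate any genuine obstacle: the only non-trivial step is the composition-length argument noted above for part (ii), and the rest is a matter of assembling pieces. The real work of the paper has already been done in Sections \ref{section:extensions}--\ref{section:fg-a(Lambda)a}; in the applications section, the Main Theorem is being reused as a packaged statement.
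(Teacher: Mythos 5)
Your proposal is correct and follows exactly the paper's own recipe: the introduction explicitly identifies the four parts with Corollary~\ref{cor:algebra-ext-iso}, Corollary~\ref{corsingular}, Corollary~\ref{corGorensteinArtinalg} and Theorem~\ref{thm:main-result-fg}, and your composition-series reformulation of the hypothesis in Corollary~\ref{corsingular} to match condition $(\gamma)$ is the same translation the paper relies on (cf.\ the proof of Corollary~\ref{cor:algebra-ext-iso-onemodule}).
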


This section is divided into three subsections.  In the first
subsection, we apply Theorem~\ref{thm:main-results-summary} to the
class of triangular matrix algebras.  In the second subsection, we
consider some cases where the conditions $(\alpha)$--$(\delta)$ in
Theorem~\ref{thm:main-results-summary} are related.  As a consequence,
we find sufficient conditions, stated in terms of the quiver and
relations, for applying Theorem~\ref{thm:main-results-summary} to a
quotient of a path algebra.  In the last subsection, we compare our
work to that of Nagase in \cite{N}.

\subsection{Triangular Matrix Algebras}

Let $\Sigma$ and $\Gamma$ be two artin algebras over a commutative
ring $k$, and let ${_{\Gamma}M_{\Sigma}}$ be a
$\Gamma$-$\Sigma$-bimodule such that $M$ is finitely generated over
$k$, and $k$ acts centrally on $M$.  Then we have the artin triangular
matrix algebra
\[
\Lambda = \begin{pmatrix}
           \Sigma & 0 \\
           {_\Gamma M_\Sigma} & \Gamma \\
         \end{pmatrix},
\]
where the addition and the multiplication are given by the ordinary
operations on matrices.

The module category of $\Lambda$ has a well known description, see
\cite{ARS, FGR}.  In fact, a module over $\Lambda$ is described as a
triple $(X,Y,f)$, where $X$ is a $\Sigma$-module, $Y$ is a
$\Gamma$-module and $f\colon M\tensor_{\Sigma}X\lxr Y$ is a
$\Gamma$-homomorphism.  A morphism between two triples $(X,Y,f)$ and
$(X',Y',f')$ is a pair of homomorphisms $(a,b)$, where $a\in
\Hom_{\Sigma}(X,X')$ and $b\in \Hom_{\Gamma}(Y,Y')$, such that the
following diagram commutes$\colon$
\[
\xymatrix{
 M\tensor_{\Sigma}X \ar[r]^{ \ \ \ f} \ar[d]_{\Id_M\tensor a} & Y  \ar[d]^{b} \\
 M\tensor_{\Sigma}X'\ar[r]^{ \ \ \ f'}   &  Y'  
}
\]
We define the following functors$\colon$  
\begin{enumerate}
\item The functor $T_{\Sigma}\colon \fmod{\Sigma}\lxr \fmod{\Lambda}$
is defined on $\Sigma$-modules $X$ by
$T_{\Sigma}(X)=(X,M\tensor_{\Sigma}X,\Id_{M\tensor X})$ and given a
$\Sigma$-homomorphism $a\colon X\lxr X'$ then
$T_{\Sigma}(a)=(a,\Id_{M}\tensor a)$.
\item The functor $U_{\Sigma}\colon \fmod{\Lambda}\lxr \fmod{\Sigma}$
is defined on $\Lambda$-modules $(X,Y,f)$ by $U_{\Sigma}(X,Y,f)=X$ and
given a $\Lambda$-homomorphism $(a,b)\colon (X,Y,f)\lxr (X',Y',f')$
then $U_{\Sigma}(a,b)=a$.  Similarly we define the functor
$U_{\Gamma}\colon \fmod{\Lambda}\lxr \fmod{\Gamma}$.
\item The functor $Z_{\Sigma}\colon \fmod{\Sigma}\lxr \fmod{\Lambda}$
is defined on $\Sigma$-modules $X$ by $Z_{\Sigma}(X)=(X,0,0)$ and
given a $\Sigma$-homomorphism $a\colon X\lxr X'$ then
$Z_{\Sigma}(a)=(a,0)$.  Similarly we define the functor
$Z_{\Gamma}\colon \fmod{\Gamma}\lxr \fmod{\Lambda}$.
\item The functor $H_{\Gamma}\colon\fmod{\Gamma}\lxr \fmod{\Lambda}$
is defined by $H_{\Gamma}(Y)=(\Hom_{\Gamma}(M,Y),Y,\epsilon_X)$ on
$\Gamma$-modules $Y$ and given a $\Gamma$-homomorphism $b\colon Y\lxr
Y'$ then $H_{\Gamma}(b)=(\Hom_{\Gamma}(M,b),b)$.
\end{enumerate}
Then from Example~\ref{exam:mod-recollements} (see also
\cite[Example~2.12]{Psaroud}), using the idempotent elements
$e_1=\bigl(\begin{smallmatrix}
  1_{\Sigma} & 0 \\
  0 & 0
\end{smallmatrix}\bigr)$ and $e_2=\bigl(\begin{smallmatrix}
0 & 0 \\
0 & 1_{\Gamma}
\end{smallmatrix}\bigr)$, we have the following recollements of
abelian categories$\colon$
\begin{equation}
\label{recolone}
\xymatrix@C=0.5cm{
\fmod{\Gamma} \ar[rrr]^{Z_{\Gamma}} &&& \fmod{\Lambda} \ar[rrr]^{U_{\Sigma}} \ar
@/_1.5pc/[lll]_{q}  \ar
 @/^1.5pc/[lll]^{U_{\Gamma}} &&& \fmod{\Sigma}
\ar @/_1.5pc/[lll]_{T_{\Sigma}} \ar
 @/^1.5pc/[lll]^{Z_{\Sigma}}
 } 
\end{equation}
and
\begin{equation}
\label{recoltwo}
\xymatrix@C=0.5cm{
\fmod{\Sigma} \ar[rrr]^{Z_{\Sigma}} &&& \fmod{\Lambda} \ar[rrr]^{U_{\Gamma}} \ar
@/_1.5pc/[lll]_{U_{\Sigma}}  \ar
 @/^1.5pc/[lll]^{p} &&& \fmod{\Gamma}
\ar @/_1.5pc/[lll]_{Z_{\Gamma}} \ar
 @/^1.5pc/[lll]^{H_{\Gamma}}
 } 
\end{equation}
The functors $q$ and $p$ are induced from the adjoint pairs
$(T_{\Sigma},U_{\Sigma})$ and $(U_{\Gamma},H_{\Gamma})$ respectively,
see \cite[Remark~2.3]{Psaroud} for more details.

We want to use Theorem~\ref{thm:main-results-summary} to compare the
triangular matrix algebra $\Lambda$ with the algebras $\Sigma$ and
$\Gamma$.  First consider the case where we compare $\Lambda$ with
$\Sigma$.  We then take the idempotent $a$ in the theorem to be $e_1$,
and we can reformulate the conditions $(\alpha)$, $(\beta)$,
$(\gamma)$ and $(\delta)$ as follows:
\begin{enumerate}
\item[$(\alpha)$] The functor $Z_\Gamma$ sends every $\Gamma$-module
  to a $\Lambda$-module with finite injective dimension.
\item[$(\beta)$] The functor $U_\Sigma$ sends every projective
  $\Lambda$-module to a $\Sigma$-module with finite projective
  dimension.
\item[$(\gamma)$] The functor $Z_\Gamma$ sends every $\Gamma$-module
  to a $\Lambda$-module with finite projective dimension.
\item[$(\delta)$] The functor $U_\Sigma$ sends every injective
  $\Lambda$-module to a $\Sigma$-module with finite injective
  dimension.
\end{enumerate}
By interchanging $\Sigma$ and $\Gamma$, we get a similar reformulation
of the conditions for the case where we compare $\Lambda$ with
$\Gamma$.

The next result clarifies when the above hold for the recollement
\eqref{recoltwo} of a triangular matrix algebra $\Lambda$.

\begin{lem}
\label{lemtriangular}
Let $\Lambda=\bigl(\begin{smallmatrix}
\Sigma & 0 \\
{_\Gamma M_\Sigma} & \Gamma
\end{smallmatrix}\bigr)$ be a triangular matrix algebra.  The
following hold.
\begin{enumerate}
\item If $\pd_{\Gamma}M<\infty$, then the functor $U_{\Gamma}$ sends
projective $\Lambda$-modules to $\Gamma$-modules of finite projective
dimension.
\item The functor $U_{\Gamma}$ preserves injectives.
\item Assume that $\gld{\Sigma}<\infty$.  Then
$\id_{\Lambda}Z_{\Sigma}(X)<\infty$ for every $\Sigma$-module $X$.
\item Assume that $\gld{\Sigma}<\infty$ and $\pd_{\Gamma}M<\infty$.
Then we have $\pd_{\Lambda}Z_{\Sigma}(X)<\infty$ for all
$\Sigma$-modules $X$.
\end{enumerate}
\begin{proof}
(i) It is known, see \cite{ARS}, that the indecomposable projective
$\Lambda$-modules are of the form $T_{\Sigma}(P)$, where $P$ is an
indecomposable projective $\Sigma$-module, and $Z_{\Gamma}(Q)$, where
$Q$ is an indecomposable projective $\Gamma$-module.  Hence it is
enough to consider modules of these forms.  We have
$U_{\Gamma}Z_{\Gamma}(Q)=Q$, and since $\pd_{\Gamma}M<\infty$ it
follows that
$\pd_{\Gamma}U_{\Gamma}T_{\Sigma}(P)=\pd_{\Gamma}(M\tensor_{\Sigma}P)<\infty$.

(ii) Since $(Z_{\Gamma}, U_{\Gamma})$ is an adjoint pair and
$Z_{\Gamma}$ is exact it follows that the functor $U_{\Gamma}$
preserves injectives.

(iii) Let $0\lxr X\lxr I^0\lxr \cdots\lxr I^n\lxr 0$ be a finite
injective resolution of a $\Sigma$-module $X$.  Then applying the
functor $Z_{\Sigma}$ we get the exact sequence $0\lxr
Z_{\Sigma}(X)\lxr Z_{\Sigma}(I^0)\lxr \cdots\lxr Z_{\Sigma}(I^n)\lxr
0$, where every $Z_{\Sigma}(I^i)$ is an injective $\Lambda$-module
since we have the adjoint pair $(U_{\Sigma},Z_{\Sigma})$ and
$U_{\Sigma}$ is exact.  Hence the injective dimension of
$Z_{\Sigma}(X)$ is finite.

(iv) This follows from \cite[Lemma~2.4]{triangular} since a
$\Lambda$-module $(X,Y,f)$ has finite projective dimension if and only
if the projective dimensions of $X$ and $Y$ are finite.
\end{proof}
\end{lem}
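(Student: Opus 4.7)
The plan is to handle the four parts using the description of projective and injective $\Lambda$-modules together with standard adjointness facts for the functors in the two recollements~\eqref{recolone} and~\eqref{recoltwo}.

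For part (i), I would recall from the literature on triangular matrix algebras (e.g.\ \cite{ARS}) that every indecomposable projective $\Lambda$-module has one of two shapes: $T_{\Sigma}(P)$ for some indecomposable projective $\Sigma$-module $P$, or $Z_{\Gamma}(Q)$ for some indecomposable projective $\Gamma$-module $Q$. Since $U_\Gamma$ is additive, it suffices to bound the projective dimension on these two types. A direct computation from the definitions gives $U_\Gamma Z_\Gamma(Q)=Q$, which is projective, and $U_\Gamma T_\Sigma(P) = M \tensor_\Sigma P$. Since $P$ is projective over $\Sigma$, the $\Gamma$-module $M \tensor_\Sigma P$ is a direct summand of a finite direct sum of copies of $M$, and hence $\pd_\Gamma (M\tensor_\Sigma P)\le \pd_\Gamma M < \infty$.

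For part (ii), I would simply invoke the general principle that the right adjoint of an exact functor preserves injectives. In recollement~\eqref{recoltwo} we have the adjoint pair $(Z_\Gamma, U_\Gamma)$, and $Z_\Gamma$ is exact (being the inclusion part of a recollement, cf.\ Proposition~\ref{properties}(i)), so $U_\Gamma$ preserves injectives. Part~(iii) is dual: given a finite injective resolution $0\lxr X \lxr I^0 \lxr \cdots \lxr I^n \lxr 0$ of $X$ in $\fmod \Sigma$ (which exists because $\gld \Sigma < \infty$), applying the exact functor $Z_\Sigma$ yields an exact sequence $0 \lxr Z_\Sigma(X) \lxr Z_\Sigma(I^0) \lxr \cdots \lxr Z_\Sigma(I^n) \lxr 0$, and each $Z_\Sigma(I^i)$ is $\Lambda$-injective because $Z_\Sigma$ is the right adjoint of the exact functor $U_\Sigma$ in recollement~\eqref{recolone}. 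Hence $\id_\Lambda Z_\Sigma(X) \le n < \infty$.

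For part (iv), the key ingredient will be the well-known formula (see, e.g., \cite[Lemma~2.4]{triangular}) expressing the projective dimension of a $\Lambda$-module $(X,Y,f)$ in terms of the projective dimensions of $X$ over $\Sigma$ and of the pieces built from $Y$ and $M\tensor_\Sigma X$ over $\Gamma$. Applied to $Z_\Sigma(X)=(X,0,0)$, the relevant quantities are $\pd_\Sigma X$, which is at most $\gld \Sigma < \infty$, and the $\Gamma$-projective dimension of $M\tensor_\Sigma X$, which is controlled by $\pd_\Gamma M < \infty$ together with $\pd_\Sigma X < \infty$ via the standard spectral-sequence/induction estimate on the projective dimension of a tensor product of modules of finite projective dimension over a projective-over-$k$ base. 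I do not anticipate any real obstacle in any of these steps; the only place requiring some care is~(iv), where one must invoke the precise bound from~\cite[Lemma~2.4]{triangular} rather than trying to reconstruct it from scratch.
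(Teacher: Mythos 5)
Your proofs of (i), (ii), and (iii) match the paper's arguments essentially verbatim: classify indecomposable projectives as $T_\Sigma(P)$ and $Z_\Gamma(Q)$, use that $U_\Gamma$ is right adjoint to the exact $Z_\Gamma$, and that $Z_\Sigma$ is right adjoint to the exact $U_\Sigma$.

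For (iv), however, your description of what \cite[Lemma~2.4]{triangular} gives is off the mark. The paper uses the clean criterion from that lemma: under the hypothesis $\pd_\Gamma M < \infty$, a $\Lambda$-module $(X,Y,f)$ has finite projective dimension if and only if $\pd_\Sigma X < \infty$ \emph{and} $\pd_\Gamma Y < \infty$. For $Z_\Sigma(X) = (X,0,0)$ the second coordinate is $Y = 0$, so the second condition is automatic and one only needs $\gld \Sigma < \infty$; there is nothing to bound about $M \tensor_\Sigma X$. You instead describe a formula involving $\pd_\Gamma(M\tensor_\Sigma X)$ and try to control it by a ``spectral-sequence/induction estimate on the projective dimension of a tensor product of modules of finite projective dimension over a projective-over-$k$ base.'' That estimate is the one in Lemma~\ref{lem:tensor-preserves-fin-projdim}, which is about $\tensor_k$ over a field (where the K\"unneth formula applies); it does not transfer to $M \tensor_\Sigma X$, because a finite $\Sigma$-projective resolution of $X$ need not stay exact after applying $M \tensor_\Sigma -$, and the resulting $\Tor^\Sigma_i(M,X)$ terms are not controlled. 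So your detour is both unnecessary and, as written, not a valid argument. The fix is simply to quote the criterion from Smal{\o}'s lemma in its actual form and observe that $Y=0$ disposes of the $\Gamma$-condition; the hypothesis $\pd_\Gamma M<\infty$ is what makes that criterion available in the first place.
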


Using now the recollement \eqref{recolone} we have the following dual
result of Lemma~\ref{lemtriangular}.  The proof is left to the reader.

\begin{lem}
\label{lemtriangular2}
Let $\Lambda=\bigl(\begin{smallmatrix}
\Sigma & 0 \\
{_\Gamma M_\Sigma} & \Gamma
\end{smallmatrix}\bigr)$ be a triangular matrix algebra.  The
following hold.
\begin{enumerate}
\item The functor $U_{\Sigma}$ preserves projectives.
\item If $\pd_{\Sigma} M_{\Sigma}<\infty$, then the functor
$U_{\Sigma}$ sends injective $\Lambda$-modules to $\Sigma$-modules of
finite injective dimension.
\item Assume that $\gld{\Gamma}<\infty$.  Then
$\pd_{\Lambda}Z_{\Gamma}(Y)<\infty$ for every $\Gamma$-module $Y$.
\item Assume that $\gld{\Gamma}<\infty$ and $\pd_{\Sigma}
M_{\Sigma}<\infty$.  Then for every $\Gamma$-module $Y$ we have
$\id_{\Lambda}Z_{\Gamma}(Y)<\infty$.
\end{enumerate}
\end{lem}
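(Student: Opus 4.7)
The plan is to prove each part as the injective analogue of the corresponding part of Lemma~\ref{lemtriangular}, using the recollement \eqref{recoltwo} of Example~\ref{exam:mod-recollements} in place of \eqref{recolone}. Under the identification of \eqref{recoltwo} with the template of Definition~\ref{defnrec}, the functor $U_\Sigma$ plays the role of $q$, $Z_\Gamma$ plays the role of $l$, and $H_\Gamma$ plays the role of $r$. Then (i) is immediate from Proposition~\ref{properties}(v), which gives that $q=U_\Sigma$ preserves projectives. For (iii), since $\gld \Gamma<\infty$ every $\Gamma$-module $Y$ admits a finite projective resolution, and the exact functor $Z_\Gamma$ preserves projectives by Proposition~\ref{properties}(v). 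Applying $Z_\Gamma$ to such a resolution yields a finite projective resolution of $Z_\Gamma(Y)$ in $\fmod\Lambda$, so $\pd_\Lambda Z_\Gamma(Y)\le\pd_\Gamma Y\le\gld\Gamma<\infty$.

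For (ii), the starting point is the classical dual to the description of indecomposable projective $\Lambda$-modules invoked in the proof of Lemma~\ref{lemtriangular}(i): every indecomposable injective $\Lambda$-module is either of the form $Z_\Sigma(I)$ with $I$ indecomposable injective over $\Sigma$, or of the form $H_\Gamma(J)$ with $J$ indecomposable injective over $\Gamma$. (Both types are indeed injective in $\fmod\Lambda$: $Z_\Sigma$ preserves injectives since its left adjoint $U_\Sigma$ is exact, and $H_\Gamma=r$ preserves injectives by Proposition~\ref{properties}(v).) Hence it suffices to bound $\id_\Sigma U_\Sigma(I')$ for $I'$ of these two forms. The case $I'=Z_\Sigma(I)$ is trivial since $U_\Sigma Z_\Sigma(I)\iso I$ is injective. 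For $I'=H_\Gamma(J)$ we have $U_\Sigma H_\Gamma(J)\iso\Hom_\Gamma(M,J)$, and for any $\Sigma$-module $L$ the exactness of $\Hom_\Gamma(-,J)$ together with a projective resolution of $L$ over $\Sigma$ yields the standard isomorphism
\[
\Ext^j_\Sigma\bigl(L,\Hom_\Gamma(M,J)\bigr)\iso\Hom_\Gamma\bigl(\Tor^\Sigma_j(M,L),J\bigr),
\]
which vanishes for $j>\pd_\Sigma M_\Sigma$. Hence $\id_\Sigma\Hom_\Gamma(M,J)\le\pd_\Sigma M_\Sigma<\infty$, completing the argument.

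For (iv), which I expect to be the main obstacle, the cleanest route is to dualize and invoke Lemma~\ref{lemtriangular}(iv) for the opposite algebra. Presenting $\opposite\Lambda$ as the triangular matrix algebra with upper-left component $\opposite\Gamma$, lower-right component $\opposite\Sigma$ and bimodule $M$ viewed as an $\opposite\Sigma$-$\opposite\Gamma$-bimodule, the standard artinian duality $D$ identifies $D(Z_\Gamma(Y))$ with $Z_{\opposite\Gamma}(D(Y))$ in the triangular-matrix description of $\fmod\opposite\Lambda$. The hypotheses $\gld\Gamma<\infty$ and $\pd_\Sigma M_\Sigma<\infty$ translate respectively to $\gld\opposite\Gamma<\infty$ and $\pd_{\opposite\Sigma} M<\infty$, which are precisely the hypotheses of Lemma~\ref{lemtriangular}(iv) applied to $\opposite\Lambda$. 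That lemma then yields $\pd_{\opposite\Lambda} Z_{\opposite\Gamma}(D(Y))<\infty$, and therefore $\id_\Lambda Z_\Gamma(Y)=\pd_{\opposite\Lambda} D(Z_\Gamma(Y))<\infty$.

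The only nontrivial points in this plan are the verification that the indecomposable injectives of $\Lambda$ have the two stated forms (a classical fact, proved by dualizing the corresponding description of projectives in \cite{ARS,FGR}) and the careful bookkeeping needed in the duality argument of~(iv) to match the triangular-matrix structure of $\opposite\Lambda$ to the hypotheses of Lemma~\ref{lemtriangular}(iv); parts (i) and (iii) are completely formal consequences of the recollement machinery, and the Tor--Ext identity above makes (ii) a short direct computation.
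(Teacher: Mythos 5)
The paper gives no proof of Lemma~\ref{lemtriangular2}: after stating it as the dual of Lemma~\ref{lemtriangular} (now using the recollement~\eqref{recolone} rather than~\eqref{recoltwo}), it simply says ``The proof is left to the reader.'' Your proposal supplies exactly the dualization the paper is gesturing at, and it is correct. Parts (i) and (iii) follow, as you say, from the identifications $q=U_\Sigma$ and $l=Z_\Gamma$ in recollement~\eqref{recoltwo} together with Proposition~\ref{properties}(v), which is the same mechanism the paper uses for parts (ii) and (iii) of Lemma~\ref{lemtriangular}. Your treatment of (ii) correctly dualizes the proof of Lemma~\ref{lemtriangular}(i): the indecomposable injective $\Lambda$-modules are $Z_\Sigma(I)$ and $H_\Gamma(J)$ (injectivity of these follows from the exact left adjoint $U_\Sigma$ and from Proposition~\ref{properties}(v), respectively, and they exhaust the indecomposable injectives by the dual of the classical description of projectives), and the Ext--Tor adjunction $\Ext^j_\Sigma(L,\Hom_\Gamma(M,J))\iso\Hom_\Gamma(\Tor^\Sigma_j(M,L),J)$, valid because $J$ is injective, gives $\id_\Sigma\Hom_\Gamma(M,J)\le\pd_\Sigma M_\Sigma$. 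This is arguably cleaner than a verbatim mirror of the paper's phrasing in~\ref{lemtriangular}(i), since the dual of the one-line ``$\pd_\Gamma(M\tensor_\Sigma P)<\infty$'' step is less immediate. Your reduction of (iv) to Lemma~\ref{lemtriangular}(iv) for $\opposite\Lambda$ via the standard duality $D$ is also correct; the bookkeeping you flag does indeed go through, with $\opposite\Lambda\iso\bigl(\begin{smallmatrix}\opposite\Gamma&0\\ M&\opposite\Sigma\end{smallmatrix}\bigr)$, $D(Z_\Gamma(Y))\iso Z_{\opposite\Gamma}(D(Y))$, and $\pd_{\opposite\Sigma}M=\pd_\Sigma M_\Sigma$, $\gld\opposite\Gamma=\gld\Gamma$. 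An equally acceptable alternative for (iv) would be to cite the dual of \cite[Lemma~2.4]{triangular} directly, as the paper does for~\ref{lemtriangular}(iv); your duality argument avoids having to re-derive that dual statement.
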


As a consequence of Lemma~\ref{lemtriangular} and
Theorem~\ref{thm:main-results-summary} we have the following
result.  For similar characterizations with (ii) see
\cite{XiongZhang}.

\begin{cor}
\label{cortriang}
Let $\Lambda=\bigl(\begin{smallmatrix}
\Sigma & 0 \\
{_{\Gamma}M_{\Sigma}} & \Gamma
\end{smallmatrix}\bigr)$ be an artin triangular matrix algebra over a
commutative ring $k$ such
that $\gld{\Sigma}<\infty$ and $\pd_{\Gamma}M<\infty$.  Then the
following hold.
\begin{enumerate}
\item The singularity categories of $\Lambda$ and $\Gamma$ are triangle equivalent$\colon$
\[
\xymatrix@C=0.5cm{
 \Dsg(U_{\Gamma})\colon \Dsg(\fmod{\Lambda})  \ \ar[rr]^{  \ \ \ \ \equivalence} &&
 \ \Dsg(\fmod{\Gamma})  }
\]
\item $\Lambda$ is Gorenstein if and only if $\Gamma$ is Gorenstein.
\item Assume that $k$ is a field and that $(\Lambda/\rad \Lambda)
\tensor_k (\opposite{\Lambda}/\rad \opposite{\Lambda})$ is a
semisimple $\envalg{\Lambda}$-module.  Then $\Lambda$ satisfies
\fg{} if and only if $\Gamma$ satisfies \fg{}.
\end{enumerate}
\end{cor}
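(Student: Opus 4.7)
The plan is to apply the Main Theorem (Theorem~\ref{thm:main-results-summary}) with the idempotent $a = e_2 = \bigl(\begin{smallmatrix} 0 & 0 \\ 0 & 1_\Gamma \end{smallmatrix}\bigr)$ of $\Lambda$. First I would identify all the relevant data in the triangular matrix setting: one has $a\Lambda a \iso \Gamma$, the two-sided ideal $\idealgenby{a}$ coincides with $\bigl(\begin{smallmatrix} 0 & 0 \\ M & \Gamma \end{smallmatrix}\bigr)$, so that $\Lambda/\idealgenby{a} \iso Z_\Sigma(\Sigma)$ as a $\Lambda$-module, and the quotient functor $e = a(-)$ of Example~\ref{exam:mod-recollements}(i) is identified with the functor $U_\Gamma$ of recollement~\eqref{recoltwo}. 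In particular, $(\Lambda/\idealgenby{a})/\rad(\Lambda/\idealgenby{a})$ is isomorphic to $Z_\Sigma(\Sigma/\rad \Sigma)$ as a $\Lambda$-module.

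Next I would verify conditions $(\gamma)$ and $(\delta)$ of the Main Theorem. For $(\gamma)$, Lemma~\ref{lemtriangular}(iv) combined with the hypotheses $\gld \Sigma < \infty$ and $\pd_\Gamma M < \infty$ gives $\pd_\Lambda Z_\Sigma(X) < \infty$ for every $\Sigma$-module $X$, and specializing to $X = \Sigma/\rad \Sigma$ provides the desired bound. For $(\delta)$, a direct block-matrix computation shows $\Lambda a = \bigl(\begin{smallmatrix} 0 & 0 \\ 0 & \Gamma \end{smallmatrix}\bigr)$, which is free of rank one as a right $\Gamma$-module, so $\pd_{\opposite{(a\Lambda a)}}(\Lambda a) = 0$.

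With $(\gamma)$ and $(\delta)$ established, part (i) of the Main Theorem yields that $(\alpha)$ and $(\beta)$ also hold and that $U_\Gamma$ is an eventually homological isomorphism. Each part of the corollary will then follow formally: part (i) of the corollary from part (ii) of the Main Theorem applied to $(\beta)$ and $(\gamma)$; part (ii) of the corollary from part (iii) of the Main Theorem; and part (iii) of the corollary from part (iv) of the Main Theorem, using the extra hypothesis on $(\Lambda/\rad \Lambda) \tensor_k (\opposite{\Lambda}/\rad \opposite{\Lambda})$.

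The hard part is really packaged into the Main Theorem and into Lemma~\ref{lemtriangular}. Once those are in hand, the proof is a matter of correctly matching the functors $U_\Gamma$ and $Z_\Sigma$ of the triangular matrix recollement with the idempotent data of the Main Theorem, together with a straightforward verification of $(\gamma)$ and $(\delta)$; no serious obstacle is anticipated at this stage.
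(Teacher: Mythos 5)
Your proposal is correct and follows the same route as the paper: apply the Main Theorem with $a = e_2$, identify $a\Lambda a \iso \Gamma$, $\Lambda/\idealgenby{a} \iso \Sigma$, and $e = U_\Gamma$, and verify the needed conditions via Lemma~\ref{lemtriangular} (the paper's Lemma~\ref{lemtriangular}(ii) is essentially your direct block-matrix check that $\Lambda a$ is $\Gamma$-free, and Lemma~\ref{lemtriangular}(iv) gives $(\gamma)$). No gaps.
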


\begin{rem}
The algebra $(\Lambda/\rad \Lambda) \tensor_k (\opposite{\Lambda}/\rad
\opposite{\Lambda})$ being semisimple (as required in part (iii)
above) can be shown to be equivalent to the following three algebras
being semisimple: $(\Sigma/\rad \Sigma) \tensor_k
(\opposite{\Sigma}/\rad \opposite{\Sigma})$, $(\Sigma/\rad \Sigma)
\tensor_k (\opposite{\Gamma}/\rad \opposite{\Gamma})$ and
$(\Gamma/\rad \Gamma) \tensor_k (\opposite{\Gamma}/\rad
\opposite{\Gamma})$.
\end{rem}

We also have the following consequence, obtained now from
Lemma~\ref{lemtriangular2} and Theorem~\ref{thm:main-results-summary}.
Note that in the first statement we recover a theorem by Chen
\cite{Chen:schurfunctors}.

\begin{cor}
Let  $\Lambda=\bigl(\begin{smallmatrix}
\Sigma & 0 \\
{_{\Gamma}M_{\Sigma}} & \Gamma
\end{smallmatrix}\bigr)$ be an artin triangular matrix algebra over a
commutative ring $k$.
\begin{enumerate}
\item \cite[Theorem~4.1]{Chen:schurfunctors} Assume that
$\gld{\Gamma}<\infty$.  Then there is a triangle equivalence$\colon$
\[
\xymatrix@C=0.5cm{
   \Dsg(\fmod{\Lambda})
   \ \ar[rr]^{ \equivalence \ }_{\Dsg(U_{\Sigma}) \ } &&
   \ \Dsg(\fmod{\Sigma})  }
\]
\item Assume that $\gld{\Gamma}<\infty$ and $\pd_{\Sigma}
M_{\Sigma}<\infty$.  Then the following hold.
\begin{enumerate}
\item $\Lambda$ is Gorenstein if and only if $\Sigma$ is Gorenstein.
\item Assume that $k$ is a field and that $(\Lambda/\rad \Lambda)
\tensor_k (\opposite{\Lambda}/\rad \opposite{\Lambda})$ is a
semisimple $\envalg{\Lambda}$-module.  Then $\Lambda$ satisfies
\fg{} if and only if $\Sigma$ satisfies \fg{}.
\end{enumerate}
\end{enumerate}
\end{cor}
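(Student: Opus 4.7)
The plan is to apply Theorem~\ref{thm:main-results-summary} with the idempotent $a = e_1 = \bigl(\begin{smallmatrix} 1_\Sigma & 0 \\ 0 & 0 \end{smallmatrix}\bigr)$ in $\Lambda$. Under this choice we have $a\Lambda a = \Sigma$, the ideal $\idealgenby{a}$ is the kernel of the canonical projection $\Lambda \to \Gamma$ (so $\Lambda/\idealgenby{a} \iso \Gamma$), and the quotient functor $a(-) \colon \fmod{\Lambda} \lxr \fmod{\Sigma}$ coincides with $U_\Sigma$. The ambient recollement is then exactly \eqref{recolone}, so the reformulations of $(\alpha)$--$(\delta)$ stated immediately before Lemma~\ref{lemtriangular} apply verbatim (with $\Sigma$ and $\Gamma$ in the roles given there).

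For part (i), I would verify conditions $(\beta)$ and $(\gamma)$ and then invoke Theorem~\ref{thm:main-results-summary}(ii). Condition $(\beta)$ holds automatically: by Lemma~\ref{lemtriangular2}(i), the functor $U_\Sigma$ preserves projectives, so $\pd_\Sigma U_\Sigma(P) = 0$ for every projective $\Lambda$-module $P$. For $(\gamma)$, note that a simple $\Gamma$-module (regarded as a $\Lambda$-module) is of the form $Z_\Gamma(S)$ for $S$ a simple $\Gamma$-module; since every $\Gamma$-module admits a finite composition series, it suffices to show that $\pd_\Lambda Z_\Gamma(Y) < \infty$ for every $\Gamma$-module $Y$. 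This is exactly Lemma~\ref{lemtriangular2}(iii) under the hypothesis $\gld \Gamma < \infty$. Theorem~\ref{thm:main-results-summary}(ii) then yields the singular equivalence $\Dsg(U_\Sigma) \colon \Dsg(\fmod \Lambda) \xrightarrow{\simeq} \Dsg(\fmod \Sigma)$.

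For part (ii), the additional assumption $\pd_\Sigma M_\Sigma < \infty$ provides condition $(\delta)$ via Lemma~\ref{lemtriangular2}(ii): the functor $U_\Sigma$ sends injective $\Lambda$-modules to $\Sigma$-modules of finite injective dimension. Together with $(\gamma)$ from the previous paragraph, Theorem~\ref{thm:main-results-summary}(i) now shows that $U_\Sigma$ is an eventually homological isomorphism. Part (ii)(a) follows directly from Theorem~\ref{thm:main-results-summary}(iii), and part (ii)(b) from Theorem~\ref{thm:main-results-summary}(iv); for (ii)(b) the semisimplicity hypothesis on $(\Lambda/\rad \Lambda) \tensor_k (\opposite{\Lambda}/\rad \opposite{\Lambda})$ is exactly the input required by that part of the Main Theorem.

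There is no real obstacle: once the idempotent is chosen correctly, the whole argument reduces to bookkeeping, matching the abstract hypotheses $(\alpha)$--$(\delta)$ with the concrete dimension bounds in Lemma~\ref{lemtriangular2}. The only mildly subtle point is recognising that condition $(\gamma)$, phrased in the Main Theorem in terms of the simple tops of $\Lambda/\idealgenby{a}$, translates into finiteness of $\pd_\Lambda Z_\Gamma(Y)$ for \emph{all} $\Gamma$-modules $Y$; this is precisely the composition-series reduction already used in the proof of Corollary~\ref{cor:algebra-ext-iso-onemodule}.
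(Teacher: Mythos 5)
Your proof is correct and matches the paper's approach exactly: the paper derives this corollary directly from Lemma~\ref{lemtriangular2} and Theorem~\ref{thm:main-results-summary} with $a=e_1$, which is precisely the idempotent choice and bookkeeping you carry out. The only minor remark is that the reduction from $(\gamma)$ to ``$\pd_\Lambda Z_\Gamma(Y)<\infty$ for all $Y$'' actually goes in the easy direction (the semisimple top of $\Gamma$ is itself a $\Gamma$-module, so no composition-series argument is needed to deduce $(\gamma)$ from Lemma~\ref{lemtriangular2}(iii)), but this does not affect the correctness of the argument.
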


From the above corollaries and the classical result of
Buchweitz--Happel (see the text before Corollary~\ref{corgorsingular})
we have the following result for stable categories of Cohen--Macaulay
modules.

\begin{cor}
\label{cor:matrix-algebra-cm}
Let  $\Lambda=\bigl(\begin{smallmatrix}
\Sigma & 0 \\
{_{\Gamma}M_{\Sigma}} & \Gamma
\end{smallmatrix}\bigr)$ be an artin triangular matrix algebra.
\begin{enumerate}
\item \cite[Corollary~4.2]{Chen:schurfunctors} Assume that
$\gld{\Gamma}<\infty$ and $\Sigma$ is Gorenstein.  Then there is a
triangle equivalence$\colon$
\[
\xymatrix@C=0.5cm{
   \Dsg(\fmod{\Lambda})  \ \ar[rr]^{ \ \equivalence } &&
   \ \uCM(\Sigma) }
\]
\item Assume that $\gld{\Gamma}<\infty$ and $\pd_{\Sigma}
M_{\Sigma}<\infty$.  If $\Sigma$ is Gorenstein, then there is a
triangle equivalence between the stable categories of Cohen--Macaulay
modules of $\Lambda$ and $\Sigma$$\colon$
\[
\xymatrix@C=0.5cm{
 \uCM(\Lambda)  \ \ar[rr]^{ \equivalence} &&
 \ \uCM(\Sigma)  }
\]
\item Assume that $\gld{\Sigma}<\infty$ and $\pd_{\Gamma}M<\infty$.
If $\Gamma$ is Gorenstein, then there is a triangle equivalence
between the stable categories of Cohen--Macaulay modules of $\Lambda$
and $\Gamma$$\colon$
\[
\xymatrix@C=0.5cm{
 \uCM(\Lambda)  \ \ar[rr]^{ \equivalence} &&
 \ \uCM(\Gamma)  }
\]
\end{enumerate}
\end{cor}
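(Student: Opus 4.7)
The plan is to derive all three parts as straightforward consequences of combining the singular equivalences established in the two preceding corollaries with the classical theorem of Buchweitz--Happel, which states that for a Gorenstein artin algebra $R$ there is a triangle equivalence $\Dsg(\fmod R) \equivalence \uCM(R)$.

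For part (i), the hypothesis $\gld \Gamma < \infty$ already gives, by the preceding corollary (Chen's theorem), a triangle equivalence $\Dsg(\fmod \Lambda) \equivalence \Dsg(\fmod \Sigma)$ induced by $U_\Sigma$. Since $\Sigma$ is assumed Gorenstein, Buchweitz--Happel gives $\Dsg(\fmod \Sigma) \equivalence \uCM(\Sigma)$, and composing the two equivalences yields the result.

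For part (ii), the assumptions $\gld \Gamma < \infty$ and $\pd_\Sigma M_\Sigma < \infty$ match the hypotheses of part (ii)(a) of the preceding corollary, so $\Lambda$ is Gorenstein if and only if $\Sigma$ is Gorenstein. Since $\Sigma$ is Gorenstein by assumption, so is $\Lambda$. Applying Buchweitz--Happel on both sides, we obtain $\uCM(\Lambda) \equivalence \Dsg(\fmod \Lambda)$ and $\uCM(\Sigma) \equivalence \Dsg(\fmod \Sigma)$. Combining with the singular equivalence $\Dsg(\fmod \Lambda) \equivalence \Dsg(\fmod \Sigma)$ (which holds since $\gld \Gamma < \infty$ alone suffices by Chen's theorem) gives the desired triangle equivalence $\uCM(\Lambda) \equivalence \uCM(\Sigma)$.

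For part (iii), the argument is entirely symmetric: under $\gld \Sigma < \infty$ and $\pd_\Gamma M < \infty$, Corollary~\ref{cortriang} provides simultaneously the singular equivalence $\Dsg(\fmod \Lambda) \equivalence \Dsg(\fmod \Gamma)$ induced by $U_\Gamma$ and the equivalence ``$\Lambda$ Gorenstein $\iff$ $\Gamma$ Gorenstein''. With $\Gamma$ Gorenstein, $\Lambda$ is Gorenstein as well, so Buchweitz--Happel replaces both singularity categories by the corresponding stable categories of Cohen--Macaulay modules, yielding $\uCM(\Lambda) \equivalence \uCM(\Gamma)$. There is no real obstacle here; the substantive work was already carried out in the preceding corollaries, and this result is simply a packaging of those together with Buchweitz--Happel.
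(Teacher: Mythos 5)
Your proposal is correct and matches the paper's intended argument exactly: the paper itself gives no further proof beyond the one-sentence remark that the corollary follows from the two preceding corollaries together with Buchweitz--Happel, and your write-up is precisely the expected spelling-out of that remark. You also correctly note the slight economy in part (ii), where the singular equivalence requires only $\gld\Gamma < \infty$ while the extra hypothesis $\pd_\Sigma M_\Sigma < \infty$ is needed only to transfer Gorensteinness to $\Lambda$ so that Buchweitz--Happel can be applied on the $\Lambda$ side.
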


\subsection{Algebras with ordered simples}

In this subsection, we apply Theorem~\ref{thm:main-results-summary} to
cases where there exists a total order $\leS$ of the simple
$\Lambda/\idealgenby{a}$-modules with the property that
\begin{equation}
\label{eqn:simple-order}
S \leS S'
\implies
\Ext_\Lambda^{>0}(S,S')=0
\end{equation}
for every pair $S$ and $S'$ of simple
$\Lambda/\idealgenby{a}$-modules.  With this assumption, we show that
we have the implications $(\alpha) \implies (\delta)$ and $(\gamma)
\implies (\beta)$ between the conditions in
Theorem~\ref{thm:main-results-summary}.  We then consider some special
cases where such orderings appear.

We need the following preliminary results.

\begin{lem}
\label{lem:simples-proj-res}
Let $\Lambda$ be an artin algebra, let $M$ be a $\Lambda$-module with
minimal projective resolution $\cdots \lxr P_1 \lxr P_0 \lxr M \lxr 0$,
and let $S$ be a simple $\Lambda$-module.  Then, for every nonnegative
integer $n$, we have $\Ext_\Lambda^n(M, S) = 0$ if and only if the
projective cover of $S$ is not a direct summand of $P_n$.
\end{lem}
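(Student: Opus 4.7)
The plan is to use the standard fact that $\Ext_\Lambda^n(M,S)$ can be computed from the complex obtained by applying $\Hom_\Lambda(-,S)$ to the deleted projective resolution, together with the special behaviour of minimal resolutions with respect to maps into simple modules.

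More concretely, I would first observe that applying $\Hom_\Lambda(-,S)$ to the minimal projective resolution yields the complex
\[
0 \to \Hom_\Lambda(P_0,S) \to \Hom_\Lambda(P_1,S) \to \Hom_\Lambda(P_2,S) \to \cdots,
\]
whose $n$-th cohomology is $\Ext_\Lambda^n(M,S)$. The key step is to show that all differentials in this complex are zero. By minimality of the resolution, each differential $d_{n+1}\colon P_{n+1}\lxr P_n$ satisfies $\Image d_{n+1}\subseteq \rad P_n$. Any homomorphism $\varphi\colon P_n\lxr S$ factors through $P_n/\rad P_n$ because $S$ is simple (and hence semisimple, so $\rad P_n$ is sent to zero). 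Therefore $\varphi\fcomp d_{n+1}=0$, which shows that the induced map $\Hom_\Lambda(P_n,S)\lxr \Hom_\Lambda(P_{n+1},S)$ is the zero map. Consequently
\[
\Ext_\Lambda^n(M,S)\iso \Hom_\Lambda(P_n,S).
\]

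Finally, I would translate the vanishing of $\Hom_\Lambda(P_n,S)$ into the desired statement about projective covers. Writing $P_n$ as a finite direct sum $\Dsum_i P(T_i)$ of indecomposable projectives, where $P(T_i)$ is the projective cover of the simple module $T_i$, we have $\Hom_\Lambda(P(T_i),S)\ne 0$ if and only if $T_i\iso S$. Thus $\Hom_\Lambda(P_n,S)=0$ precisely when none of the summands $P(T_i)$ is isomorphic to the projective cover of $S$, i.e.\ when the projective cover of $S$ is not a direct summand of $P_n$. Combining this with the isomorphism above gives the lemma.

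There is no real obstacle here; the only subtle point is the use of minimality to make the connecting differentials in the $\Hom$-complex vanish, and this is standard for artin algebras once one notes that maps from a projective into a simple factor through the top.
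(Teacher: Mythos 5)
Your proof is correct and is the standard argument; the paper states this lemma without proof, treating it as a well-known fact about minimal projective resolutions over artin algebras, and your argument (minimality forces the differentials in the $\Hom(-,S)$ complex to vanish, hence $\Ext^n_\Lambda(M,S)\iso\Hom_\Lambda(P_n,S)$, then decompose $P_n$ into indecomposable projectives) is exactly the expected one.
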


\begin{lem}
\label{lem:simples-in-module-recollement}
Let $\Lambda$ be an artin algebra, and let $a$ be an idempotent in
$\Lambda$.  Let $S$ be a simple $\Lambda$-module which is not
annihilated by the ideal $\idealgenby{a}$, and let $P$ be the
projective cover of $S$.  Then $aP$ is a projective
$a{\Lambda}a$-module.
\end{lem}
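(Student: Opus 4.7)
The plan is to show that the projective cover $P$ of $S$ can be represented concretely as $\Lambda e$ for a primitive idempotent $e$ that is a summand of $a$ (so that $ae = ea = e$); once this is done, $aP$ is visibly a direct summand of the regular $a\Lambda a$-module.

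To realize $P$ in this form, I would use the natural isomorphism $aS \iso \Hom_{\Lambda}(\Lambda a, S)$. The hypothesis $aS \neq 0$ then provides a nonzero $\Lambda$-homomorphism $\Lambda a \lxr S$. Decomposing $a = a_1 + \cdots + a_k$ into a sum of orthogonal primitive idempotents yields $\Lambda a = \Dsum_{i=1}^{k} \Lambda a_i$, so some restriction $\Lambda a_{i_0} \lxr S$ is nonzero, and since $S$ is simple this restriction is surjective. Because $\Lambda a_{i_0}$ is an indecomposable projective $\Lambda$-module, it is the projective cover of its simple top, and I conclude $P \iso \Lambda a_{i_0}$.

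With this in hand, set $e = a_{i_0}$. Orthogonality of the primitive summands of $a$ forces $ae = ea = e$, so $e = aea$ is an idempotent of $a\Lambda a$. Using the identities $e = ae$ and $e = ea$, an element $a\lambda e$ of $a\Lambda e$ equals $(a\lambda e)\cdot e$ with $a\lambda e = a\lambda e a \in a\Lambda a$; conversely $(a\Lambda a)e \subseteq a\Lambda e$ is obvious. Hence
\[
aP \iso a\Lambda e = (a\Lambda a)e,
\]
and the latter is a direct summand of ${}_{a\Lambda a}(a\Lambda a)$, hence projective over $a\Lambda a$, as required.

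The only potential obstacle is the careful manipulation of the idempotent relations to get $a\Lambda e = (a\Lambda a)e$, but this is a direct consequence of $e = ae = ea$. Everything else reduces to the standard characterization of projective covers of simple modules in terms of primitive idempotents.
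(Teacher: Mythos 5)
Your proof is correct and follows essentially the same route as the paper: use $aS\iso\Hom_\Lambda(\Lambda a,S)\ne 0$ to produce a nonzero map $\Lambda a\to S$, decompose $a$ into orthogonal primitive idempotents to locate an indecomposable projective summand $\Lambda a_{i_0}$ mapping onto $S$, identify it as the projective cover, and observe $aP\iso(a\Lambda a)a_{i_0}$. Your write-up is slightly more explicit than the paper's in spelling out $ae=ea=e$ and the equality $a\Lambda e=(a\Lambda a)e$, but there is no substantive difference.
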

\begin{proof}
We have
\[
\Hom_\Lambda({\Lambda}a, S)
\iso aS
\ne 0,
\]
so there exists a nonzero morphism $f \colon {\Lambda}a \lxr S$.
Decomposing the idempotent $a$ into a sum $a = a_1 + \cdots + a_t$ of
orthogonal primitive idempotents gives a decomposition ${\Lambda}a
\iso {\Lambda}a_1 \dsum \cdots \dsum {\Lambda}a_t$ of ${\Lambda}a$
into indecomposable projective modules.  For some $i$, we must then
have a nonzero morphism $f_i \colon {\Lambda}a_i \lxr S$.  Since $S$ is
simple, this means that ${\Lambda}a_i$ is its projective cover.  Since
$a \cdot a_i = a_i$, we get
\[
aP
\iso a{\Lambda}a_i
= (a{\Lambda}a) a_i.
\]
Therefore $aP$ is a projective $a{\Lambda}a$-module.
\end{proof}

Now we show that the conditions of
Theorem~\ref{thm:main-results-summary} are related when we have an
ordering of the simple $\Lambda/\idealgenby{a}$-modules.

\begin{prop}
\label{prop:ordered-simples}
Let $\Lambda$ be an artin algebra, and let $a$ be an idempotent in
$\Lambda$.  Assume that there is a total order $\leS$ on the simple
$\Lambda/\idealgenby{a}$-modules satisfying
condition~\eqref{eqn:simple-order}.  Then we have the following
implications between the conditions of
\textup{Theorem~\ref{thm:main-results-summary}}$\colon$
\begin{enumerate}
\item $(\alpha) \implies (\delta)$.
\item $(\gamma) \implies (\beta)$.
\end{enumerate}
\end{prop}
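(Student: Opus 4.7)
My approach is to prove (ii) directly by induction along the order $\leS$, and then to deduce (i) from (ii) by applying the standard $k$-duality $D$ to pass to $\opposite{\Lambda}$. The crucial observation driving the induction is that the hypothesis $S \leS S' \implies \Ext_\Lambda^{>0}(S,S')=0$, combined with Lemma~\ref{lem:simples-proj-res}, constrains the minimal projective resolution of each simple $\Lambda/\idealgenby{a}$-module $S$: in every positive homological degree, the projective cover of a simple $\Lambda/\idealgenby{a}$-module $S'$ with $S \leS S'$ cannot appear as a direct summand.

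For (ii), I would list the simple $\Lambda/\idealgenby{a}$-modules as $S_1 \leS \cdots \leS S_m$, write $Q_k$ for the projective cover of $S_k$ in $\fmod\Lambda$, and let $R_1,\dots,R_n$ denote the projective covers of the remaining simples of $\Lambda$ (those not annihilated by $\idealgenby{a}$). By Lemma~\ref{lem:simples-in-module-recollement} each $aR_j$ is projective over $a\Lambda a$. I would then prove by induction on $k$ that $aQ_k$ has finite projective dimension over $a\Lambda a$. For the base case $k=1$, hypothesis $(\gamma)$ produces a finite minimal projective resolution $0 \to P_{l_1} \to \cdots \to P_1 \to Q_1 \to S_1 \to 0$ in $\fmod\Lambda$, and the observation above forces each $P_i$ with $i \ge 1$ to be a direct sum of $R_j$'s alone (since $S_1 \leS S_j$ for every $j$). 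Applying the exact functor $a(-)$ and using $aS_1 = 0$ yields an exact sequence $0 \to aP_{l_1} \to \cdots \to aP_1 \to aQ_1 \to 0$ whose terms of positive index are projective over $a\Lambda a$, so $\pd_{a\Lambda a} aQ_1 < \infty$. The inductive step is structurally identical: in the resolution of $S_k$, each $P_i$ with $i \ge 1$ is now permitted to involve $Q_1,\dots,Q_{k-1}$ in addition to the $R_j$'s, but the inductive hypothesis together with Lemma~\ref{lem:simples-in-module-recollement} ensures that after applying $a(-)$ every term other than $aQ_k$ has finite projective dimension over $a\Lambda a$, whence $\pd_{a\Lambda a} aQ_k < \infty$ by dimension shifting. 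Since $\Lambda$ decomposes as a $\Lambda$-module into copies of the $Q_k$'s and $R_j$'s, $a\Lambda$ decomposes accordingly, and all summands have finite projective dimension over $a\Lambda a$, yielding $(\beta)$.

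For (i), I would transport the problem to $\opposite{\Lambda}$ via the duality $D = \Hom_k(-,E) \colon \fmod\Lambda \equivalence \fmod\opposite{\Lambda}$, where $E$ is the injective envelope of $k/\rad k$. The simple $\opposite{\Lambda}/\idealgenby{a}$-modules are the duals $DS_i$, and the order obtained by reversing $\leS$ again satisfies the vanishing hypothesis, thanks to the natural isomorphism $\Ext_{\opposite{\Lambda}}^n(DS',DS) \iso \Ext_\Lambda^n(S,S')$. Moreover, $D$ converts injective dimension over $\Lambda$ into projective dimension over $\opposite{\Lambda}$, so condition $(\alpha)$ for $\Lambda$ becomes condition $(\gamma)$ for $\opposite{\Lambda}$. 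Under the identifications $\opposite{(a\Lambda a)} \iso a\opposite{\Lambda}a$ and $\Lambda a \iso a\opposite{\Lambda}$ (as left $\opposite{(a\Lambda a)}$-modules), condition $(\delta)$ for $\Lambda$ becomes condition $(\beta)$ for $\opposite{\Lambda}$. Applying (ii) to $\opposite{\Lambda}$ then yields (i).

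I expect the main technical point to be the dimension-shift step in the induction for (ii): given a bounded exact complex of $a\Lambda a$-modules $0 \to X_l \to \cdots \to X_1 \to aQ_k \to 0$ with each $X_i$ of finite projective dimension over $a\Lambda a$, one must conclude that $\pd_{a\Lambda a} aQ_k < \infty$. This is a routine syzygy argument (break the sequence into short exact sequences and iterate), but some care is needed if one wants to extract explicit bounds. The duality translations in the reduction of (i) to (ii) are formal, but they do require careful inspection of the multiplicative conventions in $\opposite{\Lambda}$ and $\opposite{(a\Lambda a)}$.
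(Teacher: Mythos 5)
Your proof is correct and follows essentially the same line as the paper's: for part (ii), both arguments induct up the order $\leS$, use Lemma~\ref{lem:simples-proj-res} together with the vanishing hypothesis~\eqref{eqn:simple-order} to control which indecomposable projectives can appear in positive degrees of the minimal projective resolution of each $S_k$, apply the exact functor $a(-)$, invoke Lemma~\ref{lem:simples-in-module-recollement} to handle the projective covers of simples not annihilated by $\idealgenby{a}$, and finish with a bounded-complex dimension shift. For part (i) the paper only says ``the first can be shown in a similar way''; your explicit reduction to (ii) via the standard $k$-duality (after reversing the total order) is a clean way to make that precise and is consistent with what the authors intend.
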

\begin{proof}
We show the second implication; the first can be showed in a similar
way.  Assume that $(\gamma)$ holds, that is, every
$\Lambda/\idealgenby{a}$-module has finite projective dimension as a
$\Lambda$-module.  We want to show that $(\beta)$ holds, that is, the
$a{\Lambda}a$-module $a\Lambda$ has finite projective dimension.

As in Section~\ref{section:fg-a(Lambda)a}, we let $e$ be the exact
functor $e=(a-)\colon \fmod \Lambda \lxr \fmod a{\Lambda}a$ given by
multiplication with $a$.  Then what we need to show is that
$e(\Lambda)$ has finite projective dimension as $a{\Lambda}a$-module.

Let $S_1 \leS \cdots \leS S_s$ be all the simple
$\Lambda/\idealgenby{a}$-modules (up to isomorphism), ordered by the
total order $\leS$.  Let $T_1, \ldots, T_t$ be all the other simple
$\Lambda$-modules (up to isomorphism).  Let $Q_i$ be the projective
cover of $S_i$ (considered as $\Lambda$-module) and $Q'_j$ the
projective cover of $T_j$, for every $i$ and $j$.  These are all the
indecomposable projective $\Lambda$-modules up to isomorphism, so it
is sufficient to show that $e(Q_i)$ and $e(Q'_j)$ have finite
projective dimension as $a{\Lambda}a$-modules for every $i$ and $j$.

For each of the modules $Q'_j$, we have that $e(Q'_j)$ is a projective
$a{\Lambda}a$-module by Lemma~\ref{lem:simples-in-module-recollement}.
We need to check that $e(Q_i)$ has finite projective dimension for
every $i$.

Consider the module $S_1$.  By our assumptions,
every simple $\Lambda/\idealgenby{a}$-module has finite projective
dimension over $\Lambda$.  Let
\[
\xymatrix{
0 \ar[r]^{} &
P^{(1)}_{n_1} \ar[r]^{} &
\cdots \ar[r]^{} &
P^{(1)}_2 \ar[r]^{} &
P^{(1)}_1 \ar[r]^{} &
Q_1 \ar[r]^{} &
S_1 \ar[r] &
0
}
\]
be a minimal projective resolution of $S_1$.  Applying the functor $e$
to this sequence gives the exact sequence
\begin{equation}
\label{eqn:proj-res-e(Q_1)}
\xymatrix{
0 \ar[r]^{} &
e(P^{(1)}_{n_1}) \ar[r]^{} &
\cdots \ar[r]^{} &
e(P^{(1)}_2) \ar[r]^{} &
e(P^{(1)}_1) \ar[r]^{} &
e(Q_1) \ar[r]^{} &
0
}
\end{equation}
of $a{\Lambda}a$-modules, since $e(S_1) = 0$.  Since we have
$\Ext_\Lambda^{>0}(S_1,S_i) = 0$ for every $i$, it follows from
Lemma~\ref{lem:simples-proj-res} that the only indecomposable
projective $\Lambda$-modules which can occur as direct summands of the
modules $P^{(1)}_1, \ldots, P^{(1)}_{n_1}$ are the modules $Q'_j$.
Since we know that these are mapped to projective modules by $e$, the
sequence~\eqref{eqn:proj-res-e(Q_1)} is a projective resolution of the
$a{\Lambda}a$-module $e(Q_1)$.

We continue inductively.  For every $i$, we apply the functor $e$ to a
minimal projective resolution
\[
\xymatrix{
0 \ar[r]^{} &
P^{(i)}_{n_i} \ar[r]^{} &
\cdots \ar[r]^{} &
P^{(i)}_2 \ar[r]^{} &
P^{(i)}_1 \ar[r]^{} &
Q_i \ar[r]^{} &
S_i \ar[r] &
0
}
\]
and obtain the sequence
\[
\xymatrix{
0 \ar[r]^{} &
e(P^{(i)}_{n_i}) \ar[r]^{} &
\cdots \ar[r]^{} &
e(P^{(i)}_2) \ar[r]^{} &
e(P^{(i)}_1) \ar[r]^{} &
e(Q_i) \ar[r]^{} &
0
}
\]
of $a{\Lambda}a$-modules.  Each of the modules $P^{(i)}_1, \ldots,
P^{(i)}_{n_i}$ has only the indecomposable projective modules $Q'_1,
\ldots, Q'_t, Q_1, \ldots, Q_{i-1}$ as direct summands.  Therefore (by the
induction assumption), all the modules $e(P^{(i)}_1), \ldots,
e(P^{(i)}_{n_i})$ have finite projective dimension, and thus the
module $e(Q_i)$ has finite projective dimension.
\end{proof}

The following example shows that the implications $(\alpha) \implies
(\beta)$ and $(\gamma) \implies (\delta)$ of the above proposition do
not hold in general.

\begin{exam}
Let $k$ be a field.  Let the $k$-algebra $\Lambda =
kQ/\idealgenby{\rho}$ be given by the following quiver and
relations$\colon$
\[
Q\colon
\xymatrix{
{1} \ar@/^/[r]^\alpha &
{2} \ar@/^/[l]^\beta
}
\qquad\qquad
\rho = \{ \alpha\beta \}.
\]
Let $a = e_1$.  Let $S_2$ be the simple $\Lambda$-module associated to
the vertex $2$.  Then we have $\pd_\Lambda S_2 = 2$ and $\id_\Lambda
S_2 = 2$, but $\pd_{a{\Lambda}a} a{\Lambda} = \infty$ and
$\id_{\opposite{(a{\Lambda}a)}} {\Lambda}a = \infty$.
\end{exam}

By combining Theorem~\ref{thm:main-results-summary} with
Proposition~\ref{prop:ordered-simples}, we get the following result.

\begin{cor}
\label{cor:ordered-simples}
Let $\Lambda$ be an artin algebra over a commutative ring $k$, and let
$a$ be an idempotent in $\Lambda$.  Assume that there is a total order
$\leS$ on the simple $\Lambda/\idealgenby{a}$-modules satisfying
condition~\eqref{eqn:simple-order}.  Then the following hold, where
$(\alpha)$, $(\beta)$, $(\gamma)$ and $(\delta)$ refer to the
conditions in \textup{Theorem~\ref{thm:main-results-summary}}.
\begin{enumerate}
\item The functor $a-\colon \fmod{\Lambda}\lxr \fmod{a\Lambda a}$
induces a singular equivalence between $\Lambda$ and $a\Lambda a$ if
and only if $(\gamma)$ holds.
\item Assume that $(\alpha)$ and $(\gamma)$ hold.  Then $\Lambda$ is
Gorenstein if and only if $a{\Lambda}a$ is Gorenstein.
\item Assume that $(\alpha)$ and $(\gamma)$ hold, that $k$ is a field
and $(\Lambda/\rad \Lambda) \tensor_k (\opposite{\Lambda}/\rad
\opposite{\Lambda})$ is a semisimple $\envalg{\Lambda}$-module.  Then
$\Lambda$ satisfies \fg{} if and only if $a{\Lambda}a$ satisfies
\fg{}.
\end{enumerate}
\end{cor}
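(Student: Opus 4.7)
The plan is to reduce each of the three parts to the corresponding statement of Theorem~\ref{thm:main-results-summary} by feeding in the implications supplied by Proposition~\ref{prop:ordered-simples}. Under the ordering hypothesis on the simple $\Lambda/\idealgenby{a}$-modules, that proposition gives $(\alpha)\Rightarrow(\delta)$ and $(\gamma)\Rightarrow(\beta)$, so any pair from $\{(\alpha),(\gamma)\}$ automatically upgrades to the full package $\{(\alpha),(\beta),(\gamma),(\delta)\}$, and condition $(\gamma)$ alone upgrades to $\{(\beta),(\gamma)\}$. Everything else is then a direct citation.

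For part (i), I invoke Theorem~\ref{thm:main-results-summary}(ii), which says that $e=a(-)$ induces a singular equivalence between $\fmod \Lambda$ and $\fmod a\Lambda a$ if and only if $(\beta)$ and $(\gamma)$ both hold. The forward implication trivially yields $(\gamma)$. Conversely, starting from $(\gamma)$, Proposition~\ref{prop:ordered-simples}(ii) delivers $(\beta)$, so the pair $\{(\beta),(\gamma)\}$ is in force and Theorem~\ref{thm:main-results-summary}(ii) applies.

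For part (ii), I assume $(\alpha)$ and $(\gamma)$. Proposition~\ref{prop:ordered-simples}(i) promotes $(\alpha)$ to $(\delta)$, and part (ii) of the same proposition promotes $(\gamma)$ to $(\beta)$. Hence all four conditions hold, so Theorem~\ref{thm:main-results-summary}(i) guarantees that $e$ is an eventually homological isomorphism. Theorem~\ref{thm:main-results-summary}(iii) then gives $\Lambda$ Gorenstein $\iff$ $a\Lambda a$ Gorenstein. Part (iii) is obtained by exactly the same reasoning, after noting that the extra semisimplicity hypothesis on $(\Lambda/\rad\Lambda)\tensor_k(\opposite\Lambda/\rad\opposite\Lambda)$ is precisely what is required to apply Theorem~\ref{thm:main-results-summary}(iv).

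The argument is essentially bookkeeping once Theorem~\ref{thm:main-results-summary} and Proposition~\ref{prop:ordered-simples} are in hand; there is no substantive obstacle at this stage. The genuine content has already been placed in Proposition~\ref{prop:ordered-simples}, where the ordering~\eqref{eqn:simple-order} is exploited via Lemma~\ref{lem:simples-proj-res} and Lemma~\ref{lem:simples-in-module-recollement} to control projective resolutions of the modules $e(Q_i)$ inductively along the chain $S_1\leS\cdots\leS S_s$. The only point in the present proof that merits explicit mention is that part (i) needs no hypothesis beyond $(\gamma)$, whereas parts (ii) and (iii) require both $(\alpha)$ and $(\gamma)$, because $(\beta)$ and $(\delta)$ must both be available in order to conclude that $e$ is an eventually homological isomorphism via Theorem~\ref{thm:main-results-summary}(i).
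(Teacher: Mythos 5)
Your proof is correct and follows the paper's own (implicit) argument exactly: the paper introduces the corollary with ``by combining Theorem~\ref{thm:main-results-summary} with Proposition~\ref{prop:ordered-simples}'' and gives no further detail, which is precisely the bookkeeping you carry out. One small quibble with your closing remark: Theorem~\ref{thm:main-results-summary}(i) asks for the pair $(\alpha)$--$(\beta)$ or the pair $(\gamma)$--$(\delta)$, not for $(\beta)$ and $(\delta)$ jointly; the reason parts (ii) and (iii) need both $(\alpha)$ and $(\gamma)$ is that the proposition only supplies the implications $(\alpha)\Rightarrow(\delta)$ and $(\gamma)\Rightarrow(\beta)$, so a single hypothesis produces only the incompatible pair $(\alpha)$--$(\delta)$ or $(\gamma)$--$(\beta)$, neither of which feeds into the theorem.
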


We now consider special cases of the conditions $(\alpha)$ and
$(\gamma)$ where the dimensions are not only finite, but at most one.
We show that if one of these dimensions is at most one, then we have
an ordering of the simple $\Lambda/\idealgenby{a}$-modules as assumed
in Proposition~\ref{prop:ordered-simples} and
Corollary~\ref{cor:ordered-simples}.

\begin{lem}
\label{lem:pd,id<=1}
Let $\Lambda$ be an artin algebra, and let $a$ be an idempotent in
$\Lambda$.  Assume that we have either
\[
(\alpha_1)
\ \id_\Lambda \Big( \frac{\Lambda/\idealgenby{a}}{\rad \Lambda/\idealgenby{a}} \Big)
 \le 1
\qquad\text{or}\qquad
(\gamma_1)
\ \pd_\Lambda \Big( \frac{\Lambda/\idealgenby{a}}{\rad \Lambda/\idealgenby{a}} \Big)
 \le 1
\]
Then there exists a total order $\leS$ on the simple
$\Lambda/\idealgenby{a}$-modules satisfying
condition~\eqref{eqn:simple-order}.
\end{lem}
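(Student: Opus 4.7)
The plan is to reduce the lemma to the acyclicity of the directed graph $G$ on the (finitely many) isomorphism classes of simple $\Lambda/\idealgenby{a}$-modules, with an edge $S \lxr S'$ exactly when $\Ext^1_\Lambda(S, S') \ne 0$. Acyclicity of $G$ is equivalent to the existence of a total order $\leS$ on its vertices with the property that every edge $S \lxr S'$ satisfies $S' <_S S$; equivalently, $S \leS S'$ forbids any edge from $S$ to $S'$, so that $\Ext^1_\Lambda(S, S') = 0$. In both cases of the hypothesis we moreover have $\Ext^n_\Lambda(S, S') = 0$ for every $n \ge 2$ -- because $\pd_\Lambda S \le 1$ under $(\gamma_1)$ and because $\id_\Lambda S' \le 1$ under $(\alpha_1)$ -- so such an order automatically satisfies condition~\eqref{eqn:simple-order}.

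The heart of the proof is therefore to show that $G$ has no directed cycles. The strategy I would use is to attach to each vertex $S$ a positive integer weight and to prove that every edge changes this weight strictly in one fixed direction; a directed cycle would then produce a strictly monotone cyclic sequence of positive integers, a contradiction.

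In case $(\gamma_1)$, I would take the weight of a simple $\Lambda/\idealgenby{a}$-module $S$ to be the composition length $\length_\Lambda P_S$ of its projective cover $P_S$. Since $\pd_\Lambda S \le 1$, the radical $\rad P_S$ is already projective and therefore coincides with the first term of the minimal projective resolution of $S$. Lemma~\ref{lem:simples-proj-res} then identifies an edge $S \lxr S'$ with the appearance of $P_{S'}$ as a direct summand of $\rad P_S$, yielding the strict decrease $\length_\Lambda P_{S'} \le \length_\Lambda \rad P_S = \length_\Lambda P_S - 1$.

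Case $(\alpha_1)$ is dual, handled by assigning to $S$ the length $\length_\Lambda I_S$ of its injective envelope. Under $\id_\Lambda S' \le 1$, the first cosyzygy $I_{S'}/S'$ is injective, and the second isomorphism of Lemma~\ref{lemHom} gives $\Ext^1_\Lambda(S, S') \iso \Hom_\Lambda(S, I_{S'}/S')$. An edge $S \lxr S'$ therefore provides a nonzero map from the simple module $S$ into the injective module $I_{S'}/S'$, which extends to an embedding of the injective envelope $I_S$ as a direct summand of $I_{S'}/S'$, forcing $\length_\Lambda I_S \le \length_\Lambda I_{S'} - 1$. The only bookkeeping point I expect to be mildly delicate is the injective analogue of Lemma~\ref{lem:simples-proj-res}, which is not stated directly in the paper, but as indicated it follows immediately from Lemma~\ref{lemHom} together with the standard fact that any simple submodule of an injective module sits as the socle of a summand isomorphic to its injective envelope.
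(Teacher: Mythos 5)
Your argument is correct and rests on the same key invariant as the paper's proof: under $(\gamma_1)$ the length of the projective cover strictly decreases along nonzero $\Ext^1_\Lambda$ between simple $\Lambda/\idealgenby{a}$-modules (and dually under $(\alpha_1)$ via lengths of injective envelopes), so ordering by this length produces the required total order. The paper proceeds a bit more directly, simply listing the simple $\Lambda/\idealgenby{a}$-modules by increasing length of projective cover and verifying condition~\eqref{eqn:simple-order} via Lemma~\ref{lem:simples-proj-res} (leaving the dual case to the reader), whereas you route through acyclicity of the Ext-graph followed by a topological sort, but the underlying mechanism is identical.
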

\begin{proof}
Assume that $(\gamma_1)$ holds (the proof using $(\alpha_1)$ is similar).
Let $S_1, \ldots, S_s$ be all the simple
$\Lambda/\idealgenby{a}$-modules (up to isomorphism), and let $P_1,
\ldots, P_s$ be their projective covers as $\Lambda$-modules, such
that $P_i/((\rad \Lambda)P_i) \iso S_i$ for every $i$.  Assume that we
have ordered these by increasing length of the projective covers, that
is,
\[
\operatorname{length}(P_1) \le
\operatorname{length}(P_2) \le
\cdots \le
\operatorname{length}(P_s).
\]
For any $i$, the module $S_i$ has a projective resolution of the form
\[
\xymatrix{
0 \ar[r]^{} & Q \ar[r]^{} & P_i \ar[r]^{} & S_i \ar[r]^{} & 0  }
\]
Since the module $Q$ has shorter length than the module $P_i$, it can
not have any of the modules $P_i, \ldots, P_s$ as direct summands.  Then
Lemma~\ref{lem:simples-proj-res} implies that
$\Ext_\Lambda^{>0}(S_i, S_j) = 0$ for $i \le j$.
\end{proof}

By using Proposition~\ref{prop:ordered-simples},
Lemma~\ref{lem:pd,id<=1} and Theorem~\ref{thm:main-results-summary},
we have the following.

\begin{cor}
\label{cor:pd,id<=1}
Let $\Lambda$ be an artin algebra over a commutative ring $k$, and let
$a$ be an idempotent in $\Lambda$.  Then the following hold, where
$(\alpha)$, $(\beta)$, $(\gamma)$ and $(\delta)$ refer to the
conditions in \textup{Theorem~\ref{thm:main-results-summary}}, and $(\alpha_1)$
and $(\gamma_1)$ refer to the conditions in \textup{Lemma~\ref{lem:pd,id<=1}}.
\begin{enumerate}
\item If $(\gamma_1)$ holds, then the singularity categories of
$\Lambda$ and $a{\Lambda}a$ are triangle equivalent.
\item Assume either that $(\alpha_1)$ and $(\gamma)$ hold, or that
$(\alpha)$ and $(\gamma_1)$ hold.  Then $\Lambda$ is Gorenstein if and
only if $a{\Lambda}a$ is Gorenstein.
\item Assume either that $(\alpha_1)$ and $(\gamma)$ hold, or that
$(\alpha)$ and $(\gamma_1)$ hold.  Furthermore, assume that $k$ is a
field and $(\Lambda/\rad \Lambda) \tensor_k (\opposite{\Lambda}/\rad
\opposite{\Lambda})$ is a semisimple $\envalg{\Lambda}$-module.  Then
$\Lambda$ satisfies \fg{} if and only if $a{\Lambda}a$ satisfies
\fg{}.
\end{enumerate}
\end{cor}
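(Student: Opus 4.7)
The plan is to reduce each part of the corollary directly to Theorem~\ref{thm:main-results-summary} using Lemma~\ref{lem:pd,id<=1} and Proposition~\ref{prop:ordered-simples}. The key observation is that $(\alpha_1)$ and $(\gamma_1)$ are each strong enough on their own to produce a total order on the simple $\Lambda/\idealgenby{a}$-modules satisfying \eqref{eqn:simple-order}, and such an order in turn forces the ``missing'' dimension condition to hold automatically.

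For part (i), I first note that $(\gamma_1)$ trivially implies $(\gamma)$. By Lemma~\ref{lem:pd,id<=1}, condition $(\gamma_1)$ yields an ordering $\leS$ on the simple $\Lambda/\idealgenby{a}$-modules as in \eqref{eqn:simple-order}, and then Proposition~\ref{prop:ordered-simples}(ii) promotes $(\gamma)$ to $(\beta)$. Thus $(\beta)$ and $(\gamma)$ both hold, and Theorem~\ref{thm:main-results-summary}(ii) delivers the singular equivalence.

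For parts (ii) and (iii), I would first show that under either set of hypotheses the functor $e$ is an eventually homological isomorphism, and then invoke Theorem~\ref{thm:main-results-summary}(iii) or (iv) as appropriate. If $(\alpha_1)$ and $(\gamma)$ are assumed, Lemma~\ref{lem:pd,id<=1} applied to $(\alpha_1)$ furnishes the required order; since $(\alpha_1)$ also implies $(\alpha)$, Proposition~\ref{prop:ordered-simples} gives both $(\delta)$ from $(\alpha)$ and $(\beta)$ from $(\gamma)$, so all four conditions $(\alpha)$--$(\delta)$ hold and Theorem~\ref{thm:main-results-summary}(i) declares $e$ an eventually homological isomorphism. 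If instead $(\alpha)$ and $(\gamma_1)$ are assumed, the same argument runs verbatim with $(\gamma_1)$ supplying the order and $(\alpha)$ being given outright. With $e$ known to be an eventually homological isomorphism, part (ii) follows from Theorem~\ref{thm:main-results-summary}(iii), and part (iii) follows from Theorem~\ref{thm:main-results-summary}(iv) under the additional field and semisimplicity hypotheses.

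I do not anticipate a genuine obstacle here; the real content has already been isolated in Lemma~\ref{lem:pd,id<=1} and Proposition~\ref{prop:ordered-simples}, and the corollary is essentially an assembly of these with the Main Theorem. The only care needed is bookkeeping: one must track which of $(\alpha_1)$, $(\gamma_1)$ produces the order via Lemma~\ref{lem:pd,id<=1} and which of the two implications $(\alpha) \implies (\delta)$, $(\gamma) \implies (\beta)$ of Proposition~\ref{prop:ordered-simples} one must then apply to obtain the full package $(\alpha)$--$(\delta)$.
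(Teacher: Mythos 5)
Your proof is correct and follows exactly the same route the paper takes: the corollary is stated there as an immediate consequence of Lemma~\ref{lem:pd,id<=1}, Proposition~\ref{prop:ordered-simples} and Theorem~\ref{thm:main-results-summary}, with no further proof given. Your bookkeeping of which of $(\alpha_1)$, $(\gamma_1)$ supplies the ordering and how $(\beta)$ and $(\delta)$ then follow is exactly what is needed.
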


For the following results, we let $\Lambda = kQ/\idealgenby{\rho}$ be
a quotient of a path algebra, where $k$ is a field, $Q$ is a quiver,
and $\rho$ a minimal set of relations in $kQ$ generating an admissible
ideal $\idealgenby{\rho}$.

First we describe how the conditions $(\alpha_1)$ and $(\gamma_1)$ can
be interpreted for quotients of path algebras.  The result follows
directly from \cite[Corollary, Section~1.1]{Bongartz}.

\begin{lem}
\label{lem:relations-dimensions}
Let $S$ be the simple $\Lambda$-module corresponding to a vertex $v$
in the quiver $Q$.
\begin{enumerate}
\item We have $\pd_\Lambda S \le 1$ if and only if no relation starts
in the vertex $v$.
\item We have $\id_\Lambda S \le 1$ if and only if no relation ends in
the vertex $v$.
\end{enumerate}
\end{lem}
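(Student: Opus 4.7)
The plan is to exploit the standard computation of $\Ext$-groups between simple modules over a bound quiver algebra, which is the content of the Bongartz result being cited. For each vertex $v$ of $Q$, write $S_v$ for the associated simple $\Lambda$-module and $e_v$ for the corresponding primitive idempotent. The key input from Bongartz is the formula
\[
\dim_k \Ext_\Lambda^2(S_v, S_w) = \#\{\rho_i \in \rho \mid \rho_i \text{ starts at } v \text{ and ends at } w\},
\]
which is obtained by constructing the beginning $P_2 \lxr P_1 \lxr P_0 \lxr S_v \lxr 0$ of a minimal projective resolution of $S_v$, with $P_0 = \Lambda e_v$, with $P_1$ indexed by the arrows out of $v$ (sending generators to the arrows viewed as elements of $\rad \Lambda e_v$), and with $P_2$ indexed by the minimal relations in $\rho$ that start at $v$; then evaluating $\Hom_\Lambda(-, S_w)$ and using that the differentials in the resulting cochain complex are zero by minimality.

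For part (i), I would first observe that by Lemma~\ref{lemHom} and the fact that every $\Lambda$-module has a finite composition series, the condition $\pd_\Lambda S_v \leq 1$ is equivalent to the vanishing $\Ext_\Lambda^2(S_v, S_w) = 0$ for every vertex $w$ of $Q$. Summing the Bongartz formula over all $w$, this is in turn equivalent to there being no element of $\rho$ starting at $v$, which is precisely the claim.

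Part (ii) will be handled by a dual argument via the standard duality $D = \Hom_k(-, k)\colon \fmod \Lambda \lxr \fmod \opposite{\Lambda}$. Since $D$ exchanges minimal injective coresolutions over $\Lambda$ with minimal projective resolutions over $\opposite{\Lambda}$, we have $\id_\Lambda S_v \leq 1$ if and only if $\pd_{\opposite{\Lambda}} D(S_v) \leq 1$. Using the identification $\opposite{\Lambda} \iso kQ^{\operatorname{op}}/\idealgenby{\rho^{\operatorname{op}}}$, where $Q^{\operatorname{op}}$ reverses all arrows and $\rho^{\operatorname{op}}$ is the opposite set of relations, and noting that $D(S_v)$ is the simple $\opposite{\Lambda}$-module at $v$, part (i) applied to $\opposite{\Lambda}$ says this is equivalent to no relation in $\rho^{\operatorname{op}}$ starting at $v$, i.e., no relation in $\rho$ ending at $v$.

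I do not foresee any serious obstacle, as the result is essentially an unpacking of Bongartz's computation; the one point that needs care is to make genuine use of the minimality of the generating set $\rho$, since this is what guarantees that the projective cover $P_2$ has exactly one indecomposable summand per element of $\rho$ and that the differential $P_2 \lxr P_1$ lies in $\rad \Lambda \cdot P_1$. Without minimality one could introduce redundant relations that produce trivial summands in a non-minimal resolution, and the correspondence between relations starting at $v$ and non-vanishing of $\Ext^2(S_v, -)$ would fail.
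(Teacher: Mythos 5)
Your argument is correct and follows the same route as the paper, which disposes of the lemma in a single line by citing \cite[Corollary, Section~1.1]{Bongartz}; you have simply unpacked that citation, spelling out how the Bongartz formula $\dim_k \Ext_\Lambda^2(S_v,S_w) = \#\{\text{minimal relations from } v \text{ to } w\}$, the reduction of $\pd_\Lambda S_v \le 1$ to the vanishing of $\Ext^2(S_v,-)$ on simples via Lemma~\ref{lemHom}, and the duality $D$ for part (ii) combine to give the statement. Your remark about the necessity of $\rho$ being a minimal generating set is exactly the hypothesis the paper imposes in the sentence preceding the lemma, and it is indeed what makes the Bongartz count of $\Ext^2$ valid.
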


As a consequence of Lemma~\ref{lem:relations-dimensions} and
Corollary~\ref{cor:pd,id<=1}, we get the following results for path
algebras.

\begin{cor}
\label{cor:removing-vertices-1}
Let $\Lambda = kQ/\idealgenby{\rho}$ be a quotient of a path algebra
as above.  Choose some vertices in $Q$ where no relations start, and
let $a$ be the sum of all vertices except these.  Then the functor
$a-\colon \fmod \Lambda \lxr \fmod a{\Lambda}a$ induces a singular
equivalence between $\Lambda$ and $a{\Lambda}a\colon$
\[
\Dsg(a-)\colon
 \Dsg(\fmod \Lambda)
 \stackrel{\equivalence}{\lxr}
 \Dsg(\fmod a{\Lambda}a)
\]
\end{cor}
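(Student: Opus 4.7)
The plan is to reduce the corollary to Corollary~\ref{cor:pd,id<=1}~(i) by verifying condition $(\gamma_1)$ of Lemma~\ref{lem:pd,id<=1}. The key point is that the idempotent $a$ has been cleverly chosen so that the simple $\Lambda/\idealgenby{a}$-modules are exactly the simple modules at the vertices where no relation starts, and such simples have projective dimension at most $1$ by Lemma~\ref{lem:relations-dimensions}~(i).

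First I would identify the simple $\Lambda/\idealgenby{a}$-modules. Write $v_1,\ldots,v_r$ for the chosen vertices (those at which no relation starts), so that $a = \sum_{v \notin \{v_1,\ldots,v_r\}} e_v$ and $1-a = e_{v_1}+\cdots+e_{v_r}$. A simple $\Lambda$-module is annihilated by $\idealgenby{a}$ if and only if the corresponding vertex lies in $\{v_1,\ldots,v_r\}$, so the simple $\Lambda/\idealgenby{a}$-modules are precisely $S_{v_1},\ldots,S_{v_r}$, and the semisimple module $(\Lambda/\idealgenby{a})/\rad(\Lambda/\idealgenby{a})$ is a direct sum of copies of these.

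Next I would compute the projective dimensions. By the choice of the $v_i$, no relation in $\rho$ starts at $v_i$; applying Lemma~\ref{lem:relations-dimensions}~(i) therefore yields $\pd_\Lambda S_{v_i} \le 1$ for each $i$. Hence
\[
\pd_\Lambda \Big( \frac{\Lambda/\idealgenby{a}}{\rad \Lambda/\idealgenby{a}} \Big) \le 1,
\]
which is exactly condition $(\gamma_1)$ of Lemma~\ref{lem:pd,id<=1}.

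Finally, Corollary~\ref{cor:pd,id<=1}~(i) (whose triangle equivalence is, via Corollary~\ref{cor:ordered-simples}~(i), induced by the quotient functor $a-$) gives the desired singular equivalence
\[
\Dsg(a-)\colon \Dsg(\fmod \Lambda) \stackrel{\equivalence}{\lxr} \Dsg(\fmod a{\Lambda}a).
\]
There is no genuine obstacle here beyond correctly reading off the simple $\Lambda/\idealgenby{a}$-modules in terms of the quiver; once that bookkeeping is done the result is a direct specialization of the earlier machinery.
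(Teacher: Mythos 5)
Your proof is correct and follows exactly the same route as the paper: the statement is given as a direct consequence of Lemma~\ref{lem:relations-dimensions} and Corollary~\ref{cor:pd,id<=1}, and you have correctly filled in the bookkeeping — the simple $\Lambda/\idealgenby{a}$-modules are precisely the simples at the vertices not appearing in $a$, and Lemma~\ref{lem:relations-dimensions}~(i) then yields $(\gamma_1)$, after which Corollary~\ref{cor:pd,id<=1}~(i) (equivalently Corollary~\ref{cor:ordered-simples}~(i) together with $(\gamma_1)\Rightarrow(\gamma)$, giving the equivalence induced by $a-$) finishes the argument.
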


\begin{cor}
\label{cor:removing-vertices-2}
Let $\Lambda = kQ/\idealgenby{\rho}$ be a quotient of a path algebra
as above.  Choose some vertices in $Q$ where no relations start and no
relations end, and let $a$ be the sum of all vertices except these.
Then the following hold$\colon$
\begin{enumerate}
\item $\Lambda$ is Gorenstein if and only if $a{\Lambda}a$ is
Gorenstein.
\item $\Lambda$ satisfies \fg{} if and only if $a{\Lambda}a$ satisfies
\fg{}.
\end{enumerate}
\end{cor}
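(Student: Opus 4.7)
The plan is a direct unpacking of Corollary~\ref{cor:pd,id<=1} together with Lemma~\ref{lem:relations-dimensions}. First I would identify the simple $\Lambda/\idealgenby{a}$-modules: since $a$ is the sum of all vertices except those chosen to be removed, the ideal $\idealgenby{a}$ annihilates precisely the simple modules corresponding to the removed vertices, so $(\Lambda/\idealgenby{a})/\rad(\Lambda/\idealgenby{a})$ is the direct sum of the simple $\Lambda$-modules $S_v$ indexed by the chosen (removed) vertices $v$.

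Next I would bound the projective and injective dimensions of each such $S_v$. By hypothesis, no relation of $\rho$ starts at $v$ and no relation of $\rho$ ends at $v$. Applying Lemma~\ref{lem:relations-dimensions} in both directions gives $\pd_\Lambda S_v \le 1$ and $\id_\Lambda S_v \le 1$. Taking the supremum over the finitely many chosen vertices, both conditions $(\alpha_1)$ and $(\gamma_1)$ of Lemma~\ref{lem:pd,id<=1} are satisfied; in particular $(\alpha)$ and $(\gamma)$ both hold.

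For part (i), I would now invoke Corollary~\ref{cor:pd,id<=1}(ii): since $(\alpha_1)$ and $(\gamma)$ hold (indeed even $(\gamma_1)$), we conclude $\Lambda$ is Gorenstein iff $a\Lambda a$ is Gorenstein. For part (ii), I would invoke Corollary~\ref{cor:pd,id<=1}(iii) in the same way, checking in addition that $(\Lambda/\rad \Lambda)\tensor_k (\opposite{\Lambda}/\rad \opposite{\Lambda})$ is semisimple as an $\envalg{\Lambda}$-module. This semisimplicity is precisely one of the cases flagged in the remark preceding Lemma~\ref{lem:ext-lambda-simples}: a quotient of a path algebra by an admissible ideal always satisfies it, because $\Lambda/\rad \Lambda$ is a product of copies of the field $k$, hence separable, so its enveloping algebra is semisimple. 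The conclusion \fg{} for $\Lambda$ iff \fg{} for $a\Lambda a$ then follows.

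There is no real obstacle; the work is entirely assembly. The only step that requires any care is confirming the semisimplicity hypothesis for part (ii), which, as noted, is automatic in the path-algebra setting because $\Lambda/\rad\Lambda \iso k^{|Q_0|}$ is separable over $k$.
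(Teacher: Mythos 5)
Your proposal is correct and follows exactly the route the paper intends: identify the simple $\Lambda/\idealgenby{a}$-modules as the $S_v$ for the omitted vertices, apply Lemma~\ref{lem:relations-dimensions} to get $(\alpha_1)$ and $(\gamma_1)$, then invoke Corollary~\ref{cor:pd,id<=1}(ii) and (iii), with the semisimplicity hypothesis for (iii) supplied by the remark before Lemma~\ref{lem:ext-lambda-simples} (the path-algebra case). The paper itself compresses this to ``As a consequence of Lemma~\ref{lem:relations-dimensions} and Corollary~\ref{cor:pd,id<=1}''; your write-up is simply the careful unpacking of that chain, with no gaps.
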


We apply the above result in the following example.

\begin{exam}
Let $Q$ be the quiver with relations $\rho$ given by
\[
Q\colon
\xymatrix@C=3em{
1 \ar[r]^{\alpha_1} &
2 \ar[r]^-{\alpha_2} &
{\cdots} \ar[r]^{\alpha_{m-1}} &
m \ar@(dl,dr)[lll]^{\alpha_m}
}
\qquad\text{and}\qquad
\rho=\{(\alpha_m \cdots \alpha_1)^n\},
\]
for some integers $m \ge 2$ and $n \ge 2$.  Let $\Lambda =
kQ/\idealgenby{\rho}$, and let $a = e_1$ (the only vertex where a
relation starts and ends).  Then $a{\Lambda}a \iso
k[x]/\idealgenby{x^n}$, so $a{\Lambda}a$ satisfies \fg{} by
\cite{EH,EHS}.
By Corollary~\ref{cor:removing-vertices-2}, the algebra $\Lambda$ also
satisfies \fg{}.  By Corollary~\ref{cor:removing-vertices-1}, the
algebras $\Lambda$ and $k[x]/\idealgenby{x^n}$ are singularly
equivalent.  See \cite{SO} for a general discussion of the Hochschild
cohomology ring of the path algebra $kQ$ modulo one relation.
\end{exam}

\subsection{Comparison to work by Nagase}
\label{subsection:nagase}

In this subsection we recall a result of Hiroshi Nagase \cite{N} and
relate his set of assumptions to ours.

In \cite{N} Hiroshi Nagase proves the following result.
\begin{prop}
  Let $\Lambda$ be a finite dimensional algebra over an algebraically
  closed field with a stratifying ideal $\idealgenby{a}$ for an
  idempotent $a$ in $\Lambda$.  Suppose $\pd_{\envalg{\Lambda}}
  \Lambda/\idealgenby{a} < \infty$.  Then we have
\begin{enumerate}
\item[(1)] $\HH{\ge n}(\Lambda)\iso \HH{\ge n}(a\Lambda a)$ as graded
algebras, where $n = \pd_{\envalg{\Lambda}} \Lambda/\idealgenby{a} +
1$.
\item[(2)] $\Lambda$ satisfies \fg{} if and only if so does $a\Lambda
  a$.
\item[(3)] $\Lambda$ is Gorenstein if and only if so is $a\Lambda a$.
\end{enumerate}
\end{prop}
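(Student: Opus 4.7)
The plan is to derive Nagase's proposition from the Main Theorem and Proposition~\ref{prop:lambda-ext-iso} by showing that his hypotheses force the functor $e = a(-)\colon \fmod \Lambda \lxr \fmod a\Lambda a$ to be an eventually homological isomorphism. Once this reduction is in place, part (3) is part (iii) of the Main Theorem, and part (2) is part (iv) of the Main Theorem (the semisimplicity condition on $(\Lambda/\rad \Lambda) \tensor_k (\opposite{\Lambda}/\rad \opposite{\Lambda})$ is automatic because $k$ is algebraically closed, as noted after Example~\ref{exam:not-semisimple}). Part (1) is essentially a reformulation of Proposition~\ref{prop:lambda-ext-iso}: the graded ring homomorphism $E^*_{\Lambda,\Lambda}\colon \HH*(\Lambda) \lxr \HH*(a\Lambda a)$ constructed in Section~\ref{section:fg-a(Lambda)a} (and shown to make the square of Proposition~\ref{prop:e-gamma-commute} commute) is an isomorphism in each sufficiently high degree, hence an isomorphism of graded rings in all degrees $\geq n$ for some $n$.

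To verify that $e$ is an eventually homological isomorphism, by Corollary~\ref{cor:algebra-ext-iso} it suffices to check conditions $(\gamma)$ and $(\delta)$ of the Main Theorem. The hypothesis $\pd_{\envalg{\Lambda}} \Lambda/\idealgenby{a} < \infty$, by restriction of scalars along $\Lambda \lxr \envalg{\Lambda}$ and $\opposite{\Lambda} \lxr \envalg{\Lambda}$, gives both $\pd_\Lambda \Lambda/\idealgenby{a} < \infty$ and $\pd_{\opposite{\Lambda}} \Lambda/\idealgenby{a} < \infty$. For $(\gamma)$, I would combine these one-sided finite projective dimensions with the defining property of a stratifying ideal that the inclusion $i\colon \fmod \Lambda/\idealgenby{a} \lxr \fmod \Lambda$ is a homological embedding, so that $\Ext_\Lambda^n(i(X),i(Y)) \iso \Ext_{\Lambda/\idealgenby{a}}^n(X,Y)$ for all $n$; this should force each simple $\Lambda/\idealgenby{a}$-module, and hence the semisimple top of $\Lambda/\idealgenby{a}$, to have finite projective dimension over $\Lambda$. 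For $(\delta)$, I would exploit the Tor-vanishing built into the stratifying hypothesis, namely $\Tor_i^{a\Lambda a}(\Lambda a, a\Lambda)=0$ for $i>0$ and the multiplication map $\Lambda a \tensor_{a\Lambda a} a\Lambda \lxr \idealgenby{a}$ being an isomorphism of $\envalg{\Lambda}$-modules, together with the short exact sequence $0 \lxr \idealgenby{a} \lxr \Lambda \lxr \Lambda/\idealgenby{a} \lxr 0$ of $\envalg{\Lambda}$-modules and the finiteness of $\pd_{\envalg{\Lambda}} \Lambda/\idealgenby{a}$, in order to bound $\pd_{\opposite{(a\Lambda a)}} \Lambda a$.

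The main obstacle will be this bridge between the stratifying condition, which is derived-categorical in nature, and the concrete one-sided projective dimension bounds $(\gamma)$ and $(\delta)$, which are statements about specific modules and specific syzygies. Unpacking the Tor-vanishing carefully enough to turn the $\envalg{\Lambda}$-projective dimension of $\Lambda/\idealgenby{a}$ into pd bounds over $a\Lambda a$ and its opposite is the crux of the argument. A secondary, less serious difficulty is that our route will almost certainly not recover the sharp bound $n = \pd_{\envalg{\Lambda}} \Lambda/\idealgenby{a} + 1$ in part (1): Proposition~\ref{prop:lambda-ext-iso} produces an isomorphism in all degrees beyond a sum of several dimensions (including $\pd_{a\Lambda a} a\Lambda$, $\pd_{\opposite{(a\Lambda a)}} \Lambda a$, and the injective and projective dimensions of the top of $\Lambda/\idealgenby{a}$ over $\Lambda$), which will typically exceed Nagase's bound.
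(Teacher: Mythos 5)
The proposition you are proving is Nagase's; the paper does not reprove it directly but instead records it as a citation of \cite{N} and then, in Lemma~\ref{lem:stratifying}, builds the bridge between Nagase's hypothesis and the paper's notion of an eventually homological isomorphism. Your overall reduction strategy — derive parts (2) and (3) from parts~(iv) and~(iii) of the Main Theorem, and part~(1) from Proposition~\ref{prop:lambda-ext-iso}, after showing the functor $e$ is an eventually homological isomorphism — is exactly the shape of what the paper makes possible. Your remarks that the semisimplicity hypothesis is automatic over an algebraically closed field and that this route will not recover Nagase's sharp bound $n = \pd_{\envalg{\Lambda}}\Lambda/\idealgenby{a}+1$ are both correct.

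However, the central step — showing that a stratifying ideal $\idealgenby{a}$ with $\pd_{\envalg{\Lambda}}\Lambda/\idealgenby{a}<\infty$ forces $e$ to be an eventually homological isomorphism — is where your sketch has a real gap, and it is not where the paper puts the work. You aim to verify conditions~$(\gamma)$ and~$(\delta)$ of Corollary~\ref{cor:algebra-ext-iso} directly. For~$(\gamma)$, you correctly get $\pd_\Lambda \Lambda/\idealgenby{a}<\infty$ by restriction of scalars, but then you appeal to the homological embedding property of $i$ (the $\Ext$-isomorphism between $\Lambda/\idealgenby{a}$ and $\Lambda$) and assert that "this should force each simple $\Lambda/\idealgenby{a}$-module to have finite projective dimension over $\Lambda$." That inference does not follow: the homological embedding only compares $\Ext$-groups between pairs of $\Lambda/\idealgenby{a}$-modules, whereas $\pd_\Lambda S<\infty$ requires vanishing of $\Ext^n_\Lambda(S,-)$ against \emph{all} $\Lambda$-modules. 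What you would actually need is $\gld{\Lambda/\idealgenby{a}}<\infty$ together with $\pd_\Lambda\idealgenby{a}<\infty$ (this is precisely the content of the last lemma of Subsection~\ref{subsection:nagase}), and the finiteness of $\gld{\Lambda/\idealgenby{a}}$ is not something you have established — indeed, in the paper it comes out as a \emph{consequence} of $e$ being an eventually homological isomorphism, which is what you are trying to prove, so there is a circularity risk. Your discussion of $(\delta)$ is even more programmatic ("I would exploit the Tor-vanishing\dots in order to bound $\pd_{\opposite{(a\Lambda a)}}\Lambda a$") and does not supply the needed chain of inequalities.

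The missing ingredient is the K\"onig--Nagase isomorphism
\[
\Ext^i_{\envalg{\Lambda}}(\idealgenby{a},X) \iso \Ext^i_{\envalg{(a\Lambda a)}}(a\Lambda a, aXa),
\]
valid under the stratifying hypothesis, which is what the paper uses in the proof of Lemma~\ref{lem:stratifying}\ref{lem:stratifying:equiv} (direction (a)$\implies$(b)). Combined with the long exact sequence from $0\to\idealgenby{a}\to\Lambda\to\Lambda/\idealgenby{a}\to 0$, the hypothesis $\pd_{\envalg{\Lambda}}\Lambda/\idealgenby{a}<\infty$, and Lemma~\ref{lem:ext-over-envalg-as-ext-over-lambda}, this gives
$\Ext^j_\Lambda(M,N)\iso\Ext^j_{a\Lambda a}(aM,aN)$
for all $j>\pd_{\envalg{\Lambda}}\Lambda/\idealgenby{a}$ \emph{directly}, without ever detouring through $(\gamma)$ and $(\delta)$; those conditions then hold automatically by Proposition~\ref{prop:ext-iso-implies-conditions}. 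If you replace your attempted direct verification of $(\gamma)$ and $(\delta)$ by an appeal to Lemma~\ref{lem:stratifying}\ref{lem:stratifying:equiv}, your reduction of parts (2), (3), and (a weak form of) (1) to the Main Theorem and Proposition~\ref{prop:lambda-ext-iso} is sound.
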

This work is based on the paper \cite{Kon-Nag}, where stratifying
ideals $\idealgenby{a}$ in a finite dimensional algebra $\Lambda$
were used to show that the Hochschild cohomology groups of $\Lambda$
and $a\Lambda a$ are isomorphic in almost all degrees.

We start by giving an example of a recollement $(\fmod
\Lambda/\idealgenby{a}, \fmod \Lambda, \fmod a\Lambda a)$, where the
ideal $\idealgenby{a}$ is not a stratifying ideal but it satisfies our
conditions from Theorem \ref{thm:main-result-fg}.

\begin{exam}
Let $Q$ be the quiver with relations $\rho$ given by
\[\xymatrix@R=5pt{
  & 2\ar[dd]^\gamma \\
1\ar@(ul,dl)_{\alpha}\ar[ur]^\beta& \\
  & 3\ar[ul]^\delta }\]
and $\rho=\{\alpha^2, \gamma\beta, \beta\alpha\delta\}$.  Let $\Lambda
= kQ/\idealgenby{\rho}$ for some field $k$, and let $a = e_1$. We
want to study the relationship between $\Lambda$ and $a\Lambda a$.  Let
$S_i$ denote the simple $\Lambda$-module associated to the vertex $i$
for $i=1,2,3$.  Then $\pd_\Lambda S_2 = 1$, $\pd_\Lambda S_3 = 3$,
$\id_\Lambda S_2 = 2$ and $\id_\Lambda S_3 = 3$.  Furthermore, the left
and right $a\Lambda a$-module $a\Lambda$ and $\Lambda a$ have finite
projective dimension (they are projective) as $a\Lambda
a$-modules.  Hence, according to Theorem \ref{thm:main-result-fg}
$\Lambda$ satisfies \fg{} if and only if $a\Lambda a\iso k[x]/\idealgenby{x^2}$
does.  We infer from this that $\Lambda$ satisfies \fg{}. Moreover, the
Hochschild cohomology groups of $\Lambda$ and $a\Lambda a$ are
isomorphic in almost all degrees by Proposition
\ref{prop:lambda-ext-iso}.

We claim that $\idealgenby{a}$ is not a stratifying ideal. Recall that
$\idealgenby{a}$ is stratifying if (i) the multiplication map $\Lambda
a\tensor_{a\Lambda a} a\Lambda \lxr\Lambda a \Lambda$ is an isomorphism
and (ii) $\Tor^{a\Lambda a}_i(\Lambda a,a\Lambda) = (0)$ for $i>0$.
Using that $(1-a)\Lambda a\iso a\Lambda a$ as a right $a\Lambda
a$-module, direct computations show that $\Lambda a\tensor_{a\Lambda
  a} a\Lambda$ has dimension $12$, while $\idealgenby{a}$ has
dimension $10$.  Consequently $\idealgenby{a}$ is not a stratifying
ideal in $\Lambda$.  However, the condition (ii) is satisfied since
$\Lambda a$ is a projective $a\Lambda a$-module.
\end{exam}

Next we show that, when $\idealgenby{a}$ is a stratifying ideal, then
the property $\pd_{\envalg{\Lambda}} \Lambda/\idealgenby{a}< \infty$
is equivalent to the functor $e \colon \fmod \Lambda \lxr \fmod
a{\Lambda}a$ being an eventually homological isomorphism.  We thank
Hiroshi Nagase for pointing out that (a) implies (b) in the second
part of the following result.  This led to a much better understanding
of the conditions occurring in the main results.

\begin{lem}
\label{lem:stratifying}
Let $\Lambda$ be a finite dimensional algebra over an algebraically
closed field $k$.
\begin{enumerate}
\item\label{lem:stratifying:pd} Assume that $(\alpha)\ \id_\Lambda
  \Big( \frac{\Lambda/\idealgenby{a}}{\rad \Lambda/\idealgenby{a}}
  \Big) < \infty$ and $(\gamma)\ \pd_\Lambda \Big(
  \frac{\Lambda/\idealgenby{a}}{\rad \Lambda/\idealgenby{a}} \Big) <
  \infty$.  Then $\pd_{\envalg{\Lambda}} \Lambda/\idealgenby{a} < \infty$.
\item\label{lem:stratifying:equiv} Assume that $\idealgenby{a}$ is a
  stratifying ideal in $\Lambda$. Then the following are equivalent.
\begin{enumerate}
\item $\pd_{\envalg{\Lambda}} \Lambda/\idealgenby{a} < \infty$.
\item The functor $e \colon \fmod \Lambda \lxr \fmod a{\Lambda}a$ is an
eventually homological isomorphism.
\end{enumerate}
\end{enumerate}
\end{lem}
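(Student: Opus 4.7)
For part~(i), my plan is first to show that $(\gamma)$ and $(\alpha)$ imply finite left and right projective dimensions of $\Lambda/\idealgenby{a}$ over $\Lambda$. Since the simple $\Lambda/\idealgenby{a}$-modules are the direct summands of $(\Lambda/\idealgenby{a})/\rad(\Lambda/\idealgenby{a})$, condition~$(\gamma)$ bounds each of their projective dimensions over $\Lambda$; a composition-series induction using the long exact Ext sequence then yields $\pd_\Lambda(\Lambda/\idealgenby{a}) = d < \infty$. For the opposite side, the standard $k$-duality $D = \Hom_k(-,k)$ interchanges simple left and right $\Lambda/\idealgenby{a}$-modules and satisfies $\pd_{\opposite{\Lambda}} D(S) = \id_\Lambda S$ for every finitely generated $S$; combined with $(\alpha)$ and the same composition-series induction applied to right modules, this gives $\pd_{\opposite{\Lambda}}(\Lambda/\idealgenby{a}) = d' < \infty$.

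The bound on $\pd_{\envalg{\Lambda}}(\Lambda/\idealgenby{a})$ is then obtained by testing against simple $\envalg{\Lambda}$-modules. Since $k$ is algebraically closed, $(\Lambda/\rad\Lambda) \tensor_k (\opposite{\Lambda}/\rad\opposite{\Lambda})$ is a semisimple $\envalg{\Lambda}$-module, so Lemma~\ref{lem:envalg-simples} identifies every simple $\envalg{\Lambda}$-module as some $S \tensor_k D(T)$ for simple $\Lambda$-modules $S$ and $T$. The adjunction $(- \tensor_\Lambda T) \dashv \Hom_k(T,-)$ between $\envalg{\Lambda}$-modules and $\Lambda$-modules, together with the exactness of $\Hom_k(T,-)$ over the field $k$, yields a hyperExt spectral sequence
\[
E_2^{p,q} = \Ext^p_\Lambda(\Tor^\Lambda_q(\Lambda/\idealgenby{a}, T), S) \Rightarrow \Ext^{p+q}_{\envalg{\Lambda}}(\Lambda/\idealgenby{a}, S \tensor_k D(T)).
\]
Computing Tor using a right-$\Lambda$-projective resolution of $\Lambda/\idealgenby{a}$ of length $d'$ gives $E_2^{p,q} = 0$ for $q > d'$; moreover, each $\Tor^\Lambda_q(\Lambda/\idealgenby{a}, T)$ inherits a left $\Lambda/\idealgenby{a}$-module structure from the bimodule structure of $\Lambda/\idealgenby{a}$, so its projective dimension over $\Lambda$ is at most $d$ by another composition-series argument, giving $E_2^{p,q} = 0$ for $p > d$. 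Therefore the abutment vanishes for $p + q > d + d'$, whence $\pd_{\envalg{\Lambda}}(\Lambda/\idealgenby{a}) \le d + d' < \infty$.

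For part~(ii), the implication (b) $\implies$ (a) is immediate: (b) forces $(\alpha)$ and $(\gamma)$ by Corollary~\ref{cor:algebra-ext-iso}, and part~(i) then supplies (a). For (a) $\implies$ (b), the plan is to exploit the stratifying hypothesis to produce the short exact sequence of $\envalg{\Lambda}$-modules
\[
0 \lxr \Lambda a \tensor_{a\Lambda a} a\Lambda \lxr \Lambda \lxr \Lambda/\idealgenby{a} \lxr 0
\]
together with the vanishing $\Tor^{a\Lambda a}_i(\Lambda a, a\Lambda) = 0$ for $i \ge 1$. Setting $p = \pd_{\envalg{\Lambda}}(\Lambda/\idealgenby{a})$, the long exact Ext sequence yields isomorphisms $\Ext^j_{\envalg{\Lambda}}(\Lambda, N) \iso \Ext^j_{\envalg{\Lambda}}(\Lambda a \tensor_{a\Lambda a} a\Lambda, N)$ for every $\envalg{\Lambda}$-module $N$ and every $j > p$. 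Composing with the derived adjunction $\Ext^j_{\envalg{\Lambda}}(\Lambda a \tensor_{a\Lambda a} a\Lambda, N) \iso \Ext^j_{\envalg{(a\Lambda a)}}(a\Lambda a, aNa)$—whose validity rests on the stratifying Tor-vanishing—and then specialising to $N = \Hom_k(M, N')$ for $\Lambda$-modules $M, N'$, using Lemma~\ref{lem:ext-over-envalg-as-ext-over-lambda} and the identification $a\Hom_k(M, N')a \iso \Hom_k(aM, aN')$ of $\envalg{(a\Lambda a)}$-modules, gives isomorphisms $\Ext^j_\Lambda(M, N') \iso \Ext^j_{a\Lambda a}(aM, aN')$ for all $j > p$, showing that $e$ is a $p$-homological isomorphism. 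The main obstacle is the spectral sequence argument in part~(i); in particular, verifying that each $\Tor^\Lambda_q(\Lambda/\idealgenby{a}, T)$ inherits a $\Lambda/\idealgenby{a}$-module structure (needed to bound its projective dimension) and that the derived adjunction really produces the spectral sequence abutting to $\Ext^*_{\envalg{\Lambda}}(\Lambda/\idealgenby{a}, S \tensor_k D(T))$.
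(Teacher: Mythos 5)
Your proof is correct, and for part~(i) it takes a genuinely different route from the paper. Where you test $\Ext^n_{\envalg{\Lambda}}(\Lambda/\idealgenby{a},-)$ against simple $\envalg{\Lambda}$-modules via the hyperExt spectral sequence
\[
E_2^{p,q}=\Ext^p_\Lambda\bigl(\Tor^\Lambda_q(\Lambda/\idealgenby{a},T),S\bigr)\Rightarrow\Ext^{p+q}_{\envalg{\Lambda}}(\Lambda/\idealgenby{a},S\tensor_k D(T)),
\]
the paper argues more directly: from the computation $\Hom_{\envalg{\Lambda}}(\envalg{\Lambda}(u\tensor v),\Lambda/\idealgenby{a})\iso u(\Lambda/\idealgenby{a})v=0$ whenever $u$ or $v$ occurs in $a$, it deduces that every composition factor of $\Lambda/\idealgenby{a}$ as an $\envalg{\Lambda}$-module is a direct summand of $\bigl(\tfrac{\Lambda/\idealgenby{a}}{\rad\Lambda/\idealgenby{a}}\bigr)\tensor_k\bigl(\tfrac{\opposite{\Lambda}/\idealgenby{a}}{\rad\opposite{\Lambda}/\idealgenby{a}}\bigr)$, and then applies Lemma~\ref{lem:tensor-preserves-fin-projdim} (the K\"unneth bound $\pd(M\tensor_kN)\le\pd M+\pd N$) together with a composition-series induction. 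Both arguments are really the same K\"unneth phenomenon in disguise; the paper's version avoids the spectral-sequence machinery and gets the bound with less setup. The step you flagged as an obstacle is in fact sound: each $\Tor^\Lambda_q(\Lambda/\idealgenby{a},T)$ is indeed annihilated by $\idealgenby{a}$ on the left (compute $\Tor$ from a left-$\Lambda$-projective resolution of $T$ and observe that $\idealgenby{a}$ kills $\Lambda/\idealgenby{a}\tensor_\Lambda Q_i$ for each term), and the collapse of $\Hom_\Lambda(P_\bullet\tensor_\Lambda T,I^\bullet)$ in the injective direction uses that $P_i\tensor_\Lambda T$ is left-$\Lambda$-projective (as $\envalg{\Lambda}\tensor_\Lambda T\iso\Lambda\tensor_k T$ is free). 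One small imprecision: the uniform bound on $\pd_\Lambda$ of arbitrary $\Lambda/\idealgenby{a}$-modules coming from the composition-series induction is $\pd_\Lambda\bigl(\tfrac{\Lambda/\idealgenby{a}}{\rad\Lambda/\idealgenby{a}}\bigr)$, not $\pd_\Lambda(\Lambda/\idealgenby{a})$; you should use the former as your constant $d$, but since both are finite this does not affect the conclusion. Your treatment of part~(ii) matches the paper's: both rest on the stratifying short exact sequence, the K\"onig--Nagase isomorphism $\Ext^i_{\envalg{\Lambda}}(\idealgenby{a},X)\iso\Ext^i_{\envalg{(a\Lambda a)}}(a\Lambda a,aXa)$, the identification $a\Hom_k(M,N')a\iso\Hom_k(aM,aN')$, and Lemma~\ref{lem:ext-over-envalg-as-ext-over-lambda}.
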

\begin{proof}
\ref{lem:stratifying:pd} For two primitive idempotents $u$ and $v$ in
$\Lambda$, we have that
\[\Hom_{\envalg{\Lambda}}(\envalg{\Lambda}(u\tensor v), \Lambda/\idealgenby{a})
\iso u(\Lambda/\idealgenby{a})v.\] Then, if $u$ or $v$ occurs in
$a$, then this homomorphism set is zero.  Consequently we infer that
the composition factors of $\Lambda/\idealgenby{a}$ are direct
summands of the semisimple module $\Big(
\frac{\Lambda/\idealgenby{a}}{\rad \Lambda/\idealgenby{a}} \Big)
\tensor_k \Big( \frac{\opposite{\Lambda}/\idealgenby{a}}{\rad
   \opposite{\Lambda}/\idealgenby{a}} \Big)$.  By Lemma
\ref{lem:tensor-preserves-fin-projdim}
$\pd_{\envalg{\Lambda}} \Big( \frac{\Lambda/\idealgenby{a}}{\rad
   \Lambda/\idealgenby{a}} \Big) \tensor_k \Big(
\frac{\opposite{\Lambda}/\idealgenby{a}}{\rad
\opposite{\Lambda}/\idealgenby{a}}
\Big)$ is finite, hence the claim follows.

\ref{lem:stratifying:equiv} By Corollary~\ref{cor:algebra-ext-iso} and
part \ref{lem:stratifying:pd}, statement (b) implies (a).  

Conversely, assume (a).  For $j> \pd
_{\envalg{\Lambda}}\Lambda/\idealgenby{a}$ and any $\Lambda$-modules
$M$ and $N$ we have that 
\[
\Ext^j_{\envalg{\Lambda}}(\Lambda,\Hom_k(M,N))
\iso \Ext^j_{\envalg{\Lambda}}(\idealgenby{a}, \Hom_k(M,N))
\]
Using the isomorphism in the proof of Proposition 3.3 in \cite{Kon-Nag},
\[
\Ext^i_{\envalg{\Lambda}}(\idealgenby{a},X)
\iso \Ext^i_{\envalg{a\Lambda a}}(a\Lambda a, aXa),
\]
we obtain that
\begin{align*}
\Ext^i_{\envalg{\Lambda}}(\idealgenby{a},\Hom_k(M,N)) 
&\iso \Ext^i_{\envalg{a\Lambda a}}(a\Lambda a, a\Hom_k(M,N)a)\\
&\iso \Ext^i_{\envalg{a\Lambda a}}(a\Lambda a, \Hom_k(aM,aN))\\
&\iso \Ext^i_{a\Lambda a}(aM,aN))
\end{align*}
for all $\Lambda$-modules $M$ and $N$.  Since
$\Ext^i_{\envalg{\Lambda}}(\Lambda,\Hom_k(M,N)) \iso
\Ext^i_{\Lambda}(M,N)$, we obtain the isomorphism
\[
\Ext_\Lambda^j (M, N) \iso \Ext_{a\Lambda a}^j (aM, aN)
\]
for all $j > \pd _{\envalg{\Lambda}}\Lambda/\idealgenby{a}$ and all
$\Lambda$-modules $M$ and $N$.  Hence $e$ is an eventually homological
isomorphism.
\end{proof}

The following result gives a characterization of the condition
$(\gamma)$ when $\idealgenby{a}$ is a stratifying ideal.

\begin{lem}
  Let $\Lambda$ be an artin algebra and $a$ an idempotent in
  $\Lambda$.  Assume that $\idealgenby{a}$ is a stratifying ideal in
  $\Lambda$. Then we have $(\gamma)\ \pd_\Lambda \Big(
  \frac{\Lambda/\idealgenby{a}}{\rad \Lambda/\idealgenby{a}} \Big) <
  \infty$ if and only if $\gld{\Lambda/\idealgenby{a}}<\infty$ and
  $\pd_{\Lambda} \idealgenby{a}<\infty$. Moreover, if $(\gamma)$
  holds, then $(\beta)$ holds. 
\begin{proof}
  Assume that $(\gamma)$ $\pd_\Lambda \Big(
  \frac{\Lambda/\idealgenby{a}}{\rad \Lambda/\idealgenby{a}} \Big) <
  \infty$.  It is clear that $\pd_\Lambda\idealgenby{a} < \infty$ if
  and only if $\pd_\Lambda \Lambda/\idealgenby{a} < \infty$.  Since
  $\Lambda/\idealgenby{a}$ as a $\Lambda$-module is filtered in simple
  modules occurring as direct summands in
  $(\Lambda/\idealgenby{a})/(\rad \Lambda/\idealgenby{a})$, we infer
  that $\pd_\Lambda \Lambda/\idealgenby{a} < \infty$ by the property
  $(\gamma)$. Since $\idealgenby{a}$ is a stratifying ideal in
  $\Lambda$, we have that 
\[\Ext^j_{\Lambda/\idealgenby{a}}(X,Y) \iso \Ext^j_\Lambda(X,Y)\]
for all $j\ge 0$ and all modules $X$ and $Y$ in
$\fmod\Lambda/\idealgenby{a}$.  Using the above isomorphism and the
property $(\gamma)$ again, we obtain that
$\id_{\Lambda/\idealgenby{a}} Y \leq \pd_\Lambda
(\Lambda/\idealgenby{a})/(\rad \Lambda/\idealgenby{a})$ for all $Y$ in
$\fmod\Lambda/\idealgenby{a}$. Hence $\gld\Lambda/\idealgenby{a} <
\infty$.

Assume conversely that $\gld{\Lambda/\idealgenby{a}}<\infty$ and
$\pd_{\Lambda} \idealgenby{a}<\infty$. From \cite[Theorem
$3.9$]{Psaroud} we have a finite projective resolution $0\lxr \Lambda
a\otimes_{a\Lambda a}Q_n\lxr \cdots \lxr \Lambda a\otimes_{a\Lambda
  a}Q_0\lxr \idealgenby{a}\lxr 0$, where $Q_i$ are projective
$a\Lambda a$-modules.  Then applying the exact functor $e=a-$, it
follows from Proposition \ref{properties} that the sequence $0\lxr
Q_n\lxr \cdots \lxr Q_0\lxr a(\idealgenby{a})\lxr 0$ is exact.  We
infer that $(\beta)\ \pd_{a\Lambda a} a\Lambda < \infty$, since
$a\idealgenby{a} \iso a\Lambda$. Since $\gld\Lambda/\idealgenby{a} <
\infty$ and $\pd_{\Lambda} \Lambda/\idealgenby{a}<\infty$, we have
that $\pd_{\Lambda}X\leq \pd_{\Lambda/\idealgenby{a}}X +
\pd_{\Lambda}\Lambda/\idealgenby{a}$.  We infer that $(\gamma)$
$\pd_\Lambda \Big( \frac{\Lambda/\idealgenby{a}}{\rad
  \Lambda/\idealgenby{a}} \Big) < \infty$ holds.

The last claim follows immediately from the above. 
\end{proof}
\end{lem}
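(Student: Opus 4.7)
The plan is to treat the three claims separately, exploiting the two characteristic consequences of $\idealgenby{a}$ being a stratifying ideal: first, the multiplication map gives $\Lambda a \tensor_{a\Lambda a} a\Lambda \iso \idealgenby{a}$ with higher Tors vanishing (so $\Lambda a$ is a flat $a\Lambda a$-module and $\idealgenby{a}$ has a projective $\Lambda$-resolution built from $\Lambda a \tensor_{a\Lambda a} -$), and second, the full embedding $\fmod \Lambda/\idealgenby{a} \hookrightarrow \fmod \Lambda$ is homological, i.e.\ $\Ext^j_{\Lambda/\idealgenby{a}}(X,Y) \iso \Ext^j_\Lambda(X,Y)$ for all $X,Y$ in $\fmod \Lambda/\idealgenby{a}$. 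The second fact is exactly what identifies $\A$ with $\Ker e$ in a homological way, and is proved in the recollement literature (e.g.\ \cite{Psaroud}).

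For the direction $(\gamma) \Rightarrow \gld\Lambda/\idealgenby{a} < \infty$ and $\pd_\Lambda \idealgenby{a} < \infty$: first I would observe that $\Lambda/\idealgenby{a}$ admits a finite filtration whose factors are direct summands of $(\Lambda/\idealgenby{a})/\rad(\Lambda/\idealgenby{a})$, so $(\gamma)$ immediately gives $\pd_\Lambda \Lambda/\idealgenby{a} < \infty$, and then the short exact sequence $0 \lxr \idealgenby{a} \lxr \Lambda \lxr \Lambda/\idealgenby{a} \lxr 0$ yields $\pd_\Lambda \idealgenby{a} < \infty$. Next, to bound $\gld \Lambda/\idealgenby{a}$, I would use the homological embedding above: for any simple $\Lambda/\idealgenby{a}$-module $S$ and any $\Lambda/\idealgenby{a}$-module $Y$, one has $\Ext^j_{\Lambda/\idealgenby{a}}(S,Y) \iso \Ext^j_\Lambda(S,Y)$, which vanishes for $j > \pd_\Lambda(\Lambda/\idealgenby{a})/\rad(\Lambda/\idealgenby{a})$ by $(\gamma)$; ranging $Y$ over all of $\fmod \Lambda/\idealgenby{a}$ gives the required finite bound on global dimension.

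For the converse direction, I would use a change-of-rings argument to establish the inequality $\pd_\Lambda X \le \pd_{\Lambda/\idealgenby{a}} X + \pd_\Lambda \Lambda/\idealgenby{a}$ for every $\Lambda/\idealgenby{a}$-module $X$: splice a finite projective $\Lambda/\idealgenby{a}$-resolution of $X$ with finite projective $\Lambda$-resolutions of each $\Lambda/\idealgenby{a}$-projective (each such projective is a summand of $\Lambda/\idealgenby{a}$, hence has finite $\Lambda$-projective dimension). Applied to $X = (\Lambda/\idealgenby{a})/\rad(\Lambda/\idealgenby{a})$, this gives $(\gamma)$.

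Finally, for $(\gamma) \Rightarrow (\beta)$, the idea is to produce a finite projective $a\Lambda a$-resolution of $a\Lambda$ directly by applying $e = a(-)$ to a suitable finite projective $\Lambda$-resolution of $\idealgenby{a}$. The stratifying hypothesis, combined with the already established $\pd_\Lambda \idealgenby{a} < \infty$, lets one invoke \cite[Theorem~3.9]{Psaroud} to produce a finite resolution $0 \lxr \Lambda a \tensor_{a\Lambda a} Q_n \lxr \cdots \lxr \Lambda a \tensor_{a\Lambda a} Q_0 \lxr \idealgenby{a} \lxr 0$ with each $Q_i$ projective over $a\Lambda a$. Since $e$ is exact and satisfies $e(\Lambda a \tensor_{a\Lambda a} Q_i) \iso Q_i$ (by Proposition~\ref{properties} applied to the recollement, as $l(Q_i) = \Lambda a \tensor_{a\Lambda a} Q_i$ and $el \iso \Id$), and since $e(\idealgenby{a}) = a\idealgenby{a} = a\Lambda$, applying $e$ gives the desired finite projective $a\Lambda a$-resolution of $a\Lambda$. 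The main obstacle I expect is assembling the right form of the projective resolution of $\idealgenby{a}$ and verifying cleanly that $e$ carries it to a resolution of $a\Lambda$, but both pieces are packaged in the cited recollement machinery.
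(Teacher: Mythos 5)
Your proposal is correct and follows essentially the same route as the paper: the filtration argument plus the short exact sequence $0 \to \idealgenby{a} \to \Lambda \to \Lambda/\idealgenby{a} \to 0$ for one direction, the homological-embedding isomorphism $\Ext^j_{\Lambda/\idealgenby{a}} \iso \Ext^j_\Lambda$ granted by the stratifying hypothesis to bound $\gld\Lambda/\idealgenby{a}$, the change-of-rings inequality $\pd_\Lambda X \le \pd_{\Lambda/\idealgenby{a}} X + \pd_\Lambda \Lambda/\idealgenby{a}$ for the converse, and applying $e$ to the resolution from \cite[Theorem~3.9]{Psaroud} together with $el \iso \Id$ and $a\idealgenby{a} \iso a\Lambda$ to deduce $(\beta)$.
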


\end{document}